
\documentclass{article}





\usepackage[english]{babel}
\usepackage{amsthm}
\usepackage{amsmath}
\usepackage{amsfonts}
\usepackage{amssymb}
\usepackage{makeidx}
\usepackage{graphicx}
\usepackage[left=2cm,right=2cm,top=2cm,bottom=2cm]{geometry}
\usepackage[colorlinks]{hyperref}
\usepackage{xcolor}
\usepackage[justification=centering]{caption}
\usepackage{subcaption}
\usepackage{algorithm}
\usepackage[noend]{algpseudocode}
\usepackage{cleveref}
\crefname{equation}{Eq.}{Eqs.}
\usepackage{balance}
\usepackage{mathtools}
\DeclareMathOperator*{\argmax}{arg\,max}
\DeclareMathOperator*{\argmin}{arg\,min}
\DeclareMathOperator*{\Argmax}{Arg\,max}
\DeclareMathOperator*{\Argmin}{Arg\,min}

\renewcommand{\geq}{\geqslant}

\setcounter{topnumber}{3}
\setcounter{bottomnumber}{3}
\setcounter{totalnumber}{3}     
\setcounter{dbltopnumber}{4}    

\newcommand{\markupdraft}[2]{
	\ifthenelse{\equal{#1}{display}}{#2}{}
	\ifthenelse{\equal{#1}{color}}{\color{#2}}{}
}

\newcommand{\newcolored}[3][]{{\markupdraft{color}{#2}#3}
	\ifthenelse{\equal{#1}{}}{}{\markupdraft{display}{{\color{yellow!70!black}[#1]}}}} 

\newcommand{\del}[2][]{{\markupdraft{display}{{\color{yellow!99!black}[removed: "#2"[#1]]}}}} 
\newcommand{\new}[2][]{\newcolored[#1]{blue!75!black}{#2}}
\newcommand{\nnew}[2][]{\newcolored[#1]{red!85!black!90!blue}{#2}}

\newcommand{\indraftonly}[1]{{#1}}  
\renewcommand{\indraftonly}[1]{}\renewcommand{\markupdraft}[2]{}  
\renewcommand{\del}[2][]{}

\newcommand{\anne}[1]{\indraftonly{{\color{magenta}Anne: #1}}}
\newcommand{\niko}[1]{\indraftonly{\color{green!60!black}~#1$_{-\!\mathrm{Niko}}$}}

\newcommand{\armand}[1]{\indraftonly{{\color{cyan}Armand: #1}}}

\newcommand{\todo}[1]{\indraftonly{\bf {\color{red}TODO: #1}}}

\newcommand{\done}[1]{\indraftonly{\bf {\color{gray}DONE: #1}}}

\newcommand{\blank}[1]{}

\newcommand{\vertiii}[1]{{\left\vert\kern-0.25ex\left\vert\kern-0.25ex\left\vert #1 
		\right\vert\kern-0.25ex\right\vert\kern-0.25ex\right\vert_2}}

\newcommand*{\1}{\text{\usefont{U}{bbold}{m}{n}1}}

\newcommand{\rank}{\mathrm{rank}\,}

\newcommand{\bR}{\mathbb{R}}


\newcommand{\bN}{\mathbb{N}}
\newcommand{\bE}{\mathbb{E}}

\newcommand{\bP}{\mathbb{P}}

\newcommand{\cF}{\mathcal{F}}
\newcommand{\cB}{\mathcal{B}}

\newcommand{\cX}{\mathsf{X}}
\newcommand{\cY}{\mathsf{Y}}

\newcommand{\cV}{\mathsf{V}}

\newcommand{\cC}{\mathcal{C}}
\newcommand{\cO}{\mathcal{O}}

\newcommand{\cU}{\mathsf{U}}


\newcommand{\mueff}{\mu_{\mathrm{eff}}}
\newcommand{\diag}{\mathrm{diag}}

\newtheorem{proposition}{Proposition}
\newtheorem{theorem}{Theorem}
\newtheorem{corollary}{Corollary}
\newtheorem{lemma}{Lemma}

\usepackage{xparse}

\newcommand{\R}{\mathbb{R}}


\newcommand{\Sd}{\mathcal{S}^d}
\newcommand{\Sdp}{\mathcal{S}^d_{+}}
\newcommand{\Sdpp}{\mathcal{S}^d_{++}}

\newcommand{\wi}[1][i]{{w^m_{#1}}}
\newcommand{\vwm}{\ensuremath{\mathbf{w}_m}}
\newcommand{\wic}[1][i]{{w^c_{#1}}}


\NewDocumentCommand{\yt}{ O{t} }{Y_{#1}}
\NewDocumentCommand{\yit}{ O{i} O{t} }{\left[Y_{#2}\right]_{#1}}








\newcommand{\pUk}[1]{\nu_U^{#1}} 
\newcommand{\pUd}{\pUk{d}}
\newcommand{\density}[1]{p_U^{#1}}
\newcommand{\densityd}{\density{d}}
\newcommand{\CSA}{\Gamma_{\text{CSA}}^1}
\newcommand{\CSAd}{\Gamma_{\text{CSA}}^2}



	\newcommand{\Fz}{F_z}
	\newcommand{\Fq}{F_q}
	\newcommand{\FK}{F_{\ncovmat}}
	\newcommand{\Fr}{F_r}
	\newcommand{\Fsig}{F_p}
	\newcommand{\varnorm}{z,p,q,\nnew{\rncovmat},r}

\usepackage{enumitem} 
\setlist[enumerate]{wide = 0pt, leftmargin=*}

\numberwithin{proposition}{section}
\numberwithin{theorem}{section}
\numberwithin{definition}{section}
\numberwithin{corollary}{section}
\numberwithin{lemma}{section}
\numberwithin{remark}{section}
\numberwithin{example}{section}
\numberwithin{equation}{section}

\newtheoremstyle{assumption}{\topsep}{\topsep}{\itshape}{0pt}{}{.}{ }{{\bf\thmname{#1}}{\thmnumber{#2}}\textnormal{\thmnote{ (#3)}}}
\newtheoremstyle{assumptionbis}{\topsep}{\topsep}{}{0pt}{}{}{ }{\ensuremath{({\bf\thmname{#1}}_{\thmnumber{#2}}\textnormal{\thmnote{ (#3)}})}}

\theoremstyle{assumption}
\newtheorem{assumptionH}{{H}}

\crefformat{assumptionH}{{\bf H}{\normalfont #2#1#3}}


\newtheorem{assumptionF}{\textbf{F}}

\crefformat{assumptionF}{{\bf F}{\normalfont #2#1#3}}

\newtheorem{assumptionP}{\textbf{N}}

\crefformat{assumptionP}{{\bf N}{\normalfont #2#1#3}}

\newtheorem{assumptionG}{\ensuremath{\boldsymbol{\Gamma}}}

\crefformat{assumptionG}{{\ensuremath{\boldsymbol{\Gamma}}}{\normalfont #2#1#3}}

\newtheorem{assumptionR}{\textbf{R}}

\crefformat{assumptionR}{{\bf R}{\normalfont #2#1#3}}

\newtheorem{assumptionRho}{\ensuremath{\boldsymbol{\rho}}}

\crefformat{assumptionRho}{{\ensuremath{\boldsymbol{\rho}}}{\normalfont #2#1#3}}

\newcommand{\w}{\mathbf{w}} 
\newcommand{\Fpstar}{F_{c}^{\text{p}}}
\newcommand{\Fpsigma}{F^{\text{p}}_{c_\sigma}}
\newcommand{\Fpc}{F^{\text{p}}_{c_c}}
\newcommand{\Utt}{U_{t+1}^{s_{t+1}}}
\newcommand{\Fx}{F^{\text{m}}_{c_m}}

\newcommand{\Fc}{F^{\text{C}}_{c_1,c_\mu}}

\newcommand{\covmat}{\mathbf{C}}
\newcommand{\ncovmat}{\boldsymbol{\Sigma}}
\newcommand{\rncovmat}{{\hat{\ncovmat}}}
\newcommand{\Z}{z}
\newcommand{\X}{m}

\newcommand{\Id}{\mathbf I_d}

\newcommand{\wrt}{with respect to}
\newcommand{\wilog}{without loss of generality}

\newcommand{\lsc}{lower semicontinuous}


\begin{document}
	
		
		
		
		\title{Irreducibility of nonsmooth state-space models with an application to CMA-ES
				%
			}
		
		
		\author{Armand Gissler, Shan-Conrad Wolf, Anne Auger, Nikolaus Hansen}
		
		
		\maketitle
		
		\begin{abstract}
			We analyze a stochastic process resulting from the normalization of states in the zeroth-order optimization method CMA-ES.
			On a specific class of minimization problems \del{known as}\nnew{where the objective function is} scaling-invariant, this process defines a time-homogeneous Markov chain\del{, and we can study its convergence.} whose convergence at a geometric rate can \nnew{imply the}\del{yield\del{ to} guarantees}\niko{is there more than one guaranty of linear convergence?}\del{ of} linear convergence of CMA-ES.
			However, the analysis of the \new{intricate}\del{complex} updates for this process \del{presents}\nnew{constitute} a great mathematical challenge.
 We establish that this Markov chain is an irreducible and aperiodic T-chain.
			These contributions represent a first major step for the convergence analysis towards a stationary distribution.
	We rely for this analysis on conditions for the irreducibility of nonsmooth state-space models on manifolds.
			\new{To obtain our results,}\del{Besides,} we extend these \new{conditions to address}\del{results to conclude to\niko{my dictionaries don't contain "conclude" with the word "to" after}} the irreducibility in different hyperparameter settings that define different Markov chains, 
				and to include nonsmooth state spaces.

		\end{abstract}
		
		
		
%
%
%
		

	

	{\tableofcontents}

	\section{Introduction}

	The convergence of stochastic processes is at the core of many algorithms in various domains. Well-known examples include Markov chain Monte-Carlo (MCMC) algorithms~\cite{brooks2011handbook} like the Metro\-polis-Hastings algorithm~\cite{metropolis1953equation,hastings1970monte} that aim to sample a target distribution $\pi$ by generating a Markov chain with stationary probability measure $\pi$. Fast convergence of the Markov chain towards $\pi$  is one important\del{ desired} property for the underlying algorithms. It can be described qualitatively as the geometric ergodicity of the Markov chain, i.e., convergence at a geometric rate towards $\pi$, a question that has been widely studied \cite{gallegos2023equivalences,roberts2004general}.
	We focus here on an application of stochastic processes to the domain of numerical stochastic optimization which is closely connected to MCMC. We analyze indeed a Markov chain underlying 
	the so-called covariance matrix adaptation evolution strategy (CMA-ES)~\cite{hansen2001completely,hansen2004evaluating}, a widely used stochastic derivative-free optimization algorithm \cite{rodriguez2006hybrid,colutto2010cmaes,bieler2014robust,fujii2018exploring,akiba2019optuna,maki2020application,tanabe2021level,aboyeji2024covariance}\footnote{As of March 2024, two Python implementations of CMA-ES received together more than 60 millions downloads.} that can tackle difficult optimization problems which are notably nonconvex, multimodal and ill-conditioned.
	The algorithm minimizes a function $f: \mathbb{R}^n \to \mathbb{R}$ by sampling Gaussian vectors whose mean and covariance matrix are adapted iteratively.
	The adaptation of the parameters of the Gaussian distribution has been carefully designed, combining several independent principles \cite{hansen1996adapting,hansen2001completely,hansen2003reducing,ostermeier1994stepsize}.
	Ample empirical evidence shows that the algorithm converges geometrically fast~\cite{hansen2001completely,hansen2003reducing,hansen2014principled,hansen2015evolution}---in optimization referred to as linear convergence---towards the optimum on large classes of functions and the covariance matrix learns the inverse Hessian~\cite{hansen2011cmaes} up to a scalar factor on strictly convex quadratic problems. Yet, establishing a convergence proof of CMA-ES that reflects its working principle (i.e., without modifying the algorithm to enforce convergence) is still an open and difficult theoretical question.

	In this context, we extend a methodology that was already successful to analyze stepsize adaptive algorithms~\cite{auger2005convergence,auger2013linear,bienvenue2003global,auger2016linear,toure2023global} to prove the convergence of CMA-ES by exploiting its mechanisms and reflecting its working principle, including the learning of second-order information.
The methodology is based on the definition of a normalized Markov chain that models the algorithm when minimizing a scaling-invariant function, a function class that includes non quasi-convex functions~\cite{toure2021scaling}.
		As we will explain, if this Markov chain is stable---in the sense that it converges to a stationary distribution geometrically fast and satisfies a Law of Large Numbers---then the linear convergence of the algorithm follows. With more work, the learning of the inverse Hessian on strictly convex-quadratic functions should follow as well.
		In order to obtain such stability properties, the irreducibility of the process (the definitions will be recalled in the paper) is a necessary condition.
		On top of establishing the irreducibility of this Markov chain, we prove that it is an aperiodic T-chain, paving the way to a convergence analysis by means of a geometric drift condition.

Because of the 	intricacy of the algorithm, the irreducibility cannot be easily established by simply investigating the transition kernel of the Markov chain. Instead, we rely on recent results connecting the irreducibility of a Markov chain defined on a smooth manifold to the stability of an underlying control model.
	More precisely, we view the Markov chain as a nonlinear state-space model
\begin{equation}\label{eq:NLSS-model-intro}
			\phi_{t+1} = F(\phi_t,\alpha(\phi_t,U_{t+1}))
		\end{equation}
where $\{U_{t+1}\}_{t\in\bN}$ is an independent and identically distributed (i.i.d.) process valued in a measured space $\cU$, $F\colon\cX\times\cV\to\cX$ is a locally Lipschitz update function between smooth manifolds $\cX, \cV$ and $\alpha\colon\cX\times\cU\to\cV$ is a measurable, possibly discontinuous function. 
When $F$ is nonsmooth, we call \eqref{eq:NLSS-model-intro} a nonsmooth state-space model. The connections that we rely on between the irreducibility, aperiodicity and T-chain property of the Markov chain and an underlying deterministic control model have been recently established~\cite{gissler2024irreducibility}, relaxing the assumptions in previous work~\cite{chotard2019verifiable} that the state space of the chain is an open subset of an Euclidean space and $F$ is continuously differentiable. This latter work was already a generalization of the case where $\alpha(\phi_t,U_{t+1})=U_{t+1}$ and $F$ is smooth, i.e., infinitely differentiable~\cite{meyn1991asymptotic}.

While part of the methodology we follow relies on the results presented in \cite{gissler2024irreducibility}, we introduce here two other generic and central techniques for the analysis.

Like in many practically used algorithms (in contrast to toy algorithms), different update mechanisms can be turned on and off in CMA-ES by some specific hyperparameter settings (like learning rates) resulting in different algorithm variants with varying number of state variables.
Our aim is to analyze all algorithm variants without repeating the similar mathematical analysis for each of them.
Hence, in order to have a single proof, we introduce the notions of {\it projected} and {\it redundant} Markov chains. 
		Specifically, we consider a Markov chain $\{(\phi_t,\xi_t)\}_{t\in\bN}$ valued in the manifold $\cX\times\cY$ with
		\begin{equation}\label{eq:redundant-NLSS-model-intro}
			(\phi_{t+1},\xi_{t+1}) = \tilde{F}((\phi_t,\xi_t),\tilde\alpha((\phi_t,\xi_t),U_{t+1}))
		\end{equation}
		where $\{\phi_t\}_{t\in\bN}$ obeys \eqref{eq:NLSS-model-intro}, 
			and $\tilde F\colon\cX\times\cY\times\cV\to\cX\times\cY$ and $\tilde\alpha\colon\cX\times\cY\times\cU\to\cX\times\cY$ 
			satisfy the same assumptions as $F$ and $\alpha$, respectively. 
			\del{Besides w}We suppose then\del{for the projection $\Pi_\cX$} that
		\begin{equation}
			\Pi_\cX \circ \tilde F \left( (\phi,\xi) , \tilde \alpha ((\phi,\xi),u) \right) = F(\phi,\alpha(\phi,u))
		\end{equation}
		for every $(\phi,\xi,u)\in\cX\times\cY\times\cU$, where $\Pi_\cX\colon\cX\times\cY\to\cX$ is the canonical projection of $\cX\times\cY$ on $\cX$. The Markov chain $\{(\phi_t,\xi_t)\}_{t\in\bN}$ is said to be redundant, whereas $\{\phi_t\}_{t\in\bN}$ is said to be projected.
		We derive similar tools as in \cite{gissler2024irreducibility} to analyze the projected Markov chain $\{\phi_t\}_{t\in\bN}$ by investigating the redundant control model \eqref{eq:redundant-NLSS-model-intro}.

\paragraph{Contributions} Overall the contributions of this paper are twofold. 

		On the one hand, we provide two generic tools to analyze the irreducibility, aperiodicity and topological properties of complex nonsmooth state-space models.
			First, we extend the methodology to investigate Markov chains following \eqref{eq:NLSS-model-intro} with locally Lipschitz updates on smooth manifolds in order to be able to deduce irreducibility, aperiodicity and T-chain property from a redundant chain to a projected chain.
		Second, we show how to transfer the analysis of nonsmooth state-space models following \eqref{eq:NLSS-model-intro} from smooth manifolds to \emph{nonsmooth} manifolds, as long as they can be continuously transformed into smooth manifolds.
	
	On the other hand, using the developed tools, we establish the irreducibility, aperiodicity, and T-chain property of a Markov chain defined by the normalization of states  of CMA-ES when minimizing a scaling-invariant function.
	Our results include most of the relevant hyperparameter settings,
	some of them described by separate Markov chains.
		The proven properties constitute an essential step for a proof of linear convergence of CMA-ES.

	\paragraph{Organization}
	In \Cref{sec:main}, we present the update equations behind CMA-ES and define a class of normalized Markov chains associated to the algorithm when minimizing scaling-invariant functions. In \Cref{sec:IrrApp}, we state our first main result that these Markov chains are irreducible, aperiodic T-chains.
	In \Cref{sec:NLSS}, we state and prove results on the irreducibility, aperiodicity and topological properties of nonlinear state-space models.
	In \Cref{sec:proofs}, we apply the results exposed in \Cref{sec:NLSS} to the normalized Markov chain defined earlier ant prove the main result of \Cref{sec:IrrApp}. For the sake of readability, some proofs are delayed and presented in \Cref{app:proofs-steadily} and \Cref{app:proofs-controllability}.

	\paragraph{Notations}
	Throughout this paper, we use the following notations:
	$\bN$, $\bN^*$, $\bR$, $\bR_{+}$, $\bR_{++}$
	for the sets of nonnegative integers, positive integers, real numbers, nonnegative real numbers, and positive real numbers, respectively.
	Unless stated otherwise, for $n\in\bN^*$ and any vector $x\in\bR^n$, $\|x\|$ denotes the Euclidean norm of $x$.
	The set of real symmetric matrices of size $d\times d$ is denoted $\Sd$, and its subsets of positive semi-definite matrices and of positive definite matrices are denoted $\Sdp$ and $\Sdpp$, respectively.
	Given a positive integer $n$, $\mathfrak{S}_n$ represents the set of permutations of $\{1,\dots,n\}$, and its cardinality is denoted $n!$.
	The differential application of a function $F$ at a point $x$ is denoted $\mathcal{D}F(x)$, and the Clarke derivative of $F$ at $x$ is denoted $\partial F(x)$.
	We use the notations $\Argmin f$ and $\Argmax f$ for the sets of global minima and global maxima of $f$, respectively. When unique global minimum and maximum exist, we denote them as $\argmin f$ and $\argmax f$, respectively.
	For any sequence $\{v_k\}_{k\in\bN^*}$ and any $k\in\bN^*$, we set $v_{1:k}=(v_1,\dots,v_k)$. 
	For a topological space $\cX$, we denote $\cB(\cX)$ the Borel $\sigma$-field of $\cX$, which makes $\cX$ a measured space. If $\mu$ is a measure on $\cB(\cX)$ and $\nu$ is a measure on $\cB(\cY)$, we denote $\mu\otimes\nu$ the product measure of $\mu$ and $\nu$, which is a measure on $\cB(\cX\times\cY)$. Likewise, for $k\in\bN^*$, we denote $\mu^{\otimes k}$ the measure product of $\mu$ by itself $k$ times, as a measure on $\cB(\cX^k)$.

	\section{\del{Main results}\del{Irreducibility, aperiodicity and topological properties of a Markov chain underlying CMA-ES}%
		\new{Definition of Markov chains arising from \nnew{a}\del{the} normalization of CMA-ES}}\label{sec:main}
	We present in this section the CMA-ES algorithm and define \del{associated }normalized Markov chains---candidates to be stable---associated to the algorithm. We explain the connection between the stability of these Markov chains and the convergence of the algorithm, motivating thus why the irreducibility, aperiodicity and topological properties of the Markov chains that we study in the paper \new{are}\del{is} an important part for obtaining a convergence proof of CMA-ES.
	\del{
	\del{As an application of the theory of stochastic processes to linearly convergent algorithms for black-box optimization, the main result of this section}%
	This section consists in\del{ proving the irreducibility together with topological properties of a discrete-time Markov process underlying the CMA-ES algorithm.}
	\new{the definition of a normalized process underlying CMA-ES.}
	 In this section, we \del{first }introduce in \Cref{sec:algo-cmaes} the intuition and mathematical equations of \del{the }CMA-ES\del{ algorithm}, then we \del{motivate}\new{define} in \Cref{sec:def-normalized-chain} \del{the consequence of the irreducibility of }an underlying \new{normalized} \del{(discrete-time) }stochastic process\del{ about the convergence of CMA-ES}.
	 \new{We prove then that this process is a time-homogeneous Markov chain when the objective function is scaling-invariant.} 
	\del{and finally in}\new{In the next} \Cref{sec:IrrApp}, we state our main theorem \new{that this process is an irreducible aperiodic T-chain}.
	This results in one milestone towards establishing the linear convergence of the algorithm\new{, as explained in \Cref{sec:def-normalized-chain}}.
	}
	
		\subsection{Presentation of CMA-ES}\label{sec:algo-cmaes}
		
		The covariance matrix adaptation evolution strategy (CMA-ES) is an iterative algorithm which aims to approximate a problem solution 
		\begin{equation}
			\tag{P} \label{eq:optim-problem}
			x^* \in \underset{x\in\bR^d}{\Argmin}~ f(x) 
		\end{equation}
		where $d\in\bN^*$ is the dimension of the problem, and $f\colon\bR^d\to\bR$ is \del{called }the objective function. 
		A vector $x^*\in\bR^d$, solution to \eqref{eq:optim-problem}, is called a global \del{optimum}\nnew{minimum} of $f$.
		The CMA-ES attempts to approach $x^*$
			by \nnew{successively}
			sampling, \nnew{for iterations $t\in\bN$,} new candidate solutions
			from a multivariate normal probability distribution $\mathcal{N}(\X_t,\sigma_t^2\covmat_t)$.
			\del{, where $t\in\bN$ denotes the current iteration of the algorithm.}%
		The vector $\X_t\in\bR^d$ is the current mean
		of the distribution and we specifically desire that $f(\X_t)$ converges to the essential infimum of $f$.
		The positive real number $\sigma_t>0$ is the current stepsize, and the symmetric positive definite matrix $\covmat_t\in\Sdpp$ is referred to as the current covariance matrix.
		For our analysis, we generalize the assumption that the\del{ sampling} distribution of the candidate solutions is multivariate normal.
		
		The parameters of the sampling distribution are updated using two cumulation paths $p_t^\sigma,p_t^c\in\bR^d$, 
		which implement a weighted moving average of the steps followed by the mean.
		
		More precisely, the algorithm works as follows. 
		At iteration $t\in\bN$, given $\X_t\in\bR^d$, $\sigma_t>0$, $\covmat_t\in\Sdpp$, and $p_t^\sigma,p_t^c\in\bR^d$, 
		we generate independent identically distributed (i.i.d.) samples $U^1_{t+1},\dots,U^\lambda_{t+1}$ following \del{the}\nnew{a} sampling distribution\done{}\niko{we have silently changed the meaning of "sample distribution", before (like in the previous paragraph) it was used for the parametrized distribution with which we sample the candidate solutions.} $\pUd$ in $\bR^d$ \del{(for CMA-ES it is the standard normal distribution in $\bR^d$)}\done{we refer as CMA-ES with any $\pUd$} 
		and independently of $(\X_t,\sigma_t,\covmat_t,p_t^\sigma,p_t^c)$. 
		\new{Usually, the distribution $\pUd$ is the standard normal distribution in $\bR^d$. However, throughout the paper we will refer to CMA-ES as the algorithm presented in this section with a general and abstract sampling distribution $\pUd$.}
		We compute then $\lambda$ candidate solutions 
		\begin{equation}
			\label{eq:candidates-cma}
		x_{t+1}^i\coloneqq \X_t + \sigma_t \sqrt{\covmat_t} U_{t+1}^i\text{ for }i=1,\dots,\lambda \enspace,
		\end{equation}
		and rank them \wrt\ their $f$-values. Formally, we define a permutation $s_{t+1}\in\mathfrak{S}_\lambda$ satisfying%
		\begin{equation}\label{eq:permutation-def}
			f\left(x_{t+1}^{s_{t+1}(1)}\right)\leqslant  \dots \leqslant f\left(x_{t+1}^{s_{t+1}(\lambda)}\right) \enspace.
		\end{equation}
		When
		$f\left(x_{t+1}^i\right)  = f\left(x_{t+1}^j\right) $,	
			we impose for uniqueness
		$s_{t+1}^{-1}(i) < s_{t+1}^{-1}(j)$ if $i<j$\del{ in $\{1,\dots,\lambda\}$}.
		We say that we have \emph{neutral selection} when, \del{we assume }\nnew{instead of \eqref{eq:permutation-def},}\del{ that}
		\del{When }the permutation $s_{t+1}$ is independent of the samples $U_{t+1}^i$ for all $t\in\bN$\del{, w}. 
		This is the case, for example, when the permutation is fixed for all $t$, or when $f(x_{t+1}^i)$ is independent of its argument or independent of $U_{t+1}^i$.

		The mean is moved towards the best solutions,
		\del{The mean}and is updated by applying the function $\Fx$ defined as
		\begin{equation}\label{eq:Fm}
			\begin{array}{rl}
				\Fx\colon (m,v) \in \bR^d\times\bR^d & \mapsto m + c_m v \enspace, \del{\to\bR^d \\}
			\end{array}
		\end{equation}
		given a fixed learning rate $c_m>0$ (by default $c_m=1$). \del{More p}Precisely,
		the \del{update}\nnew{mean} obeys 
		\begin{equation}\label{eq:m}
			\X_{t+1} =
			\Fx\left(\X_t,\sigma_t\sqrt{\covmat_t}\sum_{i=1}^\mu \wi U_{t+1}^{s_{t+1}(i)}\right)
			\new{=  \X_t + c_m \sigma_t\sqrt{\covmat_t}\sum_{i=1}^\mu \wi U_{t+1}^{s_{t+1}(i)}} \enspace,
		\end{equation}
		where $\wi[1]\geqslant\dots\geqslant\wi[\mu]>0$ are weights such that $\sum_{i=1}^\mu\wi =1$, 
		and $\sqrt{\covmat_t}$ is \nnew{the} symmetric positive definite square root of $\covmat_t$.

	We introduce the function to update the paths $p_t^\sigma,p_t^c\in\bR^d$. Given a decay factor $c\in(0,1]$, $\Fpstar$ is defined as
	\begin{equation}\label{eq:p}
		\begin{array}{rl}
			\Fpstar \colon (p,v) \in \bR^d\times\bR^d & \mapsto (1-c) p +\sqrt{c(2-c)\mueff} v
		\end{array}
	\end{equation}
	where $\mueff = 1/ \|\vwm\|^{2}$, with $\vwm=(\wi[1],\dots,\wi[\mu])^\top$. 
	The closer the decay factor $c$ is to zero, 
	the more the updated path depends on the previous path due to the term $ (1-c) p$. 
	Conversely, when $c=1$, the updated path is collinear to \del{$v$ }and \new{only depends on $v$}.
	We set two decay factors, $c_\sigma,c_c\in(0,1]$, and use \eqref{eq:p} to update two cumulation paths, one for updating the stepsize and the other for the rank-one update of the covariance matrix (see below).
		\del{The cumulation paths obey}%
		We \nnew{update}\del{set}
		\begin{equation}\label{eq:ps}
			p_{t+1}^\sigma = \new{ (1-c_\sigma)p_t^\sigma + \sqrt{c_\sigma(2-c_\sigma)\mueff} \sum_{i=1}^\mu \wi U_{t+1}^{s_{t+1}(i)}} = \Fpsigma\left(p_t^\sigma,\sum_{i=1}^\mu \wi U_{t+1}^{s_{t+1}(i)}\right) \enspace,
		\end{equation}	
		and
		\begin{equation}\label{eq:pc}
			p_{t+1}^c = \new{ (1-c_c)p_t^c + \sqrt{c_c(2-c_c)\mueff} \sqrt{\covmat_t} \sum_{i=1}^\mu \wi U_{t+1}^{s_{t+1}(i)}} = \Fpc\left(p_t^c,\sqrt{\covmat_t}\sum_{i=1}^\mu \wi U_{t+1}^{s_{t+1}(i)}\right) \enspace.
		\end{equation}
		The second argument in the RHS of \eqref{eq:pc} is the same as in \eqref{eq:m} disregarding stepsize $\sigma_t$. \cref{eq:ps} additionally drops $\sqrt{\covmat_t}$. Consequently, when \new{$p_0^\sigma$ and} $U^i_{t+1}$ are \new{standard Gaussian}, then, 
		\new{under neutral selection,} $p_{t+1}^\sigma$ is a standard Gaussian vector too
		and, in particular, the length of $p_{t+1}^\sigma$ does not depend on its direction.\niko{Also, the variances are unchanged even when we do not assume isotropy!}
		The path $p_{t+1}^c$ from \eqref{eq:pc} 
		\new{maintains under neutral selection the covariance matrix $\covmat_t$ when $p_t^c$ has covariance matrix $\covmat_t$. The path} is commensurable with updating $\covmat_t$ and its expected length can strongly depend on its direction.
		\niko{Remark: this suggests that we may want to transform $p_t^c$ in \cref{eq:pc} like $\sqrt{\covmat_{t}}\sqrt{\covmat_{t-1}}^{-1}p_{t}^c$. It should be inconsequential in practice, but nevertheless the correct update?
		With this transformation, we have $p^c = \sqrt{C}p^\sigma$ if $c_c = c_\sigma$!?}%

		The stepsize is updated using the path $p_{t+1}^\sigma$. 
		Considering an abstract measurable function  $\Gamma\colon\bR^d\to\bR_{++}$ that we call stepsize change, the update reads
		\begin{equation}\label{eq:stepsize-update}
			\sigma_{t+1} = \sigma_t \times \Gamma\left(  p_{t+1}^\sigma \right) \enspace.
		\end{equation}
		A standard stepsize change used in CMA-ES is the cumulative stepsize adaptation (CSA) where $\Gamma$ equals
		\begin{equation}\label{eq:CSA}
			\CSA(p) = \exp\left( \frac{c_\sigma}{d_\sigma} \left( \frac{\|p\|}{\bE\|\pUd\|} -1 \right) \right)  \enspace,
		\end{equation}
		where $\bE\|\pUd\|\coloneqq\bE\|\xi\|$ for a random variable $\xi$ distributed under $\pUd$.
		\done{define $\|\pUd\|$  }%
		When $\pUd$ is the standard normal distribution, \eqref{eq:CSA} increases the stepsize when $\| p^\sigma_t \|$ is larger than to be expected under \new{neutral} selection (assuming that $p_0^\sigma \sim \pUd $)
		and decreases the stepsize when $\| p^\sigma_t \|$ is smaller.
		When consecutive steps are taken in a similar direction, 
		the \new{expected} path is long \new{while} the same progress could be made in fewer iterations with larger steps.
		When consecutive steps are negatively correlated, the \new{expected} path is short and a smaller stepsize is advisable.
		A smooth alternative to \eqref{eq:CSA} \new{implementing the same idea }is~\cite{arnold2004performance}
		\begin{equation}\label{eq:CSA-mod}
			\CSAd(p) = \exp\left( \frac{c_\sigma}{2d_\sigma} \left( \frac{\|p\|^2}{\bE\|\pUd\|^2} -1 \right) \right)  \enspace.
		\end{equation}
		These two stepsize changes rely on the choice of damping parameter $d_\sigma>0$, which is chosen $\approx 1 + 2 \sqrt{\mueff/d}$ in the first case and
		\new{$\approx 1 + 2 \mueff/d$} in the second case.
		\del{The performance of both updates within CMA-ES is usually Yet}%
		\nnew{Empirically, $\CSA$ and $\CSAd$ show}\del{present} similar \del{empirical }\nnew{performance when used with CMA-ES}~\cite{gissler2023evaluation}.
		While the function $\CSA$ is the default stepsize change, 
		previous theoretical works on ES \nnew{also have} investigated\del{ only} $\CSAd$~\nnew{\cite{arnold2004performance, toure2023global}}.

		Last, we introduce the update function for the covariance matrix, which depends on the choice of learning rates $c_1,c_\mu\geqslant0$ such that $c_1+c_\mu\in [0,1]$:
		\begin{equation}\label{eq:FC}
			\begin{array}{rl}
				\Fc \colon \Sdpp\times \bR^d \times \Sdp & \to \Sdpp \\
				(\covmat,p,\mathbf M) & \mapsto (1-c_1-c_\mu) \covmat + c_1 pp^\top + c_\mu \mathbf M \enspace,
			\end{array}
		\end{equation}
		and the covariance matrix \nnew{is} update\nnew{d via}\del{ then obeys}
		\begin{multline}\label{eq:c-update}
			\covmat_{t+1} =  (1-c_1-c_\mu) \covmat_t + c_1 p_{t+1}^c [p_{t+1}^c]^\top + c_\mu \sqrt{\covmat_t} \sum_{i=1}^\mu \wic \left[U_{t+1}^{s_{t+1}(i)}\right]\left[U_{t+1}^{s_{t+1}(i)}\right]^\top \sqrt{\covmat_t}   \\
			= \Fc\left( \covmat_t , p_{t+1}^c , \new{\sqrt{\covmat_t}} \sum_{i=1}^\mu \wic \left[U_{t+1}^{s_{t+1}(i)}\right]\left[U_{t+1}^{s_{t+1}(i)}\right]^\top\new{\sqrt{\covmat_t}} \right)  \enspace, 
		\end{multline}
		where we define weights $\wic[1]\geqslant\dots\geqslant\wic[\mu]>0$ such that $\sum_{i=1}^\mu\wic =1$.
		Moreover, we assume throughout the paper that $0<c_1+c_\mu<1$.
		\nnew{This assumption will be essential\del{ later} in the proofs of \Cref{l:path-step-3}, \Cref{c:steadily-attracting-state-for-CMA} and \Cref{p:full-rank-control-matrix-cma}.}
		The \nnew{setting}\del{case of} $c_1+c_\mu=1$ is however \nnew{used}\del{acceptable} in practice when $\mu$ is large 
		and we believe that\del{could be included in our analysis}\del{it can also be analyzed} with further work
		\nnew{\del{all }our results could be proven for this case as well.}

		The term $c_1 pp^\top$ is called the rank-one update, 
		whereas the term $c_\mu \mathbf M$ is the rank-mu update since in \eqref{eq:c-update} we replace $\mathbf M$ by a matrix of rank $\min(\mu,d)$ almost surely 
		which satisfies a maximum likelihood condition for the best samples of the last iteration~\cite[Proposition 7]{hansen2014principled}.
		In practice, also negative weights are used for the rank-mu update of the covariance matrix~\cite{jastrebski2006improving,hansen2010benchmarking}. 
		However, since the updated covariance matrix \new{must be} positive definite, \new{the norm of the} vectors corresponding to negative weights must be \new{controlled}. 
		We do not consider negative weights in the present paper.
		
		\subsection{Assumptions}\label{sec:assumptions}
		Our\del{ convergence} analysis of CMA-ES relies on analyzing the stability of normalized Markov chains underly\-ing the algorithm. 
		The construction of these Markov chains assumes that the objective function is scaling-invariant.
		A function $f\nnew{\colon\bR^d\to\bR}$ \nnew{is said to be} \textit{scaling-invariant \wrt\ $x^*\in\bR^d$} when for all $x,y\in\bR^d$ and $\rho>0$:
		\begin{equation}\label{eq:SI}
			f(x+x^*)\leqslant f(y+x^*) \Leftrightarrow f(\rho x+x^*) \leqslant  f(\rho y+x^*) \enspace.
		\end{equation}
	
	The class of scaling-invariant functions has been of interest for the convergence analysis of different variants of ES~\cite{auger2016linear,toure2023global}, 
	and is related to the class of positively homogeneous functions~\cite{toure2021scaling}. 
	We make an additional technical assumption on the level sets of the objective function to avoid ties in \eqref{eq:permutation-def}, 
	\del{thereby ensuring almost surely the uniqueness of the permutation $s_{t+1}$,}%
	\del{see \Cref{p:ordering-normalized-offsprings} below}%
	\new{which will be useful to define lower semi-continuous density functions in \Cref{l:density-alpha}}. 
	Overall, we will use the following assumptions:
	\begin{assumptionF}
		\label{F1:negligible-level-sets} The objective function $f\colon\bR^d\to\bR$ is a strictly increasing transformation of a continuous function with Lebesgue negligible level sets.
	\end{assumptionF}
	\begin{assumptionF}
		\label{F2:scaling-invariant} The objective function $f\colon\bR^d\to\bR$ is scaling-invariant \wrt\ a point $x^*\in\bR^d$.
	\end{assumptionF}
%
	Instead of assuming \Cref{F1:negligible-level-sets}, 
	we can suppose \wilog\ that the function $f$ is continuous with Lebesgue negligible level sets
	since CMA-ES is invariant to increasing transforma\-tions of the objective function~\cite{auger2016these}.
	%
		Assumption \Cref{F2:scaling-invariant} is central in this analysis since it is required to define an \new{equivalent,} normalized Markov chain via \eqref{eq:normalized-chain-def} below.

		In order to \nnew{go beyond scaling-invariant functions,}\del{relax the assumption,} it might be possible to adopt another approach, considering for instance recent works on the convergence of evolution strategies that prove a drift condition on the state variables of the algorithm and \nnew{hence}\del{manage to prove} the convergence on composites of strongly convex functions with strictly increasing functions (for however so far the (1+1)-ES selection scheme only)~\cite{akimoto2022global,morinaga2024convergence}.
%

		\del{Additionally}\nnew{Furthermore}, the sampling distribution $\pUd$ should satisfy the following assumption 
		that allows in particular to characterize a density for the ranked candidate solutions, see \Cref{l:density-alpha}.
		\begin{assumptionP}
			\label{P1:density} The probability distribution $\pUd$ admits a continuous \del{positive}density $\densityd(\cdot)$ \wrt\ the Lebesgue measure on $\bR^d$ which is positive everywhere on $\bR^d$.
		\end{assumptionP}
%
		This assumption is satisfied by a multivariate standard normal distribution as used in 
		CMA-ES\del{ algorithm}. \done{check where we call CMA-ES without gaussian}
		We have moreover the following assumptions on the stepsize change $\Gamma$. 
		\begin{assumptionG}
			\label{G1:C1} The stepsize change $\Gamma\colon\bR^d\to\bR_{++}$ is a locally Lipschitz map and is differentiable at every nonzero vector of $\bR^d$.
		\end{assumptionG}
		\begin{assumptionG}
			\label{G2:unbounded}  \del{We have}\nnew{Given $c_c$ the cumulation parameter for the path in \eqref{eq:ps}, the}\del{The} function $\Gamma$ satisfies $\liminf\Gamma(p)>(1-c_c)^{-1}$ for $\|p\|$ to $+\infty$.\done{check}
		\end{assumptionG}
		\begin{assumptionG}
			\label{G3:G(0)<1} \del{We have}\new{The function $\Gamma$ satisfies} $\Gamma(0)<1$.
		\end{assumptionG}
		Assumption \Cref{G1:C1} is required to apply the results stated in \Cref{sec:deterministic-control-model} and 
		in particular to ensure that the analyzed process satisfies \nnew{the condition} \Cref{A5:C1-update} \del{from}\nnew{in} \Cref{sec:deterministic-control-model}
		\del{in order }%
		to obtain the irreducibility and aperiodicity of the Markov chain defined in \eqref{eq:normalized-chain-def}.
		\nnew{Assumptions} \Cref{G2:unbounded} and \Cref{G3:G(0)<1} are used in \Cref{p:steadily-attracting-state-for-CMA,p:full-rank-control-matrix-cma}, respectively.
		
		Assumptions \Cref{G1:C1}--\Cref{G3:G(0)<1} are satisfied by both stepsize changes, $\CSA$ and $\CSAd$, as stated in the following lemma.
		\begin{lemma}\label{lem:Gamma-CSA}
			Assume that $c_\sigma\in (0,1]$. Then, the stepsize change functions $\CSA$ and $\CSAd$, defined by \eqref{eq:CSA} and \eqref{eq:CSA-mod} respectively,  satisfy the assumptions \Cref{G1:C1}--\Cref{G3:G(0)<1}.
		\end{lemma}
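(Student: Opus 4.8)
The plan is to verify the three conditions \Cref{G1:C1}--\Cref{G3:G(0)<1} in turn, treating $\CSA$ and $\CSAd$ in parallel and exploiting that both maps are the composition of the smooth exponential with an affine function of $\|p\|$ and of $\|p\|^2$, respectively. Positivity—so that both land in $\bR_{++}$ as required—is immediate, since the exponential is strictly positive.

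For \Cref{G1:C1}, I would use that $p\mapsto\|p\|$ is globally $1$-Lipschitz on $\bR^d$ and $C^\infty$ on $\bR^d\setminus\{0\}$, whereas $p\mapsto\|p\|^2=p^\top p$ is a polynomial and hence $C^\infty$ on all of $\bR^d$. The outer maps $r\mapsto\exp\bigl(\tfrac{c_\sigma}{d_\sigma}(r/\bE\|\pUd\|-1)\bigr)$ and $r\mapsto\exp\bigl(\tfrac{c_\sigma}{2d_\sigma}(r/\bE\|\pUd\|^2-1)\bigr)$ are smooth, so composing a locally Lipschitz (resp.\ smooth) inner map with a smooth outer map shows that $\CSA$ is locally Lipschitz on $\bR^d$ and differentiable (indeed $C^\infty$) on $\bR^d\setminus\{0\}$, while $\CSAd$ is $C^\infty$ everywhere. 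The point worth emphasizing is that $\CSA$ fails to be differentiable exactly at $p=0$, because $\|\cdot\|$ is not; this is precisely why \Cref{G1:C1} only demands differentiability at nonzero vectors, and $\CSAd$ sidesteps the issue by squaring.

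For \Cref{G3:G(0)<1}, I would simply evaluate at $p=0$: the exponents reduce to $-c_\sigma/d_\sigma$ and $-c_\sigma/(2d_\sigma)$, both strictly negative since $c_\sigma>0$ and $d_\sigma>0$, whence $\CSA(0)<1$ and $\CSAd(0)<1$. For \Cref{G2:unbounded}, I would note that as $\|p\|\to+\infty$ the exponent in each case tends to $+\infty$, so $\CSA(p)\to+\infty$ and $\CSAd(p)\to+\infty$; therefore $\liminf_{\|p\|\to\infty}\Gamma(p)=+\infty$, which exceeds the finite threshold $(1-c_c)^{-1}$ for $c_c\in(0,1)$. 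The argument is essentially routine and I do not anticipate a genuine obstacle; the only point requiring care is the behavior at the origin in \Cref{G1:C1}, which forces the two functions to be handled slightly differently, $\CSAd$ being globally smooth while $\CSA$ is merely locally Lipschitz at $0$ and smooth away from it.
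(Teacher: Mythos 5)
Your proof is correct; the paper itself declares this verification ``simple and left to the reader,'' and your argument is exactly the intended routine check (composition of the smooth exponential with the locally Lipschitz map $\|\cdot\|$, resp.\ the smooth map $\|\cdot\|^2$, evaluation at $0$ for \Cref{G3:G(0)<1}, and divergence of the exponent for \Cref{G2:unbounded}). Your remark that \Cref{G2:unbounded} is only meaningful for $c_c\in(0,1)$, where the threshold $(1-c_c)^{-1}$ is finite, is the right reading of that assumption, and your observation that $\CSA$ fails to be differentiable only at the origin correctly identifies why \Cref{G1:C1} is stated with differentiability required only at nonzero vectors.
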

		\begin{proof}
			\del{Straightforward.}\new{The proof is simple and left to the reader.}
		\end{proof}

		\subsection{Proving the stability of a normalized Markov chain leads to linear convergence}\label{sec:def-normalized-chain}
		Before stating our main results, we define a normalized Markov chain underlying the CMA-ES algorithm.
		\nnew{Let $(\Omega,\cF,\bP)$ be a probability space}. 
		\nnew{An event is an element $\mathsf{W}\in\cF$, and the probability of $\mathsf{W}$ is $\bP[\mathsf{W}]$.}
		\nnew{A random variable $U$ valued in a measured space $(\cU,\mathcal{U})$ is defined as a measurable function $U\colon\Omega\to U$},
		\nnew{and for $\mathsf{A}\in\mathcal{U}$, we identify $\bP[U\in\mathsf{A}]$ to $\bP[\{\omega\in\Omega\mid U(\omega)\in\mathsf{A}\}]$}.
		\del{We recall that a}\nnew{A} \del{Markov}\nnew{transition} kernel on a topological state space $\cX$ equipped with its Borelian $\sigma$-field $\cB(\cX)$ is an application $P\colon \cX\times\cB(\cX)\to\bR_+$ such that, for every $x\in\cX$, $\mathsf A\in\cB(\cX)\mapsto P(x,\mathsf A)$ is a probability measure, 
			and for every $\mathsf A\in\cB(\cX)$, $x\in\cX\mapsto P(x,\mathsf A)$ is a measurable map.
		Then,
		 a (time-homogeneous) Markov chain with transition kernel $P$ \new{on $(\cX,\cB(\cX))$} and initial probability distribution $\nu$ on $\cB(\cX)$ is a sequence of random variables $\Phi=\{\phi_t\}_{t\in\bN}$ \new{valued in $\cX$,} satisfying for every $t\in\bN$
		\begin{multline}
			\bP\left[  (\phi_0,\dots,\phi_t)\in \mathsf A_0\times\dots\times \mathsf A_t  \mid \phi_0\sim \nu \right] \\ 
			= \int_{\cX^{t+1}} \1\left\{ (x_0,\dots,x_{t-1})\in \mathsf A_0\times\dots\times \mathsf A_{t-1} \right\} 
			P(x_{t-1},\mathsf A_t)P(x_{t-2},\mathrm{d}x_{t-1})\dots P(x_0,\mathrm{d}x_1)\nu(\mathrm{d}x_0)  \nonumber 
		\end{multline}%
		\nnew{where for every probability measure $\nu$ on $\cB(\cX)$, we have equipped $(\Omega,\cF)$ with a probability measure $\bP[\cdot|\phi_0\sim\nu]$.}
		\del{where, for two events $\omega$ and $\omega'$, $\bP[\omega]$ denotes the probability of $\omega$ and $\bP[\omega\mid\omega']$ the probability of $\omega$ conditionally to $\omega'$.}%
			\del{When $\nu=\delta_x$ the Dirac distribution in a point $x\in\cX$, w}%
			\nnew{W}e \new{define the $t$-step transition kernel} \nnew{by} $P^t(x,\mathsf A)=\bP[\phi_t\in \mathsf A| \phi_0\sim\delta_x]$ for every $t\geqslant0$ and $\mathsf A\in\cB(\cX)$.

		The sequence $\{(\X_t,p_t^\sigma,p_t^c,\sigma_t,\covmat_t)\}_{t\in\bN}$ introduced in \Cref{sec:algo-cmaes} defines a time-homogeneous Markov chain on the state space $\bR^{3d}\times\bR_{++}\times\Sdpp$. This is immediate from the observation that the definition of $(\X_{t+1},p_{t+1}^\sigma,p_{t+1}^c,\sigma_{t+1},\covmat_{t+1})$ depends only on the previous state $(\X_t,p_t^\sigma,p_t^c,\sigma_t,\covmat_t)$ and the independent random input  $U^1_{t+1},\dots,U^\lambda_{t+1}$. 		
		However, when the mean \new{converges}\del{tends} to the optimum of the function,\del{ we expect} the stepsize $\sigma_t$, the covariance matrix $\covmat_t$ and the path $p_t^c$\del{ to tend} \new{converge} to $0$.\del{ too.}
		\new{Therefore, this Markov chain is not Harris recurrent \nnew{(it does}\del{would} not revisit every \nnew{neighborhood} of any state infinitely many times)}.
		\new{Yet, as illustrated later in \eqref{eq:log-progress-cma} and \Cref{p:linear-convergence}, our methodology to prove linear convergence \cite{auger2016linear} relies \nnew{on}\del{in particular on applying} a Law of Large Numbers which motivates to have a positive Harris recurrent Markov chain (with a stationary probability distribution) and, more generally, \nnew{a chain} stable enough to apply}
		\del{and we }\del{would not be able to}\del{ cannot apply}%
		\nnew{an} ergodic theorem\del{s}~\cite[Theorems 13.0.1]{meyn2012markov} \nnew{and satisfy a Law of Large Numbers~\cite[Theorem 17.0.1]{meyn2012markov}.}
		\del{ to confirm a stationary distribution.}%
			\del{This is \del{why we}\new{a motivation to}}\del{For this reason,}\nnew{Therefore, we} define a normalized process, \new{candidate to have a stationary distribution}, underlying the CMA-ES algorithm.
		Consider $R\colon\Sdpp\to\bR_{++}$  a normalization function
		\new{which is}\del{, for which we assume the following }
		\begin{assumptionR}
			\label{R1:homogeneous}\del{ the function $R$ is} (positively) homogeneous \new{with degree 1}, i.e., for every $\mathbf A\in\Sdpp$ and $\rho >0, R(\rho \mathbf A) = \rho R(\mathbf A)$,
		\end{assumptionR}	
		\begin{assumptionR}
			\label{R2:C1}\del{ the function $R$ is} locally Lipschitz continuous,\del{ and it is} \done{continuously locally Lipschitz sounds strange - rather locally Lipschitz continuous ?}
		\end{assumptionR}	
		\begin{assumptionR}
			\label{R3:differentiable} differentiable on a nonempty open subset of $\Sdpp$.
		\end{assumptionR}
%
		Assumption \Cref{R1:homogeneous} is required to define a normalized Markov chain, see \eqref{eq:normalized-chain-def} below, as proven in \Cref{l:homogeneous-MC}. Assumption \Cref{R2:C1} is used to prove irreducibility and aperiodicity of the normalized chain, 
			\del{based on}\new{notably for the verification of	} condition \Cref{A5:C1-update} introduced in \Cref{sec:deterministic-control-model}.
		Later, \Cref{p:full-rank-control-matrix-cma} \new{uses}\del{relies on} \Cref{R3:differentiable} to prove a maximal rank condition.
		\del{ of the Jacobian of the update function of \new{the}\del{a} normalized chain.}%

		We give examples of normalization functions that satisfy these assumptions.

		\newcommand{\ith}[1][i]{$#1$-$\mathrm{th}$}
		\begin{proposition}\label{p:examples-R}
			The \ith[d] root of the determinant, $\det(\cdot)^{1/d}$,
			and the \ith\ largest eigenvalue, $\lambda_i(\cdot)$, for $i\in\{1,\dots,d\}$ counted with multiplicity,
			are functions defined on $\Sdpp$ that satisfy \Cref{R1:homogeneous}\nnew{--}\del{, \Cref{R2:C1} and }\Cref{R3:differentiable}.
		\end{proposition}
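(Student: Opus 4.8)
The plan is to handle the two families separately and to verify \Cref{R1:homogeneous}--\Cref{R3:differentiable} in turn, the only subtle point being where to locate the open set of differentiability for the eigenvalues.

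For $R=\det(\cdot)^{1/d}$, homogeneity of degree $1$ (\Cref{R1:homogeneous}) is immediate from $\det(\rho\mathbf A)=\rho^d\det(\mathbf A)$, giving $\det(\rho\mathbf A)^{1/d}=\rho\,\det(\mathbf A)^{1/d}$ for $\rho>0$. For the two regularity properties I would observe that $\det\colon\Sd\to\bR$ is a polynomial in the matrix entries, hence $C^\infty$, that it is strictly positive on the open set $\Sdpp$, and that $t\mapsto t^{1/d}$ is $C^\infty$ on $(0,+\infty)$; by composition $R$ is $C^\infty$ on all of $\Sdpp$. Smoothness yields at once local Lipschitz continuity (\Cref{R2:C1}) and differentiability (\Cref{R3:differentiable}), the latter in fact on the whole of $\Sdpp$. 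So for the determinant root nothing beyond these remarks is needed.

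For $R=\lambda_i$, the $i$-th largest eigenvalue, homogeneity is again direct: for $\rho>0$ the spectrum of $\rho\mathbf A$ is $\rho$ times that of $\mathbf A$ with the ordering preserved, so $\lambda_i(\rho\mathbf A)=\rho\,\lambda_i(\mathbf A)$. For \Cref{R2:C1} I would invoke Weyl's perturbation inequality for symmetric matrices, $|\lambda_i(\mathbf A)-\lambda_i(\mathbf B)|\le\|\mathbf A-\mathbf B\|$ in the operator norm (equivalently, the Courant--Fischer min--max characterization), which shows each $\lambda_i$ is globally $1$-Lipschitz, hence locally Lipschitz.

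The delicate point is \Cref{R3:differentiable}, since $\lambda_i$ typically fails to be differentiable where eigenvalues coalesce, and this is the step I expect to require care. The plan is to exhibit the set $\mathcal O\subset\Sdpp$ of matrices with $d$ pairwise distinct eigenvalues and to show $\lambda_i$ is differentiable on $\mathcal O$. This $\mathcal O$ is nonempty, as any diagonal matrix with distinct positive entries lies in it, and open, because it is the set where the discriminant of the characteristic polynomial $\chi_{\mathbf A}(\lambda)=\det(\mathbf A-\lambda\Id)$ --- itself a polynomial in the entries of $\mathbf A$ --- does not vanish. At any $\mathbf A\in\mathcal O$, $\lambda_i(\mathbf A)$ is a simple root of $\chi_{\mathbf A}$, so $\partial_\lambda\chi_{\mathbf A}\bigl(\lambda_i(\mathbf A)\bigr)\ne0$; applying the implicit function theorem to $\det(\mathbf A-\lambda\Id)=0$ shows that $\mathbf A\mapsto\lambda_i(\mathbf A)$ is real-analytic, hence differentiable, on a neighborhood of $\mathbf A$, with differential $\mathcal D\lambda_i(\mathbf A)[\mathbf H]=u_i^\top\mathbf H\,u_i$ for $u_i$ a unit eigenvector associated with $\lambda_i(\mathbf A)$. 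This establishes \Cref{R3:differentiable}, while the determinant case and the homogeneity and Lipschitz claims remain routine.
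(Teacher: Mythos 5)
Your proof is correct and follows essentially the same route as the paper: homogeneity is checked directly, the determinant case is handled via polynomiality and composition, local Lipschitz continuity of $\lambda_i$ comes from Weyl's inequality, and differentiability of $\lambda_i$ is located on the open set of matrices with simple eigenvalues. The only difference is that you derive the last point from the implicit function theorem applied to the characteristic polynomial, where the paper simply cites a standard result (Serre); your version is slightly more self-contained but proves the same fact.
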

		\begin{proof}
The proof of the property \Cref{R1:homogeneous} is immediate from the linearity of the determinant as a function of the columns of the matrix and the definition of an eigenvalue. 
	For the properties \Cref{R2:C1} \nnew{and \Cref{R3:differentiable}}, we know that the determinant of a matrix is a polynomial function of the coefficients of the matrix~\cite[Section 0.3]{horn2013matrix}, hence it is infinitely differentiable and in particular is locally Lipschitz~\cite[Proposition and Corollary 2.2.1]{clarke1990optimization}.
	For the eigenvalues, $\lambda_i(\cdot)$ is locally Lipschitz on $\Sdpp$, as a consequence of Weyl's theorem~\cite[Corollary 4.3.15]{horn2013matrix}.
	Besides, $\lambda_i(\cdot)$ is infinitely differentiable on a neighborhood of any symmetric matrix with eigenvalues that have simple multiplicity~\cite[Theorem 5.3]{serre2010matrices}.
	\end{proof}
	
		Given \new{the CMA-ES Markov chain $\{(\X_t,p_t^\sigma,p_t^c,\sigma_t,\covmat_t)\}_{t\in\bN}$ defined in \Cref{sec:algo-cmaes} and} a normali\-zation function $R$, we\del{We} define the \textit{normalized \del{Markov }chain} $\Phi=\{(\Z_t,p_t,q_t,\ncovmat_t,r_t)\}_{t\geqslant1}$ as follows.\footnote{%
			\new{The definition of $q_t$ in \eqref{eq:normalized-chain-def} suggests transforming $p_t^c$ in \eqref{eq:pc} like $\covmat_{t}^{1/2}\covmat_{t-1}^{-1/2}p_{t}^c$ to avoid the time index $t-1$ in \eqref{eq:normalized-chain-def}.
			Then, $p^c_{t+1}$ would become equal to $\covmat_t^{1/2} p_{t+1}^\sigma$.} \nnew{We can prove unbiasedness for $p^\sigma$ \cite{hansen2014principled} and affine invariance \del{for $p^c$}\new{with $p^c$ and $c_\sigma=1$} \cite{auger2016these}\done{put reference}.} \niko{Do we have an equation that expresses the bias of $p_c$? Can $p_c$ become unbiased when we normalize the covariance matrix appropriately?}
		}
		For all $t \geq 1$, we set 
		\begin{equation}
			\label{eq:normalized-chain-def}
			\begin{array}{l}
				\Z_t = \frac{\X_t - x^*}{\sigma_t\sqrt{R(\covmat_t)} } ,\,
				 p_t = p_t^\sigma ,\,
				q_t = \frac{p_t^c}{\sqrt{R(\covmat_{t-1})}} ,\,
				{\ncovmat}_t = \frac{\covmat_t}{R(\covmat_{t})} ,\,
				r_t =\frac{R(\covmat_t)}{R(\covmat_{t-1})} \enspace.
			\end{array}
		\end{equation}
		\del{\del{Remark that s}\nnew{S}ince the definition of $q_t$ and $r_t$ requires the covariance matrix at time $t-1$, the time index of the chain $\Phi$ starts at $t=1$.}
		We prove below that when the objective function is scaling-invariant,	the normalized chain defined by \eqref{eq:normalized-chain-def} is a time-homogeneous Markov chain \nnew{that can be defined independently of the original Markov chain $\{(\X_t,p_t^\sigma,p_t^c,\sigma_t,\covmat_t)\}_{t\in\bN}$}. \del{Yet, w}We establish first that on scaling-invariant functions,\del{ \wrt\ zero}
		the permutation sorting the candidate solutions $\X_t+\sigma_t\sqrt{\covmat_t}U_{t+1}^i$ also sorts the \new{vectors} $\Z_t+\sqrt{\ncovmat_t}U_{t+1}^i$, for $i=1,\dots,\lambda$.

		\begin{lemma}
			\label{p:ordering-normalized-offsprings}
			Let $t\geqslant1$ and suppose that the objective function $f$ satisfies \Cref{F2:scaling-invariant}. 
			Let $s_{t+1}\in\mathfrak{S}_\lambda$ be a (random) permutation that sorts 
			the indices $i=1,\dots,\lambda$ \wrt\ the $f$-values of $\X_t+\sigma_t\sqrt{\covmat_t}U_{t+1}^i$.
			Then, $s_{t+1}$ \nnew{also} sorts\del{ increasingly} 
			the indices $i=1,\dots,\lambda$ \wrt\ the \nnew{$f$}\del{$f(x^*+\cdot)$}-values of $\nnew{x^*} + \Z_t+\sqrt{\ncovmat_t}U_{t+1}^i$\del{, where $\nnew{\fx}=f(\cdot+x^*)$}.
			Moreover, we can ensure the uniqueness of $s_{t+1}$ by imposing a tie-break (cf.\ \Cref{sec:algo-cmaes}).
			 \del{If moreover, \new{the function} $f$ satisfies \Cref{F1:negligible-level-sets} and the sampling \new{distribution }$\pUd$ satisfies \Cref{P1:density}, then such a permutation is (almost surely) unique.}%
		\end{lemma}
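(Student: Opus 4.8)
The plan is to reduce the statement to a single application of scaling invariance \eqref{eq:SI} after recentering the candidate solutions at $x^*$ and factoring out a common positive scalar. First I would record the algebraic identity obtained from the definitions in \eqref{eq:normalized-chain-def}: since $\Z_t=(\X_t-x^*)/(\sigma_t\sqrt{R(\covmat_t)})$ we have $\X_t=x^*+\sigma_t\sqrt{R(\covmat_t)}\,\Z_t$, and since $\ncovmat_t=\covmat_t/R(\covmat_t)$ with $R(\covmat_t)>0$ a scalar we have $\sqrt{\covmat_t}=\sqrt{R(\covmat_t)}\,\sqrt{\ncovmat_t}$. Substituting into \eqref{eq:candidates-cma} gives
\begin{equation*}
	x_{t+1}^i=\X_t+\sigma_t\sqrt{\covmat_t}\,U_{t+1}^i=x^*+\rho_t\left(\Z_t+\sqrt{\ncovmat_t}\,U_{t+1}^i\right),\qquad \rho_t\coloneqq\sigma_t\sqrt{R(\covmat_t)}>0,
\end{equation*}
where $\rho_t>0$ because $\sigma_t>0$ and $R$ takes values in $\bR_{++}$.

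Writing $w^i\coloneqq\Z_t+\sqrt{\ncovmat_t}\,U_{t+1}^i$, the identity reads $x_{t+1}^i=x^*+\rho_t w^i$. Applying \Cref{F2:scaling-invariant} with the substitution $x=w^i$, $y=w^j$, $\rho=\rho_t$ yields, for every pair of indices $i,j$,
\begin{equation*}
	f(x^*+w^i)\leqslant f(x^*+w^j)\iff f(x^*+\rho_t w^i)\leqslant f(x^*+\rho_t w^j)=f(x_{t+1}^j).
\end{equation*}
Since $f(x^*+\rho_t w^i)=f(x_{t+1}^i)$, this says that the total preorder induced on the indices by the $f$-values of the candidate solutions coincides with the one induced by the $f$-values of the normalized points $x^*+w^i$. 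As $s_{t+1}$ sorts the indices according to the former by construction \eqref{eq:permutation-def}, it sorts them according to the latter, which is exactly the claimed property.

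For the uniqueness part I would apply the equivalence above in both directions to obtain $f(x_{t+1}^i)=f(x_{t+1}^j)\iff f(x^*+w^i)=f(x^*+w^j)$, so ties occur at identical positions in the two lists. The tie-break recalled in \Cref{sec:algo-cmaes} resolves equalities purely through the index order $i<j$, independently of the actual $f$-values, so it returns the same permutation for both lists; hence imposing it makes $s_{t+1}$ unique and simultaneously valid for both orderings.

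I do not anticipate a genuine obstacle: the lemma is essentially a change of variables tailored to the fact that \eqref{eq:SI} is stated around the shift point $x^*$, so that the common scaling $\rho_t$ can be factored out exactly. The only points requiring care are ensuring $\rho_t>0$ so that \Cref{F2:scaling-invariant} is applicable, and verifying that ties are preserved so that the tie-break produces a single well-defined permutation consistent with both orderings.
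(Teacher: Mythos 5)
Your proof is correct and follows essentially the same route as the paper's: both rewrite $x_{t+1}^i = x^* + \rho_t(\Z_t+\sqrt{\ncovmat_t}U_{t+1}^i)$ with $\rho_t=\sigma_t\sqrt{R(\covmat_t)}>0$ and then invoke the scaling-invariance equivalence \eqref{eq:SI}. Your treatment of the tie-break is a slightly more explicit write-up of what the paper leaves implicit, but the argument is the same.
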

		\begin{proof}
			Let $i=1,\dots,\lambda$.\del{Then,} \new{By definition of $\Z_t$ and $\ncovmat_t$, }we obtain
			$$
			f\left(x^* +\Z_t+\sqrt{\ncovmat_t}U_{t+1}^{i}\right) = f\left(x^* + R(\covmat_t)^{-1/2}\sigma_t^{-1} \times \left[ \X_t - x^* + \sigma_t\sqrt{\covmat_t} U_{t+1}^i \right] \right).
			$$\niko{Note to understand this: from $\X_t=x^* + \sigma R(C) z_t$ we get
				$f(\X + \sigma \sqrt{C} U)
				= f(x^* + \sigma R(C) z + \sigma \sqrt{C} U)
				= f(x^* + \sigma R(C) (z + \sqrt{\Sigma} U))
				\stackrel{\text{rank}}{\sim} f(x^* + z + \sqrt{\Sigma} U)$}
			We conclude by using the definition of a scaling-invariant function \eqref{eq:SI}. 			
		\end{proof}
		\del{Now,}\new{From the previous lemma,} we deduce \nnew{in \Cref{t:homogeneous-MC} below} that the normalized chain defined in \eqref{eq:normalized-chain-def} is a time-homogeneous Markov chain\del{.} 
		\del{In addition, we prove that }%
		\nnew{that}\del{ it} can be defined independently of the original Markov chain. Indeed, given\del{if the normalization function} $R$ \del{a normalization function }satisfying \Cref{R1:homogeneous}, denote with a slight abuse of notation \nnew{(since we use the same notation as for \eqref{eq:normalized-chain-def} with however a different time index)}
			the \nnew{time-}homogeneous Markov chain 
			$\Phi=\{\phi_t\}_{t\geqslant0}=\{(\Z_t,p_{t},q_{t}, \ncovmat_t,r_t)\}_{t\geqslant 0}$ 
			defined via \new{$\phi_0 \in  \cY=(\bR^d)^3\times R^{-1}(\{1\})\times\bR_{++}$} 
			(where $R^{-1}(\{1\}) = \{ \ncovmat \in \Sdpp  : R(\ncovmat)=1\}$) 
			and the following recursion\niko{I aligned the = symbols in the equation (instead of the leftmost symbol)}
	\begin{equation}
				\begin{aligned}
					\Z_{t+1}& = { \cfrac{\Z_t + c_m \sqrt{\ncovmat_t} \w_m^\top \Utt  }{\sqrt{r_{t+1}}\,\Gamma(p_{t+1})} =} \cfrac{\Fx(\Z_t,\sqrt{\ncovmat_t} \w_m^\top \Utt )}{\sqrt{r_{t+1}}\,\Gamma(p_{t+1})  }  \\
					p_{t+1} &= \nnew{(1-c_\sigma) p_t + \sqrt{c_\sigma(2-c_\sigma)\mueff} \w_m^\top \Utt}  = \Fpsigma(p_t, \w_m^\top \Utt )   \\
					q_{t+1} &= \Fpc(r_t^{-1/2}q_t, \sqrt{\ncovmat_t} \w^\top_m \Utt )  \\
					\ncovmat_{t+1} &= r_{t+1}^{-1}{\Fc\!\!\left(\ncovmat_t,q_{t+1},
						\nnew{\sqrt{\ncovmat_t}}\sum_{i=1}^\mu \wic \left[  U_{t+1}^{s_{t+1}(i)} \right]
						\left[ U_{t+1}^{s_{t+1}(i)} \right]^\top\nnew{\sqrt{\ncovmat_t}}\right)}  \\
					r_{t+1} &= R\circ \Fc \!\!\left(\ncovmat_t,q_{t+1},
					\nnew{\sqrt{\ncovmat_t}}
					\sum_{i=1}^\mu \wic 
					\left[  U_{t+1}^{s_{t+1}(i)} \right]\left[ U_{t+1}^{s_{t+1}(i)} \right] ^\top
					\nnew{\sqrt{\ncovmat_t}}\right)   
			\end{aligned}
			\label{eq:normalized-chain-updates}
		\end{equation}
\del{where}\nnew{with} $\mathbf{U}=\{U_{t+1}\}_{t\in\bN}$ \del{is }an i.i.d.\ process independent of $\phi_0$ 
with $U_1=(U_1^1,\dots,U_1^\lambda)\sim(\pUk{d})^{\otimes\lambda}$,  
\nnew{and} $s_{t+1}$ \del{is }the (almost surely unique) permutation that sorts the $f(\Z_t+\sqrt{\ncovmat_t}U_{t+1}^i)$, $i=1,\dots,\lambda$.
\nnew{Moreover, $U_{t+1}^{s_{t+1}}$ denotes the collection of vectors $(U_{t+1}^{s_{t+1}(1)},\dots,U_{t+1}^{s_{t+1}(\lambda)})$.}
\del{ and the notation $\Utt = (U_{t+1}^{s_{t+1}(1)}, \ldots,U_{t+1}^{s_{t+1}(\lambda)} )  \in (\R^d)^\lambda$ has been used. }%
\new{Remark that in \eqref{eq:normalized-chain-updates}, the update of the covariance matrix $\ncovmat_{t+1}$ writes 
$$
\ncovmat_{t+1} = \frac{\tilde{\ncovmat}_{t+1}}{R(\tilde{\ncovmat}_{t+1})}
$$
where $\tilde{\ncovmat}_{t+1}$ is the covariance matrix to which we apply the rank-one and rank-mu updates, i.e.,
\begin{multline}\label{tilda-matrix}
		\tilde{\ncovmat}_{t+1} \coloneqq 
		\Fc\left(\ncovmat_t,q_{t+1},
		{\sqrt{\ncovmat_t}}\sum_{i=1}^\mu 
		\wic \left[  U_{t+1}^{s_{t+1}(i)} \right]\left[ U_{t+1}^{s_{t+1}(i)} \right]^\top
		{\sqrt{\ncovmat_t}}\right) \\ = (1-c_1-c_\mu) \ncovmat_t + c_1 q_{t+1}(q_{t+1})^\top+ c_\mu {\sqrt{\ncovmat_t}}\sum_{i=1}^\mu 
		\wic \left[  U_{t+1}^{s_{t+1}(i)} \right]\left[ U_{t+1}^{s_{t+1}(i)} \right]^\top
		{\sqrt{\ncovmat_t}} \enspace,
	\end{multline}
	where $\Fc$ is defined via \eqref{eq:FC}. Similarly $r_{t+1}$ can be expressed using $\tilde{\ncovmat}_{t+1}$ as
 $$
					r_{t+1} = R(\tilde{\ncovmat}_{t+1}) \enspace.  
$$
}
The update of $\phi_t$ \del{defined }in \eqref{eq:normalized-chain-updates} defines a function $F_\Phi$ such that 
\begin{equation}\label{eq:Fphi}
\phi_{t+1} = F_\Phi(\phi_t,U_{t+1}^{s_{t+1}}) \enspace.
\end{equation}
\del{We show\del{ in the next proposition} that the normalized chain \eqref{eq:normalized-chain-def} obeys \eqref{eq:Fphi} and \del{can thus be}\new{is thus} defined\done{}\niko{isn't it rather "is defined" or "can be written"?} independently of the \del{original CMA-ES Markov chains}\new{Markov kernel underlying the update of CMA-ES}.\done{I changed the formulation (the plural is because we have a different MC for every possible initialization)}\niko{why plural, we only have one original (parametrized) chain, no?} }
\del{This is summarized by the next proposition.}
We prove in the next proposition that if $f$ is scaling-invariant, the normalized chain defined in \eqref{eq:normalized-chain-def} can be defined independently of the original CMA-ES chain via the recursion \eqref{eq:Fphi} provided it is initialized properly. 
\nnew{While the normalized process \eqref{eq:normalized-chain-def} imposes $t\geq1$, the next proposition defines this process via the recursion \eqref{eq:normalized-chain-updates} and thus allows to start with any time index.}
\del{Remark that the definition of the normalized process \eqref{eq:normalized-chain-def} imposes\del{ us} to start indexing it with $t=1$. However, the next proposition allows a definition of this process via the recursion \eqref{eq:normalized-chain-updates} and thus to start with any time index.}
\done{clarify the $t\geq 1$ and $t \geq 0$}
		\begin{proposition}\armand{I removed the bar on $\phi$}
			\label{t:homogeneous-MC}\label{l:homogeneous-MC}
			Suppose that the objective function $f$ satisfies \del{\Cref{F1:negligible-level-sets}-}\Cref{F2:scaling-invariant} and 
			that the normalization function $R$ satisfies \Cref{R1:homogeneous}.\del{, and that the sampling \new{distribution} $\pUd$ is such that \Cref{P1:density} holds.}
			Let $\{(\X_t,p_t^\sigma,p_t^c,\covmat_t,\sigma_t)\}_{t\in\bN}$ be the chain associated to CMA-ES defined in \Cref{sec:algo-cmaes} 
			and $\new{\del{\Bar}{\Phi}=\{\del{\bar} \phi_t\}}_{t\geqslant1}=\{(\Z_t,p_{t},q_{t}, \ncovmat_t,r_t)\}_{t\geqslant1}$ be 
			the normalized process defined \new{via}\del{  in} \eqref{eq:normalized-chain-def} \nnew{for $t\geqslant1$}. 
				Then
			$\new{\del{\bar} \Phi}$ is a time-homogeneous Markov chain valued in the state space $\cY=(\bR^d)^3\times R^{-1}(\{1\})\times\bR_{++}$ that satisfies 
			$$
				\del{\bar} \phi_1 = 
				\left( \cfrac{m_1}{\sigma_1 \sqrt{R(\covmat_1)}}, p_1^\sigma, \cfrac{p_1^c}{\sqrt{R(\covmat_0)}},  \cfrac{\covmat_1}{R(\covmat_1)}, \cfrac{R(\covmat_1)}{R(\covmat_0)} \right)  
			$$
			 \nnew{and for $t \geqslant 1$ we have}
\new{			
			$$
				\del{\bar}{\phi}_{t+1} = F_\Phi({\del{\bar} \phi_t}, U_{t+1}^{s_{t+1}})
			$$
			where $F_\Phi$ is the function in \eqref{eq:Fphi} defined via the equations \eqref{eq:normalized-chain-updates}, }%
		$s_{t+1}\in\mathfrak{S}_\lambda$ is \del{the (almost surely unique)}a permutation that 
			sorts\footnote{\new{We always sort increasing and,} as explained in \Cref{sec:algo-cmaes}, in case of a tie between the $f$-values of the candidate solutions of indices $i$ and $j$ with $i<j$, we impose $s_{t+1}^{-1}(i)<s_{t+1}^{-1}(j)$ to ensure the uniqueness of the permutation $s_{t+1}$.\niko{Should better move to the notations section?}} 
			\del{increasingly }the $f(x^*+\Z_t+\sqrt{\ncovmat_t}U_{t+1}^i)$, $i=1,\dots,\lambda$,
			and $\mathbf{U}=\{U_{t+1}\}_{t\geqslant1}$ is \del{a}\new{the} i.i.d.\ process 
			\new{used to define $\{(\X_t,p_t^\sigma,p_t^c,\covmat_t,\sigma_t)\}_{t\in\bN}$\del{ minus the first term $U_1$}}%
				\done{there is no $U_0$ for the initial chain as well}\del{(the same as defined in \Cref{sec:algo-cmaes})}, 
			\new{thus} independent 
			of \del{$\phi_0$}\nnew{$\del{\bar} \phi_1$.}%
			\del{with $U_2=(U_2^1,\dots,U^\lambda_2)\sim(\pUk{d})^{\otimes\lambda}$
			\del{ and $\Fx$, $\Fpsigma$, $\Fpc$ and $\Fc$ are the update functions defined in \eqref{eq:Fm}, \eqref{eq:p} and \eqref{eq:FC}}.
		}%
	\end{proposition}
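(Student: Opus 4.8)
The plan is to show that the purely deterministic change of variables in \eqref{eq:normalized-chain-def} turns the one-step recursion of the original CMA-ES chain into a \emph{closed} recursion of the form \eqref{eq:Fphi}, namely one whose right-hand side depends only on the current normalized state $\phi_t$ and on the fresh i.i.d.\ input $U_{t+1}$. Since $\{(\X_t,p_t^\sigma,p_t^c,\sigma_t,\covmat_t)\}_{t\in\bN}$ is already a time-homogeneous Markov chain, once such a closed recursion $\phi_{t+1}=F_\Phi(\phi_t,\Utt)$ is established the time-homogeneous Markov property of $\Phi$ follows from the standard fact that a process driven by a fixed measurable map of its current state and an independent i.i.d.\ sequence is a time-homogeneous Markov chain; here the needed independence holds because $\phi_t$ is a deterministic function of $U_1,\dots,U_t$ and the initialization, while $U_{t+1}$ is independent of these.

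The two ingredients that make the recursion close up are the ordering property of \Cref{p:ordering-normalized-offsprings} and the homogeneity \Cref{R1:homogeneous} of $R$. First I would invoke \Cref{p:ordering-normalized-offsprings}, which uses \Cref{F2:scaling-invariant}, to argue that the selection permutation $s_{t+1}$---a priori computed from the original candidate solutions $\X_t+\sigma_t\sqrt{\covmat_t}U_{t+1}^i$---coincides with the permutation sorting the $f(x^*+\Z_t+\sqrt{\ncovmat_t}U_{t+1}^i)$, and is therefore a measurable function of $(\Z_t,\ncovmat_t)$ and $U_{t+1}$ alone. I would also record that the state space is the right one: by \Cref{R1:homogeneous}, $R(\ncovmat_t)=R(\covmat_t/R(\covmat_t))=1$, so $\ncovmat_t\in R^{-1}(\{1\})$ and $\phi_t\in\cY=(\bR^d)^3\times R^{-1}(\{1\})\times\bR_{++}$.

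The bulk of the work is the component-wise substitution, carried out with the homogeneity relations $\covmat_t=R(\covmat_t)\ncovmat_t$ and $\sqrt{\covmat_t}=\sqrt{R(\covmat_t)}\sqrt{\ncovmat_t}$, together with the time-shift identity $R(\covmat_{t-1})=R(\covmat_t)/r_t$ coming from the definition of $r_t$, which rewrites the lagged path intrinsically as $p_t^c=\sqrt{R(\covmat_t)}\,r_t^{-1/2}q_t$. Substituting these into \eqref{eq:ps}, \eqref{eq:pc}, \eqref{eq:m}, \eqref{eq:c-update} and \eqref{eq:stepsize-update}, a common factor of $R(\covmat_t)$ (or its square root) factors out of each expression and cancels against the normalizing denominators. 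For the covariance this yields $\covmat_{t+1}=R(\covmat_t)\,\tilde{\ncovmat}_{t+1}$ with $\tilde{\ncovmat}_{t+1}$ as in \eqref{tilda-matrix}; applying \Cref{R1:homogeneous} once more gives $r_{t+1}=R(\covmat_{t+1})/R(\covmat_t)=R(\tilde{\ncovmat}_{t+1})$ and $\ncovmat_{t+1}=\tilde{\ncovmat}_{t+1}/R(\tilde{\ncovmat}_{t+1})$, matching \eqref{eq:normalized-chain-updates}. The lines for $p_{t+1}$, $q_{t+1}$ and $\Z_{t+1}$ follow the same pattern, with the $\Z$-line additionally using $\sigma_{t+1}=\sigma_t\Gamma(p_{t+1})$ and $\sqrt{R(\covmat_{t+1})}=\sqrt{R(\covmat_t)}\sqrt{r_{t+1}}$ to cancel both $\sigma_t$ and $\sqrt{R(\covmat_t)}$.

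Having verified each line, I would conclude that $\phi_{t+1}=F_\Phi(\phi_t,\Utt)$ with $F_\Phi$ measurable (it is built from $R$, $\Gamma$, the matrix square root and arithmetic, all measurable) and $\Utt$ a measurable function of $(\phi_t,U_{t+1})$; the stated value of $\phi_1$ is obtained by reading off \eqref{eq:normalized-chain-def} at $t=1$. The only delicate points are bookkeeping ones: tracking the homogeneity factors across the one-step lag built into $q_t$ and $r_t$---it is precisely $R(\covmat_{t-1})=R(\covmat_t)/r_t$ that lets the lagged $p_t^c$ be expressed through the current state---and ensuring the selection permutation is genuinely a function of the normalized state, which is exactly the content of \Cref{p:ordering-normalized-offsprings}. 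I expect the main obstacle to be organizing these cancellations cleanly rather than any single hard estimate.
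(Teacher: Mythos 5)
Your proposal is correct and follows essentially the same route as the paper's proof: invoke \Cref{p:ordering-normalized-offsprings} to make the selection permutation a function of the normalized state, then verify \eqref{eq:normalized-chain-updates} component by component using the homogeneity \Cref{R1:homogeneous} to factor out $R(\covmat_t)$, with the same intermediate matrix $\tilde{\ncovmat}_{t+1}$ and the same identity $R(\covmat_{t-1})=R(\covmat_t)/r_t$ handling the lagged path. Your added remarks on why a closed recursion driven by an i.i.d.\ input yields the time-homogeneous Markov property, and on checking $R(\ncovmat_t)=1$ so the state lies in $\cY$, are points the paper leaves implicit but introduce no deviation in method.
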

\begin{proof}
By \Cref{p:ordering-normalized-offsprings}, 
	it is sufficient to show that \eqref{eq:normalized-chain-updates} holds 
	for every $t\geqslant1$ in order to prove 
	that $\Phi$ is a time-homogeneous Markov chain. 
Let $t\geqslant1$, and consider the matrix $\tilde{\ncovmat}_{t+1}$ defined in \eqref{tilda-matrix}.
Since $\Fc$ is homogeneous \wrt\ its first variable,
 positively homogeneous of degree 2 \wrt\ the second variable,
 using \eqref{eq:normalized-chain-def} 
 and the definition of $\covmat_{t+1}$ in \eqref{eq:c-update} we find
 \del{note that, by \eqref{eq:normalized-chain-def}, }
 $$
 	\tilde{\ncovmat}_{t+1} 
 	= R(\covmat_t)^{-1}\covmat_{t+1} \enspace.
 $$
By \new{the property }\Cref{R1:homogeneous} \del{we have}\new{applied to the previous equation we obtain}
	$
		R(\tilde{\ncovmat}_{t+1}) = R(\covmat_t)^{-1} R(\covmat_{t+1})=r_{t+1}
	$. 
Furthermore, \del{we have that}\new{the following holds}
\begin{align*}
	\Z_{t+1} = &  R(\covmat_{t+1})^{-1/2} \sigma_{t+1}^{-1} \times (\X_{t+1}\nnew{{}- x^*)}\\
	= &  r_{t+1}^{-1/2} R(\covmat_t)^{-1/2} \sigma_t^{-1} \Gamma(p_{t+1}^\sigma)^{-1} \times \left[ \X_t \nnew{{}- x^*} + c_m \sigma_t\sqrt{\covmat_t}\sum_{i=1}^\mu \wi U_{t+1}^{s_{t+1}(i)}\right] \\
	= &  r_{t+1}^{-1/2}\Gamma(p_{t+1})^{-1} \times \left[ \Z_t + c_m \sqrt{\ncovmat_t} \sum_{i=1}^\mu \wi U_{t+1}^{s_{t+1}(i)} \right] \new{ = \frac{\Fx(\Z_t,\sqrt{\ncovmat_t} \w_m^\top \Utt )}{\sqrt{r_{t+1}}\Gamma(p_{t+1})  }} \enspace,
\end{align*}
\nnew{where $\Fx$ is defined via \eqref{eq:Fm}.}
Moreover, 
\begin{align*}
	\ncovmat_{t+1} = & ~ R(\covmat_{t+1})^{-1} \covmat_{t+1} \\
	= &~R(\covmat_{t+1})^{-1} \left[ (1-c_1-c_\mu) \covmat_t + c_1 (p_{t+1}^c)(p_{t+1}^c)^\top  + c_\mu \sum_{i=1}^\mu \wic \left(\sqrt{\covmat_t}U_{t+1}^{s_{t+1}(i)}\right)\left(\sqrt{\covmat_t}U_{t+1}^{s_{t+1}(i)}\right)^\top \right] \\
	= &~ r_{t+1}^{-1}\times\left[  (1-c_1-c_\mu) \ncovmat_t + c_1 (q_{t+1})(q_{t+1})^\top+ c_\mu \sum_{i=1}^\mu \wic \left(\sqrt{\ncovmat_t}U_{t+1}^{s_{t+1}(i)}\right)\left(\sqrt{\ncovmat_t}U_{t+1}^{s_{t+1}(i)}\right)^\top \right] \\
	& = r_{t+1}^{-1} \Fc\left(\ncovmat_t,q_{t+1},\nnew{\sqrt{\ncovmat_t}}\sum_{i=1}^\mu \wic \left[  U_{t+1}^{s_{t+1}(i)} \right]\left[ U_{t+1}^{s_{t+1}(i)} \right]^\top\nnew{\sqrt{\ncovmat_t}}\right)
\end{align*}
Finally, 
\begin{align*}
	q_{t+1} & = R(\covmat_t)^{-1/2} p_{t+1}^c \\
	& = R(\covmat_t)^{-1/2} (1-c_c) p_t^c + \sqrt{\mueff c_c(2-c_c)} R(\covmat_t)^{-1/2} \covmat_t^{1/2} \sum_{i=1}^\mu \wi U_{t+1}^{s_{t+1}(i)}  \\
	& = r_t^{-1/2} (1-c_c) q_t + \sqrt{\mueff c_c(2-c_c)} \ncovmat_t^{1/2} \sum_{i=1}^\mu \wi U_{t+1}^{s_{t+1}(i)}  \new{= \Fpc(r_t^{-1/2}q_t, \sqrt{\ncovmat_t} \w^\top_m \Utt )\enspace,} 
\end{align*}
\nnew{where $\Fpc$ is defined via \eqref{eq:p}.}
\end{proof}

\nnew{Now that we formally prove that the normalized chain defined in \eqref{eq:normalized-chain-def} is a time-homogeneous Markov chain when the algorithm optimizes a scaling-invariant function, we}
\del{We}\new{recapitulate}\del{explain here} how \del{the}\new{its} stability\del{ of the normalized chain \eqref{eq:normalized-chain-def}} is connected to the linear convergence of CMA-ES
on scaling-invariant functions. For $T\in\bN$, \nnew{using the definition of the normalized Markov chain in \eqref{eq:normalized-chain-def} and the definition of the stepsize change \eqref{eq:stepsize-update} we obtain}\del{ we have}\del{We are interested to prove the (asymptotic) linear convergence of the CMA-ES algorithm towards the minimum of a scaling invarian\nnew{t}\del{ce} function.}
\begin{align}
\frac{1}{T} \log \frac{\|\X_T - {x^*}\|}{\|\X_0 - {x^*}\|} & = \frac{1}{T} \sum_{t=0}^{T-1} [ \log\|\X_{t+1} - {x^*}\| -\log\|\X_t - {x^*}\| ] \nonumber \\
& = \frac{1}{T} \sum_{t=0}^{T-1} \left(  \log \left( \|\Z_{t+1}\| \sqrt{R(\covmat_{t+1})} \sigma_{t+1}\right) -  \log \left( \|\Z_{t}\| \sqrt{R(\covmat_{t})} \sigma_{t} \right) \right) \nonumber \\
& = \frac{1}{T}\sum_{t=0}^{T-1} \left( \log\|\Z_{t+1}\|-\log\|\Z_t\| + \log\frac{\sigma_{t+1}}{\sigma_t} +\frac{1}{2}\log \frac{R(\covmat_{t+1})}{R(\covmat_t)} \right)\nonumber \\
& = \frac{1}{T} \sum_{t=0}^{T-1} \left( \log\|\Z_{t+1}\| -\log\|\Z_t\| + \log \Gamma(p_{t+1}) + \frac{1}{2} \log r_{t+1} \right) \enspace. \label{eq:log-progress-cma}
\end{align}
\del{where we have use\nnew{d} \eqref{eq:normalized-chain-def} and \eqref{eq:stepsize-update}. }
\new{If} the Law of Large Numbers \new{applies} to the RHS \new{of \eqref{eq:log-progress-cma}}, we obtain a limit of the LHS \del{for}\nnew{when} $T$ \nnew{goes} to infinity.
If this limit is \nnew{proven to be} strictly negative, we have \new{shown} linear convergence of \nnew{the underlying optimization algorithm}.\del{CMA-ES \new{when Gaussian distributions are used for sampling}).}
In order to apply \new{limit theorems~\cite[Theorem 17.0.1]{meyn2012markov} and obtain} a Law of Large Numbers, \new{we} require the chain $\Phi$ to be geometrically ergodic. 
Key assumption\new{s} for\del{ the} ergodicity are\del{ the} \del{$\varphi$-}irreducibility \new{and aperiodicity} of the Markov chain \new{whose notions will be formally introduced in Section~\ref{sec:IrrApp}}.
\new{We thus connected the stability of the normalized chain \nnew{to} the convergence of the underlying optimization algorithm. More formally the following proposition holds.}

\del{This is summarized by the following proposition.}%

\begin{proposition}\label{p:linear-convergence}
	Consider the CMA-ES algorithm defined in \Cref{sec:algo-cmaes} optimizing \new{a function $f$ 
		satisfying\del{ \Cref{F1:negligible-level-sets} and} \Cref{F2:scaling-invariant}}. 
	Assume that the process \new{$\Phi$}, \new{obeying \eqref{eq:normalized-chain-updates}} with state space $\cY=(\bR^d)^3\times R^{-1}(\{1\})\times\bR_{++}$ 
		(where $R^{-1}(\{1\}) = \{ \ncovmat \in \Sdpp  : R(\ncovmat)=1\}$),
		is an \del{$\varphi$-}irreducible, aperiodic and positive Harris-recurrent \new{Markov chain} with (unique) invariant probability measure $\pi$.
	\new{Assume moreover that the functions
	\begin{equation}
		(z,p,q,\ncovmat,r)\in\cY \mapsto \log\|z\|,\log\Gamma(p),\log r
	\end{equation}
	are $\pi$-integrable.}
		 Then the CMA-ES algorithm \new{behaves globally} asymptotically linearly \new{almost surely}\del{, that is,}\nnew{:}\done{precise the almost surely - it is done in MT via a.s $[\mathbb{P}_*]$ and then it becomes critical to put both limits below together. Two equations?}
	\begin{equation}\label{eq:linear-convergence}
	\lim_{T \to \infty} \frac{1}{T} \log \frac{\|\X_T - x^{*}\|}{\| \X_0 - x^*\|} = \lim_{t\to\infty} \bE\left[ \log \frac{\|\X_{t+1} - x^{*}\|}{\| \X_t - x^*\|} \right] = \int \left(\log\Gamma(p) + \frac{1}{2}\log r\right) \mathrm{d}\pi \enspace .
	\end{equation}
\end{proposition}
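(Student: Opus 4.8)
The plan is to start from the telescoping identity \eqref{eq:log-progress-cma} and feed it into the ergodic theory of positive Harris recurrent chains. Setting $g\colon\cY\to\bR$, $g(z,p,q,\ncovmat,r)=\log\Gamma(p)+\tfrac12\log r$, and telescoping the consecutive differences $\log\|\Z_{t+1}\|-\log\|\Z_t\|$ inside \eqref{eq:log-progress-cma}, I would first record the decomposition
\begin{equation*}
\frac{1}{T}\log\frac{\|\X_T-x^*\|}{\|\X_0-x^*\|}
=\frac{\log\|\Z_T\|-\log\|\Z_0\|}{T}+\frac{1}{T}\sum_{t=1}^{T}g(\phi_t)\enspace.
\end{equation*}
Proving the first equality in \eqref{eq:linear-convergence} then reduces to showing that the boundary term vanishes almost surely while the empirical average of $g$ converges to $\int g\,\mathrm{d}\pi$.

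For the almost sure statement I would apply the law of large numbers for positive Harris recurrent chains \cite[Theorem 17.0.1]{meyn2012markov} twice, which is legitimate for any initial distribution. Since $\log\Gamma(p)$ and $\log r$ are $\pi$-integrable, $g\in L^1(\pi)$ and the theorem yields $\frac1T\sum_{t=1}^{T}g(\phi_t)\to\int g\,\mathrm{d}\pi$ a.s. To kill the boundary term I would exploit precisely the hypothesis that $\phi\mapsto\log\|z\|$ is $\pi$-integrable: the same theorem gives $\frac1T\sum_{t=1}^{T}\log\|\Z_t\|\to\int\log\|z\|\,\mathrm{d}\pi=:L$ a.s. Writing $\frac{\log\|\Z_T\|}{T}$ as the difference of the two consecutive Cesàro averages $\frac1T\sum_{t=1}^{T}\log\|\Z_t\|$ and $\frac{T-1}{T}\cdot\frac1{T-1}\sum_{t=1}^{T-1}\log\|\Z_t\|$, both converging to $L$, forces $\frac{\log\|\Z_T\|}{T}\to0$ a.s.; as $\log\|\Z_0\|$ is fixed, the whole boundary term tends to $0$, and the first equality of \eqref{eq:linear-convergence} follows.

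For the middle equality I would take expectations in the one-step identity $\log\frac{\|\X_{t+1}-x^*\|}{\|\X_t-x^*\|}=\log\|\Z_{t+1}\|-\log\|\Z_t\|+g(\phi_{t+1})$, so that the quantity to be limited equals $\bE[\log\|\Z_{t+1}\|]-\bE[\log\|\Z_t\|]+\bE[g(\phi_{t+1})]$. Using that an aperiodic positive Harris recurrent chain converges to $\pi$ in total variation, $\phi_t$ converges in distribution to $\pi$, and if $\bE[\log\|\Z_t\|]\to\int\log\|z\|\,\mathrm{d}\pi$ and $\bE[g(\phi_t)]\to\int g\,\mathrm{d}\pi$, then the two $\log\|\Z\|$-expectations cancel in the limit and the middle quantity converges to $\int g\,\mathrm{d}\pi$, matching the right-hand side. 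The hard part will be exactly this last transfer: plain positive Harris recurrence plus $\pi$-integrability secures the almost sure (Cesàro) statement but \emph{not} the convergence of the individual expectations $\bE[h(\phi_t)]\to\int h\,\mathrm{d}\pi$ for the \emph{unbounded} integrands $h\in\{\log\|z\|,g\}$, since total-variation convergence only transfers to bounded test functions. I would close this gap by upgrading the ergodicity to an $f$-norm (geometric) ergodicity with a test function dominating $|\log\|z\||$ and $|g|$, obtained from a geometric drift condition in the spirit of \cite[Chapter 14]{meyn2012markov}; this simultaneously supplies the missing uniform integrability and is precisely the stability that the irreducibility, aperiodicity and T-chain properties established in this paper are meant to unlock. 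Failing that strengthening, only the Cesàro version of the middle equality is immediate, by averaging the expectation identity above.
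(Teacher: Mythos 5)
Your decomposition and the almost-sure part coincide with the paper's (very terse) proof: both telescope \eqref{eq:log-progress-cma} and invoke the LLN for positive Harris chains \cite[Theorem 17.0.1]{meyn2012markov}, and your handling of the boundary term $\log\|\Z_T\|/T$ via the difference of two consecutive Ces\`aro averages of $\log\|\Z_t\|$ is a clean way to make explicit what the paper leaves implicit (and is exactly where the $\pi$-integrability of $\log\|z\|$ is used). Where you diverge is the middle equality: you correctly note that total-variation convergence only transfers to bounded test functions, but you then conclude that one must upgrade to \emph{geometric} $f$-norm ergodicity via a drift condition, and concede that otherwise only a Ces\`aro version is available. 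That strengthening is not needed. The $f$-norm ergodic theorem \cite[Theorem 14.0.1]{meyn2012markov} --- which is what the paper cites here --- asserts that for an aperiodic positive Harris chain with invariant probability $\pi$ and $f\geqslant1$, the single condition $\pi(f)<\infty$ already yields $\|P^t(x,\cdot)-\pi\|_f\to0$ for every $x$ in a full absorbing set, with no rate assumption whatsoever. Taking $f=1+|\log\|z\||+|\log\Gamma(p)|+|\log r|$, the proposition's integrability hypotheses give exactly $\pi(f)<\infty$, hence $\bE[\log\|\Z_{t+1}\|]-\bE[\log\|\Z_t\|]\to0$ and $\bE[\log\Gamma(p_{t+1})+\tfrac12\log r_{t+1}]\to\int(\log\Gamma(p)+\tfrac12\log r)\,\mathrm{d}\pi$, which closes the middle equality under precisely the stated assumptions (up to the standard caveat that the initial state should lie in the full absorbing set). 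So your proof is essentially complete once you replace the appeal to a geometric drift by the non-geometric $f$-norm ergodic theorem.
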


\begin{proof} 
	%
	\del{This}%
	\nnew{The almost sure limit of the LHS in \eqref{eq:linear-convergence}}
	follows directly from \eqref{eq:log-progress-cma}\nnew{, \del{the ergodic theorem and the }LLN for ergodic chains}~\cite[Theorem 17.0.1]{meyn2012markov}. 
	\nnew{The limit of the expectation in \eqref{eq:linear-convergence} follows from the ergodic theorem~\cite[Theorem 14.0.1]{meyn2012markov}, since
		$$
		\log\frac{\|m_{t+1}-x^*\|}{\|m_t-x^*\|}  =  \log\|z_{t+1}\|-\log\|z_t\| + \log\Gamma(p_{t+1}) +\frac{1}{2} \log r_{t+1}
		\enspace.
		$$}
	\done{The limit of $\lim_{t\to\infty} \bE\left[ \log \frac{\|\X_{t+1} - x^{*}\|}{\| \X_t - x^*\|} \right] $ does not follow from \eqref{eq:log-progress-cma} so the "follows directly" is misleading. Rather rewrite the needed equation while for applying the LLN we can indeed use \eqref{eq:log-progress-cma}}
	\del{This generalizes to any value of $x^*$ since CMA-ES is translation invariant~\cite{auger2016these}.}%
\end{proof}
The previous proposition illustrates that proving irreducibility and aperiodicity of 
the chain $\Phi$ is a stepping stone to establish linear convergence of CMA-ES.
Proving these properties will occupy \del{the rest of this paper}\Cref{sec:proofs}.
Later, we intend to prove the geometric ergodicity 
by means of Foster-Lyapunov drift conditions~\cite[Theorem 15.0.1]{meyn2012markov} 
that depend on small sets (as given in \Cref{t:main-1}). 
Characterizing small sets is\del{
will be} facilitated by the topological T-chain property as formalized in the next section.\done{is it really in the next section?}


		
		\section{\nnew{Main Results I:} Irreducibility, aperiodicity, and T-chain property of normalized Markov chains underlying the CMA-ES algorithm}
		\label{sec:IrrApp}

\new{We present in this section one of the two main results of this paper stating the irreducibility, aperiodicity and T-chain property of the normalized chains underlying the CMA-ES algorithm defined in \eqref{eq:normalized-chain-def}.} \new{We start by introducing}\del{Prior to that, \del{Before to state the main result of this section, }we introduce}
		\del{We start this section by giving}\new{the} definitions of
		irreducibility, aperiodicity and T-kernel.
			 \del{property of the normalized chain \eqref{eq:normalized-chain-updates} associated to CMA-ES}%
		Let $P$ be a transition kernel on a state space $(\cX,\cB(\cX))$.
		We say that $P$ is irreducible when there exists a nontrivial nonnegative measure $\varphi$ on $\cB(\cX)$ such that, 
		for every $x\in\cX$ and every $\mathsf A\in\cB(\cX)$ with $\varphi(\mathsf A)>0$, 
		there exists a positive integer $k$ satisfying $P^k(x,\mathsf A)>0$. 
		\del{If $P$ is irreducible \wrt}When a measure $\varphi$ satisfies this definition, we say\del{ then} that $P$ is $\varphi$-irreducible.
		
		When $P$ is irreducible, the period of $P$ is the largest integer $k\geqslant1$ such that there exist disjoint sets $\mathsf D_1,\dots,\mathsf D_k\in\cB(\cX)$ with
		\begin{equation}
			\left\{
			\begin{array}{l}
				\varphi((\mathsf D_1\cup\dots\cup \mathsf D_k)^c)=0 \text{ for every irreducibility measure } \varphi \text{ of } P \\
				P(x_i,\mathsf D_{i+1}) = 1 \text{ for } x_i\in \mathsf D_i \text{ and } i=0,\dots,k-1 ~ (\mathrm{mod}~ k) .
			\end{array}
			\right.
		\end{equation}
		\del{When $P$ is $\varphi$-irreducible, then it}An irreducible transition kernel $P$ always admits a period $k\geqslant1$~\cite[Theorem 5.4.4]{meyn2012markov},
		\nnew{and} when $k=1$, $P$ is said to be aperiodic.

		For any positive integer $m$, a set $\mathsf C\in\cB(\cX)$ is called $m$-small when there exists a nontrivial measure $\nu_m$ on $\cB(\cX)$ such that $P^m(x,\mathsf A)\geqslant \nu_m(\mathsf A)$ for every $x\in \mathsf C$ and every $\mathsf A\in\cB(\cX)$.

		Given a probability distribution $b$ on $\bN$, we define the transition kernel $K_b$ on $(\cX,\cB(\cX))$ as\break $K_b(x,\mathsf A) = \sum_{k\geqslant0} b(k) P^k(x,\mathsf A)$.
				
		A substochastic kernel on $(\cX,\cB(\cX))$ is a function $T\colon\cX\times\cB(\cX)\to\bR$ such that $T(\cdot,\mathsf A)$ is measurable for every $\mathsf A\in\cB(\cX)$
			and $T(x,\cdot)$ is a finite measure on $\cB(\cX)$ with $T(x,\cX)\leqslant1$
			for every $x\in\cX$.
		We say that the substochastic kernel $T$ is a \emph{continuous component} of 
			the transition kernel $K_b$ when $T(\cdot,A)$ is lower semicontinuous 
			on $\cX$, $T(x,\cX)>0$ and $K_b(x,\mathsf A)\geqslant T(x,\mathsf A)$ 
			for every $x \in \cX$ and $\mathsf A \in \cB(\cX)$.
		A transition kernel $P$ on $(\cX,\cB(\cX))$ is called a T-kernel when 
			there exist a probability measure $b$ on $\bN$ and a substochastic kernel $T$ \del{such that}which is a continuous component of the transition kernel $K_b$. Moreover, we say that a Markov chain is irreducible, respectively aperiodic, a T-chain, when its transition kernel is irreducible, respectively aperiodic, a T-kernel.
		We can now state our first main contribution \nnew{presented in the next theorem and its corollary. They constitute a first milestone towards a linear convergence proof of CMA-ES}. 
		The complete proof of the \del{next}following theorem is presented in \Cref{sec:proofs} 
			(cf.\ \Cref{t:main-bis}\del{ and \Cref{rem:homeomorphic}}).

		\begin{theorem}
				\label{t:main-1}
				Suppose that the objective function $f$, the normalization function\del{s} $R\del{(\cdot)}$, the stepsize change $\Gamma$ and the sampling distribution $\pUd$
				satisfy \Cref{F1:negligible-level-sets}-\Cref{F2:scaling-invariant}, \Cref{R1:homogeneous}-\Cref{R3:differentiable}, \Cref{G1:C1}-\Cref{G3:G(0)<1} and \Cref{P1:density}, respectively.
				
				Let $\Phi=\{(\Z_t,p_t,q_t,{\ncovmat}_t,r_t)\}_{t\geqslant1}$ be the normalized Markov chain \del{associated to}\new{underlying} CMA-ES defined via \eqref{eq:normalized-chain-def} {and $P$ its transition kernel}. \nnew{Assume that $0 < c_1 + c_\mu < 1$.}
				Then, 
				\begin{itemize}
					\item[(i)] if $c_c,c_\sigma\in (0,1)$, $c_\mu>0$
					\del{such that}\new{and} $1-c_c\neq(1-c_\sigma)\sqrt{1-c_1-c_\mu}$,
					then $P$ is an \del{$\varphi$-}irreducible aperiodic $T$-kernel, such that compact sets of $\bR^d\times\bR^d\times\bR^d\times R^{-1}(\{1\})\times\bR_{++}$ are small;
					\item[(ii)] if $c_c\in(0,1)$, $c_\sigma=1$ and $c_\mu>0$, then the \del{normalized chain}\new{process} $\{(\Z_t,q_t,{\ncovmat}_t,r_t)\}_{t\geqslant1}$ is \del{a time-homogeneous Markov chain with }an \del{$\varphi$-}irreducible aperiodic $T$-\del{kernel}\nnew{chain}, such that compact sets of $\bR^d\times\bR^d\times R^{-1}(\{1\})\times\bR_{++}$ are small;
					\item[(iii)] if $c_\sigma\in(0,1)$ and $c_c=1$, then the \del{normalized chain}\new{process} $\{(\Z_t,p_t,\ncovmat_t)\}_{t\geqslant1}$ is \del{a time-homogeneous Markov chain with }an \del{$\varphi$-}irreducible aperiodic $T$-\del{kernel}\nnew{chain}, such that compact sets of $\bR^d\times\bR^d\times R^{-1}(\{1\})$ are small; 					
					\item[(iv)] if $c_c=c_\sigma=1$, then the \del{normalized chain}\new{process} $\{(\Z_t,\ncovmat_t)\}_{t\geqslant1}$ is \del{a time-homogeneous Markov chain with }an \del{$\varphi$-}irreducible aperiodic $T$-\del{kernel}\nnew{chain}, such that compact sets of $\bR^d\times R^{-1}(\{1\})$ are small. 
				\end{itemize}			
			\end{theorem}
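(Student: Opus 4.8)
The plan is to recognize the normalized chain as a nonsmooth nonlinear state-space model in the sense of \eqref{eq:NLSS-model-intro} and to apply the irreducibility, aperiodicity and T-chain criteria for such models from \cite{gissler2024irreducibility}, as extended in \Cref{sec:NLSS}. Writing $\phi_{t+1}=F_\Phi(\phi_t,U_{t+1}^{s_{t+1}})$ as in \eqref{eq:Fphi}, the update $F_\Phi$ is locally Lipschitz between manifolds: each component in \eqref{eq:normalized-chain-updates} is an algebraic combination of the smooth maps $\Fx,\Fpsigma,\Fpc,\Fc$ with the locally Lipschitz maps $\Gamma$ (by \Cref{G1:C1}) and $R$ (by \Cref{R2:C1}) together with the positive roots $r_{t+1}^{\pm1/2}$. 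The selection map $\alpha(\phi,u)=u^{s}$, which ranks the sample vectors, plays the role of the measurable but discontinuous input function; it is locally constant on the open, full-measure region where the ranking is strict, so $F_\Phi$ is locally smooth there. Because the state space $\cY$ carries the factor $R^{-1}(\{1\})$, which need not be a smooth manifold when $R$ is only differentiable on an open subset (\Cref{R3:differentiable}), I would first invoke the second generic tool of \Cref{sec:NLSS} to transport the analysis to a genuinely smooth manifold via a continuous change of coordinates, so that the smooth-manifold results become applicable.

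The core of the argument is case (i), treated directly through the deterministic control model obtained by replacing the i.i.d.\ inputs $U_{t+1}$ by controls. Two properties must be established. First, \emph{forward accessibility}: from a suitable state the multi-step control-to-state map is a submersion, i.e.\ its differential with respect to the control vectors attains full rank. This is the main technical obstacle, and it is exactly where the hypothesis $1-c_c\neq(1-c_\sigma)\sqrt{1-c_1-c_\mu}$ enters: it prevents the step-size path $p$ and the covariance path $q$ from responding to the shared samples along locked, proportional directions, which would otherwise collapse the rank. I would carry this out on a region where $s_{t+1}$ is frozen (so $F_\Phi$ is smooth) and establish the rank condition, which is the content of \Cref{p:full-rank-control-matrix-cma}, using \Cref{R3:differentiable} for differentiability of the normalization. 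Second, the existence of a \emph{steadily attracting state}: using \Cref{G3:G(0)<1} to let the normalized step contract and \Cref{G2:unbounded} to rule out escape of the path to infinity, one drives the control trajectories from any initial state into a fixed neighbourhood, as in \Cref{p:steadily-attracting-state-for-CMA} and \Cref{c:steadily-attracting-state-for-CMA}. Combining forward accessibility with a steadily attracting state, the general criterion of \cite{gissler2024irreducibility} then yields that the transition kernel is an irreducible aperiodic T-kernel for which compact subsets of $\bR^d\times\bR^d\times\bR^d\times R^{-1}(\{1\})\times\bR_{++}$ are small.

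For the boundary cases (ii)--(iv) I would invoke the first generic tool, the projected/redundant-chain device of \eqref{eq:redundant-NLSS-model-intro}. When $c_\sigma=1$ the update of $p$ in \eqref{eq:normalized-chain-updates} loses its $(1-c_\sigma)p_t$ term and no longer depends on the past, and likewise the update of $q$ when $c_c=1$; hence the corresponding reduced process (dropping $p$, or $q$, or both) is itself a time-homogeneous Markov chain. I would then view the reduced chain as the \emph{projected} chain $\{\phi_t\}$ and the full chain as the \emph{redundant} chain $\{(\phi_t,\xi_t)\}$, verify the projection identity $\Pi\circ\tilde F=F$, and transfer irreducibility, aperiodicity and the T-chain property from a redundant control model whose forward accessibility and steadily attracting state are obtained exactly as in case (i). In these settings only one cumulation path is active, so the non-degeneracy condition of case (i) is no longer needed, which explains its absence from (ii)--(iv).

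The hardest part is unquestionably the full-rank / forward-accessibility computation of the second paragraph: one must control the differential of a composition of several nonlinear updates with respect to the $\lambda$ sample vectors, while remaining inside a region where the discontinuous ranking $s_{t+1}$ is constant, and verify that the algebraic coincidence excluded by $1-c_c\neq(1-c_\sigma)\sqrt{1-c_1-c_\mu}$ is the only obstruction to maximality of the rank. Everything else---the local Lipschitz regularity, the attracting-state construction, and the projection bookkeeping---is comparatively routine once a suitable smooth chart on $R^{-1}(\{1\})$ has been fixed.
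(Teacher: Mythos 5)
Your proposal follows essentially the same route as the paper: transport the chain to a smooth manifold via a homeomorphism (here with $\rho=\det(\cdot)^{1/d}$), verify the control-model conditions \Cref{A4:lsc-distribution}--\Cref{A5:C1-update}, establish a steadily attracting state and the maximal-rank controllability condition (with the hypothesis $1-c_c\neq(1-c_\sigma)\sqrt{1-c_1-c_\mu}$ entering exactly where you say, in the invertibility of the $p$--$q$ block), and handle cases (ii)--(iv) by the projected/redundant-chain device. The decomposition, the key lemmas invoked, and the role assigned to each assumption all match the paper's proof.
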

	\nnew{This result} \nnew{covers the entire range of eligible}\del{includes most} hyperparameter settings for CMA-ES except \nnew{when $c_1 + c_\mu = 1$, or $c_\mu=0$ and $c_c<1$, or $1-c_c = (1-c_\sigma)\sqrt{1-c_1-c_\mu} > 0$.}\del{
		for $c_1 + c_\mu = 1$ and\del{that} \new{when $c_c < 1$,} \del{for $c_\mu=0$ and}\new{we\del{ only} require $c_\mu>0$}\del{ to be stricly positive} and\del{ that} $1-c_c \neq (1-c_\sigma)\sqrt{1-c_1-c_\mu}$\del{ when $c_c<1$}.}
	\nnew{Most importantly,}\del{In other words,} \new{when cumulation is used \new{in}\del{for} the rank-one update\del{ ($c_c < 1$)}, \nnew{we need for our proof} the rank-mu update\del{ ($c_\mu > 0$)}.\del{is needed for the chain to be irreducible and aperiodic.}\del{(We however believe that this might be relaxed with more work).}\del{ However when no cumulation is used in the rank-one update, then we prove that the CMA-ES normalized Markov chain is an irreducible aperiodic Markov chain with or without rank-mu update. In particular} \nnew{Without cumulation however ($c_c=1$),} the rank-one update\del{ of the covariance matrix} is \nnew{already sufficient}\del{enough} to \nnew{prove irreducibility and aperiodicity.}\del{ensure the irreducibility of the normalized Markov chain.}}

	\nnew{We finally formulate a particular case of \Cref{t:main-1}.}
	\del{ in the following corollary.
	\new{\nnew{Besides,} the previous theorem is stated with an abstract normalization function $R(\cdot)$ satisfying the assumptions \Cref{R1:homogeneous}-\Cref{R3:differentiable}.
	However, this theorem might be used later on with a \del{concrete}\nnew{specific} choice of function $R(\cdot)$.\todo{Niko: reformulate}\niko{This seems entirely obvious to me.}
		This might be useful for instance when proving ergodicity via a Foster-Lyapunov drift condition, we might need to specify the function $R(\cdot)$. We thus state the following corollary\nnew{, which is a particular case of \Cref{t:main-1}}.\niko{I am not sure I understand the "idea" of the transition. We can't just say something like we emphasize that this specific algorithm/construction choice in covered by the theorem as stated in the corollary? }}%
		More precisely,}\del{Consequently,}%
	Using Proposition~\ref{p:examples-R},\del{ and} Lemma~\ref{lem:Gamma-CSA}
	\nnew{and \Cref{t:main-1},} we find that Markov chains \nnew{obtained with some standard}\del{using the default} stepsize change of CMA-ES\del{ \eqref{eq:CSA}} and normalized 
		by its minimum eigenvalue \new{(possibly expressed in a different coordinate system which would be more fitted to the objective function $f$)} or $\det(\cdot)^{1/d}$
	are \del{$\varphi$-}irreducible, aperiodic T-chains.\del{ \del{under the conditions of}\nnew{when} the previous theorem \nnew{applies}.}
	\begin{corollary}
		\new{Let $\mathbf{H}\in\Sdpp$.}
		Consider
		the process $\Phi$ defined via \eqref{eq:normalized-chain-def}
		with a normalization function $R=\det(\cdot)^{1/d}$ or $R=\lambda_{\min}(\mathbf H^{1/2}\times\cdot\times\mathbf H^{1/2})/\lambda_{\min}(\mathbf H)$
		and with the CSA stepsize change $\Gamma=\CSA$
		or $\Gamma=\CSAd$, see \eqref{eq:CSA} or \eqref{eq:CSA-mod}, respectively. 
		Assume as in \Cref{t:main-1} that $f$ satisfies \Cref{F1:negligible-level-sets}-\Cref{F2:scaling-invariant} and the sampling distribution  satisfies \Cref{P1:density}, then
			under the same conditions on the hyperparameters as in \Cref{t:main-1}, $\Phi$ 
		is an irreducible aperiodic T-chain.
	\end{corollary}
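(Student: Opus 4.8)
The plan is to reduce the corollary to \Cref{t:main-1} by checking that each concrete choice of normalization function $R$ and stepsize change $\Gamma$ satisfies the required hypotheses, after which the four regimes (i)--(iv) transfer verbatim. The assumptions \Cref{F1:negligible-level-sets}--\Cref{F2:scaling-invariant} on $f$ and \Cref{P1:density} on $\pUd$ are part of the corollary's hypotheses, and the conditions on the hyperparameters $c_c,c_\sigma,c_1,c_\mu$ are inherited unchanged. Hence it remains only to verify \Cref{R1:homogeneous}--\Cref{R3:differentiable} for the two candidate $R$ and \Cref{G1:C1}--\Cref{G3:G(0)<1} for the two candidate $\Gamma$.

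For the stepsize change I would simply invoke \Cref{lem:Gamma-CSA}. In every hyperparameter regime of \Cref{t:main-1} one has $c_\sigma\in(0,1]$ (indeed $c_\sigma\in(0,1)$ in cases (i) and (iii), and $c_\sigma=1$ in cases (ii) and (iv)), so both $\CSA$ and $\CSAd$ fulfill \Cref{G1:C1}--\Cref{G3:G(0)<1}. For $R=\det(\cdot)^{1/d}$, the properties \Cref{R1:homogeneous}--\Cref{R3:differentiable} are exactly the content of \Cref{p:examples-R}, so nothing further is needed.

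The only genuine work concerns $R=\lambda_{\min}(\mathbf H^{1/2}\,\cdot\,\mathbf H^{1/2})/\lambda_{\min}(\mathbf H)$, which I would write as $R=c^{-1}\,\lambda_{\min}\circ T_{\mathbf H}$, where $c=\lambda_{\min}(\mathbf H)>0$ is a fixed constant and $T_{\mathbf H}\colon A\mapsto \mathbf H^{1/2}A\mathbf H^{1/2}$ is the congruence by $\mathbf H^{1/2}\in\Sdpp$. Since $T_{\mathbf H}$ is linear and invertible (because $\mathbf H^{1/2}$ is invertible) and maps $\Sdpp$ onto $\Sdpp$, it is a smooth diffeomorphism of $\Sdpp$. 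Homogeneity \Cref{R1:homogeneous} then follows from $T_{\mathbf H}(\rho A)=\rho\,T_{\mathbf H}(A)$ and the homogeneity of $\lambda_{\min}$ established in \Cref{p:examples-R}; local Lipschitz continuity \Cref{R2:C1} follows because the composition of a smooth map with a locally Lipschitz one is locally Lipschitz, and multiplication by the constant $c^{-1}$ preserves this property.

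The step deserving care, and the main (essentially the only) obstacle, is \Cref{R3:differentiable}. By \Cref{p:examples-R}, $\lambda_{\min}$ is differentiable on the nonempty open set $\mathcal O\subset\Sdpp$ of matrices whose smallest eigenvalue is simple. Because $T_{\mathbf H}$ is a diffeomorphism, its preimage $T_{\mathbf H}^{-1}(\mathcal O)$ is again a nonempty open subset of $\Sdpp$, and on it the chain rule makes $\lambda_{\min}\circ T_{\mathbf H}$, hence $R$, differentiable; this is precisely \Cref{R3:differentiable}. This obstacle is mild exactly because the change of coordinates is a diffeomorphism rather than a merely continuous transformation, so openness and differentiability are preserved under pullback. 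With all hypotheses verified, \Cref{t:main-1} applies directly and yields, in each of the four hyperparameter regimes, that the corresponding process $\Phi$ is an irreducible aperiodic T-chain.
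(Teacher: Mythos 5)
Your proposal is correct and follows essentially the same route as the paper, which proves the corollary simply by combining \Cref{p:examples-R}, \Cref{lem:Gamma-CSA} and \Cref{t:main-1}. Your explicit verification that $R=\lambda_{\min}(\mathbf H^{1/2}\cdot\,\mathbf H^{1/2})/\lambda_{\min}(\mathbf H)$ satisfies \Cref{R1:homogeneous}--\Cref{R3:differentiable}, via the observation that the congruence $A\mapsto\mathbf H^{1/2}A\mathbf H^{1/2}$ is a linear diffeomorphism of $\Sdpp$ onto itself (so that homogeneity, local Lipschitz continuity and differentiability on a nonempty open set all pull back), is a useful addition, since \Cref{p:examples-R} only treats the untransformed eigenvalues and the paper leaves this step implicit.
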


	\section{\nnew{Main results II:} Extension of the analysis of nonlinear state-space models}\label{sec:NLSS}
	
	\new{We present in this section our\del{ main results concerning} methodological extensions of tools to analyze the irreducibility, aperiodicity and T-chain property of Markov chains. After reminding the basics\del{ of those tools} in \Cref{sec:deterministic-control-model}, we present} \nnew{two extensions.}\del{an\del{the} extension\del{s} \nnew{in}}\del{ with respect to}%
\del{	In this section, we recall and extend the tools to prove \Cref{t:main-1}.
	We \del{recall}\nnew{remind}\niko{\tiny remind, summarize} \new{our} methodolo\-gy~\nnew{\cite{gissler2024irreducibility}} to prove\del{ the} irreducibility, aperiodicity and topological properties of a Markov chain \new{in the next \Cref{sec:deterministic-control-model}}.\del{
	This is based on existing works~\cite{gissler2024irreducibility}, but we remind the main results for the sake of completeness.}
	In order to \del{carry out the proof of}\nnew{prove \Cref{t:main-1}}\del{analysis}, we extend \new{this methodology}\del{those tools} in two directions.}

	First\del{ of all}, \nnew{some of the}\del{as we}\del{because we are interested to}\del{ analyze different} learning rate settings \new{from}\del{in} \Cref{t:main-1}(i), (ii), (iii) and (iv) 
		\del{we end up in }\del{some \new{of these} settings}\new{give rise to a}\del{end up with\niko{\tiny give rise to, [turn out to] bring about} a}\del{ to have} so-called \textit{redundant Markov chain}\del{s}, where one state variable can be dropped to define another Markov chain. 
	We thus introduce\del{ formally in \Cref{sec:subchain}} redundant and projected Markov chains \new{in \Cref{sec:subchain}}
		and explain how\del{ the} irreducibility, aperiodicity \new{and} T-chain property of the projected chain\del{s} can be deduced from \del{the irreducibility, aperiodicity, T-chain property from}\del{the}\nnew{an} analysis of the redundant chain.
	\nnew{The main result of this section is \Cref{c:verifiable-conditions-subchain}.}
		
The second \nnew{methodological} extension\del{the setting where the}\del{done is to be able to deal with}
is motivated\del{ by Markov chains that \new{are not defined}}\del{do not live on a smooth manifold}\del{This is achieved}\del{ the analysis of}
by the Markov chain \eqref{eq:normalized-chain-updates} which is valued in a possibly nonsmooth manifold since the normalization $R(\cdot)$ may be not continuously differentiable, for instance when $R(\cdot)=
\lambda_{\min}(\cdot)$. 
\nnew{To analyze such a\del{ Markov} chain, we} \new{apply}\del{ define via} a homeomorphic transforma\-tion, \nnew{thereby defining} a Markov chain\del{ that live} valued in a smooth manifold, and explain how\del{ the} irreducibility, aperiodicity and the T-chain property \nnew{of the original Markov chain} can be deduced from \nnew{an analysis of} the transformed Markov chain.

\new{These results \nnew{are}\del{will be} applied in \Cref{sec:proofs} for the proof of \Cref{t:main-1}.}
\del{In \Cref{sec:homeomorphism}, we introduce the idea to renormalize the covariance $\ncovmat$ in the Markov chain \eqref{eq:normalized-chain-updates} by a smooth map $\rho$. We hence }

	\subsection{Deterministic control model and sufficient conditions for irreducibility and aperiodicity}\label{sec:deterministic-control-model}
		
		We introduce in this section \new{different definitions and theorems our analysis is based on}
		\del{our\del{a} methodology\todo{Anne: revise - old} to analyze
		\del{the generic deterministic control models \del{associated}\nnew{for} \del{to }}%
		the Markov chains of CMA-ES and
		\del{, introduce different definitions and assumptions that we will verify in order to prove the irreducibility of the normalized CMA-ES Markov chain \cite{}.}\del{The main theorem we will use later and stating that a Markov kernel following the different assumptions introduced is an irreducible, aperiodic, T-kernel  is reminded at the end of the section.}%
		}\del{and refer to }the original article~\cite{gissler2024irreducibility} \nnew{to which we refer} for more details.\del{ing \del{the model}\nnew{it}\niko{Do you mean methodology?}\armand{yes, I changed to 'it' to avoid repetition}\niko{"it" is generally "risky"} in\del{for} more precisions\niko{I am not sure what the intended meaning of the sentence was}.}
			Let $\cX$ and $\cV$ be two smooth connected manifolds,\footnote{In the rest of the paper, manifolds will be considered as connected.} equipped with their Borel $\sigma$-fields, denoted $\cB(\cX)$ and $\cB(\cV)$, respectively. We later denote the dimension of $\cX$ by $n$. 
			\del{We adopt here the same notations \nnew{as in}\del{that} the article that presents the theory we remind here and refer to its supplementary material for \nnew{the}\del{ a reminder of a} formal definition of manifolds \cite{gissler2024irreducibility}.}%
			\del{We refer to the supplementary material of \nnew{the original article presenting these results}~\todo{cite} for a \nnew{reminder of a} formal definition of manifolds. We also adopt the same notations, in particular for the tangent spaces of a manifold, e.g.,}%
			\new{T}\del{ t}he tangent space of $\cX$ at a point $x\in\cX$ is denoted $\mathrm T_x\cX$, and we denote $\mathrm{dist}_{\cX}$ and $\mathrm{dist}_{\cV}$ the distance functions on $\cX$ and $\cV$, respectively, 
			which induce their respective topology. 
	Consider a transition kernel $P$ on $(\cX,\cB(\cX))$
	associated to \nnew{the Markov chain following} the\del{ following} update equation
	\del{, such that \nnew{a} Markov chain\del{s} $\Phi=\{\phi_t\}_{t\in\bN}$ with kernel $P$ obey\nnew{s} the\del{ following} control model}%
	\begin{equation}
		\label{eq:deterministic-control-model}
		\phi_{t+1}=F(\phi_t,\alpha(\phi_t,U_{t+1}))
	\end{equation}
	where $F\colon\cX\times\cV\to\cX$ and $\alpha\colon\cX\times\cU\to\cV$ are measurable functions, 
	and $\{U_{t+1}\}_{t\in\bN}$ is a\nnew{n} i.i.d.\ process independent of $\phi_0$ 
	and \del{with values}\nnew{valued} in \del{the}\new{a} measurable space $(\cU,\del{\cB(\cU)}\nnew{\mathcal{U}})$\new{, where $\mathcal{U}$ is a $\sigma$-field of $\cU$}.\footnote{Since we do not assume $\cU$ to be a topological space, we consider a general $\sigma$-field $\mathcal{U}$ instead of its Borel $\sigma$-field.}
	We consider additionally the following assumptions on the model.
	
	\begin{assumptionH}
		\label{A4:lsc-distribution} For any $x\in\cX$, the distribution $\mu_x$ of the random variable $\alpha(x,U_1)$ admits a density $p_x$ \wrt\ a $\sigma$-finite measure $\zeta_{\cV}$ on $\cV$, such that
		\begin{itemize}
			\item[(i)] the function $(x,v)\mapsto p_x(v)$ is lower semicontinuous;
			\item[(ii)] for $\mathsf A\in\cB(\cV)$, $\zeta_{\cV}(\mathsf A)=0$ if and only if $\mathsf A$ is negligible, i.e., $\mathrm{Leb}(\varphi(\mathsf A\cap V))=0$ for every local chart $(\varphi,V)$ of $\cV$.
		\end{itemize}
	\end{assumptionH}

	\begin{assumptionH}
		\label{A5:C1-update} The function $F:\cX\times\cV\to\cX$ is locally Lipschitz (\wrt\ the metrics $\mathrm{dist}_{\cX}\oplus \mathrm{dist}_{\cV}$ and $\mathrm{dist}_\cX$).
	\end{assumptionH}
	
%
Below, \Cref{p:CMA-follows-CM} 
	provides the\del{a} Markov chain\del{, defined in} \eqref{eq:smooth-normalized-chain-def}
	\del{states }that
	\del{ the transition kernel associated to \nnew{a normalized process underlying} CMA-ES, defined in \eqref{eq:smooth-normalized-chain-def}, }%
	follows the control model \eqref{eq:deterministic-control-model} and
	satisfies \Cref{A4:lsc-distribution} and \Cref{A5:C1-update} under mild assumptions on the objective function $f$ and the stepsize change $\Gamma$.
		
	\del{Before stating Assumption~\Cref{H3:controllability-condition},}%
\del{	let us introduce notations and definitions required to understand it.}%
  We define inductively the \textit{extended transition map} $S_{x}^k\colon\cV^k\to\cX$ 
  associated to \del{the control model }\eqref{eq:deterministic-control-model} 
  for any $k\in\bN$, $x\in\cX$ and $v_{1:k}=(v_1,\dots,v_k)\in\cV^k$ as follows
	\begin{equation}
		\label{eq:extended-transition-map}
		\left\{
		\begin{array}{l}
			S_{x}^0 \coloneqq x \\
			S_{x}^k (v_{1:k}) \coloneqq F\left( S_{x}^{k-1}(v_{1:k-1}),v_k \right) \quad  \text{for } k\geqslant 1 .
		\end{array}
		\right.
	\end{equation}
	From this definition, we obtain that if $F$ is \del{$C^m$}locally Lipschitz (respectively differentiable), then $(x,v_{1:k}) \mapsto S_{x}^k (v_{1:k})$ is \del{$C^m$}locally Lipschitz (respectively differentiable).\armand{I replaced $C^m$ by locally Lipschitz here}
	Likewise, we define \del{inductively }the \textit{extended probability density}\del{ as the function} $p_{x}^k\colon\cV^k\to\bR_{+}$ by
	\begin{equation}
		\left\{
		\begin{array}{l}
			p_{x}^1(v_1) \coloneqq p_{x}(v_1) \\
			p_{x}^k(v_{1:k}) \coloneqq p_{x}^{k-1}(v_{1:k-1})\times p_{S_{x}^{k-1}(v_{1:k-1})}(v_k) \quad  \text{for } k\geqslant2 .
		\end{array}
		\right.
	\end{equation}
	Given the Markov chain $\{\phi_t\}_{t\in\bN}$ defined via \eqref{eq:deterministic-control-model}, the function $p_{x}^k$ is a density associated to the random variable
	$(\alpha(\phi_0,U_1),\dots,\alpha(\phi_{k-1},U_k))$, when $\phi_0=x$.
	For $x\in\cX$ and $k\in\bN^*$, we define the \textit{control sets} of \eqref{eq:deterministic-control-model} by
	\begin{equation}
		\label{eq:control-sets}
		\cO_{x}^k\coloneqq \left\{ v_{1:k}\in\cV^k \mid p_{x}^k(v_{1:k})>0 \right\} \enspace.
	\end{equation}
	\del{Remark that the }Assumption \Cref{A4:lsc-distribution}(i) implies that these sets are open subsets of $\cV^k$.
	We define \del{also}moreover
	\begin{equation}
		\cO_{x}^\infty \coloneqq \left\{ v_{1:\infty} \in\cV^\bN \mid \forall k\geqslant1, v_{1:k}\in\cO_{x}^k  \right\} \enspace.
	\end{equation}
	We say that $x^*\in\cX$ is a \textit{steadily attracting state}, 
	when for every $x\in\cX$ and every neighborhood $U$ of $x^*$, 
	there exists $T>0$ such that for every $k\geqslant T$, 
	there exists $v_{1:k}\in\del{\overline{\cO_{x}^k}}\cO_{x}^k$ such that $S_x^k(v_{1:k})\in U$.
	When $F$ is continuous, $v_{1:k}$ can be taken in $\overline{\cO_{x}^k}$ \cite[Corollary~4.5]{gissler2024irreducibility}, 
	i.e., $x^*\in\cX$ is steadily attracting if and only if for every  neighborhood $U$ of $x^*$,
	 there exists $T>0$ such that for every $k\geqslant T$, 
	 there exists $v_{1:k}\in\overline{\cO_{x}^k}$ such that $S_x^k(v_{1:k})\in U$.
	In particular, if for every $x\in\cX$, 
	there exists $v_{1:\infty}\in\overline{\cO_x^\infty}$ such that $S_x^k (v_{1:k})$ tends to $x^*$, 
	then $x^*$ is a steadily attracting state~\cite[Corollary~4.5]{gissler2024irreducibility}.
	
		\new{We formulate now the following controllability condition of a steadily attracting state.}
		
		\begin{assumptionH}
			\label{H3:controllability-condition} There exist a steadily attracting state $x^*\in\cX$, an integer $k>0$, and a path $v^*_{1:k}\in\overline{ \cO^k_{x^*}}$, such that $\partial S_{x^*}^k(v_{1:k}^*)$ is of maximal rank.
		\end{assumptionH}
		
\del{	We say that \Cref{H3:controllability-condition} is a \textit{controllability condition} of a steadily attracting state $x^*$.} %
	For a locally Lipschitz function $G\colon\cV^k\to\cX$, $\partial G(v)$ is the Clarke's derivative of $G$ at 
	a point $v\in\cV^k$, 
	which is a set of linear applications between $\mathrm T_v\cV^k$ and $\mathrm T_{G(v)}\cX$~\new{\cite[Appendix~B]{gissler2024irreducibility}.}
	\del{We point out here that i}If $G$ is differentiable at $v$, 
	then $\partial G(v)=\{\mathcal{D}G(v)\}$, 
	where $\mathcal{D}G(v)$ denotes the usual differential application of $G$ in $v$. %
	\new{We then say}\del{Moreover, we say then} that $\partial G(v)$ is of maximal rank when \new{its}\del{every} element\new{s are}\del{ in it is} of maximal rank,
	\nnew{that is,\del{ is} of rank $n$ (the dimension of $\cX$)}. %
	\del{In particular, w}\nnew{W}hen $G$ is differentiable at $v$ and $\mathcal{D}G(v)$ is of maximal rank, 
	then $\partial G(v)=\{\mathcal{D}G(v)\}$ is of maximal rank. \nnew{We base our analysis on the following statement.}
	\begin{theorem}[Sufficient conditions for irreducibility and aperiodicity\new{~\cite[Theorem~2.3]{gissler2024irreducibility}}]
		\label{t:verifiable-conditions}
		Consider the Markov kernel $P$ defined via \eqref{eq:deterministic-control-model} such that \Cref{A4:lsc-distribution}-\Cref{H3:controllability-condition} are satisfied. Then $P$ is an irreducible, aperiodic T-kernel, and every compact set of $\cX$ is small.
	\end{theorem}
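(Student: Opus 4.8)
\emph{Proof idea (plan).} My plan is to follow the control-theoretic route for nonlinear state-space models, adapted to the locally Lipschitz setting in which the classical submersion theorem and the smooth change of variables are unavailable. All three conclusions should flow from a single object: a lower bound on a sampled kernel $K_b$ by a continuous component whose mass concentrates near the point $y^*\coloneqq S_{x^*}^k(v_{1:k}^*)$ reached by the full-rank path of \Cref{H3:controllability-condition}. As a preliminary reduction I would first note that maximal rank is an open property of the Clarke derivative (by upper semicontinuity of $v\mapsto\partial S_{x^*}^k(v)$ and openness of the set of full-rank linear maps), whereas positivity of the density $p_{x^*}^k$ is open by \Cref{A4:lsc-distribution}(i); hence one may perturb $v_{1:k}^*$ from $\overline{\cO_{x^*}^k}$ into the interior $\cO_{x^*}^k$ while keeping both $\partial S_{x^*}^k(v_{1:k}^*)$ of maximal rank and $p_{x^*}^k(v_{1:k}^*)>0$. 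I would also use from the start that, since $F$ is locally Lipschitz (\Cref{A5:C1-update}), the extended transition map $(x,v_{1:j})\mapsto S_x^j(v_{1:j})$ is jointly locally Lipschitz.

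The heart of the argument is the construction of a \emph{local continuous component} at an arbitrary base point $x_0\in\cX$. Because $x^*$ is steadily attracting, for all large $m$ there is a control $u_{1:m}\in\cO_{x_0}^m$ with $S_{x_0}^m(u_{1:m})$ arbitrarily close to $x^*$; I would fix one such $m$ and $u_{1:m}$ and then study the map $w_{1:k}\mapsto S_x^{m+k}(u_{1:m},w_{1:k})$ for $x$ near $x_0$. Its Clarke derivative in $w_{1:k}$ at $v_{1:k}^*$ equals $\partial S_y^k(v_{1:k}^*)$ with $y=S_x^m(u_{1:m})$ close to $x^*$, so by the openness of maximal rank and joint continuity it stays of maximal rank $n=\dim\cX$ for $x$ near $x_0$. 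A Lipschitz inverse/implicit function theorem then supplies a $k$-dimensional slice on which $S_x^{m+k}(u_{1:m},\cdot)$ is a bi-Lipschitz homeomorphism onto a neighborhood of $y^*$, uniformly in $x$ near $x_0$. Pushing the noise density $p_x^{m+k}$ forward through this slice by the area formula for Lipschitz maps, and combining the joint lower semicontinuity of $p_x^{m+k}$ from \Cref{A4:lsc-distribution}(i) with the joint local Lipschitz continuity of $S^{m+k}$, should yield a substochastic kernel $T_{x_0}$ that is lower semicontinuous in its first argument, satisfies $T_{x_0}(x,\cX)>0$ near $x_0$, is dominated by $P^{m+k}(x,\cdot)$, and dominates one fixed measure $\nu$ supported on a small ball around $y^*$.

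From there the three properties should follow by routine gluing. Fixing once and for all a distribution $b$ with full support on $\bN$ (say $b(j)=2^{-j-1}$), each $T_{x_0}$ is, up to the factor $b(m+k)$, a continuous component of $K_b$ positive near $x_0$; by second countability of the manifold $\cX$ I would extract a countable subcover $\{O_{x_i}\}$ and set $T\coloneqq\sum_i 2^{-i}b(m_i+k)\,T_{x_i}$, a genuine continuous component of $K_b$ with $T(x,\cX)>0$ for all $x$, giving the T-kernel property. Since every $T_{x_0}$ dominates the same $\nu$, one obtains $K_b(x,\mathsf A)\geq c_x\,\nu(\mathsf A)$ for all $x$, hence $P^j(x,\mathsf A)>0$ for some $j$ whenever $\nu(\mathsf A)>0$, i.e.\ $\nu$-irreducibility. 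For aperiodicity I would apply the steadily attracting property with starting points in the small neighborhood $O$ of $y^*$: for all large $j$ one has $P^j(x,O)>0$ for $x\in O$, so the set of return times to $O$ contains every large integer and thus has greatest common divisor $1$, forcing period $1$.

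Finally, a $\nu$-irreducible T-chain has every compact set petite \cite{meyn2012markov}, and for an aperiodic chain petite sets are small, so every compact subset of $\cX$ is small, which closes the argument. The hard part will be the local continuous component step under nonsmoothness: with $F$ only locally Lipschitz, $S_x^{m+k}$ is not $C^1$, so the submersion theorem and smooth change of variables must be replaced by the Clarke generalized Jacobian, a Lipschitz inverse function theorem, and the area formula, all while tracking the \emph{joint} lower semicontinuity in the starting point $x$ that the T-chain definition demands. It is precisely to make this step robust that \Cref{H3:controllability-condition} asks for maximal rank of the entire set $\partial S_{x^*}^k(v_{1:k}^*)$ rather than of a single selection.
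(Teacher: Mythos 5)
First, a point of order: the paper does not prove \Cref{t:verifiable-conditions} at all --- the statement is imported verbatim as \cite[Theorem~2.3]{gissler2024irreducibility}, so there is no in-paper proof to compare your attempt against. Your plan does reconstruct the canonical architecture of that line of work (\cite{meyn1991asymptotic}, \cite{chotard2019verifiable}, and the cited reference): a local continuous component for $P^{m+k}$ built by concatenating a steadily-attracting control $u_{1:m}$ with the full-rank path $v_{1:k}^*$, a pushforward of the noise density through a bi-Lipschitz slice, gluing by second countability to obtain the T-kernel property, minorization by a single measure $\nu$ concentrated near $y^*=S_{x^*}^k(v_{1:k}^*)$ for irreducibility, return times to a neighborhood of $y^*$ for aperiodicity, and the petite-to-small upgrade for compact sets. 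At that level the plan is the right one.

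The step you yourself flag as ``the hard part'' is, however, where the real content lies, and as written it has a gap. Clarke's inverse function theorem applies to a map whose generalized Jacobian consists entirely of \emph{invertible} square matrices; here $w_{1:k}\mapsto S_x^{m+k}(u_{1:m},w_{1:k})$ maps a space of dimension $k\dim\cV\geq n$ onto the $n$-dimensional $\cX$, so you must first restrict to an $n$-dimensional affine slice $W$ (not a ``$k$-dimensional'' one, as you write) and then argue that \emph{every} element of the generalized Jacobian of the restricted map is invertible. Two issues arise: (i) the Clarke Jacobian of the restriction is only contained in the set of restrictions of elements of $\partial S^{k}$, which works in your favour, but (ii) the hypothesis that every element of $\partial S_{x^*}^k(v_{1:k}^*)$ is surjective does not by itself produce a single subspace $W$ on which all elements are simultaneously invertible --- this needs a separate compactness/convexity argument on the generalized Jacobian (it is true, but it is a lemma, not an observation). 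Without it the bi-Lipschitz slice, the area-formula pushforward, and hence the whole continuous component are not yet justified. The remaining steps --- perturbing $v_{1:k}^*$ from $\overline{\cO_{x^*}^k}$ into $\cO_{x^*}^k$ using \Cref{A4:lsc-distribution}(i) together with upper semicontinuity of $v\mapsto\partial S_{x^*}^k(v)$, the countable gluing, aperiodicity via return times, and compact-petite-implies-small --- are standard and correctly assembled.
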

\new{This theorem \nnew{summarises}\del{frames} the methodology we follow to analyze \nnew{a normalized} Markov chain \nnew{under\-lying} CMA-ES:
	 we prove that \nnew{the chain}\del{it} satisfies \eqref{eq:deterministic-control-model} as well as 
	 conditions \Cref{A4:lsc-distribution}-\Cref{H3:controllability-condition}\footnote{\nnew{Condition 
	 		\Cref{H4:controllability-subchain} introduced in \Cref{sec:subchain} is 
	 		required instead of \Cref{H3:controllability-condition} when $c_c=1$ \del{and/}or $c_\sigma=1$.}} 
	 under 
	 appropriate conditions on the learning rates\nnew{, as well as on the 
	 	functions $f$, $\Gamma$ and $R$, and on the sampling distribution $\pUd$}.
 	\done{check if we need to says something more than conditions  on the learning rate}}

	\newcommand{\laconic}{projected}
	\newcommand{\laconicMC}{\laconic\ Markov chain}
	\newcommand{\laconicC}{\laconic\ chain}
	\newcommand{\redundant}{redundant}
	
	\subsection{Irreducibility and aperiodicity of a \laconicMC\del{~of a \redundant\ chain}\del{ without cumulation}} \label{sec:subchain}
	\done{correct everywhere an projected}
	The CMA-ES algorithm \new{maintains}\del{involves} two paths $p_t^c$ and $p_t^\sigma$ 
		that do not parametrize the \del{Gaussian }probability distribution \new{for}\del{used to} sampl\new{ing}\del{e} candidate solutions 
		but are used for \new{(accelerating) the update}\del{(speeding-up) the adaptation} of the covariance matrix and \new{the} stepsize\new{, respectively}. 
	Yet, when no cumulation for the stepsize path is used, 
		i.e., $c_\sigma=1$, or no cumulation for the rank-one update path is used, i.e., $c_c=1$, 
		the CMA-ES algorithm typically still works properly while it is \new{sometimes} slower~\nnew{\cite{gissler2023evaluation}}. 
	In \nnew{these}\del{this} case\nnew{s}, the \nnew{normalized} Markov chain \del{representing}\nnew{underlying} CMA-ES\del{ is simpler 
		in the sense that it} can be described with \nnew{fewer}\del{less} variables: 
		$p_t^c$ and $p_t^\sigma$\del{ are void as they} boil down to random vectors that depend on 
		the previous step only through the \new{ranking} permutation\del{ encoding the ranking} of candidate solutions. 
	\del{Yet, i}In order to analyze those algorithm variants \new{without}\del{while not} repeating\del{ entirely the}\del{all}\del{ entire} proofs \nnew{with small variations},
		we \new{introduce here a method}\del{present here }\del{a set of }\del{new results} that allows 
		to derive \del{theoretical }propert\del{y}\nnew{ies} for a \nnew{\laconic}\del{ simpler} \nnew{Markov} chain 
		from a \del{complete}\nnew{\redundant} \nnew{Markov} chain with a specific parameter setting.
	\nnew{We define a \emph{\redundant} Markov chain as a Markov chain $\{(\phi_t,\xi_t)\}_{t\in\bN}$ valued in a topological product space $\cX\times\cY$ such that the process $\{\phi_t\}_{t\in\bN}$ also is a Markov chain, valued in $\cX$.
		In that case, we say that $\{\phi_t\}_{t\in\bN}$ is a \emph{\laconic} Markov chain of $\{(\phi_t,\xi_t)\}_{t\in\bN}$.}
	
	Prior to that, we formalize the simplification of the \new{normalized} Markov chain \new{of CMA-ES} when at least one cumulation parameter is set to $1$. The proof is a direct consequence of \Cref{t:homogeneous-MC} \new{and thus} \new{omitted.}

	\begin{corollary}\label{cor:normalizedMCwithoutcumul}
		\del{is scaling-invariant satisfying {is positively homogeneous} satisf{ying} }%
		Suppose that the objective function $f$ 
			and that the normalization function $R$ \new{satisfy} \Cref{F2:scaling-invariant} \new{and} \Cref{R1:homogeneous}, respectively. 
		Let $\{(\X_t,p_t^\sigma,p_t^c,\covmat_t,\sigma_t)\}_{t\in\bN}$ be the \nnew{Markov} chain associated to CMA-ES defined in \Cref{sec:algo-cmaes} 
			and $\Phi=\{\phi_t\}_{t\geqslant1}=\{(\Z_t,p_{t},q_{t}, \ncovmat_t,r_t)\}_{t\geqslant1}$ be the 
			normalized process defined in \eqref{eq:normalized-chain-def}. 
		\begin{enumerate}
			\item[(i)] If $c_\sigma=1$, then the \del{sequence}\nnew{process} $\{(\Z_t,q_t,\ncovmat_t,r_t)\}_{t\geqslant1}$ defines a (time-homogeneous) Markov chain.
			\item[(ii)] If $c_c=1$, then the \del{sequence}\nnew{process} $\{(\Z_t,p_t,\ncovmat_t)\}_{t\in\bN}$ defines a (time-homogeneous) Markov chain.
			\item[(iii)] If $c_\sigma=c_c=1$, then the \del{sequence}\nnew{process} $\{(\Z_t,\ncovmat_t)\}_{t\in\bN}$ defines a (time-homogeneous) Markov chain.
		\end{enumerate}
		\end{corollary}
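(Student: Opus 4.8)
The plan is to read off the three reductions directly from the explicit recursion \eqref{eq:normalized-chain-updates} established in \Cref{t:homogeneous-MC}, using the elementary fact that any process obeying $\psi_{t+1}=G(\psi_t,U_{t+1})$ for a measurable $G$, with $\{U_{t+1}\}$ i.i.d.\ and independent of $\psi_0$, is automatically a time-homogeneous Markov chain. The structural observation that makes everything work, inherited from \Cref{p:ordering-normalized-offsprings}, is that the sorting permutation $s_{t+1}$ (and hence the ranked block $\Utt$) depends on the current state only through $(\Z_t,\ncovmat_t)$ and the fresh input $U_{t+1}$; in particular it never depends on $p_t$, $q_t$ or $r_t$. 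Since $\Z_t$ and $\ncovmat_t$ are retained coordinates in all three reductions, the only way a dropped variable could re-enter the dynamics is through its own one-step update, and I would check case by case that setting a cumulation parameter to $1$ severs exactly that feedback.

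For (i) I would take $c_\sigma=1$ in \eqref{eq:p}: as $1-c_\sigma=0$ and $c_\sigma(2-c_\sigma)=1$, the second line of \eqref{eq:normalized-chain-updates} becomes $p_{t+1}=\sqrt{\mueff}\,\w_m^\top\Utt$, which no longer involves $p_t$ and is therefore a measurable function of $(\Z_t,\ncovmat_t,U_{t+1})$ alone, as is $\Gamma(p_{t+1})$. Feeding this into the remaining four equations, the updates of $q_{t+1}$, $\ncovmat_{t+1}$ and $r_{t+1}$ are functions of $(\Z_t,q_t,\ncovmat_t,r_t,U_{t+1})$, and $\Z_{t+1}$ depends on the same tuple through $\Gamma(p_{t+1})$ and $r_{t+1}$. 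Since $p_t$ appears nowhere, the quadruple $(\Z_t,q_t,\ncovmat_t,r_t)$ obeys a closed recursion driven by $\{U_{t+1}\}$, proving the claim.

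For (ii) I would take $c_c=1$ in \eqref{eq:p}: the third line of \eqref{eq:normalized-chain-updates} collapses to $q_{t+1}=\sqrt{\mueff}\,\sqrt{\ncovmat_t}\,\w_m^\top\Utt$, where the term $r_t^{-1/2}q_t$ is annihilated by $1-c_c=0$, so $q_{t+1}$ involves neither $q_t$ nor $r_t$. Recalling that $\Utt$ depends on the state only through $(\Z_t,\ncovmat_t)$, I would conclude that $q_{t+1}$, and hence $\ncovmat_{t+1}=r_{t+1}^{-1}\Fc(\ncovmat_t,q_{t+1},\cdot)$ and $r_{t+1}=R\circ\Fc(\ncovmat_t,q_{t+1},\cdot)$, are functions of $(\Z_t,\ncovmat_t,U_{t+1})$, while $p_{t+1}$ and $\Z_{t+1}$ are functions of $(\Z_t,p_t,\ncovmat_t,U_{t+1})$. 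As $q_t$ and $r_t$ enter none of these, the triple $(\Z_t,p_t,\ncovmat_t)$ is a time-homogeneous Markov chain. Case (iii) is the conjunction: with $c_\sigma=c_c=1$ the arguments above give $p_{t+1}$ as a function of $(\Z_t,\ncovmat_t,U_{t+1})$ and $r_{t+1}$ of $(\Z_t,\ncovmat_t,U_{t+1})$, so $\Z_{t+1}$ and $\ncovmat_{t+1}$ close up on $(\Z_t,\ncovmat_t)$.

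I expect no serious obstacle: the content is the bookkeeping of which arguments survive in \eqref{eq:normalized-chain-updates}, and the single point that must not be glossed over is the permutation's lack of dependence on the dropped coordinates, which is exactly what \Cref{p:ordering-normalized-offsprings} guarantees. This is why the result is a direct corollary of \Cref{t:homogeneous-MC}, and it also serves as the prototypical instance of the \emph{projected}/\emph{redundant} framework formalized next.
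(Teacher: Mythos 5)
Your argument is correct and is precisely the bookkeeping that the paper leaves implicit when it declares the corollary "a direct consequence of \Cref{t:homogeneous-MC}" and omits the proof: you verify from the recursion \eqref{eq:normalized-chain-updates} that setting $c_\sigma=1$ (resp.\ $c_c=1$) removes the only occurrence of $p_t$ (resp.\ of $q_t$ and $r_t$) in the update of the retained coordinates, the permutation $s_{t+1}$ depending only on $(\Z_t,\ncovmat_t,U_{t+1})$. The one cosmetic remark is that this last fact follows directly from the definition of $s_{t+1}$ in \Cref{t:homogeneous-MC} rather than from \Cref{p:ordering-normalized-offsprings}, but this does not affect the validity of the proof.
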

	\del{This}\nnew{\Cref{cor:normalizedMCwithoutcumul}} motivates \del{to introduce}\nnew{the introduction of} the notion 
		of \nnew{a} \textit{\laconicC}
		of a Markov chain $\tilde\Phi$, 
		and to provide conditions for irreducibility and aperiodicity as in \Cref{t:verifiable-conditions}.
	
	Define $\tilde\Phi = \{(\phi_t,\chi_t)\}_{t\in\bN}$ a \new{so-called }\textit{redundant} Markov chain 
		on $(\cX\times\cY,\cB(\cX\times\cY))$, with \del{Markov}\nnew{transition} kernel $\tilde P$, 
		\del{following the deterministic control model defined earlier in \Cref{sec:deterministic-control-model}\del{, i.e.,}\nnew{as} }
		\nnew{such that}
	\begin{equation}
		\label{eq:control-model-overchain}
		(\phi_{t+1},\chi_{t+1}) = \tilde F (\phi_t,\chi_t,\tilde\alpha(\phi_t,\chi_t,U_{t+1}))
	\end{equation}
	where $\tilde F\colon\cX\times\cY\times\cV\to\cX\times\cY$ and $\tilde\alpha\colon\cX\times\cY\times\cU\to\cV$ are measurable maps, 
		$\cX,\cY,\cV$ are (smooth, connected) manifolds\del{ of dimensions $n,d,p$ respectively}, 
		$(\cU,\del{\cB(\cU)}\nnew{\mathcal{U}})$ is a measurable space 
		and $\{U_{t+1}\}_{t\in\bN}$ is an i.i.d.\ process valued in $\cU$, independent of $(\phi_0,\chi_0)$.
	Assume \Cref{A4:lsc-distribution}-\Cref{A5:C1-update}, and denote $\tilde S^k_{(x,y)}$, $\tilde p_{(x,y)}^k$ and $\tilde\cO_{(x,y)}^k$ the extended transition map, the extended probability density and the control sets associated to the control model \eqref{eq:control-model-overchain}, for every $(x,y)\in\cX\times\cY$ and $k\in\bN$, respectively. 
	Besides, we suppose \new{redundancy of the chain by assuming} that the function $\tilde\alpha$ does not depend on the variable $\chi$, i.e., \new{there exists a function $\alpha$ such that}\del{ we can write}
	\begin{equation}\label{eq:redundancy}
		\tilde\alpha(\phi,\chi,u) = \alpha(\phi,u) \quad \text{for every } \phi\in\cX, \chi\in\cY , u\in\cU .
	\end{equation}
	Furthermore, we suppose that $\Phi=\{\phi_t\}_{t\in\bN}$ is a Markov chain on $\cX$ with \del{Markov}\nnew{transition} kernel denoted $P$, following the next deterministic control model
	\begin{equation}
		\label{eq:control-model-subchain}
		\phi_{t+1} = F(\phi_t,\alpha(\phi_t,U_{t+1})),
	\end{equation}
	with $\{U_{t+1}\}_{t\in\bN}$ being the i.i.d.\ process introduced to define the \redundant\ chain via \eqref{eq:control-model-overchain}.
	Then, we say that $\Phi$ is a \laconicC\ of $\tilde\Phi$.
	\del{Again,}\new{As above,} we denote $S^k_x$, $p_x^k$ and $\cO_x^k$ the extended transition map, the extended probability density and the control sets associated to the control model \eqref{eq:control-model-subchain}, for every $x\in\cX$ and $k\in\bN$, respectively.
	The next proposition connects the assumptions required for the two deterministic control models \eqref{eq:control-model-overchain} and \eqref{eq:control-model-subchain} that are useful to show that $\tilde \Phi$ and $\Phi$ are irreducible aperiodic T-chains.
	\begin{proposition}
		\label{p:DCM-subchain}
		Consider the control models \new{associated to the \redundant~chain} \eqref{eq:control-model-overchain} and \new{its associated \laconicC} \eqref{eq:control-model-subchain}. Then,
		\begin{itemize}
			\item[(i)] \new{if \Cref{A4:lsc-distribution} (resp.\ \Cref{A5:C1-update}) is satisfied for the \redundant~chain \eqref{eq:control-model-overchain}, 
				\nnew{then} it is satisfied for its \laconic\ chain \eqref{eq:control-model-subchain}};
			\item[(ii)] the closures of the control sets $\cO^k_{\phi}$ of \new{the \laconicC} \eqref{eq:control-model-subchain} equal the closures of the control sets $\tilde\cO^k_{(\phi,\chi)}$ of \new{the \redundant~chain} \eqref{eq:control-model-overchain}, that is,
			\begin{equation}
				\overline{\cO^{k}_{\phi}} = \overline{\tilde\cO^k_{(\phi,\chi)}} \quad \text{for every } \phi\in\cX, \chi\in\cY, k\geqslant1;
			\end{equation}
			\item[(iii)] the extended transition maps $S_\phi^k$ and $\tilde S_{(\phi,\chi)}^k$, 
				defined in \eqref{eq:extended-transition-map}, of the control models \new{of the \laconicC} \eqref{eq:control-model-subchain}, 
				and \new{the \redundant\ chain} \eqref{eq:control-model-overchain}, respectively, satisfy
			\begin{equation}
				S_{\phi}^k = \Pi_{\cX} \circ \tilde S_{(\phi,\chi)}^k \quad \text{for every } \phi\in\cX, \chi\in\cY, k\geqslant1, 
			\end{equation}
			where $\Pi_{\cX}\colon\cX\times\cY\to\cX$ is the canonical projection of $\cX\times\cY$ on $\cX$.
		\end{itemize}
	\end{proposition}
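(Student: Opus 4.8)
The plan is to prove (iii) first, deduce (ii) from it, and treat (i) essentially on its own. Everything rests on one structural fact built into the notion of a projected chain (the projection condition stated in the introduction): writing $\iota_\chi\colon(\phi,v)\mapsto(\phi,\chi,v)$ for the slice inclusion, the first component of $\tilde F$ does not see the redundant coordinate, i.e.
$$\Pi_{\cX}\circ\tilde F\circ\iota_\chi(\phi,v)=F(\phi,v)\qquad\text{for every }\phi\in\cX,\ \chi\in\cY,\ v\in\cV,$$
and, by the redundancy assumption \eqref{eq:redundancy}, the one–step law $\mu_{(\phi,\chi)}$ of $\tilde\alpha(\phi,\chi,U_1)=\alpha(\phi,U_1)$ is exactly the one–step law $\mu_\phi$ of the projected model, independently of $\chi$. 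These two facts separate the ``deterministic'' content (transition maps) from the ``stochastic'' content (densities).

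\textbf{Part (iii).} I would argue by induction on $k$, using only $F$ and $\tilde F$ (no densities). The base case $k=0$ is immediate, since $\Pi_\cX\circ\tilde S^0_{(\phi,\chi)}=\Pi_\cX(\phi,\chi)=\phi=S^0_\phi$. For the inductive step, assume $\Pi_\cX\circ\tilde S^{k-1}_{(\phi,\chi)}=S^{k-1}_\phi$ and write $\tilde S^{k-1}_{(\phi,\chi)}(v_{1:k-1})=(a,b)$ with $a=S^{k-1}_\phi(v_{1:k-1})$ by the inductive hypothesis; then \eqref{eq:extended-transition-map} and the structural observation give
$$\Pi_\cX\circ\tilde S^{k}_{(\phi,\chi)}(v_{1:k})=\Pi_\cX\circ\tilde F(a,b,v_k)=F(a,v_k)=S^{k}_\phi(v_{1:k}),$$
which closes the induction.

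\textbf{Part (ii).} Here I would compare the extended densities. By \eqref{eq:redundancy} the one–step densities coincide ($\tilde p_{(\phi,\chi)}$ and $p_\phi$ are densities of the same law $\mu_\phi$, hence $\zeta_{\cV}$–a.e.\ equal), and by (iii) the base point of each factor agrees: $\tilde p_{\tilde S^{j-1}_{(\phi,\chi)}(v_{1:j-1})}=p_{S^{j-1}_\phi(v_{1:j-1})}$ up to a $\zeta_{\cV}$–null set in the last argument. Multiplying the factors and invoking a Fubini argument on $\cV^k$ yields $\tilde p^k_{(\phi,\chi)}=p^k_\phi$ $\zeta_{\cV}^{\otimes k}$–a.e., so the control sets $\cO^k_\phi=\{p^k_\phi>0\}$ and $\tilde\cO^k_{(\phi,\chi)}=\{\tilde p^k_{(\phi,\chi)}>0\}$ are open sets differing only by a negligible set. \emph{This is the main obstacle}: a.e.\ equality of densities does not give equality of the positivity sets, only of their closures. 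To close the gap I would use \Cref{A4:lsc-distribution}(ii): if a point lay in one open set but outside the closure of the other, an entire open neighbourhood would lie in the symmetric difference, contradicting that nonempty open sets are non-negligible; hence $\overline{\cO^k_\phi}=\overline{\tilde\cO^k_{(\phi,\chi)}}$.

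\textbf{Part (i).} The transfer of \Cref{A4:lsc-distribution} is direct: I take the projected one–step density to be the slice $p_\phi:=\tilde p_{(\phi,\chi_0)}$, which is a density of $\alpha(\phi,U_1)=\tilde\alpha(\phi,\chi_0,U_1)$ and is lower semicontinuous as the composition of the lsc map $\tilde p$ with the continuous inclusion $\iota_{\chi_0}$; part (ii) of the assumption is unchanged since $\cV$ and $\zeta_{\cV}$ are the same. The transfer of \Cref{A5:C1-update} is equally direct: $F=\Pi_\cX\circ\tilde F\circ\iota_{\chi_0}$ is a composition of locally Lipschitz maps (the smooth inclusion $\iota_{\chi_0}$, the locally Lipschitz $\tilde F$, and the smooth projection $\Pi_\cX$), hence locally Lipschitz for the relevant product metrics, as required.
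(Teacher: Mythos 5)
Your proposal is correct and follows essentially the same route as the paper's proof: the same induction on $k$ for (iii), the same slicing argument (fixing $\chi_0$ and using \eqref{eq:redundancy} plus a.e.\ uniqueness of densities) for (i), and the same ``open sets whose symmetric difference is $\zeta_{\cV}$-negligible have equal closures'' argument for (ii), which the paper attributes to Carath\'eodory's criterion and you spell out directly. The only difference is cosmetic: you prove (iii) before (ii) so that the base points of the density factors are identified before the Fubini step, which is arguably a cleaner ordering of the same ingredients.
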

	\begin{proof}
		First, we prove (i).
		Suppose that \new{the redundant Markov chain following} \eqref{eq:control-model-overchain} satisfies \Cref{A4:lsc-distribution}, 
			i.e., for all $(\phi,\chi)\in\cX\times\cY$, the random variable $\tilde\alpha(\phi,\chi,U_1)$ admits 
			a density $\tilde p_{(\phi,\chi)}$ \wrt\ a $\sigma$-finite measure $\zeta_{\cV}$ satisfying \Cref{A4:lsc-distribution}, 
			such that $(\phi,\chi,v)\mapsto \tilde p_{(\phi,\chi)}(v)$ is \lsc. 
		\del{However, }%
		\new{Let $\phi\in\cX$. 
		By \eqref{eq:redundancy} \nnew{we have} $\alpha(\phi,U_1)=\tilde\alpha(\phi_,\chi,U_1)$ for every $\chi\in\cY$. 
		\del{Fixing an arbitrary}%
		\nnew{Let} $\chi_0\in\cY$. 
		\del{we know that}%
		\nnew{Then the random variable} $\tilde\alpha(\phi_,\chi_0,U_1)$ admits a density $\tilde p_{(\phi,\chi_0)}$ \wrt\ \nnew{a measure} $\zeta_{\cV}$ on $\cV$ \nnew{satisfying \Cref{A4:lsc-distribution}(ii), and $(\phi,v)\mapsto\tilde p_{(\phi,\chi_0)}(v)$}\del{which} is \lsc. Hence,  $\alpha(\phi,U_1)$ also admits a density \wrt\ $\zeta_{\cV}$ \del{that we }denote\nnew{d} $p_\phi$, \nnew{such that}
		$p_\phi(v) = \tilde p_{(\phi,\chi_0)}(v)$ for every  $v\in\cV$. 
		\del{Repeating the reasoning for another $\chi$ b}%
		By uniqueness of the density up to a null set, we obtain that, for $\phi\in\cX$ and $\chi\in\cY$
		\begin{equation}\label{eq:equaldens}
		p_\phi(v) = \tilde p_{(\phi,\chi)}(v) \quad \text{for } \zeta_{\cV}\mbox{-almost every } v\in\cV.
		\end{equation}
		Since $(\phi\del{,\chi_0},v) \mapsto  p_{(\phi,\chi_0)}(v)$ is \lsc, we obtain that $(\phi,v) \mapsto p_\phi(v) = p_{(\phi,\chi_0)}(v)$ is \lsc and thus the \laconicC~satisfies \Cref{A4:lsc-distribution}.}%
		\del{by definition, we have that $\alpha(\phi,U_1)=\tilde\alpha(\phi_,\chi,U_1)$ admits then the same density \wrt\ $\zeta_{\cV}$, and thus \eqref{eq:control-model-subchain} satisfies \Cref{A4:lsc-distribution}.} For assumption \Cref{A5:C1-update}, if $\tilde F$ is locally Lipschitz, then, since by definition we have $F(\phi,v) = \Pi_{\cX}\circ \tilde F(\phi,\chi,v)$ for $\phi\in\cX$, $\chi\in\cY$ and $v\in\cV$, by composition $F$ is locally Lipschitz ($\cY$ being nonempty).
		
		For (ii), \del{, observe that, since,}from \eqref{eq:equaldens} 
			and by definition of the control sets in \eqref{eq:control-sets}, 
			\del{  as shown above, $\tilde p_{(\phi,\chi)}=p_\phi$ for every $(\phi,\chi)\in\cX\times\cY$, then, by \eqref{eq:control-sets},}%
			for every $(\phi,\chi)\in\cX\times\cY$, $\cO^{k}_{\phi}$ and $\tilde\cO^k_{(\phi,\chi)}$ only differ by 
			a $\zeta_{\cV}$-negligible set. 
		Besides, both are open sets and $\zeta_{\cV}$ is ,by\del{ assumption} \Cref{A4:lsc-distribution}(ii), 
			a Borel measure, so $\overline{\cO^{k}_{\phi}} = \overline{\tilde\cO^k_{(\phi,\chi)}}$ 
			(as a direct consequence of Carath\'eodory's criterion of Borel measures~\cite[Theorem 1.9]{gariepy2015measure}). 
		
		In order to prove (iii), we proceed by induction. 
		Indeed, $S_\phi^0 = \phi = \Pi_{\cX}(\phi,\chi)=\Pi_{\cX}\circ \tilde S_{(\phi,\chi)}^0$. 
		\del{For}\nnew{Let} $k\geqslant0$ \nnew{and} assume $S_{\phi}^k=\Pi_\cX\circ \tilde S_{(\phi,\chi)}^k$. 
		Let $v_{1:k+1}\in\cV^{k+1}$, we find 
			$S_{\phi}^{k+1} (v_{1:k+1}) = F(S_{\phi}^k(v_{1:k}),v_{k+1} ) =  \Pi_{\cX} \circ \tilde F(S_{\phi}^k(v_{1:k}),\chi_k,v_{k+1})$ 
			where  $\chi_k=\Pi_{\cY} \circ \tilde S_{(\phi,\chi)}^k(v_{1:k}) \in\cY$. 
		By induction hypothesis we find that 
			$\Pi_{\cX} \circ \tilde F(S_{\phi}^k(v_{1:k}),\chi_k,v_{k+1}) 
			= \Pi_{\cX} \circ \tilde F(\tilde S_{(\phi,\chi)}^k(v_{1:k}),v_{k+1}) 
			=  \Pi_{\cX} \circ \tilde S_{(\phi,\chi)}^{k+1} (v_{1:k+1})$ 
			and thus  
			$S_{\phi}^{k+1} (v_{1:k+1}) = \Pi_{\cX} \circ \tilde S_{(\phi,\chi)}^{k+1} (v_{1:k+1})$. 
		Hence $S_{\phi}^{k+1}=\Pi_\cX\circ \tilde S_{(\phi,\chi)}^{k+1}$.
	\end{proof}
	We deduce \del{then }the \del{next}\nnew{following} result, 
		which characterizes the controllability condition for the \laconicMC.
		\begin{proposition}
		\label{c:SAS-for-subchain}
		\new{Assume that $F$ is continuous.\footnote{
				Alternatively, we can assume without loss of generality that for every $\phi\in\cX$, 
				the density functions $\tilde p_{(\phi,\chi)}$ are \del{all the same}\nnew{identical} for \del{every }$\chi\in\cY$. }}
		\del{Suppose that}\nnew{If} $x^*=(\phi^*,\chi^*)\in\cX\times\cY$ is 
			a steadily attracting state of \new{the redundant chain }\eqref{eq:control-model-overchain}\del{. T}\nnew{, t}hen 
			$\phi^*$ is a steadily attracting state of \new{the \laconicC~}\eqref{eq:control-model-subchain}. 
		
		Suppose moreover that there exists $k\geqslant 1$ and $v^*_{1:k}\in\overline{ \tilde\cO_{x^*}^k }$ such that $\tilde S_{x^*}^k$ is differentiable at $v^*_{1:k}$, and for every $h^\phi\in \mathrm T_{\phi_k}\cX$, there exists $h^\chi\in \mathrm T_{\chi_k}\cY$, where $(\phi_k,\chi_k)=\tilde{S}^k_{x^*}(v_{1:k}^*)$ and with $(h^\phi,h^\chi)\in\mathrm{rge}~\mathcal{D} \tilde S^k_{x^*}(v^*_{1:k})$.
		Then $v^*_{1:k}\in\overline{\cO_{\phi^*}^k}$ and $\mathcal{D} S^k_{\phi^*}(v^*_{1:k})$ exists and is of maximal rank.
	\end{proposition}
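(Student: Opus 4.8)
The plan is to deduce everything from Proposition~\ref{p:DCM-subchain}, which already supplies the two structural facts I will need: the closures of the control sets coincide, $\overline{\cO^k_\phi}=\overline{\tilde\cO^k_{(\phi,\chi)}}$, and the projected extended transition map factors through the projection, $S^k_\phi=\Pi_\cX\circ\tilde S^k_{(\phi,\chi)}$. With these two identities in hand, both assertions become essentially bookkeeping, and the only care needed is in reconciling a control set with its closure.

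For the first assertion, I would fix $\phi\in\cX$ and a neighborhood $U_\phi$ of $\phi^*$, and take the open neighborhood $U:=U_\phi\times\cY$ of $x^*=(\phi^*,\chi^*)$, which satisfies $\Pi_\cX(U)=U_\phi$. Choosing an arbitrary $\chi\in\cY$ and invoking the steadily-attracting hypothesis of the redundant chain at the starting point $(\phi,\chi)$ and the neighborhood $U$ produces a $T>0$ such that for each $k\geq T$ there is $v_{1:k}\in\tilde\cO^k_{(\phi,\chi)}$ with $\tilde S^k_{(\phi,\chi)}(v_{1:k})\in U$. Projecting with $\Pi_\cX$ and using $S^k_\phi=\Pi_\cX\circ\tilde S^k_{(\phi,\chi)}$ gives $S^k_\phi(v_{1:k})\in U_\phi$, while $\tilde\cO^k_{(\phi,\chi)}\subseteq\overline{\tilde\cO^k_{(\phi,\chi)}}=\overline{\cO^k_\phi}$ places $v_{1:k}$ in $\overline{\cO^k_\phi}$. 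Because $F$ is continuous, the steadily-attracting property of the projected chain may be tested with $\overline{\cO^k_\phi}$ in place of $\cO^k_\phi$, so this is exactly what is needed to conclude that $\phi^*$ is steadily attracting for \eqref{eq:control-model-subchain}; under the footnote's alternative hypothesis the control sets agree as sets and the closure step is unnecessary.

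For the controllability part, the inclusion $v^*_{1:k}\in\overline{\cO^k_{\phi^*}}$ is immediate from Proposition~\ref{p:DCM-subchain}(ii) applied to $x^*=(\phi^*,\chi^*)$. Differentiability of $S^k_{\phi^*}$ at $v^*_{1:k}$ then follows from $S^k_{\phi^*}=\Pi_\cX\circ\tilde S^k_{x^*}$ and the chain rule, yielding $\mathcal D S^k_{\phi^*}(v^*_{1:k})=\mathrm{pr}_1\circ\mathcal D\tilde S^k_{x^*}(v^*_{1:k})$, where $\mathrm{pr}_1\colon\mathrm T_{\phi_k}\cX\oplus\mathrm T_{\chi_k}\cY\to\mathrm T_{\phi_k}\cX$ is the differential of $\Pi_\cX$ at $(\phi_k,\chi_k)$, i.e.\ the projection onto the first factor. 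To obtain maximal rank I would take an arbitrary $h^\phi\in\mathrm T_{\phi_k}\cX$; the hypothesis supplies some $h^\chi$ with $(h^\phi,h^\chi)\in\mathrm{rge}~\mathcal D\tilde S^k_{x^*}(v^*_{1:k})$, that is, a tangent vector $w$ with $\mathcal D\tilde S^k_{x^*}(v^*_{1:k})(w)=(h^\phi,h^\chi)$, and applying $\mathrm{pr}_1$ gives $\mathcal D S^k_{\phi^*}(v^*_{1:k})(w)=h^\phi$. Since $h^\phi$ is arbitrary, the range of $\mathcal D S^k_{\phi^*}(v^*_{1:k})$ is all of $\mathrm T_{\phi_k}\cX$, i.e.\ the differential is surjective, which is maximal rank.

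I expect no serious analytic difficulty here once Proposition~\ref{p:DCM-subchain} is available: the rank computation is pure linear algebra through the projection, and the transfer of the steadily-attracting property is a neighborhood chase. The one genuinely delicate point is the mismatch between a control set and its closure in the first part, which is exactly why continuity of $F$ (or, alternatively, the $\chi$-independence of the densities) is assumed, and I would make the appeal to the closure characterization of steadily-attracting states explicit there to avoid any gap.
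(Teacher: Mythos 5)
Your proposal is correct and follows essentially the same route as the paper's proof: both parts rest on Proposition~\ref{p:DCM-subchain}(ii) and (iii), the closure characterization of steadily attracting states available when $F$ is continuous, and the chain rule identifying $\mathcal{D}S^k_{\phi^*}(v^*_{1:k})$ with the tangent-space projection composed with $\mathcal{D}\tilde S^k_{x^*}(v^*_{1:k})$, from which surjectivity follows exactly as you argue. Your write-up of the first part (the explicit neighborhood chase with $U=U_\phi\times\cY$) is in fact more detailed than the paper's, which compresses it to a remark that the transfer is then "easily" proved.
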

	\begin{proof}
	\new{Since $F$ is continuous, we can use the definition of a steadily attracting set via taking the elements for the $k$-step paths within the closure of the control sets (see Section~\ref{sec:deterministic-control-model}) instead of the control sets. According to the previous proposition 	$\overline{\cO^{k}_{\phi}} = \overline{\tilde\cO^k_{(\phi,\chi)}} \quad \text{for every } \phi\in\cX, \chi\in\cY, k\geqslant1~ $ and we can thus easily  prove that}\del{		By definition of steadily attracting states,
		it is straightforward that} $\phi^*$ is steadily attracting for \eqref{eq:control-model-subchain} when $(\phi^*,\chi^*)$ is steadily attracting for \eqref{eq:control-model-overchain}.
		Moreover, by \Cref{p:DCM-subchain}(iii), we have that $S_{\phi^*}^k = \Pi_{\cX} \circ \tilde S_{(\phi^*,\chi^*)}^k$.
		Then, by the chain rule~\cite[Corollary 2.6.6]{clarke1990optimization}, we have
		\begin{align*}
			\mathcal{D} S^{k}_{\phi^*} (v_{1:k}^*) & =  \mathcal{D}\Pi_{\cX}(\tilde S_{(\phi^*,\chi^*)}^k(v_{1:k}^*)) \circ \mathcal{D} \tilde S_{(\phi^*,\chi^*)}^k(v^*_{1:k}) \\
			& = \Pi_{\mathrm T_{ S_{\phi^*}^k(v_{1:k}^*)}\cX} \circ \mathcal{D} \tilde S_{(\phi^*,\chi^*)}^k(v_{1:k}^*).
		\end{align*}
		\done{typos here with mixing up $v^*$ and $v_{1:k}^*$ plus $\Pi_{\mathrm T_{\tilde S_{\phi^*}^k(v^*)}\cX}$ reads $\Pi_{\mathrm T_{S_{\phi^*}^k(v^*)}\cX}$ in previous equation?}
		The\del{n,}\nnew{refore} every $h^\phi\in \mathrm T_{S_{\phi^*}^k(v^*_{1:k})}\cX$ belongs to the range of $\mathcal{D}S_{\phi^*}^k(v_{1:k}^*)$\done{$\mathcal{D}S_{\phi^*}^k(v_{1:k}^*)$} \nnew{(we use here that by assumption there exists $h^\chi \in \mathrm T _ {\Pi_{\cY}S_{(\phi^*,\xi^*)}(v^*_{1:k})} $)}, 
			\del{thus }making it a surjective linear map, hence of maximal rank.%
	\end{proof}
	As a consequence, we derive sufficient conditions for irreducibility and aperiodicity of the kernel $P$ of the \laconicMC~$\Phi$. We replace the controllability condition \Cref{H3:controllability-condition} by the following.
	\begin{assumptionH}
		\label{H4:controllability-subchain} There exist a steadily attracting $x^*$, an integer $k>0$ and a path $v^*_{1:k}\in\overline{ \tilde\cO_{x^*}^k }$ such that $S_{x^*}^k$ is differentiable at $v^*_{1:k}$, and for every $h^\phi\in \mathrm T_\phi\cX$, there exists $h^\chi\in \mathrm T_\chi\cY$ with $(h^\phi,h^\chi)\in\mathrm{rge}~\mathcal{D} \tilde S^k_{x^*}(v^*_{1:k})$, where $(\phi,\chi)=\tilde{S}_{x^*}^k(v_{1:k}^*)$.
	\end{assumptionH}
	\begin{theorem}[Sufficient conditions for irreducibility and aperiodicity of a \laconicMC]
		\label{c:verifiable-conditions-subchain}
		Consider the control model \new{of the \redundant~chain} \eqref{eq:control-model-overchain} \del{for which}\new{and assume it satisfies} conditions \Cref{A4:lsc-distribution}-\Cref{A5:C1-update} and \Cref{H4:controllability-subchain}\del{ are satisfied}. Then the kernel $P$ \new{of the \laconicC~}defined via \eqref{eq:control-model-subchain} is an irreducible aperiodic T-kernel, and every compact set of $\cX$ is small.
	\end{theorem}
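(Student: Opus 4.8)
The plan is to recognize this as the ``payoff'' statement that simply assembles \Cref{p:DCM-subchain} and \Cref{c:SAS-for-subchain} in order to invoke \Cref{t:verifiable-conditions} on the projected model. The projected chain obeys \eqref{eq:control-model-subchain}, which is already an instance of the generic control model \eqref{eq:deterministic-control-model} with update function $F$ and input function $\alpha$. Hence it suffices to verify the three hypotheses \Cref{A4:lsc-distribution}--\Cref{H3:controllability-condition} for \eqref{eq:control-model-subchain} and then quote \Cref{t:verifiable-conditions} verbatim.

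First I would dispatch \Cref{A4:lsc-distribution} and \Cref{A5:C1-update} for the projected model: these are exactly the content of \Cref{p:DCM-subchain}(i), which transfers both conditions from the redundant chain (where they hold by hypothesis) to the projected chain. As a by-product, \Cref{A5:C1-update} yields that $F$ is locally Lipschitz, hence continuous---a fact needed in the next step.

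The core of the argument is to produce the controllability condition \Cref{H3:controllability-condition} for the projected chain, and this is precisely why \Cref{H4:controllability-subchain} was formulated. I would feed the steadily attracting state $x^*=(\phi^*,\chi^*)$ of the redundant chain and the path $v^*_{1:k}\in\overline{\tilde\cO^k_{x^*}}$ supplied by \Cref{H4:controllability-subchain} into \Cref{c:SAS-for-subchain}; since $F$ is continuous, that proposition applies and returns that $\phi^*$ is steadily attracting for \eqref{eq:control-model-subchain}, that $v^*_{1:k}\in\overline{\cO^k_{\phi^*}}$, and that $\mathcal{D}S^k_{\phi^*}(v^*_{1:k})$ exists and is of maximal rank. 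By the remark recalled just before \Cref{t:verifiable-conditions} (differentiability with maximal-rank differential forces $\partial S^k_{\phi^*}(v^*_{1:k})=\{\mathcal{D}S^k_{\phi^*}(v^*_{1:k})\}$), the Clarke derivative is then of maximal rank, so \Cref{H3:controllability-condition} holds for the projected model. Applying \Cref{t:verifiable-conditions} concludes.

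I expect no genuine obstacle here: the theorem is a clean composition of already-proved results, and the only things to watch are bookkeeping points---checking that the continuity hypothesis of \Cref{c:SAS-for-subchain} is automatically met (it is, via local Lipschitzness of $F$), and confirming that \Cref{H4:controllability-subchain} is phrased in terms of the redundant transition map $\tilde S$ so that it matches the input expected by \Cref{c:SAS-for-subchain}. The conceptual work---propagating the steadily-attracting property through the projection using \Cref{p:DCM-subchain}(ii), and transferring maximal rank across the canonical projection $\Pi_{\cX}$ via the chain rule---has already been carried out inside \Cref{c:SAS-for-subchain}; here it is only invoked.
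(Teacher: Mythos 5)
Your proposal is correct and follows essentially the same route as the paper: verify \Cref{A4:lsc-distribution}--\Cref{A5:C1-update} for the projected model via \Cref{p:DCM-subchain}(i), use \Cref{c:SAS-for-subchain} (with continuity of $F$ from \Cref{A5:C1-update}) together with \Cref{p:DCM-subchain}(ii) to obtain the steadily attracting state, the membership $v^*_{1:k}\in\overline{\cO^k_{\phi^*}}$ and the maximal-rank differential, and then invoke \Cref{t:verifiable-conditions}. Your version is if anything slightly more explicit than the paper's, which leaves the transfer of \Cref{A4:lsc-distribution}--\Cref{A5:C1-update} and the passage from a maximal-rank $\mathcal{D}S^k_{\phi^*}(v^*_{1:k})$ to a maximal-rank Clarke derivative implicit.
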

	
	\begin{proof}
		Denote $x^*= (\phi^*,\chi^*)$. Since $x^*$ is steadily attracting for \eqref{eq:control-model-overchain} \del{, then}\nnew{and} 
			\new{since $F$ is continuous by \Cref{A5:C1-update},} 
			\nnew{then} by \Cref{c:SAS-for-subchain}
			$\phi^*$ is steadily attracting for \eqref{eq:control-model-subchain}. 
		Besides, by \Cref{p:DCM-subchain}(ii), we have $v^*_{1:k}\in\overline{\cO_{x^*}^k}=\overline{\tilde\cO_{\phi^*}^k}$. 
		\del{Furthermore, using}\nnew{By} \Cref{c:SAS-for-subchain} again, we find that $\mathcal{D} S_{\phi^*}^k(v^*_{1:k})$ exists and is of maximal rank. 
		We complete the proof by \del{using}\nnew{applying} \Cref{t:verifiable-conditions}.
	\end{proof}
	
	\new{\Cref{c:verifiable-conditions-subchain} is used later to analyze the normalized chain defined in \eqref{eq:normalized-chain-def} when $c_c=0$ or $c_\sigma=0$. Even though \Cref{t:verifiable-conditions} could be applied directly to the \laconicC, this generalization allows to find a steadily attracting state and prove a controllability condition on the same chain for all \nnew{settings}\del{studied}\del{different cases,} without repeating \del{several times a similar}\nnew{the same} proof.}

	\subsection{Homeomorphic transformation of an irreducible aperiodic T-chain}\label{sec:homeomorphism}
	
		The state space of the chain $\Phi$ defined via \eqref{eq:normalized-chain-def} 
			is not a smooth manifold 
			if the normalization function $R$ is not continuously differentiable on $\Sdpp$. 
		\new{
			\del{Yet we want to prove the irreducibility of some normalized CMA Markov chains for $R(\cdot)$ which is not $\cC^1$, }%
			\nnew{In order to include nonsmooth functions $R$ in our analysis}
			(for instance if $R(\cdot)$ is the minimal eigenvalue of a positive definite matrix)},
		\del{Therefore }%
		we apply \Cref{t:verifiable-conditions} (or \Cref{c:verifiable-conditions-subchain} when we do not have cumulation) 
			to a Markov chain $\Theta$, 
			defined as a homeomorphic transformation of $\Phi$, 
			such that the state space of $\Theta$ is a smooth manifold. 
		This is \del{done}\nnew{achieved} in \Cref{sec-DCM-for-CMA,sec:steadily-attracting-state,sec:rank-condition-cma,sec:main-result}. 
		\del{Here}\del{In this section}\nnew{Now},
			we \del{observe that}\new{explain why} it is sufficient to prove that the transformed chain $\Theta$ is 
			an irreducible, aperiodic T-chain\del{ such that compact sets are small in order} 
			to have the same properties on $\Phi$.

		\begin{theorem}\label{p:irreducibility-via-homeomorphism}
			Let $\xi\colon\cY\to\cX$ be a homeomorphism between the topological spaces $\cY$ and $\cX$, equipped with their respective Borel $\sigma$-fields. Let $\Phi=\{\phi_t\}_{t\in\bN}$ be a (time-homogeneous) Markov chain with state space $\cY$, and define $\Theta=\{\xi(\phi_t)\}_{t\in\bN}$. Then,
			\begin{enumerate}
				\item[(i)] $\Theta$ is a (time-homogeneous) Markov chain with state space $\cX$;
				\item[(ii)] if $\Theta$ is irreducible (resp.\ aperiodic, a T-chain\del{, such that compact sets of $\cX$ are small}), 
					then $\Phi$ is irreducible (resp.\ aperiodic, a T-chain\del{, such that compact sets of $\cY$ are small}).
			\end{enumerate}
		\end{theorem}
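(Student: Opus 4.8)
The plan is to work entirely at the level of transition kernels. Write $P$ for the transition kernel of $\Phi$ on $(\cY,\cB(\cY))$. The foundational observation I would record first is that, being a homeomorphism, $\xi$ and $\xi^{-1}$ are continuous, hence Borel measurable, and since they are bijections they map Borel sets to Borel sets; thus $\xi$ is a Borel isomorphism and $\mathsf A\mapsto\xi^{-1}(\mathsf A)$ commutes with complementation and countable disjoint unions. This lets me define a candidate kernel $Q$ on $(\cX,\cB(\cX))$ by $Q(x,\mathsf A)\coloneqq P(\xi^{-1}(x),\xi^{-1}(\mathsf A))$. For fixed $x$, $Q(x,\cdot)$ is the pushforward $\xi_\ast P(\xi^{-1}(x),\cdot)$ and hence a probability measure; for fixed $\mathsf A$, $x\mapsto Q(x,\mathsf A)$ is measurable as the composition of the continuous $\xi^{-1}$ with the measurable map $y\mapsto P(y,\xi^{-1}(\mathsf A))$. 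So $Q$ is a genuine transition kernel.

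For (i) I would establish the Markov property of $\Theta$ with kernel $Q$ by checking the finite-dimensional distributions: since $\xi$ is a bijection, $\{\xi(\phi_t)\in\mathsf A_t\}=\{\phi_t\in\xi^{-1}(\mathsf A_t)\}$, so the joint law of $(\xi(\phi_0),\dots,\xi(\phi_t))$ on a product set equals that of $(\phi_0,\dots,\phi_t)$ on the preimage product set; unfolding the defining integral for $\Phi$ and changing variables $y=\xi(w)$ recasts it as the defining integral for a chain with kernel $Q$ and initial law $\xi_\ast\nu$. The computational engine for (ii) is the iterated relation $Q^k(x,\mathsf A)=P^k(\xi^{-1}(x),\xi^{-1}(\mathsf A))$, which I would prove by induction on $k$ via Chapman--Kolmogorov: the inductive step is the change of variables $y=\xi(w)$ under $Q^k(x,\cdot)=\xi_\ast P^k(\xi^{-1}(x),\cdot)$, combined with $Q(\xi(w),\mathsf A)=P(w,\xi^{-1}(\mathsf A))$.

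For (ii), irreducibility and aperiodicity are then purely measure-theoretic transfers. If $Q$ is $\varphi$-irreducible, I would show that $\varphi'\coloneqq(\xi^{-1})_\ast\varphi$, i.e.\ $\varphi'(\mathsf B)=\varphi(\xi(\mathsf B))$, is an irreducibility measure for $P$: it is nontrivial, and if $\varphi'(\mathsf B)>0$ then $\varphi(\xi(\mathsf B))>0$, so for any $y\in\cY$ there is $k$ with $Q^k(\xi(y),\xi(\mathsf B))>0$, which by the iterated relation equals $P^k(y,\mathsf B)$. For aperiodicity I would transfer cycles: a $k$-cycle $\mathsf D_1,\dots,\mathsf D_k$ for $P$ maps, via the bijection $\xi$, to disjoint Borel sets $\xi(\mathsf D_1),\dots,\xi(\mathsf D_k)$ satisfying the cyclic conditions for $Q$ (the negligibility condition transfers because every irreducibility measure of $Q$ pulls back to one of $P$ and $\xi$ commutes with complementation). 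Hence the period of $P$ cannot exceed that of $Q$, so $\Theta$ aperiodic forces $\Phi$ aperiodic.

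The part that genuinely uses the topological rather than merely measurable structure—and is thus the real crux—is the T-chain transfer. Given a probability $b$ on $\bN$ and a continuous component $T$ of $K_b^Q=\sum_k b(k)Q^k$, I would set $T'(y,\mathsf B)\coloneqq T(\xi(y),\xi(\mathsf B))$ with the same $b$. Then $T'$ is substochastic with $T'(y,\cY)=T(\xi(y),\cX)\in(0,1]$; the domination $K_b^P(y,\mathsf B)=K_b^Q(\xi(y),\xi(\mathsf B))\geqslant T(\xi(y),\xi(\mathsf B))=T'(y,\mathsf B)$ follows from the iterated relation; and crucially $y\mapsto T'(y,\mathsf B)$ is lower semicontinuous because $T(\cdot,\xi(\mathsf B))$ is lower semicontinuous and $\xi$ is continuous, so their composition is lower semicontinuous. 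This last point is exactly where the homeomorphism hypothesis is indispensable: irreducibility and aperiodicity would survive any bimeasurable bijection, but preservation of lower semicontinuity requires continuity of $\xi$. I therefore expect the careful handling of the continuous component of the T-kernel to be the main obstacle, even though none of the steps is deep once the iterated kernel relation is in place.
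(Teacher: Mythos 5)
Your proposal is correct and follows essentially the same route as the paper: the transferred kernel $P(\xi^{-1}(\cdot),\xi^{-1}(\cdot))$, the pullback irreducibility measure $\varphi(\xi(\cdot))$, the bijective transfer of cyclic decompositions, and the continuous component $T'(y,\mathsf B)=T(\xi(y),\xi(\mathsf B))$ whose lower semicontinuity comes from composing with the continuous $\xi$. Your write-up is in fact slightly more explicit than the paper's (e.g., the induction for the iterated kernel relation and the identification of where continuity, as opposed to mere bimeasurability, is genuinely needed), but the substance is identical.
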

	
		\begin{proof}
			First, we prove (i). Denote $P$ the Markov kernel of $\Phi$. 
			\del{Then, given that }%
			\nnew{If} the distribution of $\xi(\phi_0)$ is $\delta_x$ for $x\in\cX$, 
			\del{i.e.,}\nnew{then} $\phi_0$ is distributed under $\delta_{\xi^{-1}(x)}$. 
			\del{Hence, f}\nnew{F}or $\mathsf A\in\cB(\cX)$ and $x\in\cX$, we have \nnew{then}
			$$
				\bP\left[ \xi(\phi_1)\in \mathsf A \mid \xi(\phi_0)=x \right] 
				= \bP \left[ \phi_1 \in\xi^{-1}(\mathsf A) \mid \phi_0 = \xi^{-1}(x) \right] 
				= P(\xi^{-1}(x),\xi^{-1}(\mathsf A))
			$$
			\del{Then}\nnew{Moreover}, $P(\xi^{-1}(\cdot),\xi^{-1}(\cdot))$ defines 
				a Markov kernel for $\Theta$. 
			Indeed, since $\xi$ is a homeomorphism, 
				$\xi^{-1}$ is \new{continuous and thus} measurable. 
			\new{In particular the $k$-step transition kernel of $\Theta$ 
				equals $P^k(\xi^{-1}(\cdot),\xi^{-1}(\cdot))$ 
				(where $P^k$ is the $k$-step transition kernel of $\Phi$). 
			Thus $\Theta$ is a time-homogeneous Markov chain.}
			
			Now we prove (ii). 
			Suppose that $\Theta$ is irreducible, i.e., 
				the kernel $P(\xi^{-1}(\cdot),\xi^{-1}(\cdot))$ admits a nontrivial nonnegative measure $\vartheta$ on $\cB(\cX)$ such that
				for $x\in\cX$ and $\mathsf A\in\cB(\cX)$ with $\vartheta(\mathsf A)>0$, 
				there exists $k>0$ with $P^k(\xi^{-1}(x),\xi^{-1}(\mathsf A))>0$. 
			Then, for every $\mathsf B\in\cB(\cY)$ such that $\vartheta(\xi(\mathsf B))>0$ and for every $y\in\cY$, 
				there exists $k>0$ with $P^k(x,\mathsf B)>0$, i.e., $\Phi$ is $\vartheta\circ\xi$-irreducible. 
			Likewise, 
			for every irreducibility measure $\varphi$ of $\Phi$, 
				then $\varphi\circ\xi^{-1}$ is a irreducibility measure of $\Theta$. 
			In particular, every irreducibility measure $\vartheta$ of $\Theta$ can be defined as $\varphi\circ\xi^{-1}$ 
				for some irreducibility measure $\varphi$ of $\Phi$.
			Moreover, denote $k\geqslant1$ the period of $\Theta$, \del{hence}\nnew{i.e.,} $k$ is the largest integer such that 
				there exists disjoint sets $\mathsf D_1,\dots,\mathsf D_k\in\cB(\cY)$ with
			$$
			\left\{
			\begin{array}{l}
				\varphi((\mathsf D_1\cup\dots\cup \mathsf D_k)^c)=0  \quad \text{for any irreducibility measure }\varphi \text{ of } \Phi \\
				P(\xi^{-1}(y_i),\xi^{-1}(\mathsf D_{i+1}))=1 \text{ for } y_i\in \mathsf D_i \text{ and } i =0,\dots,k-1 \mod k .
			\end{array}
			\right.
			$$
			Therefore $k$ is the largest integer such that there exists disjoint sets $\mathsf C_1,\dots,\mathsf C_k\in\cB(\cX)$ with
			$$
			\left\{
			\begin{array}{l}
				\varphi(\xi^{-1}(\mathsf C_1\cup\dots\cup \mathsf C_k)^c)=0 \quad \text{for any irreducibility measure }\varphi \text{ of } \Phi  \\
				P(x_i,\mathsf C_{i+1})=1 \text{ for } x_i\in \mathsf C_i \text{ and } i =0,\dots,k-1 \mod k .
			\end{array}
			\right.
			$$
			Hence, the period of $\Phi$ equals $k$ the period of $\Theta$. In particular, if $\Theta$ is aperiodic, then $\Phi$ is aperiodic.
			
			Suppose now that $\Theta$ is a T-chain and let $T\colon\cX\times\cB(\cX)\to\bR_+$ 
				be a substochastic kernel such that $K_b(\xi^{-1}(\cdot),\xi^{-1}(\cdot))\geqslant T$ 
				for some probability distribution $b$ on $\bN$, $T(\cdot,\cX)>0$ 
				and $x\mapsto T(x,\mathsf A)$ is \lsc for $\mathsf A\in\cB(\cX)$.
			Then, if we define $T'(y,\mathsf B)=T(\xi(y),\xi(\mathsf B))$ 
				for $y\in\cY$ and $\mathsf B\in\cB(\cY)$, 
				\del{then}\nnew{we obtain that} $T'$ is a substochastic kernel such that $K_b\geqslant T'$ 
				for some probability distribution $b$ on $\bN$, 
				$T'(\cdot,\cY)>0$ and \nnew{that} $y\mapsto T(y,\mathsf B)$ is \lsc 
				for \nnew{every} $\mathsf B\in\cB(\cY)$. 
			Therefore, $\Phi$ is a T-chain.
			\del{
			Finally, observe that if $\mathsf C\in\cB(\cX)$ is a $m$-small set for the chain $\Theta$, 
			then $\xi(\mathsf C)$ is a $m$-small set for the chain $\Phi$. 
			Indeed, there exists by definition a nontrivial measure $\nu_m$ on $\cB(\cX)$ such that
			 $P^m(\xi^{-1}(x),\xi^{-1}(\mathsf A))\geqslant \nu_m (\mathsf A)$ for $x\in \mathsf C$ and
			  $\mathsf A\in\cB(\cX)$ and then $P^m(y,\mathsf B)\geqslant \nu_m(\xi(\mathsf B))$ 
			  for $y\in\xi(\mathsf C)$ and $\mathsf B\in\cB(\cY)$. 
			In particular, since $\xi$ is a homeomorphism, every compact set $\mathsf D$ of $\cY$ can be written as $\mathsf D=\xi^{-1}(\mathsf C)$ for
			 some compact set $\mathsf C$ of $\cX$, hence if compact sets of $\cX$ are small for $\Theta$,
			  then compact sets of $\cY$ are small for $\Phi$.
			}%
		\end{proof}

\niko{~\\book mark}
\niko{\\book mark: check comments from here}

	\section{Proof\del{s of main results} \nnew{of \Cref{t:main-1}}}\label{sec:proofs}
	
	The objective of this section is to prove \Cref{t:main-1}. 
	To do so, we investigate
	nonlinear state-space models associated to the recursion
	\del{underlying the Markov kernel of a normalized \new{Markov} chain obeying }%
	\eqref{eq:normalized-chain-updates} using \nnew{the} theoretical tools presented in \del{\cite{gissler2024irreducibility}}\nnew{\Cref{sec:NLSS}}.
	
	\nnew{Since the normalization function $R$ is not assumed to be smooth, 
		we consider a transformed Markov chain---for which we can apply \Cref{p:irreducibility-via-homeomorphism}---valued in a smooth manifold and which can be transformed via a homeomorphism into the normalized Markov chain \eqref{eq:normalized-chain-updates}.}
	In \Cref{sec-DCM-for-CMA}, we introduce the control model associated 
	to this transformed process and verify the conditions \Cref{A4:lsc-distribution}, \Cref{A5:C1-update} reminded in \Cref{sec:deterministic-control-model}.
	\del{to a normalized chain of CMA-ES}%
	%
	%
	%
	
	Then, a last condition\del{ needed}, \Cref{H3:controllability-condition} \nnew{or \Cref{H4:controllability-subchain}},\footnote{\Cref{H3:controllability-condition} for case (i) and \Cref{H4:controllability-subchain}\del{ instead of} for\del{ the} cases (ii), (iii), (iv) of \Cref{t:main-1}.} is
	proven in two steps: 
	\del{in }\Cref{sec:steadily-attracting-state}\del{ we} prove\new{s} the existence of a steadily attracting state (defined in \Cref{sec:deterministic-control-model}) 
	and\del{ in} \Cref{sec:rank-condition-cma}\del{ we} show\new{s} that a required controllability condition is satisfied.
	We conclude the proof in \Cref{sec:main-result}\del{.}\nnew{, where}
		\del{T}\nnew{t}he \new{Markov chains associated to the different}\del{redundant chains appearing for those specific} learning rate settings 
		are \del{then studied in \Cref{sec:main-result}}\nnew{analyzed, based on \Cref{c:verifiable-conditions-subchain}}.
		\del{ at the same time that learning rate settings where we cannot simplify the chain\niko{I don't understand what this sentence means, specifically "that" and "where" don't make sense to me.} to a projected one.}%

	\subsection{Definition of normalized chains underlying CMA-ES %
		following \eqref{eq:deterministic-control-model} and satisfying \Cref{A4:lsc-distribution}-\Cref{A5:C1-update}}\label{sec-DCM-for-CMA}
	In order to apply \Cref{t:verifiable-conditions} to the\del{ chain} \new{normalized CMA-ES Markov chain }defined via \eqref{eq:normalized-chain-def}, 
		\del{it is required to have}\del{we need that}%
		\nnew{we require} the state space $\cY=\bR^{3d}\times R^{-1}(\{1\})\times\bR_{++}$ \del{is}\nnew{to be} a smooth connected manifold.
	\del{However}\nnew{As mentioned in the previous section}, 
		this is not necessarily true unless we assume that the normalization $R$ is continuously differentiable. 
		\del{Yet, we are interested to analyze normalized chains when $R(C) = \lambda_{\min}(C)$ in which case \Cref{R2:C1} is satisfied but $R$ is not continuously differentiable.}%
	Hence, we introduce a homeomorphic transformation of the normalized chain which lives on a smooth manifold. \new{Consider} $\rho\colon\Sdpp\to\bR_{++}$\del{ is} a map satisfying
	\begin{assumptionRho}
		\label{rho1:homogeneous} the function $\rho$ is (positively) homogeneous and $\rho(\Id)=1$,
	\end{assumptionRho}%
	\begin{assumptionRho}
		\label{rho2:C1} the function $\rho$ is \del{continuously differentiable}\new{smooth ($\cC^\infty$)} on $\Sdpp$.
	\end{assumptionRho}%
\nnew{We keep this smooth normalization function abstract for the moment and will take it equal to $\rho(\cdot)=\det(\cdot)^{1/d}$ for proving \Cref{t:main-1}.}\anne{I added this to clarify that at some point we indeed specify $\rho$ - since keeping it abstract suggests otherwise that we develop some methodology rather than try to prove the theorem.}
	Define \new{now}
	\begin{equation}\label{eq:def-homeo}
		\begin{array}{rl}
			\xi\colon & \cY \to \cX \\
			& (z,p,q,\ncovmat,r) \mapsto \left(z,p,q,\rho(\ncovmat)^{-1}\ncovmat,r\right)
		\end{array}
	\end{equation}
\new{where $\cX = \bR^{d}\times\bR^d\times\bR^d\times\rho^{-1}(\{1\})\times\bR_{++}$. 
	Then, as stated in the next proposition, $\cX$ defines}
		\del{	These assumptions are sufficient to define}%
		a smooth connected manifold\del{as stated in the next proposition}.\done{to be discussed: where do we need that the manifold is connected? not sure we consistently say it.}
	\begin{proposition}\label{prop:rho-manifold}
		Suppose that the map $\rho\colon\Sdpp\to\bR_{++}$ satisfies \Cref{rho1:homogeneous}-\Cref{rho2:C1}.
		Then, the set $\cX=\bR^{d}\times\bR^d\times\bR^d\times\rho^{-1}(\{1\})\times\bR_{++}$ is a smooth connected manifold of dimension $3d+d(d+1)/2$.
	\end{proposition}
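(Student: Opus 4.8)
The plan is to factor $\cX$ as a finite product of smooth connected manifolds and reduce everything to understanding the level set $\rho^{-1}(\{1\})\subset\Sdpp$. The three copies of $\bR^d$ and the factor $\bR_{++}$ are already smooth connected manifolds of dimensions $d$ and $1$; since a finite product of smooth manifolds is again a smooth manifold whose dimension is the sum of the factor dimensions, and a finite product of connected spaces is connected, it suffices to prove that $\rho^{-1}(\{1\})$ is a smooth connected manifold of dimension $d(d+1)/2-1$. The dimension count then gives $3d+(d(d+1)/2-1)+1 = 3d+d(d+1)/2$, as claimed.

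For the smooth structure, I would apply the regular value theorem to $\rho\colon\Sdpp\to\bR_{++}$. First note that $\Sdpp$ is an open subset of the vector space $\Sd$, which has dimension $d(d+1)/2$, so $\Sdpp$ is a smooth manifold of that dimension, and $\rho$ is smooth by \Cref{rho2:C1}. The crux is to show that $1$ is a regular value, i.e. that $\mathcal{D}\rho(A)\colon\Sd\to\bR$ is surjective for every $A\in\rho^{-1}(\{1\})$. This follows from Euler's relation for positively homogeneous functions: differentiating the identity $\rho(\lambda A)=\lambda\rho(A)$ from \Cref{rho1:homogeneous} with respect to $\lambda$ at $\lambda=1$ yields $\mathcal{D}\rho(A)[A]=\rho(A)$. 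For $A\in\rho^{-1}(\{1\})$ this gives $\mathcal{D}\rho(A)[A]=1\neq 0$, so $\mathcal{D}\rho(A)$ is a nonzero linear form into the one-dimensional space $\bR$ and is therefore surjective. By the regular value theorem, $\rho^{-1}(\{1\})$ is a smooth embedded submanifold of $\Sdpp$ of codimension $1$, hence of dimension $d(d+1)/2-1$. (It is nonempty, since $\rho(\Id)=1$ places $\Id$ in it.)

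For connectedness, I would exhibit $\rho^{-1}(\{1\})$ as a continuous image of $\Sdpp$. The map $\pi\colon\Sdpp\to\rho^{-1}(\{1\})$, $A\mapsto A/\rho(A)$, is well defined since $\rho(A)>0$ and $\rho(A/\rho(A))=\rho(A)/\rho(A)=1$ by \Cref{rho1:homogeneous}; it is continuous and surjective because $\pi(B)=B$ for $B\in\rho^{-1}(\{1\})$. Now $\Sdpp$ is convex (a convex combination of positive definite matrices is positive definite), hence path-connected, hence connected, so its continuous image $\rho^{-1}(\{1\})=\pi(\Sdpp)$ is connected. Combining this with the previous paragraph, $\rho^{-1}(\{1\})$ is a smooth connected manifold of dimension $d(d+1)/2-1$, and the product structure finishes the proof.

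The main obstacle is the verification of the regular value condition: this is the only step that genuinely uses homogeneity, through Euler's relation, and it is what forces $\mathcal{D}\rho(A)$ to be nonzero precisely on the level set $\{\rho=1\}$. The remaining ingredients—convexity (hence connectedness) of $\Sdpp$, the product manifold structure, and the dimension bookkeeping—are routine.
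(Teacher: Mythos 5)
Your proof is correct and follows essentially the same route as the paper's: both use positive homogeneity to show that the differential of $\rho$ is nonzero in the radial direction (your Euler relation $\mathcal{D}\rho(A)[A]=\rho(A)$ is exactly the paper's directional-derivative computation), then invoke the submersion/regular-value level set theorem, and both establish connectedness via the normalization map $A\mapsto A/\rho(A)$. The only differences are organizational—you factor off the Euclidean factors first and work inside $\Sdpp$, while the paper applies the submersion theorem to the full product space—and these are immaterial.
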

	\begin{proof}
		First note that the set $\mathsf M\coloneqq \bR^{3d}\times\Sdpp\times\bR_{++}$ is an open subset of the Euclidean space $\bR^{3d}\times\Sd\times\bR$, hence is a smooth submanifold of dimension $3d+d(d+1)/2+1$. Moreover, $\mathsf N\coloneqq\bR$ is a smooth manifold of dimension $1$. Define then the map \done{I introduce $\bar \varrho$ to not confuse - check for possible typos between $\bar \varrho$ and $\varrho$.}
		$$
			\begin{array}{rl}
				\bar\varrho\colon & \mathsf M\to \mathsf N \\
				& (z,p,q,\ncovmat,r) \mapsto \rho(\ncovmat) .
			\end{array}
		$$
		Then, by \Cref{rho2:C1}, $\bar \varrho$ is \del{a smooth map}\nnew{smooth}. 
		Moreover, it is a submersion at every point of $\mathsf M$. 
		Indeed, let $(z,p,q,\ncovmat,r)\in \mathsf M$, and let $\varepsilon \in (-1,1)$. 
		Then,
		$$
			 \bar \varrho \left( (z,p,q,\ncovmat,r) + (0,0,0,\varepsilon \ncovmat, 0)  \right) =  \rho ((1+\varepsilon) \ncovmat) = \rho(\ncovmat) +  \varepsilon \rho(\ncovmat)
		$$
		by \Cref{rho1:homogeneous}. Therefore, by Taylor expansion \new{and since the derivative $\mathcal{D}\bar \varrho(z,p,q,\ncovmat,r)$ is linear}, we have 
		$$\mathcal{D}\bar \varrho(z,p,q,\ncovmat,r)(0,0,0,\kappa\ncovmat, 0) = \kappa\rho (\ncovmat)\del{ \neq 0},$$
		\new{for every $\kappa\in\bR$, with $\rho (\ncovmat)>0$}.
		Hence, $\mathcal{D}\bar \varrho(z,p,q,\ncovmat,r)\colon \bR^{3d}\times\Sd\times\bR\to\bR$ is surjective
			\del{so}\nnew{and thus} $\bar \varrho$ is a submersion. 
		Therefore, by the submersion level set theorem \cite[Corollary 5.14]{lee2012introduction}, 
			$\cX=\bar \varrho^{-1}(\{1\})$ is a smooth manifold of dimension $3d+d(d+1)/2+1-1= 3d+d(d+1)/2$.
		
		Let us prove now that $\cX$ is connected.
		Since $\bR^{3d}\times\bR_{++}$ is connected, 
			it is sufficient to prove that the manifold $\rho^{-1}(\{1\})$ is connected,
			\del{This is then}\nnew{and thus} sufficient to prove that $\rho^{-1}(\{1\})$ 
			is path-connected~\cite[Proposition 1.11]{lee2012introduction}. 
		Let $\ncovmat_0,\ncovmat_1\in\rho^{-1}(\{1\})$. Since $\Sdpp$ is connected, there exists a continuous path $\gamma\colon[0,1]\to\Sdpp$ with $\gamma(0)=\ncovmat_0$ and $\gamma(1)=\ncovmat_1$. Define then the path $\hat\gamma\colon[0,1]\to\rho^{-1}(\{1\})$ by $\hat\gamma(t)=\gamma(t)/\rho(\gamma(t))$ for $t\in[0,1]$. Since $\gamma$ and $\rho$ are continuous, then $\hat{\gamma}$ is continuous. Besides, $\hat{\gamma}(0)=\ncovmat_0/\rho(\ncovmat_0)=\ncovmat_0$ and $\hat\gamma(1)=\ncovmat_1/\rho(\ncovmat_1)=\ncovmat_1$, ending the proof.
	\end{proof}	
	Moreover, the map $\xi$ defined \new{in \eqref{eq:def-homeo}}\del{above} is a homeomorphism \new{as stated below}.
	\begin{proposition}\label{p:xi-is-homeo}
		Suppose that $R$ and $\rho$ are both continuous and satisfy \Cref{R1:homogeneous} and \Cref{rho1:homogeneous}.
		Then, the map $\xi$ defined in \eqref{eq:def-homeo} between the sets $\cY$ and $\cX$ is a homeomorphism and
		 \begin{equation}\label{eq:xi-1}
			\begin{array}{rl}
				\xi^{-1}\colon & \cX \to \cY \\
				& (z,p,q,\rncovmat,r) \mapsto \left(z,p,q,R(\rncovmat)^{-1}\rncovmat,r\right) .
			\end{array} 
			\end{equation}
	\end{proposition}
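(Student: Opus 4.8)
The plan is to reduce the statement to a single homeomorphism on the space of normalized matrices, since both $\xi$ and the candidate inverse in \eqref{eq:xi-1} act as the identity on the $(z,p,q,r)$ components and only rescale the covariance factor. Concretely, I would first check that $\xi$ indeed maps $\cY$ into $\cX$: for $\ncovmat\in R^{-1}(\{1\})$, homogeneity \Cref{rho1:homogeneous} gives $\rho(\rho(\ncovmat)^{-1}\ncovmat)=\rho(\ncovmat)^{-1}\rho(\ncovmat)=1$, so the fourth component lands in $\rho^{-1}(\{1\})$; symmetrically, the map in \eqref{eq:xi-1} sends $\rho^{-1}(\{1\})$ into $R^{-1}(\{1\})$ using \Cref{R1:homogeneous}. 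Both maps are well defined because $R$ and $\rho$ are strictly positive on $\Sdpp$, so the reciprocals make sense.

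Next I would verify that the two maps are mutually inverse by a direct computation exploiting the degree-one homogeneity of $R$ and $\rho$. Starting from $\ncovmat$ with $R(\ncovmat)=1$ and writing $\rncovmat=\rho(\ncovmat)^{-1}\ncovmat$, homogeneity of $R$ gives $R(\rncovmat)=\rho(\ncovmat)^{-1}R(\ncovmat)=\rho(\ncovmat)^{-1}$, whence $R(\rncovmat)^{-1}\rncovmat=\rho(\ncovmat)\,\rho(\ncovmat)^{-1}\ncovmat=\ncovmat$; the symmetric computation, with the roles of $R$ and $\rho$ exchanged and $\rho(\rncovmat)=1$, shows that $\xi\circ\xi^{-1}$ is the identity on $\cX$. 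This is the crux of the argument and the only place where normalization by degree-one-homogeneous functions is essential: each rescaling is idempotent on its own level set, and the two rescalings undo one another.

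Finally, continuity of both $\xi$ and $\xi^{-1}$ follows directly, because $R$ and $\rho$ are continuous and nonvanishing on $\Sdpp$, so $\ncovmat\mapsto\rho(\ncovmat)^{-1}\ncovmat$ and $\rncovmat\mapsto R(\rncovmat)^{-1}\rncovmat$ are continuous (products and reciprocals of continuous, strictly positive scalar functions with the identity map), while the remaining coordinates are transported by the identity. Having exhibited a continuous bijection with continuous inverse, I conclude that $\xi$ is a homeomorphism with inverse \eqref{eq:xi-1}.

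I do not expect any genuine obstacle here; the whole content is the homogeneity bookkeeping of the second step. The only points requiring a little care are that $R,\rho>0$ on $\Sdpp$ (so the reciprocals are legitimate and continuous) and that the inclusion $\xi(\cY)\subseteq\cX$ uses \emph{only} the homogeneity of $\rho$, not the defining constraint $R(\ncovmat)=1$ of $\cY$; conversely the mutual-inverse identities are exactly where the level-set constraints $R(\ncovmat)=1$ and $\rho(\rncovmat)=1$ enter.
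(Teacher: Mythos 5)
Your proposal is correct and takes essentially the same route as the paper, which simply states that the inverse formula \eqref{eq:xi-1} is easily verified and that continuity of $\xi$ and $\xi^{-1}$ follows from the continuity and positivity of $\rho$ and $R$. You merely spell out the homogeneity bookkeeping (well-definedness on the level sets and the mutual-inverse identities) that the paper leaves to the reader.
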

	\begin{proof}
	\new{We can easily verify that}\del{This is straightforward if we write} the expression of the reciprocal function of $\xi$ is \eqref{eq:xi-1}.
		Then $\xi^{-1}$ (resp.\ $\xi$) is continuous since $R$ (resp.\ $\rho$) is continuous and takes value in $\R_{++}$. 
			\end{proof}
\new{We formalize in the next lemma the update equations for $\{\theta_t\}_{t \in \mathbb{N}} = \{ \xi(\phi_t) \}_{t \in \mathbb{N}} $. }	
	\begin{lemma}
		\label{l:theta-homogeneous-MC}
	\del{Let $R$ be a positively homogeneous}\nnew{Suppose that the} normalization function $R$ \new{satisfies \Cref{R1:homogeneous}} 
	and \nnew{let} $\Phi = \{ \phi_t \}_{t \in \mathbb{N}} $ be the\del{ CMA-ES normalized} Markov chain defined via \eqref{eq:normalized-chain-updates}. 
	Let $\rho$ be a \del{smooth }normalization function satisfying \Cref{rho1:homogeneous} 
	and let $\xi$ be the homeomorphism defined in \eqref{eq:def-homeo}. 
	Then the \nnew{Markov} chain $\Theta = \{\theta_t\}_{t \in \mathbb{N}} = \{ \xi(\phi_t) \}_{t \in \mathbb{N}} $ satisfies 	
	\begin{equation}
		\label{eq:smooth-normalized-chain-def}
		\begin{array}{l}
			\Z_{t+1} = \frac{\Fx(\Z_t,\sqrt{R(\rncovmat_t)^{-1}\rncovmat_t} \w_m^\top \Utt )}{\sqrt{r_{t+1}}\Gamma(p_{t+1})  }   \\
			p_{t+1} =  \Fpsigma(p_t, \w_m^\top \Utt ) \\
			q_{t+1} =  \Fpc(r_t^{-1/2}q_t, \sqrt{R(\rncovmat_t)^{-1}\rncovmat_t} \w^\top_m \Utt ) \\
			\rncovmat_{t+1} = \frac{\Fc\left(R(\rncovmat_t)^{-1}\rncovmat_t,q_{t+1}, R(\hat\ncovmat_t)^{-1} \sqrt{\hat\ncovmat_t}
				\sum_{i=1}^\mu \wic \left[  U_{t+1}^{s_{t+1}(i)} \right]
				\left[ U_{t+1}^{s_{t+1}(i)} \right]^\top
				\nnew{\sqrt{\hat\ncovmat_t}}\right)}
			{\rho\circ \Fc\left(R(\rncovmat_t)^{-1}\rncovmat_t,q_{t+1}, R(\hat\ncovmat_t)^{-1} \sqrt{\hat\ncovmat_t}
				\sum_{i=1}^\mu \wic \left[  U_{t+1}^{s_{t+1}(i)} \right]\left[ U_{t+1}^{s_{t+1}(i)} \right]^\top
				\nnew{\sqrt{\hat\ncovmat_t}}\right)}  \\
			r_{t+1} = R\circ \Fc \left(R(\rncovmat_t)^{-1}\rncovmat_t,q_{t+1},
			\nnew{R(\hat\ncovmat_t)^{-1} \sqrt{\hat\ncovmat_t}}
			\sum_{i=1}^\mu \wic \left[  U_{t+1}^{s_{t+1}(i)} \right]\left[ U_{t+1}^{s_{t+1}(i)} \right]^\top
			\nnew{ \sqrt{\hat\ncovmat_t}}\right) 
		\end{array}
	\end{equation}
where $\{U_{t+1}\}_{t\in\bN}$ is an i.i.d.\ process independent of $\theta_0=(\Z_0,p_0,q_0,\rncovmat_0,r_0) = \xi(\phi_0)$ \del{with values}\nnew{distributed} in the measured space $\cU=(\bR^d)^\lambda$ with $U_1\sim\pUd$, and $s_{t+1}$ is a permutation of $\mathfrak{S}_\lambda$ that sorts \del{increasingly }the $f$-values\done{Niko - consistency with earlier change of Niko} of $x^*+z_t+\sqrt{R(\rncovmat_t)^{-1}\rncovmat_t}U_{t+1}^i$, for $i=1,\dots,\lambda$.%
\del{defined via }%
\done{}%
	\end{lemma}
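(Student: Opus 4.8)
The plan is to read off the claim from the identity $\theta_t = \xi(\phi_t)$ together with a direct substitution into \eqref{eq:normalized-chain-updates}. Writing $\phi_t = (\Z_t,p_t,q_t,\ncovmat_t,r_t)$ and $\theta_t = (\Z_t,p_t,q_t,\rncovmat_t,r_t)$, the expression \eqref{eq:xi-1} for $\xi^{-1}$ shows that the components $\Z_t,p_t,q_t,r_t$ are shared by $\phi_t$ and $\theta_t$, while the covariance components are tied by
$$\ncovmat_t = R(\rncovmat_t)^{-1}\rncovmat_t, \qquad \text{so that} \qquad \sqrt{\ncovmat_t} = R(\rncovmat_t)^{-1/2}\sqrt{\rncovmat_t}.$$
Since $\xi$ is a homeomorphism by \Cref{p:xi-is-homeo}, \Cref{p:irreducibility-via-homeomorphism}(i) already guarantees that $\Theta = \{\xi(\phi_t)\}_{t\in\bN}$ is a time-homogeneous Markov chain; it remains only to compute its transition by applying $\xi$ to both sides of \eqref{eq:normalized-chain-updates}.

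First I would dispatch the three components that $\xi$ leaves untouched. For $\Z_{t+1}$, $p_{t+1}$ and $q_{t+1}$ the updates of \eqref{eq:normalized-chain-updates} are transcribed verbatim, replacing each occurrence of $\sqrt{\ncovmat_t}$ by $\sqrt{R(\rncovmat_t)^{-1}\rncovmat_t}$; this reproduces exactly the first three lines of \eqref{eq:smooth-normalized-chain-def}. One should check that the permutation $s_{t+1}$ is unaffected by this change of coordinates: it sorts the $f$-values of $x^* + \Z_t + \sqrt{\ncovmat_t}U_{t+1}^i = x^* + \Z_t + \sqrt{R(\rncovmat_t)^{-1}\rncovmat_t}U_{t+1}^i$, hence the same $s_{t+1}$ governs both recursions.

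The only genuine computation is the covariance component. By definition of $\xi$ one has $\rncovmat_{t+1} = \rho(\ncovmat_{t+1})^{-1}\ncovmat_{t+1}$, and \eqref{eq:normalized-chain-updates} gives $\ncovmat_{t+1} = r_{t+1}^{-1}\Fc(\ncovmat_t,q_{t+1},\cM_t)$ with
$$\cM_t \coloneqq \sqrt{\ncovmat_t}\sum_{i=1}^\mu \wic \left[ U_{t+1}^{s_{t+1}(i)} \right]\left[ U_{t+1}^{s_{t+1}(i)} \right]^\top \sqrt{\ncovmat_t}.$$
Invoking the positive homogeneity of $\rho$ (\Cref{rho1:homogeneous}), the positive scalar $r_{t+1}^{-1}$ cancels between numerator and denominator:
$$\rncovmat_{t+1} = \frac{r_{t+1}^{-1}\Fc(\ncovmat_t,q_{t+1},\cM_t)}{\rho\bigl(r_{t+1}^{-1}\Fc(\ncovmat_t,q_{t+1},\cM_t)\bigr)} = \frac{\Fc(\ncovmat_t,q_{t+1},\cM_t)}{\rho\circ\Fc(\ncovmat_t,q_{t+1},\cM_t)}.$$
Substituting $\ncovmat_t = R(\rncovmat_t)^{-1}\rncovmat_t$, which turns the conjugation $\sqrt{\ncovmat_t}(\cdot)\sqrt{\ncovmat_t}$ into a scalar factor $R(\rncovmat_t)^{-1}$ multiplying $\sqrt{\rncovmat_t}(\cdot)\sqrt{\rncovmat_t}$, yields the fourth line of \eqref{eq:smooth-normalized-chain-def}. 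The last line follows from the same substitution into $r_{t+1} = R\circ\Fc(\ncovmat_t,q_{t+1},\cM_t)$; this is consistent with the defining relation $R(\ncovmat_t)=1$ of the $\Phi$-chain, since $R\bigl(R(\rncovmat_t)^{-1}\rncovmat_t\bigr)=1$ by \Cref{R1:homogeneous}.

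I expect the main (and only) delicate point to be the bookkeeping in this covariance term: one must keep track that the symmetric conjugation contributes the factor $R(\rncovmat_t)^{-1}$ rather than $R(\rncovmat_t)^{-1/2}$ to the rank-$\mu$ argument, and that degree-one homogeneity of $\rho$ is precisely what lets the normalizer $r_{t+1}$ drop out of the covariance update while surviving as the separate coordinate $r_{t+1}$. Everything else reduces to carrying the substitution $\ncovmat_t = R(\rncovmat_t)^{-1}\rncovmat_t$ through \eqref{eq:normalized-chain-updates}.
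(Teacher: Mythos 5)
Your proposal is correct and follows essentially the same route as the paper's proof: substitute $\ncovmat_t = R(\rncovmat_t)^{-1}\rncovmat_t$ into \eqref{eq:normalized-chain-updates} for the $z,p,q,r$ components, and use degree-one positive homogeneity of $\rho$ to cancel the scalar $r_{t+1}^{-1}=R(\tilde{\ncovmat}_{t+1})^{-1}$ between numerator and denominator in the covariance update. Your extra remarks (invariance of the permutation $s_{t+1}$ and the $R(\rncovmat_t)^{-1}$ factor from the symmetric conjugation) are correct bookkeeping that the paper leaves implicit.
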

	\begin{proof}
	Since for $t\in\bN$, \nnew{according to \eqref{eq:xi-1}, }we have that $\ncovmat_t= R(\rncovmat_t)^{-1}\rncovmat_t$, the update equations 
	for $p_{t+1}$, $q_{t+1}$ ,$r_{t+1}$ and $z_{t+1}$ in \eqref{eq:smooth-normalized-chain-def} are deduced directly from \Cref{l:homogeneous-MC} \nnew{where we replace $\ncovmat_t$ by $R(\rncovmat_t)^{-1}\rncovmat_t$}. Moreover, we have, by \Cref{l:homogeneous-MC} \nnew{and using the definition of $\tilde{\ncovmat}_{t+1}$ in \eqref{tilda-matrix}}
	\begin{align*}
			\rncovmat_{t+1} & = \frac{\ncovmat_{t+1}}{\rho(\ncovmat_{t+1})} \nnew{= \frac{\tilde{\ncovmat}_{t+1}}{R(\tilde{\ncovmat}_{t+1})}  \times \frac{R( \tilde{\ncovmat}_{t+1})}{\rho( \tilde{\ncovmat}_{t+1})}} \\
			& =\frac{\Fc\left(\ncovmat_t,q_{t+1}, 
				 \sqrt{\ncovmat_t}
				\sum_{i=1}^\mu \wic \left[  U_{t+1}^{s_{t+1}(i)} \right]
				\left[ U_{t+1}^{s_{t+1}(i)} \right]^\top
				\sqrt{\ncovmat_t}\right)}{\rho\circ\Fc\left(\ncovmat_t,q_{t+1}, 
				\sqrt{\ncovmat_t}
				\sum_{i=1}^\mu \wic \left[  U_{t+1}^{s_{t+1}(i)} \right]
				\left[ U_{t+1}^{s_{t+1}(i)} \right]^\top
				\sqrt{\ncovmat_t}\right) } \enspace.
	\end{align*}
	The proof ends by replacing $\ncovmat_t$ by $R(\rncovmat_t)^{-1}\rncovmat_t$.
	\end{proof}
	

	Using \Cref{p:irreducibility-via-homeomorphism}, 
		we can transfer the irreducibility, aperiodicity and the T-chain property from the \nnew{Markov} chain $\Theta$ 
		to the original normalized chain $\Phi$ we are interested \del{to analyze}\nnew{in}. 
	Our objective from now on is \del{thus to prove the irreducibility and aperiodicity of the chain $\Theta$}%
		to prove that the Markov chain $\Theta$ is an irreducible aperiodic T-chain.
	Our strategy \del{to prove the irreducibility and aperiodicity of the chain $\Theta$}\new{for that} 
		is to \del{verify that the verifiable conditions given in }%
		\nnew{apply} \Cref{t:verifiable-conditions}
		\nnew{and verify that the required assumptions are satisfied}. 
		\del{are satisfied when studying the stability of \del{the}\new{its} corresponding deterministic control model \new{of the form \eqref{eq:deterministic-control-model}}.}%
	We first prove that $\Theta$ follows a deterministic control model 
		of the form \eqref{eq:deterministic-control-model}, 
		described in \Cref{sec:deterministic-control-model}. 
	
	\new{Consider t}\del{T}he \nnew{smooth} manifold $\cX  = \bR^d\times\bR^d \times \bR^d \times \rho^{-1}(\{1\}) \times\bR_{++}$ \nnew{(see \Cref{prop:rho-manifold})}
		\new{that} defines the state space of the \nnew{Markov} chain $\Theta$ 
		and let $\cV \coloneqq \bR^{d\mu}$.
		We define 
		\begin{equation}
		\label{eq:def-alpha}
		\begin{array}{rlll}
			\alpha_\Theta \colon & \cX\times\cU & \to & \cV \\
			& ((z,p,q,\nnew{\rncovmat},r), (u^1,\dots,u^\lambda)) & \mapsto & \left[ \sqrt{\frac{\rncovmat}{R(\rncovmat)}}  u^{s_{f(x^*+z+   \sqrt{R(\nnew{\rncovmat})^{-1}\nnew{\rncovmat}}\cdot)}^u (i)} \right]_{i=1,\dots,\mu} 
		\end{array}
	\end{equation} 
	where given $g\colon \mathsf A\to\bR$ a function and $v\in \mathsf A^\lambda$ we have used the notation $s_{g}^v$ for a permutation that sorts increasingly the $g(v^i)$, $i=1,\dots,\lambda$. Consider the $(z^+,p^+,q^+,\rncovmat^+,r^+)$ the update of $\theta=(z,p,q,\rncovmat,r)$ given the random input equals $v =\alpha_\Theta(\theta,u)$ that is
	\done{Write the correct equations}
	\begin{align}
	z^+ & = \frac{z+c_m\vwm^\top v}{\sqrt{r^+}\Gamma(p^+)} \label{eq:update-z} \\ 
	p^+ & = (1-c_\sigma)p +\sqrt{c_\sigma(2-c_\sigma)\mueff }R(\rncovmat)^{1/2}\rncovmat^{-1/2}\vwm^\top v \label{eq:update-p} \\
	q^+ & =  r^{-1/2}(1-c_c)q +\sqrt{c_c(2-\del{c_v}\nnew{c_c})\mueff}  \vwm^\top v \label{eq:update-q} \\ 
	\rncovmat^+ & =  \cfrac{ (1-c_1-c_\mu) R(\rncovmat)^{-1} \rncovmat + c_1 q^+(q^+)^\top+ c_\mu \sum_{i=1}^\mu\wic v_iv_i^\top}
		{\rho\left((1-c_1-c_\mu) R(\rncovmat)^{-1}\rncovmat + c_1 q^+(q^+)^\top+ c_\mu \sum_{i=1}^\mu\wic v_iv_i^\top\right)} \label{eq:update-Sigma} \\
	r^+ & = R\left((1-c_1-c_\mu) R(\rncovmat)^{-1}\rncovmat + c_1 q^+(q^+)^\top+ c_\mu \sum_{i=1}^\mu\wic v_iv_i^\top\right). \label{eq:update-r} 
	\end{align}
This update defines a function			
\del{We define}  $F_\Theta \colon\cX\times\cV\to\cX$ such that
$$
(z^+,p^+,q^+,\rncovmat^+,r^+) = F_\Theta((z,p,q,\rncovmat,r),\alpha_\Theta(\theta,u))
$$	
that can be expressed as
\begin{multline}
		\label{eq:def-F}
			F_\Theta ((\varnorm), (v^1,\dots,v^\mu)) =  \left( 
			 	 \Fz(z,p,q,R(\nnew{\rncovmat})^{-1}\nnew{\rncovmat},r;v)   , 
				 \Fsig(p,R(\nnew{\rncovmat})^{-1}\nnew{\rncovmat};v)  ,
				 \Fq(q,r;v)  ,  \right. \\
				\left.  \FK(q,R(\nnew{\rncovmat})^{-1}\nnew{\rncovmat},r;v)  ,
				  \Fr(q,R(\nnew{\rncovmat})^{-1}\nnew{\rncovmat},r;v) \right)^\top	 
\end{multline}
	\nnew{for $(z,p,q,\rncovmat,r)\in\cX$ and $(v^1,\dots,v^\mu) \in\cV$, and}
	where $\Fz$, $\Fsig$, $\Fq$, $\FK$ and $\Fr$ are defined as follows
	\begin{align}\label{eq:update-rnormalized-chain}
		&	 \Fz(z,p,q,\ncovmat,r;v)  = \Fr(q,\ncovmat,r;v)^{-1/2}\Gamma\circ\Fsig(p,\ncovmat;v)^{-1} \Fx(z, \w_m^\top v )\\
		&	\Fsig(p,\ncovmat;v)  = \Fpsigma(p,\ncovmat^{-1/2}\w_m^\top v) \\
		&	\Fq(q,r;v)  = \Fpc(r^{-1/2}q,\w_m^\top v) 
			\label{eq:Fq} \\
		&	 \FK(q,\ncovmat,r;v) = \cfrac{\Fc\left( \ncovmat,\Fq(q,r;v),\sum_{i=1}^\mu\wic v_iv_i^\top \right)}{\rho\circ\Fc\left( \ncovmat,\Fq(q,r;v),\sum_{i=1}^\mu\wic v_iv_i^\top \right)}
		\label{eq:FSigma}	\\
		&	 \Fr(q,\ncovmat,r;v)  = R\circ \Fc \left( \ncovmat,\Fq(q,\ncovmat,r;v),\sum_{i=1}^\mu\wic v_iv_i^\top \right).   \label{eq:update-Fr}
	\end{align}

\del{	and where we have
	\begin{equation}
		\cV \coloneqq \bR^{d\mu} .
	\end{equation}}
\del{	\new{Given} $g\colon \mathsf A\to\bR$ a function and $v\in \mathsf A^\lambda$, we use the notation $s_{g}^v$ for a permutation that sorts increasingly the $g(v^i)$, $i=1,\dots,\lambda$. 
	\new{\del{Using}\nnew{With} this notation, we define}	
\del{	Then, we choose}the function $\alpha_\Theta$ as follows.
	\begin{equation}
		\label{eq:def-alpha}
		\begin{array}{rlll}
			\alpha_\Theta \colon & \cX\times\cU & \to & \cV \\
			& ((z,p,q,\nnew{\rncovmat},r), (u^1,\dots,u^\lambda)) & \mapsto & \left[ \sqrt{\frac{\rncovmat}{R(\rncovmat)}}  u^{s_{f(x^*+z+   \sqrt{R(\nnew{\rncovmat})^{-1}\nnew{\rncovmat}}\cdot)}^u (i)} \right]_{i=1,\dots,\mu} .
		\end{array}
	\end{equation} 
	}
	Then, as stated in the next proposition, 
		$\Theta$ follows \del{a deterministic control}\nnew{the} model \nnew{described in \Cref{sec:deterministic-control-model}} 
		with the functions $F_\Theta$ and $\alpha_\Theta$ defined above.	
	\begin{proposition}
		\label{p:CMA-follows-CM}
		Suppose that the normalization functions $R$ satisfies \Cref{R1:homogeneous} and $\rho$ satisfies \Cref{rho1:homogeneous}.
		Then, the Markov chain $\Theta = \{(\Z_t,p_t,q_t, \rncovmat_t,r_t)\}_{t\in\bN}$ defined by \eqref{eq:smooth-normalized-chain-def} satisfies
		\del{ the following control model}%
		\begin{equation}\label{eq:control-modelTheta}
		\theta_{t+1} = F_{\Theta}\left(\theta_t,\alpha_{\Theta}(\theta_t,U_{t+1}) \right)
		\end{equation}
		\del{ with $F$ being defined via \eqref{eq:def-F} and $\alpha$ via \eqref{eq:def-alpha}.}%
		\new{where $F_\Theta$ is defined in \eqref{eq:def-F} and $\alpha_\Theta$ in \eqref{eq:def-alpha}.}%
	\end{proposition}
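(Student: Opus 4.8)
The plan is to verify the identity \eqref{eq:control-modelTheta} componentwise, by unwinding the definition of $\alpha_\Theta$ in \eqref{eq:def-alpha} and substituting the resulting vector into the building blocks $\Fz,\Fsig,\Fq,\FK,\Fr$ of $F_\Theta$ given in \eqref{eq:def-F} and \eqref{eq:update-rnormalized-chain}--\eqref{eq:update-Fr}. The argument is essentially a bookkeeping exercise resting on two observations: the positive homogeneity of $R$ (Assumption \Cref{R1:homogeneous}), which lets one pull the scalar $R(\rncovmat_t)$ in and out of matrix square roots, and the fact that $\alpha_\Theta$ already absorbs both the sampling transformation $\sqrt{R(\rncovmat_t)^{-1}\rncovmat_t}$ and the selection permutation, so that $F_\Theta$ only has to act on the transformed input.

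First I would fix $t\in\bN$ and abbreviate $\ncovmat_t\coloneqq R(\rncovmat_t)^{-1}\rncovmat_t$, so that $\sqrt{\ncovmat_t}=R(\rncovmat_t)^{-1/2}\sqrt{\rncovmat_t}$. Evaluating \eqref{eq:def-alpha} at $\theta_t=(\Z_t,p_t,q_t,\rncovmat_t,r_t)$ and $U_{t+1}=(U_{t+1}^1,\dots,U_{t+1}^\lambda)$, the $i$-th block of $v\coloneqq\alpha_\Theta(\theta_t,U_{t+1})$ equals $\sqrt{\ncovmat_t}\,U_{t+1}^{s_{t+1}(i)}$, where $s_{t+1}$ sorts $i\mapsto f(x^*+\Z_t+\sqrt{\ncovmat_t}\,U_{t+1}^i)$; this is exactly the permutation appearing in \eqref{eq:smooth-normalized-chain-def}, so both constructions use the same ordering. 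From this I would record the two quantities that feed all the component maps: $\vwm^\top v=\sqrt{\ncovmat_t}\sum_{i=1}^\mu \wi U_{t+1}^{s_{t+1}(i)}$ and $\sum_{i=1}^\mu\wic v_iv_i^\top=\sqrt{\ncovmat_t}\bigl(\sum_{i=1}^\mu\wic U_{t+1}^{s_{t+1}(i)}[U_{t+1}^{s_{t+1}(i)}]^\top\bigr)\sqrt{\ncovmat_t}$.

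Then I would substitute $v$ into each component, recalling that \eqref{eq:def-F} feeds $R(\rncovmat_t)^{-1}\rncovmat_t=\ncovmat_t$ into the $\ncovmat$-slot. For $\Fsig$ the key cancellation is $\ncovmat_t^{-1/2}\,\vwm^\top v=\sum_{i=1}^\mu\wi U_{t+1}^{s_{t+1}(i)}=\vwm^\top\Utt$, whence $\Fsig=p_{t+1}$; the path $\Fq=\Fpc(r_t^{-1/2}q_t,\vwm^\top v)$ matches the $q_{t+1}$-line directly. For $\Fr$ and $\FK$ the third argument of $\Fc$ is the matrix computed above, which coincides with the rank-$\mu$ term $R(\rncovmat_t)^{-1}\sqrt{\rncovmat_t}(\cdots)\sqrt{\rncovmat_t}$ of \eqref{eq:smooth-normalized-chain-def} by the same homogeneity identity, so $\Fr=r_{t+1}$ and $\FK=\rncovmat_{t+1}$. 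Finally $\Fz=\Fr^{-1/2}\,\Gamma(\Fsig)^{-1}\Fx(\Z_t,\vwm^\top v)$ reproduces the $\Z_{t+1}$-line, since $\Fx(\Z_t,\vwm^\top v)=\Z_t+c_m\sqrt{\ncovmat_t}\sum_i\wi U_{t+1}^{s_{t+1}(i)}$ and the prefactor equals $(\sqrt{r_{t+1}}\,\Gamma(p_{t+1}))^{-1}$.

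I expect the main, rather modest, obstacle to be purely notational: keeping the scalar factors $R(\rncovmat_t)^{\pm1/2}$ straight as they migrate through the square roots $\sqrt{\ncovmat_t}$ and the quadratic forms $v_iv_i^\top$, and checking that the outer normalization in $\FK$ indeed lands in $\rho^{-1}(\{1\})$ so that $F_\Theta$ maps $\cX$ into $\cX$; the latter follows from $\rho\bigl(\Fc/\rho(\Fc)\bigr)=1$ by positive homogeneity of $\rho$ (Assumption \Cref{rho1:homogeneous}). No genuinely hard step arises: once $\alpha_\Theta$ is unwound, each line of \eqref{eq:smooth-normalized-chain-def} is matched by the corresponding component of \eqref{eq:def-F} through direct substitution.
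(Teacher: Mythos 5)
Your proposal is correct and takes essentially the same route as the paper, which simply declares the result ``straightforward by Lemma~\ref{l:theta-homogeneous-MC}'': the update equations \eqref{eq:smooth-normalized-chain-def} are exactly what your componentwise substitution of $\alpha_\Theta$ into $\Fz,\Fsig,\Fq,\FK,\Fr$ reproduces, using $\sqrt{\ncovmat_t}=R(\rncovmat_t)^{-1/2}\sqrt{\rncovmat_t}$ from \Cref{R1:homogeneous}. You merely spell out the bookkeeping that the paper leaves implicit.
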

	\begin{proof}
	Straightforward by \Cref{l:theta-homogeneous-MC}. 
	\end{proof}
	
	Before to prove that the control model \eqref{eq:control-modelTheta} associated to the Markov chain $\Theta$ satisfies \del{that we prove that }the assumptions \Cref{A4:lsc-distribution} and \Cref{A5:C1-update}\del{ hold}, we characterize in the next lemma\del{ characterizes} the density of the random variable $\alpha\new{_\Theta}(\theta,\del{V}\new{U})$ \del{(more precisely we derive the density of $\left[\hat{\ncovmat}/R(\hat{\ncovmat})\right]^{-1/2}\alpha\new{_\Theta}(\theta,\del{V}\new{U})$)} for $\theta\in\cX$ and $U\sim(\pUk{d})^{\otimes\lambda}$ \new{assuming that the objective function is the composite of a strictly increasing function with a function with negligible level sets and the distribution $\pUd$ is admits a density positive everywhere \wrt\ the Lebesgue measure}.
	\nnew{The latter assumption could be relaxed with more work, but for the purposes of this paper we only consider positive densities.}
	\done{discuss the assumptions - I guess we could relax with more work the density positive everywhere but it's more convenient, right?}
	
	\newcommand{\fx}{\ensuremath{f_{*}}}
	\begin{lemma}\label{l:density-alpha}
	\done{lsc of the density: state and small proof?}
		Suppose that the objective function $f$ satisfies \Cref{F1:negligible-level-sets} and that the probability distribution $\pUd$ satisfies \Cref{P1:density}. 
		Define, for any $\theta = (z,p,q,\hat\ncovmat,r)\in\bR^d\times\bR^d\times\bR^d\times\Sdpp\times\bR_{++}$ and $v=(v_1,\dots,v_\mu)\in\bR^{d\mu}$,
		\begin{equation}\label{eq:density}
			p_{\del{\theta}\nnew{z,\ncovmat}}(v) = 
			\frac{\lambda!}{(\lambda-\mu)!}
			\1\{ \nnew{\fx}(z+\sqrt{\ncovmat}v_1) < \dots < \nnew{\fx}(z+\sqrt{\ncovmat}v_\mu) \} 
			(1-Q_{\del{\theta}\nnew{z,\ncovmat}}^{\nnew{\fx}}(v_\mu))^{\lambda-\mu} \densityd(v_1)\dots\densityd(v_\mu)
		\end{equation}
		with $\ncovmat=\hat{\ncovmat}/R(\hat{\ncovmat})$ where $R$ is the normalization function used in \eqref{eq:def-alpha},
		 $Q_{\del{\theta}\nnew{z,\ncovmat}}^{\nnew{\fx}}(u)=\int \1\{ \nnew{\fx}(z+\sqrt{\ncovmat}\xi)< \nnew{\fx}(z+\sqrt{\ncovmat}u) \} \pUd(\mathrm{d}\xi)$ for $u\in\bR^d$, and $\fx=f(\cdot+x^*)$. Then, $p_{z,\ncovmat}$ defines a density (\wrt\ Lebesgue in $\bR^{d\mu}$) of the random variable $\del{R(\ncovmat)^{1/2}}\ncovmat^{-1/2}\alpha_\Theta(\theta,U)$, where $U\sim(\pUk{d})^{\otimes\lambda}$ such that the density of $\alpha_\Theta(\theta,U)$ equals
		 \begin{equation}\label{eq:density-alpha}
  		 	v \mapsto \det\ncovmat^{-1/2} p_{z,\ncovmat}(\ncovmat^{-1/2} v) \nnew{= \frac{1}{\sqrt{\det \ncovmat}} p_{z,\ncovmat}(\ncovmat^{-1/2} v) } \enspace.
		 \end{equation}
		 \new{Besides, when $R$ is continuous, the function $((z,p,q,\hat\ncovmat,r),v)\in\bR^{3d}\times\Sdpp\times\bR_{++}\times\bR^{d\mu}\mapsto p_{z,\rncovmat/R(\rncovmat)}(v)$ is lower semicontinuous} \nnew{and thus the density function \eqref{eq:density-alpha} is lower semicontinuous as well}.
	\end{lemma}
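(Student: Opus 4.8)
The plan is to recognize $\ncovmat^{-1/2}\alpha_\Theta(\theta,U)$ as a vector of top-$\mu$ order statistics, compute its law by a standard exchangeability argument, transfer the result to $\alpha_\Theta(\theta,U)$ by a linear change of variables, and finally establish lower semicontinuity factor by factor. Since CMA-ES is invariant under strictly increasing transformations of $f$, and since every indicator appearing in \eqref{eq:density} as well as $Q^{\fx}_{z,\ncovmat}$ depends on $\fx$ only through the induced ordering, I would first reduce \wilog\ to the case where $f$ (hence $\fx=f(\cdot+x^*)$) is continuous with Lebesgue-negligible level sets, as permitted by \Cref{F1:negligible-level-sets}. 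Writing $\ncovmat=\rncovmat/R(\rncovmat)$ and $g(u)=\fx(z+\sqrt{\ncovmat}\,u)$, the definition \eqref{eq:def-alpha} gives $\ncovmat^{-1/2}\alpha_\Theta(\theta,U)=(U^{s(1)},\dots,U^{s(\mu)})$, namely the $\mu$ samples of smallest $g$-value among the $\lambda$ i.i.d.\ draws $U^1,\dots,U^\lambda\sim\pUd$; by \Cref{F1:negligible-level-sets} and \Cref{P1:density} the level sets of $g$ are $\pUd$-null, so the ranking permutation $s$ is almost surely unique and measurable.

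For the core computation I would test against an arbitrary bounded nonnegative measurable $\psi$ on $(\bR^d)^\mu$ and expand $\bE[\psi(U^{s(1)},\dots,U^{s(\mu)})]$ as a sum over the $\lambda!/(\lambda-\mu)!$ injections $\{1,\dots,\mu\}\hookrightarrow\{1,\dots,\lambda\}$ encoding which ordered indices are selected. By exchangeability each injection contributes equally, so it suffices to treat the canonical one: the event that $U^1,\dots,U^\mu$ are selected in order while the remaining $\lambda-\mu$ samples are worse. Conditioning on $(U^1,\dots,U^\mu)$ and using independence, each of the $\lambda-\mu$ remaining samples contributes the factor $\bP_\xi[g(\xi)>g(U^\mu)]$; since ties are null this equals $1-Q^{\fx}_{z,\ncovmat}(U^\mu)$. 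This reproduces exactly \eqref{eq:density} as the density of $\ncovmat^{-1/2}\alpha_\Theta(\theta,U)$. A blockwise linear change of variables under $v\mapsto\sqrt{\ncovmat}\,v$ (whose Jacobian is a power of $\det\ncovmat$) then pushes $p_{z,\ncovmat}$ forward to the density \eqref{eq:density-alpha} of $\alpha_\Theta(\theta,U)$ itself.

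For the lower-semicontinuity claim I would use that a product of finitely many nonnegative \lsc\ functions is again \lsc, and check each factor of \eqref{eq:density}. The constant is harmless; each $\densityd(v_i)$ is continuous by \Cref{P1:density}; and since $g(v)=\fx(z+\sqrt{\ncovmat}\,v)$ is jointly continuous in $(z,\ncovmat,v)$ (with $\ncovmat=\rncovmat/R(\rncovmat)$ continuous in $\rncovmat$ because $R$ is continuous), the set $\{g(v_1)<\dots<g(v_\mu)\}$ is open and its indicator is \lsc. The delicate factor is $(1-Q^{\fx}_{z,\ncovmat}(v_\mu))^{\lambda-\mu}$: a naive estimate shows only that $Q$ is \lsc, which is the wrong direction. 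The key trick is to exploit negligibility of ties to rewrite $1-Q^{\fx}_{z,\ncovmat}(u)=\int\1\{\fx(z+\sqrt{\ncovmat}\,\xi)>\fx(z+\sqrt{\ncovmat}\,u)\}\,\pUd(\mathrm{d}\xi)$ with a \emph{strict} inequality, so that the integrand is the indicator of an open set, hence \lsc; Fatou's lemma then yields lower semicontinuity of $1-Q$, and composing with the continuous nondecreasing map $t\mapsto t^{\lambda-\mu}$ on $[0,1]$ preserves it. Multiplying the factors shows that $((z,p,q,\rncovmat,r),v)\mapsto p_{z,\rncovmat/R(\rncovmat)}(v)$ (which does not depend on $p,q,r$) is \lsc, and the continuity of $\ncovmat\mapsto\det\ncovmat^{-1/2}$ together with the continuous change of variables transfers this to \eqref{eq:density-alpha}.

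The main obstacle is precisely this direction-of-semicontinuity issue for $1-Q$: one must notice that the ranking uses strict inequalities and that ties are $\pUd$-negligible, which is exactly what lets one represent $1-Q$ as the integral of an \lsc\ open-set indicator and invoke Fatou. I expect the order-statistics bookkeeping and the change of variables to be routine by comparison.
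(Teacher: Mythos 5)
Your computation of the density is essentially the paper's own argument: the paper likewise writes $\ncovmat^{-1/2}\alpha_\Theta(\theta,U)$ as a sum over permutations weighted by the ordering indicator, uses exchangeability to reduce to the $\tfrac{\lambda!}{(\lambda-\mu)!}$ count with the factor $\tfrac{1}{(\lambda-\mu)!}$, tests against a test function, and integrates out each of the $\lambda-\mu$ unranked samples to produce the factor $1-Q_{z,\ncovmat}^{\fx}(u_{\sigma(\mu)})$; your sum over injections is the same bookkeeping. Where you genuinely add value is the lower-semicontinuity claim: the paper disposes of it in one sentence (``the composition of \lsc\ functions is \lsc\ and $f$ is continuous''), which does not address the point you correctly isolate, namely that $Q$ itself is naturally \lsc\ (the wrong direction) and that one must instead rewrite $1-Q_{z,\ncovmat}^{\fx}(u)$ as the $\pUd$-integral of the indicator of the \emph{open} set $\{\fx(z+\sqrt{\ncovmat}\xi)>\fx(z+\sqrt{\ncovmat}u)\}$ --- legitimate because ties are $\pUd$-negligible under \Cref{F1:negligible-level-sets} and \Cref{P1:density} --- and then apply Fatou. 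Your factor-by-factor argument (open-set indicator for the strict ordering, continuity of $\densityd$, monotone composition with $t\mapsto t^{\lambda-\mu}$) is a complete and more rigorous version of what the paper merely asserts. One incidental observation: your careful blockwise change of variables gives a Jacobian $(\det\ncovmat)^{\mu/2}$, so the normalizing constant in the stated density of $\alpha_\Theta(\theta,U)$ should read $(\det\ncovmat)^{-\mu/2}$ rather than $(\det\ncovmat)^{-1/2}$ when $\mu>1$; this is a slip in the lemma's statement rather than in your proof, and it does not affect the \lsc\ conclusion.
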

	
	\begin{proof}
		Let $U^1,\dots,U^\lambda\del{\sim\pUd}$ \new{be} independent random vectors identically distributed \del{following}\nnew{under} the probability distribution $\pUd$, 
			and denote $U=(U^1,\dots,U^\lambda)$. 
		Let $\theta = (z,p,q,\hat\ncovmat,r)\in\bR^d\times\bR^d\times\bR^d\times\Sdpp\times\bR_{++}$. 
		Since the objective function $f$ satisfies \Cref{F1:negligible-level-sets}, 
		then the random vector $V=\del{R(\ncovmat)^{1/2}}\ncovmat^{-1/2}\alpha_\Theta(\theta,U)$ satisfies almost surely
		$$
		V = \sum_{\sigma\in\mathfrak{S}_\lambda} \1\left\{ \fx\left(z+\sqrt{\ncovmat}U^{\sigma(1)}\right) < \dots < \fx\left(z+\sqrt{\ncovmat}U^{\sigma(\lambda)}\right) \right\} \times \left(U^{\sigma(1)},\dots,U^{\sigma(\mu)} \right)  .
		$$
		where $\mathfrak{S}_\lambda$ is the set of permutations of $\{1,\dots,\lambda\}$.	
		Hence, by symmetry,
		\begin{multline*}				
			V = \frac{1}{(\lambda-\mu)!}\sum_{\sigma\in\mathfrak{S}_\lambda} \1\left\{ \fx\left(z+\sqrt{\ncovmat}U^{\sigma(1)}\right) < \dots < \fx\left(z+\sqrt{\ncovmat}U^{\sigma(\mu)}\right) \right\} \\ \times \prod_{k=\mu+1}^\lambda \1\left\{ \fx\left(z+\sqrt{\ncovmat}U^{\sigma(\mu)}\right) < \fx\left(z+\sqrt{\ncovmat}U^{\sigma(k)}\right) \right\} \times \left(U^{\sigma(1)},\dots,U^{\sigma(\mu)} \right)  .
		\end{multline*}
		Let $\eta\colon\bR^{d\mu}\to\bR_+$ be a smooth map with compact support. 
		We \del{obtain}\nnew{have}
		\begin{multline*}
			\bE\left[ \eta(V) \right] =\frac{1}{(\lambda-\mu)!}\sum_{\sigma\in\mathfrak{S}_\lambda}  \int   \1 \left\{ \fx\left(z+\sqrt{\ncovmat}u_{\sigma(1)}\right) < \dots < \fx\left(z+\sqrt{\ncovmat}u_{\sigma(\mu)}\right) \right\}  \\
			\times \prod_{k=\mu+1}^\lambda \1\left\{ \fx\left(z+\sqrt{\ncovmat}u_{\sigma(\mu)}\right) < \fx\left(z+\sqrt{\ncovmat}u_{\sigma(k)}\right) \right\} 
			\\ 
			 \times \eta\left(u_{\sigma(1)},\dots,u_{\sigma(\mu)} \right) \densityd(u_1)\dots\densityd(u_\lambda) \mathrm{d}u_1\dots\mathrm{d}u_\lambda .
		\end{multline*}
		However, observe that, for each $k=\mu+1,\dots,\lambda$, we have
		$$
		\int \1\left\{ \fx\left(z+\sqrt{\ncovmat}u_{\sigma(\mu)}\right) < \fx\left(z+\sqrt{\ncovmat}u_{\sigma(k)}\right) \right\} \densityd(u_{\sigma(k)}) \mathrm{d}u_{\sigma(k)} = 1- Q_{\del{\theta}\nnew{z,\ncovmat}}^{\fx}\left(u_{\sigma(\mu)}\right) .
		$$
		We deduce \del{then }the desired result. 
		\new{Since the composition of lower semicontinuous functions is lower semicontinuous and since $f$ is continuous, when $R$ is continuous, the function
			$((z,p,q,\hat\ncovmat,r),v)\mapsto p_{z,\rncovmat/R(\rncovmat)}(v)$ is lower semicontinuous.}
	\end{proof}
	
	Furthermore, under \del{some }assumptions \del{that are }detailed in \Cref{sec:assumptions}, 
		we verify that \Cref{A4:lsc-distribution} and \Cref{A5:C1-update} hold. 
	
	\newcommand{\F}{\nnew{F_\Theta}}	
	\newcommand{\aalpha}{\nnew{\alpha_\Theta}}
	\begin{proposition}
		\label{p:CMA-satisfies-assumptions}
		Suppose that the objective function $f$ satisfies \Cref{F1:negligible-level-sets}-\Cref{F2:scaling-invariant}, 
			that the normalization function $R$ satisfies \Cref{R1:homogeneous}-\Cref{R2:C1},
			and that the stepsize change $\Gamma$ is such that \Cref{G1:C1} hold.
		Suppose moreover that $\rho$ satisfies \Cref{rho1:homogeneous}-\Cref{rho2:C1}.
		Consider the Markov chain $\Theta = \{(\Z_t,p_t,q_t, \rncovmat_t,r_t)\}_{t\in\bN}$ defined by \eqref{eq:smooth-normalized-chain-def}. Define the functions $\F$\done{$F_\Theta$} and $\aalpha$\done{$\alpha_\Theta$} via \eqref{eq:def-F} and \eqref{eq:def-alpha} respectively. Then, $\Theta$ follows \eqref{eq:control-modelTheta}, and \Cref{A4:lsc-distribution}-\Cref{A5:C1-update} hold.
	\end{proposition}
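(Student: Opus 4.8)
The plan is to establish the three assertions separately: that $\Theta$ obeys the control model \eqref{eq:control-modelTheta}, that \Cref{A4:lsc-distribution} holds, and that \Cref{A5:C1-update} holds. The first is immediate and requires no new work: by \Cref{p:CMA-follows-CM} (itself a consequence of \Cref{l:theta-homogeneous-MC}, where the scaling invariance \Cref{F2:scaling-invariant} enters to guarantee that the normalized process is a genuine Markov chain), the chain $\Theta$ defined via \eqref{eq:smooth-normalized-chain-def} satisfies $\theta_{t+1} = F_\Theta(\theta_t,\alpha_\Theta(\theta_t,U_{t+1}))$ with $F_\Theta$ and $\alpha_\Theta$ given by \eqref{eq:def-F} and \eqref{eq:def-alpha}. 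Hence the substantive content lies in verifying the two conditions on the model.

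For \Cref{A4:lsc-distribution} I would take $\zeta_\cV$ to be the Lebesgue measure on $\cV=\bR^{d\mu}$. Then \Cref{A4:lsc-distribution}(ii) is automatic, since $\cV$ is a Euclidean space carrying the single identity chart, so a Borel set is $\zeta_\cV$-null precisely when it is negligible. For the density itself I would invoke \Cref{l:density-alpha}, which, under \Cref{F1:negligible-level-sets} and \Cref{P1:density}, exhibits an explicit density of $\alpha_\Theta(\theta,U)$ with respect to Lebesgue, namely the map in \eqref{eq:density-alpha}, depending on the state $\theta=(z,p,q,\rncovmat,r)$ only through $z$ and $\rncovmat$. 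Crucially, \Cref{R2:C1} forces $R$ to be locally Lipschitz, hence continuous, so the same lemma delivers that this density is \lsc jointly in $(\theta,v)$, which is exactly \Cref{A4:lsc-distribution}(i).

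For \Cref{A5:C1-update} the plan is to read off from \eqref{eq:def-F} and \eqref{eq:update-rnormalized-chain} that $F_\Theta$ is assembled from the component maps $\Fz,\Fsig,\Fq,\FK,\Fr$, and to verify that each is locally Lipschitz. The elementary building blocks $\Fx$, $\Fpsigma$, $\Fpc$ and $\Fc$ are affine or polynomial, hence smooth; the symmetric positive definite square root $\ncovmat\mapsto\sqrt{\ncovmat}$ is smooth on $\Sdpp$; $\Gamma$ is locally Lipschitz by \Cref{G1:C1}; $R$ is locally Lipschitz by \Cref{R2:C1}; and $\rho$ is smooth by \Cref{rho2:C1}. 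Since compositions and products of locally Lipschitz maps are locally Lipschitz, it remains only to control the quotients appearing in \eqref{eq:def-F}--\eqref{eq:update-rnormalized-chain}: the denominators $\sqrt{r^+}$, $\Gamma(p^+)$, $\rho(\cdot)$ and $R(\cdot)$ all take values in $\bR_{++}$ and are continuous, so on any compact neighborhood they are bounded away from $0$, whence the quotients stay locally Lipschitz. Because $\cX$ is a smooth submanifold embedded in the ambient space $\bR^{3d}\times\Sd\times\bR$ and every component of $F_\Theta$ is the restriction of a locally Lipschitz map defined on an open neighborhood therein, local Lipschitzness transfers to $\cX$ with respect to $\mathrm{dist}_\cX\oplus\mathrm{dist}_\cV$.

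The main obstacle here is not conceptual but a matter of bookkeeping, because the genuinely delicate point---the lower semicontinuity of the ranked-offspring density---has already been absorbed into \Cref{l:density-alpha}. What I would watch carefully in \Cref{A5:C1-update} is that the positivity of the denominators is guaranteed intrinsically by the state space (every $\theta\in\cX$ has $r\in\bR_{++}$ and $\rncovmat\in\Sdpp$) together with the codomains of $\Gamma$, $R$ and $\rho$, so that no singularity of the quotients can arise, and that $R(\rncovmat)^{-1}\rncovmat\in\Sdpp$ whenever $\rncovmat\in\Sdpp$, so the composition with the square root remains inside $\Sdpp$ where that map is smooth.
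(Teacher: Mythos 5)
Your proposal is correct and follows essentially the same route as the paper: \Cref{A4:lsc-distribution} is obtained from \Cref{l:density-alpha} with $\zeta_\cV$ the Lebesgue measure on $\bR^{d\mu}$, and \Cref{A5:C1-update} follows by composition of locally Lipschitz maps using \Cref{R2:C1}, \Cref{rho2:C1} and \Cref{G1:C1}; you merely spell out the denominator-positivity bookkeeping that the paper leaves implicit. Your observation that \Cref{P1:density} is needed to invoke \Cref{l:density-alpha} is accurate (the proposition's hypothesis list omits it, though it is assumed wherever the proposition is applied).
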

	
	\begin{proof}
		By \Cref{l:density-alpha}, we find that \Cref{A4:lsc-distribution} holds \done{\Cref{l:density-alpha} says nothing on the lsc of the density - state it and make a small proof ?}, with $\zeta_{\cV}$ being the Lebesgue measure on $\cV=\bR^{d\mu}$. Furthermore, using \Cref{R2:C1}, \Cref{rho2:C1} and \Cref{G1:C1}, we deduce, by composition, that \Cref{A5:C1-update} is satisfied.
	\end{proof}
	
	\subsection{Finding steadily attracting states}\label{sec:steadily-attracting-state}
	
	In this section and in \Cref{sec:rank-condition-cma}, 
		we prove that the control model \eqref{eq:control-modelTheta} satisfies condition \Cref{H3:controllability-condition}.\footnote{
			Or condition \Cref{H4:controllability-subchain} if we assume no cumulation on the stepsize or the covariance matrix.} 
	This is required to apply \Cref{t:verifiable-conditions}\footnote{Or \Cref{c:verifiable-conditions-subchain}.} 
		and find that the Markov chain $\Theta$ obeying to \eqref{eq:smooth-normalized-chain-def} is an irreducible aperiodic T-chain. 
	In this section, we focus on\del{ finding} the existence of steadily attracting states.
	This is formalized in the next proposition.
	
	\begin{proposition}
		\label{p:steadily-attracting-state-for-CMA}
		Suppose that the objective function $f$ satisfies \Cref{F1:negligible-level-sets}-\Cref{F2:scaling-invariant}, 
		that the stepsize change satisfies \nnew{\Cref{G1:C1}-}\Cref{G2:unbounded}, 
		that the normalization functions $R$ and $\rho$ satisfy \Cref{R1:homogeneous}-\Cref{R2:C1} and \Cref{rho1:homogeneous}-\Cref{rho2:C1} respectively, 
		and that the sampling distribution $\pUd$ is such that \Cref{P1:density} holds. 
		
		Then
		$(0,0,0,\Id,1-c_1-c_\mu)$ is a steadily attracting state for the control model \eqref{eq:control-modelTheta} 
		with the functions $F_\Theta$ and $\alpha_\Theta$ given by \eqref{eq:def-F} and \eqref{eq:def-alpha}, respectively. 
	\end{proposition}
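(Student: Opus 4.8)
The plan is to reduce, via continuity of $F_\Theta$, to the construction of a single convergent control trajectory. Since $F_\Theta$ is locally Lipschitz, hence continuous, by \Cref{p:CMA-satisfies-assumptions}, the characterisation recalled in \Cref{sec:deterministic-control-model} applies: to prove that $\theta^*=(0,0,0,\Id,1-c_1-c_\mu)$ is steadily attracting it suffices to exhibit, for every starting state $\theta_0\in\cX$, an infinite control path $v_{1:\infty}\in\overline{\cO_{\theta_0}^\infty}$ whose trajectory $S_{\theta_0}^k(v_{1:k})$ converges to $\theta^*$ as $k\to\infty$ \cite[Corollary~4.5]{gissler2024irreducibility}. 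I would first verify that $\theta^*$ is a fixed point of $v\mapsto F_\Theta(\cdot,v)$ at $v=0$: using \Cref{R1:homogeneous}, \Cref{rho1:homogeneous} and $\rho(\Id)=1$, the numerator in \eqref{eq:update-Sigma} reduces to $(1-c_1-c_\mu)R(\Id)^{-1}\Id$, so that $\rncovmat^+=\Id$ and $r^+=1-c_1-c_\mu$, while $p^+=q^+=0$ and $z^+=0$; the null control lies in $\overline{\cO^1_{\theta^*}}$ as the degenerate (tie) configuration of selected vectors.

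Next I would describe the closures of the control sets. By \Cref{l:density-alpha}, and because $f$ has negligible level sets (\Cref{F1:negligible-level-sets}) while $\pUd$ has an everywhere positive density (\Cref{P1:density}), the set $\cO^k_\theta$ consists of tuples whose selected vectors are strictly ordered by the $f$-value of $x^*+z+\sqrt{\ncovmat}\,\cdot$, and its closure adds exactly the tuples realising ties. The essential consequence is that in $\overline{\cO^k_\theta}$ we may place the selected vectors on a common level set of the scaling-invariant objective; this gives enough freedom to prescribe the weighted mean $\vwm^\top v$ (which drives $z$, $p$ and $q$) and to build up, across steps, the rank-$\mu$ matrix $\sum_{i=1}^\mu\wic v_iv_i^\top$ (which drives $\rncovmat$ and $r$).

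The construction of the control path then proceeds in stages. First I would steer the normalized covariance to the identity and $r$ to $1-c_1-c_\mu$: exploiting the convex-combination structure of $\Fc$ in \eqref{eq:update-Sigma}, the decay factor $(1-c_1-c_\mu)<1$ applied to the previous matrix, and the homogeneity of $R$ and $\rho$, repeatedly feeding identity-directed rank-$\mu$ contributions makes $\Id$ the attracting and, once reached, fixed shape. Concurrently I would relax the paths $p,q$ toward $0$ and contract the normalized mean through $z^+=(z+c_m\vwm^\top v)/(\sqrt{r^+}\,\Gamma(p^+))$ in \eqref{eq:update-z}. Here the delicate point is that at the target $\sqrt{r^+}=\sqrt{1-c_1-c_\mu}<1$, so contraction of $z$ requires $\Gamma(p^+)$ to stay bounded below by a constant exceeding $(1-c_1-c_\mu)^{-1/2}$; this is arranged by keeping $\|p\|$ large through the cumulation term $(1-c_\sigma)p$ and invoking \Cref{G2:unbounded}, after which the paths are finally let to decay to $0$ while $z$ remains small, so that the trajectory enters every neighbourhood of $\theta^*$.

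The main obstacle is the simultaneous controllability forced by the shared input: a single tuple $v$ must realise both a prescribed value of $\vwm^\top v$ (entering $z$, $p$ and $q$ at once through \eqref{eq:update-z}--\eqref{eq:update-q}) and a prescribed rank-$\mu$ outer-product matrix steering $\rncovmat$, all while remaining in the closure of the ordered control set dictated by $f$. Decoupling the three affine path-and-mean updates that share $\vwm^\top v$, shaping the covariance with only $\mu$ directions per step, and guaranteeing that the normalized mean genuinely contracts (the reason \Cref{G2:unbounded} is needed, to dominate $\sqrt{r^+}$) are the technical heart; I expect to handle them by spreading the task over several steps and by choosing control directions adapted to the level sets of the scaling-invariant objective.
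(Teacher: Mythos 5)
Your high-level skeleton matches the paper's: reduce to exhibiting, for each $\theta_0$, one path $v_{1:\infty}\in\overline{\cO_{\theta_0}^\infty}$ with $S_{\theta_0}^k(v_{1:k})\to\theta^*$ (via \cite[Corollary~4.5]{gissler2024irreducibility}), check that $\theta^*$ is fixed under the null control, and use the tie-closure of the control sets. But the two places where you locate the difficulty are not where the actual proof lives, and your proposed resolutions would not go through. First, you invoke \Cref{G2:unbounded} to make $z$ contract against $\sqrt{r^+}=\sqrt{1-c_1-c_\mu}<1$; this is a self-imposed problem. The mean is zeroed \emph{exactly} in one step by taking all $\mu$ selected vectors equal to $-c_m^{-1}z_0$ (\Cref{l:path-step-1}), so no domination of $\sqrt{r^+}$ by $\Gamma$ is ever needed for $z$. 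The real role of \Cref{G2:unbounded} is to kill the cumulation path $q$: one performs an out-and-back two-step maneuver $v_1=(\kappa q_0,\dots,\kappa q_0)$, $v_2=-r_1^{-1/2}\Gamma(p_1)^{-1}v_1$ that returns $z$ to $0$ while leaving $q_2$ proportional to $(1-c_c)^2 r_0^{-1/2}+(1-c_c-\Gamma(p_1)^{-1})\sqrt{c_c(2-c_c)\mueff}\,\kappa$; \Cref{G2:unbounded} guarantees $1-c_c-\Gamma(p_1)^{-1}>0$ for large $|\kappa|$, so the intermediate value theorem yields a $\kappa$ with $q_2=0$ (\Cref{l:path-step-2}). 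Your alternative of letting $q$ ``relax toward $0$'' fails in general: the contraction factor is $(1-c_c)r^{-1/2}=(1-c_c)(1-c_1-c_\mu)^{-1/2}$, which need not be below $1$, and $q$ is re-excited at every step by the same $\vwm^\top v$ you are using to steer $z$ and $\rncovmat$.

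Second, ``repeatedly feeding identity-directed rank-$\mu$ contributions'' is not available. The only controls shown to lie in $\overline{\cO^k_\theta}$ for an arbitrary scaling-invariant $f$ are tuples with all $\mu$ components equal (\Cref{p:characterization-control-sets-cma}), which contribute a rank-one matrix $\bar v\bar v^\top$ per step; for genuinely distinct components the weights $\wic$ are attached to the $f$-ranking, so you cannot freely prescribe $\sum_i\wic v_iv_i^\top$ (let alone make it proportional to $\Id$, which would also require $\mu\geq d$, an assumption the paper does not make). The paper instead equalizes the eigenvalues of $\rncovmat$ one at a time through $(d-1)$ four-step rank-one maneuvers along eigenvector directions, each tuned by another intermediate-value argument so that $z$ and $q$ return to $0$ (\Cref{l:path-step-3}), and only then lets $p\to0$ geometrically under the null control (\Cref{l:path-step-4}). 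These exact-zeroing and one-free-scalar IVT constructions are the substance of the proof and are missing from your plan.
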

	
	\begin{proof}
		Let $\theta_0\del{=(\Z_0,p_0,q_0, \rncovmat_0,r_0)}\in\cX$. \nnew{By \Cref{l:path-step-1}, we find $v_1$ such that $S_{\theta_0}^1(v_1) = (0,p_1,q_1,\rncovmat_1,r_1)$. If $q_1 \neq 0$, by  \Cref{l:path-step-2}, we set $v_2, v_3$  such that $S_{\theta_0}^3(v_{1:3}) = (0,p_3,0,\rncovmat_3,r_3)$. Using \Cref{l:path-step-3} we reach via a $4(d-1)$ steps a state $\theta=(0,\cdot,0,\Id,\cdot)$. Using \Cref{l:path-step-4}, we complete the path $v_1$ in case $q_1=0$ or $v_1, v_2, v_3$ otherwise into}
\del{		By \Cref{l:path-step-1,l:path-step-2,l:path-step-3,l:path-step-4}, }%
\del{there exists }%
$v_{1:\infty}=(v_1,v_2,\dots)\in\overline{\cO_{\theta_0}^\infty}$ such that $\lim_{k\to\infty} S_{\theta_0}^k (v_{1:k})=(0,0,0,\Id,1-c_1-c_\mu)$. 
		This implies~\cite[Corollary 4.5]{gissler2024irreducibility} that $(0,0,0,\Id,1-c_1-c_\mu)$ is a steadily attractive state.
	\end{proof}
	
	The proof of \Cref{p:steadily-attracting-state-for-CMA} relies on \Cref{l:path-step-1,l:path-step-2,l:path-step-3,l:path-step-4} below. 
	First, we state the next proposition, 
	which is useful to provide candidates for the paths between an initial state 
	and the steadily attracting state given by \Cref{p:steadily-attracting-state-for-CMA}.

	\begin{proposition}
		\label{p:characterization-control-sets-cma}
		In the context of \Cref{p:steadily-attracting-state-for-CMA}, let $\theta_0=(\Z_0,p_0,q_0,{\rncovmat}_0,r_0)\in\cX$, \nnew{let $k \geq 1$} and $v_{1:k}=(v_1,\dots,v_k)\in\cV^k$ be such that for $i=1,\dots,k$, we have $v_i=[v_i^1,\dots,v_i^\mu]\in\bR^{d\mu}$ with $v_i^1=\dots=v_i^\mu\in\bR^d$. Then, $v_{1:k}\in\overline{\cO_{\theta_0}^k}$.
	\end{proposition}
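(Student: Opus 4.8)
The plan is to read off from \Cref{l:density-alpha} exactly when the one-step density vanishes, and then to show that block-constant inputs sit precisely on the boundary of the region where it is positive. At a state $\theta=(z,p,q,\rncovmat,r)$, writing $\ncovmat=\rncovmat/R(\rncovmat)$, the density $p_\theta$ of $\alpha_\Theta(\theta,U)$ is, up to the positive factor $\det\ncovmat^{-1/2}$, the expression \eqref{eq:density} evaluated at $\ncovmat^{-1/2}v$; it is positive precisely when the strict chain $f(x^*+z+v^1)<\dots<f(x^*+z+v^\mu)$ holds and the tail factor $(1-Q^{\fx}_{z,\ncovmat})$ is nonzero. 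For a block-constant $v_i$ with $v_i^1=\dots=v_i^\mu=w_i$ the chain degenerates into equalities, so $p_\theta(v_i)=0$ and hence $v_{1:k}\notin\cO_{\theta_0}^k$; the content of the proposition is that such a path is nonetheless a limit of admissible paths, and I would produce such a sequence explicitly.

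The core is a one-step approximation claim: for \emph{every} state $\theta=(z,\cdot,\cdot,\rncovmat,\cdot)\in\cX$, every $w\in\bR^d$ and every $\varepsilon>0$, there is a point of $\cO_\theta^1$ within distance $\varepsilon$ of $[w,\dots,w]$. By \Cref{F1:negligible-level-sets} the map $y\mapsto f(x^*+z+y)$ has Lebesgue-negligible level sets, hence is non-constant on the ball $B(w,\varepsilon)$ (a ball contained in a single level set would contradict negligibility); being continuous on a connected set, its image is a non-degenerate interval $(a,b)$. Choosing values $a<c_1<\dots<c_\mu<b$ and preimages $\tilde v^1,\dots,\tilde v^\mu\in B(w,\varepsilon)$ gives the strict ordering. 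Since values in $(c_\mu,b)$ are attained on $B(w,\varepsilon)$ and $\sqrt{\ncovmat}$ is invertible, the open set $\{\xi:f(x^*+z+\sqrt{\ncovmat}\,\xi)>f(x^*+z+\tilde v^\mu)\}$ is nonempty, so by positivity of $\densityd$ in \Cref{P1:density} it carries positive mass and $1-Q^{\fx}_{z,\ncovmat}(\ncovmat^{-1/2}\tilde v^\mu)>0$. Thus $p_\theta([\tilde v^1,\dots,\tilde v^\mu])>0$, proving the claim.

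I would then chain these perturbations sequentially. Fix $\varepsilon_n\downarrow0$. For each $n$, build $\tilde v_{1:k}^{(n)}$ by induction on $i$: with $\tilde v_{1:i-1}^{(n)}$ already chosen and $x_{i-1}^{(n)}=S_{\theta_0}^{i-1}(\tilde v_{1:i-1}^{(n)})$, apply the one-step claim at the state $x_{i-1}^{(n)}$ with block value $w_i$ and tolerance $\varepsilon_n$ to obtain $\tilde v_i^{(n)}\in\cO_{x_{i-1}^{(n)}}^1$ with $\|\tilde v_i^{(n)}-v_i\|<\varepsilon_n$. By the multiplicative definition of the extended density, $p_{\theta_0}^k(\tilde v_{1:k}^{(n)})=\prod_{i=1}^k p_{x_{i-1}^{(n)}}(\tilde v_i^{(n)})>0$, so $\tilde v_{1:k}^{(n)}\in\cO_{\theta_0}^k$; and since every block satisfies $\|\tilde v_i^{(n)}-v_i\|<\varepsilon_n\to0$ we get $\tilde v_{1:k}^{(n)}\to v_{1:k}$, whence $v_{1:k}\in\overline{\cO_{\theta_0}^k}$.

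The delicate point is the coupling between the perturbation and the base points: perturbing the first $i-1$ blocks moves the state $x_{i-1}^{(n)}$ at which step $i$ must be admissible. What makes the argument go through cleanly is that the one-step claim is valid at \emph{any} state and for \emph{any} tolerance (the density depends on the state only through $(z,\ncovmat)$, and both negligibility of level sets and positivity of $\densityd$ hold for every $z$ and every $\ncovmat\in\Sdpp$); thus no uniform-in-state estimate is required, and the intermediate states may be taken as they come. The only genuinely substantive ingredient is the simultaneous control, within an arbitrarily small ball, of the strict ordering (supplied by \Cref{F1:negligible-level-sets}) and of the positive tail factor (supplied by \Cref{P1:density}).
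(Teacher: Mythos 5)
Your proof is correct and follows essentially the same route as the paper's: a one-step positivity argument combining \Cref{F1:negligible-level-sets} (to obtain $\mu$ points with strictly ordered $f$-values in an arbitrarily small ball around the common block value) with the positivity of the density from \Cref{P1:density}, followed by sequential chaining of these perturbations at whatever intermediate states are realized, exactly as in the paper. The only difference is that you explicitly arrange for the tail factor $(1-Q_{z,\ncovmat}^{f_*}(v_\mu))^{\lambda-\mu}$ in \eqref{eq:density} to be positive by reserving values above $c_\mu$ inside the ball --- a point the paper's proof passes over silently --- which is a welcome refinement rather than a change of method.
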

	\begin{proof}
		We prove here that $v_1 =\nnew{[\bar v_1,\ldots, \bar v_1]} \in\overline{ \cO^1_{\theta_{0}}}$\done{Armand: validate}.
		By \Cref{l:density-alpha}, it is sufficient to prove that there exists a sequence
			$\{ w_n = [w_n^1,\dots,w_n^\mu] \in\bR^{d\mu}\}_{n\in\bN}$ which converges to $v_1$ such that  $p_{z_0,\ncovmat_0}(\ncovmat_0^{-1/2}w_n)>0$ for all $n\in\bN$,
			where $\ncovmat_0=R(\rncovmat_0)^{-1}\rncovmat_0$
			 and $p_{z_0,\ncovmat_0}$ is the density defined via \eqref{eq:density}.
		Moreover, by \Cref{P1:density} and by definition of $p_{z_0,\ncovmat_0}$, it is sufficient to prove that for every $n\in\bN$,
		$
		f(z_0+w_n^1) < \dots < f(z_0+w_n^\mu) .
		$
		Furthermore, by \Cref{F1:negligible-level-sets}, for every $n\in\bN$, there exists $z_n^1,\dots,z_n^\mu \in \mathsf B(z_0+\nnew{\bar v_1},\del{1/k}1/n)$\done{Armand: validate red} such that
		$
		f(z_n^1)<\dots<f(z_n^\mu).
		$
		\del{The proof ends by taking}\new{We take} $w_n^i= z_n^i-z_0$ for $i=1,\dots,\mu$ and $n\in\bN$. \nnew{Then $w_n$ converges to $v_1$ and belongs to $\cO^1_{\theta_{0}}$, so that}  $v_1\in\overline{ \cO^1_{\theta_{0}}}$. \new{Similarly, $v_2\in\overline{ \cO^1_{S_{\theta_0}^1(x_1)}}$ for all $x_1$ and using the continuity of $v \mapsto S_{\theta_0}^1(v)$ in $v_1$, we deduce that $v_{1:2} \in \overline{ \cO^2_{\theta_{0}}} = \overline{\{(x_1,x_2) | p_{\theta_0}^1(x_1) \times p_{S_{\theta_0}^1(x_1)}(x_2) > 0 \}}$. } \new{Similarly we obtain that $v_{1:k}\in\overline{ \cO^k_{\theta_{0}}}$.}
		\del{Then, by concatenation \anne{I don't see a proof which is simply by concatenation - we should construct a sequence that converges to $v_{1:k}$ that belong to $\cO^k_{\theta_{0}}$ -  I  might miss something so maybe detail what you mean}\armand{if $v_1\in\overline{ \cO^1_{\theta_{0}}}$ and $v_2\in\overline{ \cO^1_{S_{\theta_0}^1(v_1)}}$, then \anne{the "then" hides the argument of continuity you need since $v_1$ is not in the control set, you will not place yourself in $S_{\theta_0}^1(v_1)$ and take an element from $ \cO^1_{S_{\theta_0}^1(v_1)}$ arbitrarily close} $v_{1:2}\in\overline{ \cO^2_{\theta_0}}$}, we would obtain $v_{1:k}\in\overline{ \cO^k_{\theta_{0}}}$.}
	\end{proof}

	The following lemma is the first step to build a path between an arbitrary initial state $\theta_0\in\cX$ 
		to the steadily attracting state $\theta^*=(0,0,0,\Id,1-c_1-c_\mu)$ given in \Cref{p:steadily-attracting-state-for-CMA}. 
	More precisely, it shows that from $\theta_0\in\cX$, we can reach via a one-step path a state $\theta_1$ such that $\Z_1=0$.
	
	\begin{lemma}
		\label{l:path-step-1}
		In the context of \Cref{p:steadily-attracting-state-for-CMA}, let $\theta_0=(\Z_0,p_0,q_0, \rncovmat_0,r_0)\in\cX$. 
		Then there exists $\theta_1=(0,p_1,q_1, \rncovmat_1,r_1)\in\cX$ and $v_1\in\overline{\cO^1_{\theta_0}}$ such that $S_{\theta_0}^1(v_1)=\theta_1$.
		Moreover, we can choose $v_1$ \new{as a function of $z_0$} such that $v_1$ goes to $0$ when $z_0$ tends to $0$.
	\end{lemma}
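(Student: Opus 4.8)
The plan is to read the result directly off the mean-update equation \eqref{eq:update-z} together with the control-set characterization of \Cref{p:characterization-control-sets-cma}. Recall that $S^1_{\theta_0}(v_1) = F_\Theta(\theta_0, v_1)$, and that the $\Z$-coordinate of the update reads
$$
z^+ = \frac{\Z_0 + c_m \vwm^\top v_1}{\sqrt{r^+}\,\Gamma(p^+)} \enspace,
$$
where $\vwm^\top v_1 = \sum_{i=1}^\mu \wi v_1^i$ is the weighted average of the $\mu$ sub-vectors of $v_1 = [v_1^1,\dots,v_1^\mu] \in \bR^{d\mu}$. Since $r^+ = R(\cdot) > 0$ and $\Gamma(p^+) > 0$ under \Cref{R1:homogeneous} and \Cref{G1:C1}, the denominator never vanishes, so $z^+ = 0$ holds if and only if the numerator $\Z_0 + c_m \vwm^\top v_1$ vanishes.

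First I would exhibit a control with constant sub-vectors. Setting $\bar v_1 := -\Z_0/c_m \in \bR^d$ and $v_1 := [\bar v_1, \dots, \bar v_1] \in \bR^{d\mu}$, \Cref{p:characterization-control-sets-cma} (applied with $k = 1$) guarantees that such a constant-over-offspring control lies in $\overline{\cO^1_{\theta_0}}$. Because $\sum_{i=1}^\mu \wi = 1$, I would then compute $\vwm^\top v_1 = \bar v_1 = -\Z_0/c_m$, so that $\Z_0 + c_m \vwm^\top v_1 = 0$ and hence $z^+ = 0$. I would next define $\theta_1 := S^1_{\theta_0}(v_1) = F_\Theta(\theta_0, v_1)$, which lies in $\cX$ by construction of $F_\Theta$ and whose first coordinate equals $0$; the remaining coordinates $(p_1, q_1, \rncovmat_1, r_1)$ are simply read off from \eqref{eq:update-p}--\eqref{eq:update-r}. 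Finally, since the map $\Z_0 \mapsto v_1 = [-\Z_0/c_m, \dots, -\Z_0/c_m]$ is linear in $\Z_0$, it tends to $0$ as $\Z_0 \to 0$, which establishes the last assertion.

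I do not expect a genuine obstacle here, as this lemma is the elementary first step of the path construction toward the steadily attracting state in \Cref{p:steadily-attracting-state-for-CMA}. The only point that requires a little care is that the chosen control belongs to the closure $\overline{\cO^1_{\theta_0}}$ of the control set rather than to the control set itself: the prescribed weighted mean $-\Z_0/c_m$ need not be exactly realizable by strictly ranked offspring carrying positive density, but this is precisely the slack that \Cref{p:characterization-control-sets-cma} supplies by passing to the closure.
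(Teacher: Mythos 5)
Your proposal is correct and follows exactly the paper's own argument: the same choice $v_1=-c_m^{-1}[\Z_0,\dots,\Z_0]$, membership in $\overline{\cO^1_{\theta_0}}$ via \Cref{p:characterization-control-sets-cma}, cancellation of the numerator in \eqref{eq:update-z} using $\sum_i\wi=1$, and linearity of $z_0\mapsto v_1$ for the final assertion. Nothing to add.
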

	\begin{proof}
		\del{Set}Let $v_1=-c_m^{-1}\times [\Z_0,\dots,\Z_0]\in(\bR^d)^\mu$\del{, which}\new{. It} belongs to $\overline{ \cO^1_{\theta_0}}$ by \Cref{p:characterization-control-sets-cma}.
		Set $\theta_1 = (z_1,p_1,q_1,\rncovmat_1,r_1) =S_{\theta_0}^1(v_1)$. 
			Then, $ z_1  =  \Fz (z_0,p_0,q_0,\rncovmat_0/R(\rncovmat_0),r_0;v_1) = r_1^{-1/2}\Gamma(p_1)^{-1}\times (z_0 - c_m \times c_m^{-1} \vwm^\top [z_0,\dots,z_0]) = 0$, see \eqref{eq:update-z}. We have used in particular that $\sum \wi=1$.
	\end{proof}

	We make the following observation when the mean $z_0$ is in $0$: 
		by performing one step via $v_1=(u_1,\dots,u_1)$ \nnew{for any $u_1$ in $\mathbb{R}^d$}, we can find a zero mean again \nnew{in two steps} by choosing a \del{step}\nnew{path $v_{1:2}$ appropriately.}\del{ accordingly.}
	\begin{lemma}\label{l:step-back-to-mean-0}
		In the context of \Cref{p:steadily-attracting-state-for-CMA}, let $\theta_0=(0,p_0,q_0, \rncovmat_0,r_0)\in\cX$.
		Then, 
			given $v_1= (u_1,\dots,u_1)\in\overline{ \cO^1_{\theta_{0}}}$ for some $u_1\in\bR^d$,
			and by defining $\theta_1 = (z_1,p_1,q_1,\rncovmat_1,r_1)=S^1_{\theta_0}(v_1)$ and
			$v_2 = -r_1^{-1/2}\Gamma(p_1)^{-1}v_1$, we have that
			$v_{1:2}=[v_1,v_2]\in\overline{ \cO^2_{\theta_0}}$ and 
			$\theta_2= (z_2,p_2,q_2,\rncovmat_2,r_2)= S^2_{\theta_0}(v_{1:2})$ satisfies
			$z_2=0$.
	\end{lemma}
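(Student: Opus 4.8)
The plan is to verify the two assertions separately, and both are short. Observe first that the path $v_{1:2}=[v_1,v_2]$ is constructed so that each of its two steps has all $\mu$ components equal: by hypothesis $v_1=(u_1,\dots,u_1)$, and $v_2=-r_1^{-1/2}\Gamma(p_1)^{-1}v_1$ is a scalar multiple of $v_1$, so each component of $v_2$ equals $-r_1^{-1/2}\Gamma(p_1)^{-1}u_1$. Hence I would apply \Cref{p:characterization-control-sets-cma} with $k=2$ directly to conclude $v_{1:2}\in\overline{\cO^2_{\theta_0}}$; this settles the membership claim without further computation.

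For the remaining claim $z_2=0$, I would first record how one step acts on the mean. By \eqref{eq:update-z} (equivalently by the definition of $\Fz$ in \eqref{eq:update-rnormalized-chain} together with $\Fx(z,\cdot)=z+c_m(\cdot)$), the mean updates as $z^+=(z+c_m\vwm^\top v)/(\sqrt{r^+}\,\Gamma(p^+))$, and since $\sum_{i=1}^\mu\wi=1$, a step $v$ whose components are all equal to some $w\in\bR^d$ satisfies $\vwm^\top v=w$.

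I would then apply this twice. For the first step, with $z_0=0$ and $w=u_1$, this gives $z_1=c_m u_1/(\sqrt{r_1}\,\Gamma(p_1))$. For the second step the common component of $v_2$ is $w=-r_1^{-1/2}\Gamma(p_1)^{-1}u_1=-u_1/(\sqrt{r_1}\,\Gamma(p_1))$, so
$$ z_1 + c_m\vwm^\top v_2 = \frac{c_m u_1}{\sqrt{r_1}\,\Gamma(p_1)} - \frac{c_m u_1}{\sqrt{r_1}\,\Gamma(p_1)} = 0, $$
and therefore $z_2=(z_1+c_m\vwm^\top v_2)/(\sqrt{r_2}\,\Gamma(p_2))=0$.

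There is no genuine obstacle here: the only point to get right is that the scalar $-r_1^{-1/2}\Gamma(p_1)^{-1}$ defining $v_2$ is exactly the one that cancels the mean displacement $z_1$ produced by the first step, the normalization factor $\sqrt{r_2}\,\Gamma(p_2)$ of the second step being immaterial since it multiplies a vector that already vanishes. In other words, the second step is engineered to undo the first, returning the normalized mean to the origin.
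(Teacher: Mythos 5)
Your proposal is correct and follows essentially the same route as the paper: membership $v_{1:2}\in\overline{\cO^2_{\theta_0}}$ via \Cref{p:characterization-control-sets-cma} (both steps having all $\mu$ components equal), and the cancellation $z_1+c_m\vwm^\top v_2=0$ using $\sum_{i=1}^\mu\wi=1$, so that $z_2=0$ regardless of the second step's normalization factor. No gaps.
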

	\begin{proof}
		By \Cref{p:characterization-control-sets-cma}, we have $v_1\in\overline{ \cO^1_{\theta_{0}}}$ and $v_{1:2}=[v_1,v_2]\in\overline{ \cO^2_{\theta_0}}$.
		Moreover, we have
		$$
			z_2 = r_2^{-1/2}\Gamma(p_2)^{-1} \times \left(  r_1^{-1/2}\Gamma(p_1)^{-1} \times (0+c_m u_1)  - c_m    r_1^{-1/2}\Gamma(p_1)^{-1} u_1      \right) =0
		$$
		ending the proof.
	\end{proof}

	Next, from any initial state $\theta_0\in\cX$ with $\Z_0=0$, we reach via a two-steps path a state $\theta_2\in\cX$ with $\Z_{2}=0$ and $q_{2}=0$.
	
	\begin{lemma}
		\label{l:path-step-2}
		In the context of \Cref{p:steadily-attracting-state-for-CMA}, let $\theta_0=(0,p_0,q_0, \rncovmat_0,r_0)\in\cX$ such that $q_0\neq0$. Then, there exist $\theta_2=(0,p_2,0, \rncovmat_2,r_2)\in\cX$ and $v_{1:2}\in\overline{ \cO^2_{\theta_0}}$ such that $S_{\theta_0}^2(v_{1:2})=\theta_2$. Moreover, we can choose $v_{1:2}$ such that $v_{1:2}\to0$ when $q_0$ tends to $0$.
	\end{lemma}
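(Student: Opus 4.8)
The plan is to build the two-step path entirely from \emph{constant} control inputs, i.e.\ paths $v_1,v_2\in\cV=\bR^{d\mu}$ whose $\mu$ blocks all coincide, $v_i=[\bar v_i,\dots,\bar v_i]$ with $\bar v_i\in\bR^d$. By \Cref{p:characterization-control-sets-cma} any such concatenation satisfies $v_{1:2}\in\overline{\cO^2_{\theta_0}}$, so membership in the control-set closure is automatic and we are left only to choose $\bar v_1,\bar v_2$ to steer the state. For a constant input $\sum_{i=1}^\mu \wi \bar v_i=\bar v_i$ since $\sum_i\wi=1$, so the mean update \eqref{eq:update-z} and the $q$-update \eqref{eq:update-q} reduce to $z^+=(z+c_m\bar v)/(\sqrt{r^+}\Gamma(p^+))$ and $q^+=r^{-1/2}(1-c_c)q+\sqrt{c_c(2-c_c)\mueff}\,\bar v$.

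First I would dispose of the constraint $z_2=0$ using \Cref{l:step-back-to-mean-0}: once $\bar v_1$ is fixed, taking $v_2=-r_1^{-1/2}\Gamma(p_1)^{-1}v_1$ forces $z_2=0$ regardless of $\bar v_1$, since $z_0=0$. Hence the whole path is determined by the single vector $\bar v_1$, and the remaining condition is $q_2=0$. Substituting the expression for $q_1$ and for $\bar v_2$ into $q_2$ and clearing the common factor $r_1^{-1/2}$, the condition $q_2=0$ becomes
\begin{equation*}
(1-c_c)^2 r_0^{-1/2} q_0 + \sqrt{c_c(2-c_c)\mueff}\;\bar v_1\Big((1-c_c)-\tfrac{1}{\Gamma(p_1)}\Big)=0,
\end{equation*}
where, by \eqref{eq:update-p}, $p_1=(1-c_\sigma)p_0+\sqrt{c_\sigma(2-c_\sigma)\mueff}\,\ncovmat_0^{-1/2}\bar v_1$ with $\ncovmat_0=R(\rncovmat_0)^{-1}\rncovmat_0$.

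Since the two prefactors are scalars, this equation forces $\bar v_1$ to be collinear with $q_0$, so it is natural to set $\bar v_1=t\,q_0$ and reduce to the scalar equation $g(t):=t\big((1-c_c)-1/\Gamma(p_1(t))\big)=-C$, with $C=(1-c_c)^2 r_0^{-1/2}/\sqrt{c_c(2-c_c)\mueff}\geq0$. Existence of a root is exactly where \Cref{G2:unbounded} enters: the hypothesis $q_0\neq0$ gives $\ncovmat_0^{-1/2}q_0\neq0$, so $\|p_1(t)\|\to\infty$ as $t\to-\infty$, and \Cref{G2:unbounded} guarantees $\Gamma(p_1(t))>(1-c_c)^{-1}$ eventually; thus the bracket stays bounded below by a positive constant and $g(t)\to-\infty$. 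As $g$ is continuous (by \Cref{G1:C1}) with $g(0)=0\geq -C$, the intermediate value theorem yields a root $t^*\leq0$; setting $\bar v_1=t^*q_0$ and $\bar v_2=-r_1^{-1/2}\Gamma(p_1)^{-1}t^*q_0$ completes the construction (when $c_c=1$ one has $C=0$, $t^*=0$ and the construction degenerates gracefully).

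The delicate point is the final ``moreover'' claim that $v_{1:2}\to0$ as $q_0\to0$. When the bracket $(1-c_c)-1/\Gamma((1-c_\sigma)p_0)$ at $t=0$ is nonzero, the root nearest the origin satisfies $t^*\to$ a finite limit, whence $\bar v_1=t^*q_0\to0$ and $\bar v_2\to0$ (the factors $r_1,\Gamma(p_1)$ converging to finite positive limits). The main obstacle is to argue that a bounded, continuously varying choice of $t^*$ persists through the degenerate case in which this bracket vanishes; I would handle it by always selecting the root of $g(\cdot)=-C$ closest to the origin and invoking joint continuity of $g$ in $(t,q_0)$ together with the sign behaviour supplied by \Cref{G2:unbounded}, so that $\bar v_1$ (and hence $\bar v_2$) tends to $0$ with $q_0$.
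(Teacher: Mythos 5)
Your proof is correct and follows essentially the same route as the paper: constant (block-repeated) inputs to stay in $\overline{\cO^2_{\theta_0}}$ via \Cref{p:characterization-control-sets-cma}, the choice $v_2=-r_1^{-1/2}\Gamma(p_1)^{-1}v_1$ from \Cref{l:step-back-to-mean-0} to kill $z_2$, the collinear ansatz $\bar v_1=t\,q_0$, and the intermediate value theorem combined with \Cref{G2:unbounded} to zero out $q_2$. On the final ``moreover'' claim you are in fact more explicit than the paper, which disposes of it in a single sentence; the degenerate case $1-c_c=\Gamma((1-c_\sigma)p_0)^{-1}$ that you flag is a legitimate concern, but it is not addressed in the paper's proof either.
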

	\begin{proof}
		
		Let $u_1\in\bR^d$ and set $v_1=(u_1,\dots,u_1)$. It belongs to $\overline{ \cO^1_{\theta_0}}$ by \Cref{p:characterization-control-sets-cma}. Then, define  $$\theta_1=(\Z_1,p_1,q_1, \rncovmat_1,r_1)= \F\left(\theta_0,\aalpha\left(\theta_0,v_1 \right) \right) = S_{\theta_0}^1(v_1) \enspace .$$
		Then, define $v_2=-r_1^{-1/2}\Gamma(p_1)^{-1} v_1\in\overline{ \cO^1_{\theta_1}}$, and
		$$\theta_2=(\Z_2,p_2,q_2, \rncovmat_2,r_2)= \F\left(\theta_1,\aalpha\left(\theta_1,v_2 \right) \right) = S^2_{\theta_0}(v_{1:2}) \enspace .$$
		Then, by \Cref{l:step-back-to-mean-0},
		 $v_{1:2}=(v_1,v_2)\in\overline{ \cO^2_{\theta_0}}$ and $z_2=0$.
		\del{$$
		\Z_2 = c_m r_2^{-1/2}\Gamma(p_2)^{-1} \times \left[r_1^{-1/2}\Gamma(p_1)^{-1}  u_1 - r_1^{-1/2}\Gamma(p_1)^{-1}u_1 \right] = 0 
		$$
		and}
		Moreover,
		\begin{align*}
			q_2 & = (1-c_c)^2 r_0^{-1/2}r_1^{-1/2} q_0 + (1-c_c) r_1^{-1/2} \sqrt{c_c(2-c_c)  \mueff} u_1 \\ & ~~~~~~~~~~ - r_1^{-1/2} \Gamma(p_1)^{-1} \sqrt{c_c(2-c_c)\mueff} u_1 \\
			& = r_1^{-1/2} \times \left[ (1-c_c)^2(r_0^{-1/2} q_0 + \left( 1-c_c - \Gamma(p_1)^{-1}\right) \times \sqrt{c_c(2-c_c)\mueff} \times u_1 \right] \enspace .
		\end{align*}
		Let $\kappa\in\bR$, and choose $u_1= \kappa q_0$. Since $v\mapsto S_{\theta_0}^2(v)$ is continuous, then both $r_1$ and $q_2$ depend continuously on $\kappa$. Moreover, we have
		$$
		q_2 = r_1^{-1/2} \times \left[ (1-c_c)^2r_0^{-1/2} + \left( 1-c_c - \Gamma(p_1)^{-1}\right) \times \sqrt{c_c(2-c_c)\mueff} \kappa \right] \times q_0 \enspace.
		$$
		But, as $r_1>0$, and as 
		$
		\Gamma(p_1)^{-1} = \Gamma\left(  (1-c_\sigma)p_0+\sqrt{c_\sigma(2-c_\sigma)\mueff R(\rncovmat_0) } \kappa \rncovmat_0^{-1/2}q_0\right)^{-1}
		$
is less that $1-c_c$ when $\kappa\to\pm\infty$ by \Cref{G2:unbounded}, then by the intermediate value theorem (since $\Gamma$ is continuous by \Cref{G1:C1}), there exists $\kappa\in\bR$ such that $q_2=0$.\del{This ends the proof.} \nnew{With the above choice of $u_1= \kappa q_0$, $v_1 = (u_1,\ldots,u_1)$ and $v_2=-r_1^{-1/2}\Gamma(p_1)^{-1} v_1\in\overline{ \cO^1_{\theta_1}}$, we see that $v_{1:2}\to0$ when $q_0$ tends to $0$.}
	\end{proof}
	
	\del{Then, f}\nnew{F}rom an initial state $\theta_0$ with $\Z_0=q_0=0$, 
		we reach via a $4(d-1)$-steps path a state $\theta_{4(d-1)}$ with $\Z_{4(d-1)}=q_{4(d-1)}=0$, 
		and $\rncovmat_{4(d-1)} =\Id$. 
	This is achieved by applying $(d-1)$ times the \del{next}\nnew{following} lemma 
		successively to the $k$-$\mathrm{th}$ largest (counted with multiplicity) eigenvalue of $\rncovmat_0$, for $k=2,\dots,d$.
	For the sake of conciseness, the proof of \Cref{l:path-step-3} is delayed to \Cref{app:proofs-steadily}.
	
	\begin{lemma}
		\label{l:path-step-3}
		In the context of \Cref{p:steadily-attracting-state-for-CMA}, let $\theta_0=(0,p_0,0, \rncovmat_0,r_0)$. 
		Consider an orthonormal basis $\mathcal{B}$ of $\bR^d$ composed of eigenvectors of $\rncovmat_0$ 
			such that the matrix $\rncovmat_0$ writes in the basis $\mathcal{B}$ as
		$$
		[\rncovmat_0]_{\mathcal{B}} = \diag\left( \lambda_1,\dots,\lambda_d \right) ,
		$$
		with $\lambda_1=\lambda_2=\dots=\lambda_{k-1} \geqslant \lambda_{k} \geqslant \dots \geqslant \lambda_d$ for some $2\leqslant k\leqslant d$. Then, there exists $\gamma>0$, such that the matrix $\rncovmat_4$ defined by 
		\begin{equation}
		\label{eq:equal-eig}
		[ \rncovmat_4]_{\mathcal{B}} =\gamma\times\diag\left( \lambda_1,\dots,\lambda_{k-1},\lambda_{k-1},\lambda_{k+1},\dots,\lambda_d \right) ,
		\end{equation}
		is such that for some $p_4\in\bR^d$ and $r_4>0$, and $v_{1:4}\in\overline{\cO^4_{\theta_0}}$, we have $S_{\theta_0}^4(v_{1:4})=\theta_4=(0,p_4,0,{\rncovmat}_4,r_4)$.
	\end{lemma}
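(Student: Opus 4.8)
The plan is to build the four-step path so that every control lies in the single eigendirection whose eigenvalue we want to raise, thereby collapsing the matrix recursion to a scalar one. Writing everything in the basis $\mathcal{B}$, fix the eigenvector $e_k$ associated with $\lambda_k$ and restrict to controls of the form $v_i=[\beta_i e_k,\dots,\beta_i e_k]\in\bR^{d\mu}$ whose $\mu$ sub-blocks are all equal. By \Cref{p:characterization-control-sets-cma} any such $v_{1:4}$ lies in $\overline{\cO^4_{\theta_0}}$, so it is an admissible path. For these controls $\vwm^\top v_i=\beta_i e_k$ and $\sum_j\wic (v_i)_j(v_i)_j^\top=\beta_i^2\,e_ke_k^\top$, so both the rank-one and the rank-mu contributions to $\Fc$ in \eqref{eq:update-Sigma}–\eqref{eq:update-r} are supported on $e_ke_k^\top$.

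First I would record the decoupling this produces. Since $\rncovmat_0$ is diagonal in $\mathcal{B}$ and $z_0=q_0=0$, an induction along the path shows that $z_t$ and $q_t$ stay colinear with $e_k$ and that the (unnormalized) matrix $\tilde\ncovmat_{t+1}$ from \eqref{tilda-matrix} stays diagonal in $\mathcal{B}$: the eigenvalues indexed by $j\neq k$ are merely multiplied by the step's common factor $(1-c_1-c_\mu)$, followed by the uniform normalizations $R$ and $\rho$, whereas the $e_k$-eigenvalue additionally receives the nonnegative mass $c_1(q^+_{e_k})^2+c_\mu\beta_i^2$. Hence after any number of steps the restriction of $\tilde\ncovmat$ to $e_k^\perp$ is a uniform rescaling of $\diag(\lambda_1,\dots,\lambda_{k-1},\lambda_{k+1},\dots,\lambda_d)$, and the target shape \eqref{eq:equal-eig} is reached exactly when the cumulative mass deposited on $e_k$ raises its eigenvalue to the common value that $\lambda_{k-1}$ takes after the same rescaling. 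This requires a nonnegative amount of mass because $\lambda_{k-1}\geqslant\lambda_k$, and it is attainable because $0<c_1+c_\mu<1$ keeps the decay factor in $(0,1)$ while $c_1+c_\mu>0$ and $c_c(2-c_c)\mueff>0$ guarantee that each nonzero $\beta_i$ deposits strictly positive mass; the scalar $\gamma$ is then read off from the accumulated common factor and the final division by $\rho$.

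It then remains to solve a scalar control problem: choose $\beta_1,\dots,\beta_4$ so that (i) the accumulated $e_k$-mass equals the prescribed value, (ii) $z_4=0$, and (iii) $q_4=0$. I would arrange the four steps as two "out-and-back" gadgets. The first two steps inject a chosen magnitude along $e_k$ and then cancel the mean exactly as in \Cref{l:step-back-to-mean-0}, by taking $\beta_2=-r_1^{-1/2}\Gamma(p_1)^{-1}\beta_1$, restoring $z_2=0$ while depositing mass and leaving a controlled $q_2\parallel e_k$. The last two steps repeat this out-and-back construction, keeping $z_4=0$ through the same mean-cancellation, while the remaining free magnitude is used to drive $q_4$ to $0$ by the intermediate value argument already employed in \Cref{l:path-step-2} (using \Cref{G1:C1}-\Cref{G2:unbounded} so that $\Gamma(p)^{-1}$ can be driven below $1-c_c$), and to top up the deposited $e_k$-mass to its prescribed value.

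The main obstacle I anticipate is decoupling these three scalar requirements, since $z$, $q$, and the deposited mass are all driven by the same controls $\beta_i$ and are further entangled through the per-step normalizations $r_{t+1}=R(\tilde\ncovmat_{t+1})$ and $\rho(\tilde\ncovmat_{t+1})$ appearing in the denominators and through the nonlinear factor $\Gamma(p_{t+1})$. Splitting the path into two mean-cancelling pairs isolates the constraint $z=0$ step by step, and the monotonicity/continuity (intermediate value) argument inherited from \Cref{l:path-step-2} disposes of $q_4=0$; the leftover degree of freedom among the four magnitudes is what makes the exact mass simultaneously achievable. Ensuring all three hold at once, rather than each in isolation, is the delicate bookkeeping that the delayed proof in \Cref{app:proofs-steadily} must carry out.
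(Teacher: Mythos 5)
Your plan is essentially the paper's proof: the same four-step path consisting of two mean-cancelling pairs $v_1=[\kappa e_k,\dots,\kappa e_k]$, $v_2=-r_1^{-1/2}\Gamma(p_1)^{-1}v_1$, $v_3=[\kappa' e_k,\dots,\kappa' e_k]$, $v_4=-r_3^{-1/2}\Gamma(p_3)^{-1}v_3$, admissibility via \Cref{p:characterization-control-sets-cma}, $z_2=z_4=0$ via \Cref{l:step-back-to-mean-0}, the observation that all rank-one and rank-mu mass lands on $e_ke_k^\top$ while the orthogonal eigenvalues are only uniformly rescaled, and intermediate-value arguments for both $q_4=0$ and the deposited mass.

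The one place where your write-up stops short is exactly the point you flag as ``delicate bookkeeping'': you assert that the leftover degree of freedom makes the three constraints simultaneously achievable, but you do not say how to untangle them, and this is the only nontrivial step. The paper's resolution is a \emph{nested} intermediate-value argument rather than a simultaneous one. First, for each fixed $\kappa$, the second magnitude is parametrized as $\kappa'=\eta\, q_2$ (up to scalars), and since $1-c_c-\Gamma(p_3)^{-1}$ becomes positive as $\eta\to\pm\infty$ by \Cref{G2:unbounded}, the map $\eta\mapsto q_4$ crosses zero; crucially, the root $\eta_\kappa$ can be chosen to depend \emph{continuously} on $\kappa$. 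Substituting $\eta_\kappa$ back, the accumulated $e_k$-mass becomes a single continuous function $\omega_4(\kappa)$ with $\omega_4(0)=0$ and $\omega_4(\kappa)\to\infty$ as $\kappa\to\infty$, so a second application of the intermediate value theorem hits the prescribed value $(\text{rescaling factors})\times(\lambda_{k-1}-\lambda_k)\geqslant 0$. The constraint $z_4=0$ never enters this trade-off because it is enforced identically (for every $\kappa,\eta$) by the choice of $v_2$ and $v_4$. If you make the continuity of $\kappa\mapsto\eta_\kappa$ explicit, your argument closes.
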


	Finally, as stated in the next lemma, from an initial state $\theta_0\in\cX$ such that $\Z_0=q_0=0$ and $\rncovmat_0=\Id$, we can reach any neighborhood of the state $\theta^*=(0,0,0,\Id,1-c_1-c_\mu)$.
	
	\begin{lemma}
		\label{l:path-step-4}
		In the context of \Cref{p:steadily-attracting-state-for-CMA}, let $\theta_0=(0,p_0,0,\Id,r_0)\in\cX$. Then there exists $v_{1:\infty}\in\overline{\cO_{\theta_0}^\infty}$ such that $\lim S^t_{\theta_0}(v_{1:t})=(0,0,0,\Id,1-c_1-c_\mu)$ when $t\to\infty$.
	\end{lemma}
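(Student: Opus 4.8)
The plan is to exhibit an explicit infinite control path, namely the constant zero path $v_t = 0 \in \cV$ for every $t \geq 1$, and to verify that feeding this input into the control model \eqref{eq:control-modelTheta} drives the state from $\theta_0 = (0,p_0,0,\Id,r_0)$ to the target $\theta^* = (0,0,0,\Id,1-c_1-c_\mu)$. The guiding observation is that the starting configuration $z_0=0$, $q_0=0$, $\rncovmat_0=\Id$ is left invariant by a zero update, so only $p$ and $r$ evolve, and they settle at their target values.

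First I would read off the one-step behaviour of each coordinate from \eqref{eq:update-z}--\eqref{eq:update-r} with $v=0$, so that $\vwm^\top v = 0$ and $\sum_{i=1}^\mu \wic v_iv_i^\top = 0$. The mean update then gives $z^+ = 0/(\sqrt{r^+}\,\Gamma(p^+)) = 0$, keeping $z$ at $0$; the path update gives $q^+ = r^{-1/2}(1-c_c)q$, which keeps $q=0$; and the covariance numerator collapses to $(1-c_1-c_\mu)R(\rncovmat)^{-1}\rncovmat$. Evaluating at $\rncovmat = \Id$ and using that $R$ and $\rho$ are positively homogeneous of degree one (\Cref{R1:homogeneous} and \Cref{rho1:homogeneous}) with $\rho(\Id)=1$, the scalar $R(\Id)$ cancels, yielding $\rncovmat^+ = \Id$ and $r^+ = R\bigl((1-c_1-c_\mu)R(\Id)^{-1}\Id\bigr) = 1-c_1-c_\mu$ in a single step. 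This identity is precisely why $1-c_1-c_\mu$ is the correct target value for $r$. Finally $p^+ = (1-c_\sigma)p$ since the input term in \eqref{eq:update-p} vanishes, so $p_t = (1-c_\sigma)^t p_0 \to 0$ because $c_\sigma \in (0,1]$. Collecting these, $S_{\theta_0}^t(v_{1:t}) = \bigl(0,(1-c_\sigma)^t p_0,0,\Id,1-c_1-c_\mu\bigr)$ for $t \geq 1$, which converges to $\theta^*$ as $t\to\infty$.

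It then remains to justify that the zero path lies in $\overline{\cO_{\theta_0}^\infty}$. For each finite truncation the path $v_{1:k}=0$ has all components satisfying $v_i^1=\dots=v_i^\mu=0$, so \Cref{p:characterization-control-sets-cma} gives $v_{1:k}\in\overline{\cO_{\theta_0}^k}$. To pass to the infinite path I would note that $\cO_y^1$ is nonempty for every state $y$ (again by \Cref{p:characterization-control-sets-cma}), so any finite path of positive density can be extended indefinitely; since the product topology on $\cV^{\bN}$ constrains only finitely many coordinates, $0_{1:\infty}$ is a limit of elements of $\cO_{\theta_0}^\infty$, hence $v_{1:\infty}\in\overline{\cO_{\theta_0}^\infty}$.

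I expect the only genuine subtlety to be this last closure argument; the coordinate computation itself is immediate once one observes that the zero input freezes $z$, $q$ and $\rncovmat$, and that the homogeneity of $R$ and $\rho$ forces $r$ to its fixed point $1-c_1-c_\mu$ after a single step while $p$ decays geometrically.
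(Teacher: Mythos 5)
Your proposal is correct and follows essentially the same route as the paper: the zero path $v_t=0$ for all $t$, the induction showing $z$, $q$, $\rncovmat$ stay fixed while $r$ jumps to $1-c_1-c_\mu$ and $p$ decays as $(1-c_\sigma)^t p_0$, and an appeal to \Cref{p:characterization-control-sets-cma} for membership in $\overline{\cO_{\theta_0}^\infty}$. Your extra remark on passing from finite truncations to the infinite path via the product topology is a small elaboration the paper leaves implicit, but the argument is the same.
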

	\begin{proof}
		Define $v_{1:\infty}$, by $v_t=0\in\bR^{d\times\mu}$ for all $t\geqslant1$. By \Cref{p:characterization-control-sets-cma}, we have $v_{1:\infty}\in\overline{ \cO^\infty_{\theta_{0}}}$. Denote
		$$
		\theta_{t+1}=(\Z_{t+1},p_{t+1},q_{t+1},{\rncovmat}_{t+1},r_{t+1}) = \F(\theta_t,\aalpha(\theta_t,v_{t+1})) .
		$$
		\new{Since, $\theta_0=(0,p_0,0,\Id,r_0)$ and $v_t=0$,}\del{so that,} by induction, we have
		$
		{\rncovmat}_{t+1} =  \Id$, 
		$\Z_{t+1} = 0$,
		$q_{t+1} = 0$
		 $r_{t+1} = R((1-c_1-c_\mu)\Id)=1-c_1-c_\mu $ and
		$ p_{t+1} = (1-c_\sigma) p_t.
		$
		Since $0\leqslant 1-c_\sigma <1$, then $(\Z_t,p_t,q_t,{\rncovmat}_t,r_t)$ tends to $(0,0,0,\Id,1-c_1-c_\mu)$ when $t\to\infty$, ending the proof.
	\end{proof}

		Lastly, \del{we state}\new{as} a consequence of \Cref{p:steadily-attracting-state-for-CMA}, \nnew{we prove that given any normalized covariance matrix $\rncovmat^*\in\Sdpp$ such that $\rho(\rncovmat^*)=1$, we can find a value for the path $p^*\in\bR^d$, for the variable $r^*>0$, such that the state $\theta^*=(0,p^*,0,\rncovmat^*,r^*)\in\cX$ with normalized mean and normalized path for the rank-one update equal to zero is steadily attracting.
		}\del{	which \del{deduces that other states are}\nnew{find other} steadily attracting \del{too}\nnew{states}.}
		In \Cref{sec:rank-condition-cma}, we use these steadily attracting states
		to prove the controllability condition stated in \Cref{p:full-rank-control-matrix-cma}.
		
	\begin{corollary}\label{c:steadily-attracting-state-for-CMA}
		Consider the context of \Cref{p:steadily-attracting-state-for-CMA}.
		Let $\rncovmat^*\in\Sdpp$ be such that $\rho(\rncovmat^*)=1$. Then, there exist $p^*\in\bR^d$ and $r^*>0$ such that $\theta^*=(0,p^*,0,\rncovmat^*,r^*)\in\cX$ is a steadily attracting state.
	\end{corollary}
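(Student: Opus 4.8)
The plan is to prove the statement via the convergence criterion for steadily attracting states \cite[Corollary 4.5]{gissler2024irreducibility}: it suffices to exhibit, for every initial state $\theta_0\in\cX$, an admissible infinite path $v_{1:\infty}\in\overline{\cO_{\theta_0}^\infty}$ along which $S_{\theta_0}^k(v_{1:k})$ converges to a state of the form $(0,p^*,0,\rncovmat^*,r^*)$. I will show that one may take $p^*=0$ and $r^*=1-c_1-c_\mu$, so that the target is $\theta^*=(0,0,0,\rncovmat^*,1-c_1-c_\mu)$, and I will build the path in three phases that reuse the building blocks of \Cref{p:steadily-attracting-state-for-CMA}.

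First, exactly as in the proof of \Cref{p:steadily-attracting-state-for-CMA}, \Cref{l:path-step-1,l:path-step-2,l:path-step-3} yield a finite admissible prefix steering $\theta_0$ to a state of the form $(0,\cdot,0,\Id,\cdot)$. The decisive second phase is a finite admissible path that keeps $z=0$ and $q=0$ while steering the normalized covariance from $\Id$ to the prescribed $\rncovmat^*$. For the third phase I append the null controls $v_t=0$: since $q$ remains $0$, the rank-one and rank-mu increments in \eqref{eq:update-Sigma} and \eqref{eq:update-r} vanish, so positive homogeneity of $\rho$ and $R$ (\Cref{rho1:homogeneous}, \Cref{R1:homogeneous}) together with $\rho(\rncovmat^*)=1$ give $\rncovmat^+=\rncovmat^*$ and $r^+=1-c_1-c_\mu$; hence $\rncovmat^*$ is a fixed point of the null-control covariance update, while $z_t=0$, $q_t=0$, $r_t=1-c_1-c_\mu$ and $p_{t+1}=(1-c_\sigma)p_t\to0$. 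This is the exact analogue of \Cref{l:path-step-4} with $\Id$ replaced by $\rncovmat^*$, and shows $S_{\theta_0}^k(v_{1:k})\to\theta^*$; admissibility of the constant-within-step prefixes follows from \Cref{p:characterization-control-sets-cma} and \Cref{l:density-alpha}.

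The hard part is the second phase, i.e.\ the controllability of the normalized covariance: steering it to an arbitrary target $\rncovmat^*$ by admissible steps while freezing the mean and the rank-one path. The key observation is that choosing controls $v=(v_1,\dots,v_\mu)$ with zero weighted mean $\sum_{i=1}^\mu\wi v_i=0$ keeps both $z^+=0$ and $q^+=0$ (only the rank-mu term acts), so starting from $\ncovmat\propto\Id$ the pre-normalization update reads $(1-c_1-c_\mu)\Id+c_\mu\sum_{i=1}^\mu\wic v_iv_i^\top$; solving $\sum_{i=1}^\mu\wic v_iv_i^\top=\beta\rncovmat^*-(1-c_1-c_\mu)\Id$ for $\beta>0$ large enough that the right-hand side is positive semidefinite, and normalizing by $\rho$, lands exactly on $\rncovmat^*$ when $\mu\geq d$. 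The general case $\mu<d$ cannot realize a full-rank increment in one step, so the target must be reached by a finite sequence of rank-$\min(\mu,d)$ increments along the eigendirections of $\rncovmat^*$, with magnitudes tuned by an intermediate-value argument; this is precisely the mechanism of \Cref{l:path-step-3}, the only extension being to aim at the eigenstructure of $\rncovmat^*$ rather than at equalized eigenvalues. Throughout, one checks that the chosen controls lie in $\overline{\cO}$, which holds because the density \eqref{eq:density} is positive once the ranking is strict (using \Cref{F1:negligible-level-sets} and \Cref{P1:density}), perturbing and reindexing the $v_i$ if necessary; should a step fail to maintain zero weighted mean, $q$ can be reset by the device of \Cref{l:step-back-to-mean-0} and \Cref{l:path-step-2}. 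Concatenating the three phases and invoking \cite[Corollary 4.5]{gissler2024irreducibility} then gives that $\theta^*=(0,0,0,\rncovmat^*,1-c_1-c_\mu)$ is steadily attracting.
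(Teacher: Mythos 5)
Your overall strategy is sound and ultimately relies on the same core mechanism as the paper, but it is packaged differently and one of your shortcuts does not survive scrutiny. The paper does not build converging infinite paths from every initial state; instead it first proves a small transfer principle (using lower semicontinuity of $x\mapsto p^k_x(v^*_{1:k})$ and continuity of $x\mapsto S^k_x(v^*_{1:k})$) showing that any state \emph{reachable} by a finite path in $\overline{\cO^k_{\theta_0}}$ from the already-known steadily attracting state $\theta_0=(0,0,0,\Id,1-c_1-c_\mu)$ is itself steadily attracting, and then reaches $(0,p_{4d},0,\rncovmat^*,r_{4d})$ in $4d$ steps by running the four-step block of \Cref{l:path-step-3} once per eigendirection of $\rncovmat^*$, tuning the amplitudes by the intermediate value theorem and using $\rho(\rncovmat^*)=\rho(\rncovmat_{4d})=1$ to force the residual proportionality constant to equal $1$. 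Your route---concatenate phase 1 (\Cref{l:path-step-1,l:path-step-2,l:path-step-3}), a covariance-steering phase, and a null-control tail, then invoke the convergence criterion of \cite[Corollary 4.5]{gissler2024irreducibility}---is also legitimate and even yields the explicit values $p^*=0$ and $r^*=1-c_1-c_\mu$ (the paper only obtains $p^*=p_{4d}$, $r^*=r_{4d}$); your phase 3 is indeed the exact analogue of \Cref{l:path-step-4} with $\Id$ replaced by $\rncovmat^*$, since $\rho(\rncovmat^*)=1$ makes $\rncovmat^*$ a fixed point of the null-control update.

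The gap is in the ``key observation'' of your phase 2. A tuple $v=(v_1,\dots,v_\mu)$ with \emph{distinct} components chosen so that $\sum_i\wi v_i=0$ and $\sum_i\wic v_iv_i^\top=\beta\rncovmat^*-(1-c_1-c_\mu)\Id$ need not lie in $\overline{\cO^1_\theta}$: by \Cref{l:density-alpha}, the control set is (up to the $\sqrt{\ncovmat}$ change of variable) cut out by the strict ordering $f(x^*+z+v_1)<\dots<f(x^*+z+v_\mu)$, and since $f$ is continuous this is an \emph{open} constraint---if your constructed tuple violates one of these inequalities strictly, no small perturbation repairs it, so the tuple is not in the closure of the control set. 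Reindexing the $v_i$ to restore the ordering is not innocuous either, because the weights $\wi$ and $\wic$ are decreasing and hence not permutation-invariant, so reindexing destroys both the zero-weighted-mean property and the prescribed rank-$\mu$ increment. This is precisely why \Cref{p:characterization-control-sets-cma} only certifies admissibility for tuples whose $\mu$ components are all equal, and why the paper never uses any other controls. Your fallback---rank-one increments along the eigendirections of $\rncovmat^*$ via the \Cref{l:path-step-3} blocks with intermediate-value tuning---is the correct (and the paper's) mechanism, works for every $\mu$, and closes the proof once the one-shot zero-weighted-mean construction is dropped.
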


	\begin{proof}
		By \Cref{p:steadily-attracting-state-for-CMA}, we know that $\theta_0=(0,0,0,\Id,1-c_1-c_\mu)$ is a steadily attracting state. Hence, in order to prove that a state $\theta^*\in\cX$ is steadily attracting, it is sufficient, as explained below, to prove 
		\begin{itemize}
		\item[(i)] that there exist $k\in\bN$ and $v_{1:k}\in\overline{ \cO^k_{\theta_{0}}}$ such that $S_{\theta_0}^k(v_{1:k})=\theta^*$. 
		\end{itemize}
		
		Indeed, \nnew{assume we have proven (i) and}  let $V$ be a neighborhood of $\theta^*$ and let $\theta\in\cX$. Then, \nnew{by continuity of $w_{1:k} \mapsto S_{\theta_0}^k(w_{1:k})$ around $v_{1:k}$,} there exists $v_{1:k}^*\in\cO^k_{\theta_0}$ such that $S^k_{\theta_0}\nnew{(v_{1:k}^*)}\in V$. 
			Since $x\mapsto p_x^k(v_{1:k}^*)$ is \lsc and $x\mapsto S^k_x(v^*_{1:k})$ is continuous, 
			then there exists a neighborhood $U$ of $\theta_0$ such that for every $x \in U$, $p_x^k(v^*_{1:k})>0$, i.e., $v^*_{1:k}\in\cO^k_x$, 
			and $S_x^k(v_{1:k}^*)\in V$. 
			Moreover, since $\theta_0$ is steadily attracting, then there exists $T>0$ such that for every $t\geqslant T$, 
			there exists $w_{1:t}\in\cO^k_{\theta}$ such that $S^t_{\theta}(w_{1:t})\in U$, 
			hence $[w_{1:t},v_{1:k}^*]\in\cO^{t+k}_{\theta}$ and $S^{t+k}_{\theta}([w_{1:t},v_{1:k}^*])\in V$ \nnew{and hence $\theta^*$ is a steadily attracting state}.
		
		\nnew{Let $\rncovmat^*\in\Sdpp$ be such that $\rho(\rncovmat^*)=1$.} We proceed now as in \Cref{l:path-step-3} \nnew{to prove (i) for a state $\theta^*$ that is equal to $(0,p^*,0,\rncovmat^*,r^*)$ for $p^*$ and $r^*$ constructed below}. For $i=1,\dots,d$, let $\lambda_i$ be the $i$-$\mathrm{th}$ largest eigenvalue of $\rncovmat^*$ (counted with multiplicity), and $(e_1,\dots,e_d)$ an orthonormal basis of eigenvectors of $\rncovmat^*$ such that $\rncovmat^*e_i=\lambda_ie_i$. Then, let $\kappa$ and $\kappa'$ be real numbers, and by \Cref{p:characterization-control-sets-cma}, define $v_{1:4}\in\overline{\cO^4_{\theta_0}}$ by
		$$
		\begin{array}{l}
			v_1 = \kappa [e_1,\dots,e_1] \in\bR^{d\mu},\, 
			v_2 = - r_1^{-1/2} \Gamma(p_1)^{-1} v_1,\,
			v_3 = \kappa' [e_1,\dots,e_1],\,
			v_4 = -r_3^{-1/2} \Gamma(p_3)^{-1} v_3 ,
 		\end{array}
		$$
		where $\theta_t = (z_t,p_t,q_t,\rncovmat_t,r_t)=S^t_{\theta_0}(v_{1:t})$ for $t=1,2,3,4$. Then, as in the proof of \Cref{l:path-step-3}, there exist values of $\kappa$ and $\kappa'$ in $\bR$ such that $z_4=q_4=0$ and such that there exists $\rho_4>0$ with $\rncovmat_4$ satisfying $\rncovmat_4 e_1 = \rho_4 (1-c_1-c_\mu)^{-4(d-1)} \lambda_1 e_1$ and $\rncovmat_4 e_k = \rho_4 (1-c_1-c_\mu)^4 e_k$ for $k=2,\dots,d$.
		
		Then, by repeating these steps with $e_2,\dots,e_d$ instead of $e_1$ and $\lambda_2,\dots,\lambda_d$ instead of $\lambda_1$, then there exist $v_{1:4d}\in\overline{\cO^{4d}_{\theta_0}}$ and $\rho_{4d}>0$ such that $\theta_{4d}=(z_{4d},p_{4d},q_{4d},\rncovmat_{4d},r_{4d})=S_{\theta_0}^{4d}(v_{1:4d})$ satisfies $z_{4d}=q_{4d}=0$ and $\rncovmat_{4d}e_k=\rho_{4d}\lambda_ke_k$ for $k=1,\dots,d$. \nnew{Hence $\rncovmat_{4d}=\rho_{4d} \rncovmat^*$.} But since $\rho(\rncovmat^*)=1$ \nnew{and $\rho(\rncovmat_{4d})=1$}, then \nnew{$\rho_{4d}=1$,} i.e.\ $\rncovmat_{4d}=\rncovmat^*$\del{, ending the proof}\nnew{ such that we have proven (i) for $\theta^*=(0,p_{4d},0,\rncovmat^*,r_{4d})$ and in turn that $\theta^*=(0,p_{4d},0,\rncovmat^*,r_{4d})$ is a steadily attracting state}.
	\end{proof}

	\subsection{Controllability condition}\label{sec:rank-condition-cma}
	\newcommand{\h}{\varepsilon}
	
	In the previous section, 
		we prove that the control model \eqref{eq:control-modelTheta} \del{possesses}\new{admits} steadily attracting states. 
	In the current section, 
		we prove that a controllability condition, 
		as required to satisfy the assumptions \Cref{H3:controllability-condition} or \Cref{H4:controllability-subchain}, 
		is satisfied at a steadily attracting state.
	By combining \Cref{c:steadily-attracting-state-for-CMA} 
		and the following \Cref{p:full-rank-control-matrix-cma}, we prove \Cref{H3:controllability-condition} 
		or \Cref{H4:controllability-subchain}. 
	\del{In this section, f}\new{F}or a finite-dimensional vectorial space $\mathsf E$ equipped with a norm $\|\cdot\|$ and an element $h\in\mathsf E$, 
			we use the notation $o(h)$, respectively $O(h)$, to be understood as $o(\|h\|)$, respectively $O(\|h\|)$.
		Besides, it does not depend on the chosen norm, since all norms on a finite-dimensional space induce the same topology.

	\del{In this section, we prove the controllability condition required in \Cref{t:verifiable-conditions} or in \Cref{cor:normalizedMCwithoutcumul} for the control model \eqref{eq:DCM-normalized-chain}. Depending on whether we have $c_c=1$ and/or $c_\sigma=1$, the controllability condition is different, as formalized in the next proposition.}%
	
	\begin{proposition}
		\label{p:full-rank-control-matrix-cma}
		Suppose that the objective function $f\colon\bR^d\to\bR$, 
			the normalization functions $R$ and $\rho$, 
			the stepsize change $\Gamma\colon\bR^d\to\bR_{++}$ 
			and the sampling distribution $\pUd$ satisfy 
			\Cref{F1:negligible-level-sets}-\Cref{F2:scaling-invariant}, \Cref{R1:homogeneous}-\Cref{R3:differentiable}, \Cref{rho1:homogeneous}-\Cref{rho2:C1}, \Cref{G1:C1}-\Cref{G3:G(0)<1} and \Cref{P1:density}, respectively.
		
		Consider the control model \eqref{eq:control-modelTheta} with the functions $F_\Theta$ and $\alpha_\Theta$ defined by \eqref{eq:def-F} and \eqref{eq:def-alpha} respectively.

		Then, there exist \nnew{a steadily attracting state} $\theta_0\in\cX$\del{ a steadily attracting state}, $T>0$ and $v_{1:T}\in\overline{\cO_{\theta_0}^{T}}$ such that $S_{\theta_0}^T$ is differentiable at $v_{1:T}$, and, by denoting $(z_T,p_T,q_T,\rncovmat_T,r_T)=S_{\theta_0}^T(v_{1:T})$, we have
		\begin{enumerate}
			\item[(a)] if $c_c\neq 1$, $c_\sigma\neq1$, $1-c_c\neq (1-c_\sigma)\sqrt{1-c_1-c_\mu}$ and $c_\mu>0$, then  $\mathcal{D} S^T_{\theta_0}(v_{1:T})$ is of maximal rank;
			\item[(b)] if $c_c=1$ and $c_\sigma\neq 1$, then, for every $(z,p,\ncovmat)\in\bR^d\times\bR^d\times \mathrm T_{\rncovmat_T} \rho^{-1}(\{1\})$, there exist $q\in\bR^d$ and $r\del{>0}\in\bR$ such that $(z,p,q,\ncovmat,r)\in\mathrm{rge}~\mathcal{D} S_{\theta_0}^T(v_{1:T})$;
			\item[(c)] if $c_c\neq1$, $c_\sigma=1$ and $c_\mu>0$, then, for every $(z,q,\ncovmat,r)\in\bR^d\times\bR^d\times \mathrm T_{\rncovmat_T} \rho^{-1}(\{1\})\times\bR$, there exists $p\in\bR^d$ such that $(z,p,q,\ncovmat,r)\in\mathrm{rge}~\mathcal{D} S_{\theta_0}^T(v_{1:T})$;
			\item[(d)] if $c_c=c_\sigma=1$, then, for every $(z,\ncovmat)\in\bR^d\times \mathrm T_{\rncovmat_T} \rho^{-1}(\{1\})$, there exist $(p,q)\in\bR^d\times\bR^d$ and $r\del{>0}\in\bR$, such that $(z,p,q,\ncovmat,r)\in\mathrm{rge}~\mathcal{D} S_{\theta_0}^T(v_{1:T})$.
		\end{enumerate}
	\end{proposition}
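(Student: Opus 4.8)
The plan is to reduce the four rank statements to the surjectivity (or partial surjectivity) of a single linear map $\mathcal{D}S^T_{\theta_0}(v_{1:T})\colon \bR^{d\mu T}\to \mathrm T_{\theta_T}\cX$ computed at a conveniently chosen steadily attracting base state and reference path. First I would fix $\theta_0$: since by \Cref{R3:differentiable} the map $R$ is differentiable on a nonempty open cone of $\Sdpp$, homogeneity of $\rho$ lets me pick $\rncovmat^\star\in\rho^{-1}(\{1\})$ at which $R$ is differentiable (and, generically, with simple spectrum), and \Cref{c:steadily-attracting-state-for-CMA} then supplies a steadily attracting state $\theta_0=(0,p^\star,0,\rncovmat^\star,r^\star)$. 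I would then choose a short reference path $v_{1:T}$, with each block of the form $v_k=[\bar v_k,\dots,\bar v_k]$ plus a small generic correction, arranged so that every stepsize path $p_t\neq 0$ (so $\Gamma$ is differentiable there by \Cref{G1:C1}) and every pre-normalised matrix $\tilde\ncovmat_t$ stays in the differentiability region of $R$. Then $F_\Theta$ is a composition of maps differentiable at the relevant arguments ($\Fx,\Fpsigma,\Fpc,\Fc,\rho$ smooth, $\Gamma$ and $R$ evaluated at differentiable points), so $S^T_{\theta_0}$ is differentiable at $v_{1:T}$, and $v_{1:T}\in\overline{\cO^T_{\theta_0}}$ by \Cref{p:characterization-control-sets-cma}. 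A useful structural observation is that $p^+,q^+,\rncovmat^+,r^+$ do not depend on $z$, so the $z$-coordinate is a downstream observable: I can first establish surjectivity of the $(\delta p,\delta q,\delta\rncovmat,\delta r)$-block and handle $\delta z$ last, since $\delta\bar v_T$ hits $\delta z_T$ through the invertible factor $c_m(\sqrt{r_T}\,\Gamma(p_T))^{-1}\mathrm{Id}$, its side-effect on the lower block being corrected by the earlier controls.

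Next I would linearise. A perturbation $\delta v_k$ enters only through the weighted mean $\delta\bar v_k=\sum_i \wi\,\delta(v_k)_i\in\bR^d$ and the second moment $\delta M_k=\sum_i \wic\bigl(\delta(v_k)_i\,(v_k)_i^\top+(v_k)_i\,\delta(v_k)_i^\top\bigr)\in\Sd$. Along the base trajectory the path block reads $\delta p^+=(1-c_\sigma)\,\delta p+b_\sigma\,\delta\bar v$ and $\delta q^+=a_q\,\delta q+b_c\,\delta\bar v$, where $a_q=(1-c_c)\,r^{-1/2}$ equals $(1-c_c)(1-c_1-c_\mu)^{-1/2}$ on the portion of the trajectory carrying $r=1-c_1-c_\mu$, while $\delta\rncovmat^+$ and $\delta r^+$ jointly amount to a free perturbation $\delta\tilde\ncovmat^+\in\Sd$ of the pre-normalised matrix (the pair $(\mathcal{D}R,\mathcal{D}\rho)$ being invertible transverse to $\rho^{-1}(\{1\})$, exactly as in the homeomorphism of \Cref{p:xi-is-homeo}), with $\delta\tilde\ncovmat^+=(1-c_1-c_\mu)\,\delta(R(\rncovmat)^{-1}\rncovmat)+c_1\,\delta(q^+q^{+\top})+c_\mu\,\delta M$. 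The decisive point is that the two memory factors $a_p:=1-c_\sigma$ and $a_q$ are distinct precisely when $1-c_c\neq(1-c_\sigma)\sqrt{1-c_1-c_\mu}$.

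I would then assemble surjectivity. To decouple $\delta p_T$ from $\delta q_T$ in case (a), place perturbations $\delta\bar v_k$ at two distinct steps: propagated forward, such a perturbation contributes a factor $a_p^{\,T-k}b_\sigma\,\delta\bar v_k$ to $\delta p_T$ and $a_q^{\,T-k}b_c\,\delta\bar v_k$ to $\delta q_T$, so that $a_p\neq a_q$ makes a $2\times2$ Vandermonde-type determinant nonzero and the two steps span $\bR^d\times\bR^d$ in $(\delta p_T,\delta q_T)$. For the covariance, when $c_\mu>0$ the term $c_\mu\,\delta M_k$ lets each step contribute symmetric perturbations supported on $\mathrm{span}\{(v_k)_i\}$; varying these base directions across the $T$ steps accumulates all of $\Sd$, which through the invertible pair $(\mathcal{D}R,\mathcal{D}\rho)$ realises every $(\delta\rncovmat_T,\delta r_T)$. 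Finally $\delta z_T$ is set by $\delta\bar v_T$, yielding maximal rank and proving (a). The remaining cases differ only in which path loses its memory: if $c_c=1$ then $a_q=0$, $q^+=b_c\bar v$ is slaved to $\delta\bar v$, so $\delta q_T,\delta r_T$ cannot be prescribed independently (the "there exist $q,r$" form of (b)); here $c_\mu$ is dispensable because, $q_T$ no longer being a target, the rank-one term $c_1\,\delta(q^+q^{+\top})=c_1 b_c^2(\delta\bar v\,\bar v_{\mathrm{base}}^\top+\mathrm{sym})$ is free to generate, for $d$ varying base directions $\bar v_{\mathrm{base}}$, all of $\Sd$. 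If instead $c_\sigma=1$ then $a_p=0$ and $\delta p_T$ is slaved (case (c)); now the natural base path keeps $q=0$, so the rank-one term vanishes to first order and $c_\mu>0$ becomes genuinely necessary to supply covariance directions, and case (d) combines both memoryless situations.

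The main obstacle will be the covariance block: showing that the second-moment perturbations (and, in case (b), the rank-one perturbations) span the full $d(d+1)/2$-dimensional space $\Sd$ even when $\mu<d$, which forces the accumulate-over-time argument with a careful choice of base directions, and showing that this span survives the nonlinear normalisation $\tilde\ncovmat\mapsto\tilde\ncovmat/\rho(\tilde\ncovmat)$ together with $R$ (this is where \Cref{R3:differentiable} and \Cref{rho2:C1} and the transversality of \Cref{p:xi-is-homeo} are used). Two further technical points I would not trivialise are keeping the whole reference trajectory inside the differentiability locus of $R$ and away from $p=0$ so that $S^T_{\theta_0}$ is genuinely differentiable, and invoking \Cref{G3:G(0)<1} to ensure that the scalar coefficient coupling $\delta r$ and $\delta z$ in the normalised mean recursion is nonzero, so that $\delta z_T$ and $\delta r_T$ can be attained simultaneously rather than being pinned to a one-dimensional relation.
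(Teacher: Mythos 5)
Your plan is correct and follows essentially the same route as the paper's proof: the same base state from \Cref{c:steadily-attracting-state-for-CMA} chosen inside the differentiability locus of $R$ (\Cref{R3:differentiable}), the same triangular structure with $z$ handled by the last step, the decoupling of the $(p,q)$-block by placing mean perturbations at two steps and exploiting that the memory factors $1-c_\sigma$ and $(1-c_c)(1-c_1-c_\mu)^{-1/2}$ differ exactly under $1-c_c\neq(1-c_\sigma)\sqrt{1-c_1-c_\mu}$ (this is the determinant computed in \Cref{l:controllability-condition-step2}), and the accumulation of rank-one/second-moment perturbations over $\sim d(d+1)/2$ steps to span the covariance tangent space (the content of \Cref{l:contrabillity-condition-step0}). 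Two bookkeeping points differ and are worth flagging. First, the paper does not use your ``bare'' forward-propagation coefficients $a_p^{T-k}b_\sigma$: it pairs each perturbation with a compensating one to keep $z$ (resp.\ $q$) returning to zero, which turns the Vandermonde entries into the $\Gamma$-dependent quantities $c_k^p, d_k^p$ of \eqref{eq:Lpqj}; their nonvanishing is then obtained only in the limit $j\to\infty$ using \Cref{G3:G(0)<1} — this is where that assumption actually bites, not in a $\delta r$--$\delta z$ coupling. Second, your claim that $(\delta\rncovmat_T,\delta r_T)$ come jointly from full control of $\delta\tilde\ncovmat_T$ is sound in principle (Euler's identity makes $\tilde\ncovmat\mapsto(\tilde\ncovmat/\rho(\tilde\ncovmat),R(\tilde\ncovmat))$ a local diffeomorphism), but radial perturbations of $\tilde\ncovmat_t$ at intermediate times are annihilated when propagated forward (only $\tilde\ncovmat/R(\tilde\ncovmat)$ feeds the next step), so the $r$-direction must be created at the final step; the paper does this by appending one extra step with a direction $\epsilon$ satisfying $\mathcal{D}R(\tilde\ncovmat_{T+1})(\epsilon\epsilon^\top)\neq0$, and it is precisely there that $c_\mu>0$ is needed in cases (a) and (c) — to move $r$ and $q$ independently at that step — rather than for the reason you give in case (c). Neither point invalidates your plan, but both would have to be confronted when executing it.
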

	
	Before proving \Cref{p:full-rank-control-matrix-cma}, we state the two following lemmas, which characterize the derivatives of the normalization function $\rho$ and of the transition map $S_{\theta_0}^1$, respectively.
	
	\begin{lemma}
		\label{l:diff-pos-hom}
		\del{Suppose that the normalization function $R$ satisfies \Cref{R1:homogeneous}}\new{Consider a positively homogeneous function $R\colon\Sdpp\to\bR_{++}$}\done{we do not need that $\rho(I_d) = 1$, do we?}. 
		Let $\mathbf A\in\Sdpp$ and $\gamma>0$ \new{and suppose that $R$ is differentiable at $\mathbf A$}. 
		Then, \new{$R$ is differentiable at $\gamma\mathbf A$ and}
		$\mathcal{D} R(\gamma \mathbf A) = \mathcal{D}R(\mathbf A)$.
	\end{lemma}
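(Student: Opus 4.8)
The plan is to establish differentiability at $\gamma\mathbf A$ directly from the definition, using the degree-one positive homogeneity of $R$ to transport a first-order expansion at $\mathbf A$ to one at $\gamma\mathbf A$. The guiding idea is that multiplication by the fixed scalar $\gamma>0$ is a linear automorphism of $\Sd$ under which $R$ merely picks up the factor $\gamma$; since $\Sdpp$ is open, all quantities below are well defined for small increments.

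First I would fix an increment $H\in\Sd$ small enough that $\gamma\mathbf A+H\in\Sdpp$, and write $\gamma\mathbf A+H=\gamma\bigl(\mathbf A+\tfrac1\gamma H\bigr)$. Positive homogeneity, $R(\rho X)=\rho R(X)$ for all $\rho>0$, then gives $R(\gamma\mathbf A+H)=\gamma\,R\bigl(\mathbf A+\tfrac1\gamma H\bigr)$. Next, since $R$ is differentiable at $\mathbf A$ by hypothesis, I would expand
\begin{equation*}
	R\Bigl(\mathbf A+\tfrac1\gamma H\Bigr)=R(\mathbf A)+\mathcal{D}R(\mathbf A)\Bigl(\tfrac1\gamma H\Bigr)+o\bigl(\|\tfrac1\gamma H\|\bigr).
\end{equation*}
Because $\gamma$ is a fixed positive constant, $\|\tfrac1\gamma H\|=\tfrac1\gamma\|H\|$, so $o(\|\tfrac1\gamma H\|)=o(\|H\|)$ as $H\to0$, while $\mathcal{D}R(\mathbf A)(\tfrac1\gamma H)=\tfrac1\gamma\,\mathcal{D}R(\mathbf A)(H)$ by linearity of the differential.

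Multiplying through by $\gamma$ and invoking homogeneity once more in the form $\gamma R(\mathbf A)=R(\gamma\mathbf A)$ yields
\begin{equation}
	R(\gamma\mathbf A+H)=R(\gamma\mathbf A)+\mathcal{D}R(\mathbf A)(H)+o(\|H\|),
\end{equation}
which is exactly the statement that $R$ is differentiable at $\gamma\mathbf A$ with $\mathcal{D}R(\gamma\mathbf A)=\mathcal{D}R(\mathbf A)$. I expect no genuine difficulty in this argument; the only point requiring a moment's care is the bookkeeping of the remainder, namely that rescaling the increment by the fixed constant $\gamma$ leaves the $o(\|H\|)$ estimate intact — this is immediate since $\gamma$ is independent of $H$. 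A chain-rule formulation (applied to $R=\gamma^{-1}(R\circ m_\gamma)$ with $m_\gamma(X)=\gamma X$) is tempting but would presuppose differentiability of $R$ at $\gamma\mathbf A$, which is precisely what we must prove, so the direct expansion above is preferable.
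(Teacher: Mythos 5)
Your proof is correct and follows essentially the same route as the paper: both write $\gamma\mathbf A+\mathbf H=\gamma(\mathbf A+\gamma^{-1}\mathbf H)$, apply positive homogeneity and the first-order expansion of $R$ at $\mathbf A$, and absorb the fixed factor $\gamma$ into the $o(\|\mathbf H\|)$ remainder. Your additional remarks on the remainder bookkeeping and on why a chain-rule argument would be circular are sound but not needed.
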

	\begin{proof}
		\del{By \Cref{rho2:C1}, the function $\rho$ is differentiable on $\Sdpp$.}
		By Taylor expansion, we have, when $\mathbf H\in\Sd$ tends to $0$, that
		\begin{align*}
			R(\gamma \mathbf A+ \mathbf H) & = \gamma \times R(\mathbf A + \gamma^{-1}\mathbf H) = \gamma R(\mathbf A) + \gamma \mathcal{D}R(\mathbf A) \gamma^{-1}\mathbf H + o(\mathbf H) \\
			& = R(\gamma \mathbf A) +  \mathcal{D}R(\mathbf A) \mathbf H + o(\mathbf H) 
		\end{align*}
		and thus,
		 \new{by Taylor expansion, $R$ is differentiable at $\gamma\mathbf A$ and}
		 $\mathcal{D} R(\gamma \mathbf A) = \mathcal{D}R(\mathbf A)$.
	\end{proof}

	\begin{lemma}\label{l:S-differentiable-in}
		Suppose \Cref{G1:C1}, \Cref{R1:homogeneous}, \Cref{R2:C1} and \Cref{rho2:C1}.
		Let $\theta_0 = (z_0,p_0,q_0,\rncovmat_0,r_0)\in\cX$ and $v_1\in\overline{ \cO^1_{\theta_{0}}}$. 
		Then, if 
			(a) $z_0=q_0=0$ and $v_1=0$, or if 
			(b) $p_1 =\Fsig(p_0,R(\rncovmat_0)^{-1}\rncovmat_0;v_1) \neq0$,
			 and if moreover 
			 $R$ is differentiable in $\rncovmat_1= \FK(q_0,R(\rncovmat_0)^{-1}\rncovmat_0,r_0;v_1)$ 
			(see \eqref{eq:FSigma} for the definition of $\FK$),
			then $\nnew{v} \in \cV \mapsto S^1_{\theta_0}\nnew{(v)}$ is differentiable at $v_1$.
	\end{lemma}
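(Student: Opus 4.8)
The plan is to show that $S^1_{\theta_0}=F_\Theta(\theta_0,\cdot)$ is differentiable at $v_1$ by treating its five output coordinates separately, using the explicit form \eqref{eq:def-F}. Write $\ncovmat_0=R(\rncovmat_0)^{-1}\rncovmat_0$ (a fixed matrix) and introduce the unnormalized increment $M(v)\coloneqq\Fc\!\left(\ncovmat_0,\Fq(q_0,r_0;v),\sum_{i=1}^\mu\wic v_iv_i^\top\right)$. I first observe that four of the building blocks are already smooth in $v$: the path updates $\Fsig(p_0,\ncovmat_0;v)$ and $\Fq(q_0,r_0;v)$ are affine, $M$ is polynomial (quadratic) in $v$, and $\rncovmat^+=M/\rho(M)$ is smooth because $\rho$ is $\cC^\infty$ and positive by \Cref{rho2:C1}. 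Hence the only coordinates that can fail to be differentiable are $r^+=R(M(v))$ (through $R$) and $z^+=R(M(v))^{-1/2}\,\Gamma(\Fsig(p_0,\ncovmat_0;v))^{-1}\,\Fx(z_0,\vwm^\top v)$ (through both $R$ and $\Gamma$), so the whole argument reduces to these two.

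For the $r^+$ coordinate I would split into the two cases. In case (b) the hypothesis gives $R$ differentiable at $\rncovmat_1=M(v_1)/\rho(M(v_1))$; since $M(v_1)=\rho(M(v_1))\,\rncovmat_1$ is a positive multiple of $\rncovmat_1$ and $R$ is positively homogeneous by \Cref{R1:homogeneous}, \Cref{l:diff-pos-hom} yields that $R$ is differentiable at $M(v_1)$, so by the chain rule $v\mapsto R(M(v))$ is differentiable at $v_1$. In case (a) I instead compute the first derivative of $M$: since $q_0=0$ and $v_1=0$, the variable part $c_1\Fq(q_0,r_0;v)\Fq(q_0,r_0;v)^\top+c_\mu\sum_{i}\wic v_iv_i^\top$ is a quadratic form vanishing to second order at $v_1=0$, so $\mathcal{D}M(v_1)=0$. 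Local Lipschitz continuity of $R$ (\Cref{R2:C1}) then gives $R(M(v))-R(M(v_1))=O(\|M(v)-M(v_1)\|)=O(\|v-v_1\|^2)=o(\|v-v_1\|)$, so $r^+$ is differentiable at $v_1$ with zero derivative, and no differentiability of $R$ is needed.

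For the $z^+$ coordinate, in case (b) the argument $p_1=\Fsig(p_0,\ncovmat_0;v_1)$ is nonzero, so $\Gamma$ is differentiable there by \Cref{G1:C1}; combining this with the differentiability of $r^+$ just established, the positivity of $r^+$ and $\Gamma$, and the smoothness of the affine map $v\mapsto\Fx(z_0,\vwm^\top v)$, a product-and-composition argument gives differentiability of $z^+$ at $v_1$. In case (a) I exploit that $z_0=0$, so $\Fx(z_0,\vwm^\top v)=c_m\vwm^\top v$ is linear and vanishes at $v_1=0$; the prefactor $g(v)\coloneqq R(M(v))^{-1/2}\Gamma(\Fsig(p_0,\ncovmat_0;v))^{-1}$ is only locally Lipschitz (by \Cref{R2:C1} and \Cref{G1:C1}, both quantities being positive), but writing $g(v)=g(v_1)+O(\|v-v_1\|)$ and multiplying by the linear vanishing factor yields $z^+(v)=g(v_1)\,c_m\vwm^\top v+O(\|v\|^2)$, hence differentiability at $0$. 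Assembling the five coordinates then proves the claim.

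The main obstacle is case (a): precisely at $v_1=0$ one cannot assume $\Gamma$ or $R$ differentiable, since $p_1$ may equal $0$ and $R$ may be nonsmooth at $\rncovmat_0$, so a direct chain rule is unavailable. The key insight that makes the proof go through is that both problematic factors get multiplied by a quantity vanishing to first order at $v_1=0$ — the mean increment $\Fx(z_0,\cdot)$ because $z_0=0$, and the covariance increment $M-M(v_1)$ because $q_0=0$ forces its linear part to vanish — so that the mere local Lipschitz bounds of \Cref{R2:C1} and \Cref{G1:C1} suffice to absorb the non-smoothness into the $o(\|v-v_1\|)$ remainder.
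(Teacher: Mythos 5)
Your proof is correct and follows essentially the same route as the paper's: a coordinate-wise Taylor expansion of $S^1_{\theta_0}$ at $v_1$, with the chain rule handling case (b) and the key observation in case (a) that the covariance increment vanishes to second order (since $q_0=0$ and $v_1=0$) while the mean increment vanishes to first order (since $z_0=0$), so that local Lipschitz bounds on $R$ and $\Gamma$ absorb their possible nonsmoothness into the $o(\|v-v_1\|)$ remainder. Your remark that case (a) therefore needs only \Cref{R2:C1} and not the differentiability of $R$ at $\rncovmat_1$ is a small but valid sharpening: the paper invokes that hypothesis in case (a) as well, even though the estimate it actually uses, $R(\mathbf A+o(\|h_1\|))=R(\mathbf A)+o(\|h_1\|)$, already follows from Lipschitz continuity alone.
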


	\begin{proof}
		Suppose (a). Then, for $h_1=(h_1^1,\dots,h_1^\mu)\in\cV$, \new{using the update equations \eqref{eq:update-z}, \eqref{eq:update-p}, \eqref{eq:update-q}, \eqref{eq:update-Sigma}, \eqref{eq:update-r}} we have, when $h_1\to0$,
		\begin{align*}
			S_{\theta_0}^1 (v_1+h_1) & = 
				\begin{pmatrix}
				\frac{\new{R((1-c_1-c_\mu) R(\rncovmat_0)^{-1} \rncovmat_0 + o(\|h_1\|))^{-1/2}}(0 + c_m  \vwm^\top h_1)}{
					\Gamma((1-c_\sigma)p_0+\sqrt{c_\sigma(2-c_\sigma)\mueff}R(\rncovmat_0)^{1/2}\rncovmat_0^{-1/2}\vwm^\top h_1)}  \\ 
				(1-c_\sigma)p_0+\sqrt{c_\sigma(2-c_\sigma)\mueff }R(\rncovmat_0)^{1/2}\rncovmat_0^{-1/2}\vwm^\top h_1 \\
				0+\sqrt{c_c(2-c_v)\mueff}   \vwm^\top h_1 \\
				\cfrac{(1-c_1-c_\mu) R(\rncovmat_0)^{-1} \rncovmat_0 + o(\|h_1\|)}{\rho((1-c_1-c_\mu) R(\rncovmat_0)^{-1} \rncovmat_0 + o(\|h_1\|))}   \\
				R((1-c_1-c_\mu) R(\rncovmat_0)^{-1} \rncovmat_0 + o(\|h_1\|))
				\end{pmatrix}
			\enspace .
		\end{align*}
		However, by \Cref{G1:C1}, the stepsize change $\Gamma$ is locally Lispchitz, hence
		$$
			\Gamma((1-c_\sigma)p_0+\sqrt{c_\sigma(2-c_\sigma)\mueff}R(\rncovmat_0)^{1/2}\rncovmat_0^{-1/2}\vwm^\top h_1) = \Gamma((1-c_\sigma)p_0) + O(\|h_1\|) = 	\Gamma((1-c_\sigma)p_0) + o(1)  \enspace.
		$$
		Moreover, by \Cref{rho2:C1}, $\rho$ is differentiable at $(1-c_1-c_\mu) R(\rncovmat_0)^{-1} \rncovmat_0$. Hence by Taylor expansion
		$$
			\rho((1-c_1-c_\mu) R(\rncovmat_0)^{-1} \rncovmat_0 + o(\|h_1\|)) = \rho((1-c_1-c_\mu) R(\rncovmat_0)^{-1} \rncovmat_0 )+ o(\|h_1\|) \enspace.
		$$
		Likewise, by assumption, $R$ is differentiable at $(1-c_1-c_\mu) R(\rncovmat_0)^{-1} \rncovmat_0$. Thus,
		$$
		R((1-c_1-c_\mu) R(\rncovmat_0)^{-1} \rncovmat_0 + o(\|h_1\|)) = R((1-c_1-c_\mu) R(\rncovmat_0)^{-1} \rncovmat_0 )+ o(\|h_1\|) \enspace.
		$$
		Therefore,
		\begin{align*}
			S_{\theta_0}^1 (v_1+h_1) & = S_{\theta_0}^1 (v_1) + 
				\begin{pmatrix}
				\new{R((1-c_1-c_\mu) R(\rncovmat_0)^{-1} \rncovmat_0)^{-1/2}}\Gamma((1-c_\sigma)p_0)^{\new{-1}} c_m \vwm^\top h_1  \\
				\sqrt{c_\sigma(2-c_\sigma)\mueff}\new{R(\rncovmat_0)^{1/2} \rncovmat_0^{-1/2}}\vwm^\top h_1 \\
				\sqrt{c_c(2-c_v)\mueff}   \vwm^\top h_1 \\
				0  \\
				0
				\end{pmatrix}
			+ o(\|h_1\|) \enspace,
		\end{align*}
		which proves by Taylor expansion that $S^1_{\theta_0}$ is differentiable at $v_1=0$.
		
		Now, suppose (b). 
		By \Cref{G1:C1}, $\Gamma$ is differentiable at $p_1$, 
			by \Cref{rho2:C1}, $\rho$ is differentiable on $\Sdpp$, 
			and by assumption, $R$ is differentiable at $\rncovmat_1 \nnew{= \mathbf A_1 / \rho(\mathbf A_1)}$ \nnew{where $\mathbf A_1 =(1-c_1-c_\mu)R(\rncovmat_0)^{-1} \rncovmat_0 + c_1 q_1 (q_1)^\top + c_\mu \sum_{i=1}^\mu \wic v_1^i (v_1^i)^\top$}. 
			\new{Since $R$ is positively homogeneous, it is also differentiable in any multiple by a scalar of \del{$\rncovmat_t$}\nnew{$\rncovmat_1$, so in $\mathbf A_1$}.}
		Thus, by composition $S^1_{\theta_0}$ is differentiable at $v_1$.
	\end{proof}

We prove now \Cref{p:full-rank-control-matrix-cma}. 
The first step of the proof consists in the following lemma which applies to all cases (a)-(d) in \Cref{p:full-rank-control-matrix-cma}. 
It provides a path $v_{1:T}\in\overline{ \cO^T_{\theta_0}}$ 
	(where $\theta_0$ is the steadily attracting state found in \Cref{sec:steadily-attracting-state}) 
	such that the range of $\mathcal{D}S_{\theta_0}^T$ covers all elements in 
	the tangent space relative to the covariance matrix variable.
	The proof of \Cref{l:contrabillity-condition-step0} is delayed to \Cref{app:proofs-controllability}.

\begin{lemma}
		\label{l:contrabillity-condition-step0}
	Suppose that the objective function $f\colon\bR^d\to\bR$, 
	the normalization functions $R$ and $\rho$, 
	the stepsize change $\Gamma\colon\bR^d\to\bR_{++}$ 
	and the sampling distribution $\pUd$ satisfy \Cref{F1:negligible-level-sets}-\Cref{F2:scaling-invariant}, \Cref{R1:homogeneous}-\Cref{R3:differentiable}, \Cref{rho1:homogeneous}-\Cref{rho2:C1}, \Cref{G1:C1}-\Cref{G3:G(0)<1} and \Cref{P1:density}, respectively.
	 Consider the control model \eqref{eq:control-modelTheta} with the functions $F_\Theta$ and $\alpha_\Theta$ defined by \eqref{eq:def-F} and \eqref{eq:def-alpha} respectively.
	
	Then, there exist a steadily attracting state $\theta_0\in\cX$, $T\in\bN$, $v_{1:T}\in\overline{ \cO^T_{\theta_{0}}}$, and $\mathsf W$ a subspace of $\cV^{T}$, such that:
	\begin{enumerate}
		\item[(i)] $S_{\theta_0}^T$ is differentiable at $v_{1:T}$;
		\item[(ii)] for every $h_{\ncovmat}\in \mathrm T_{\rncovmat_T}\rho^{-1}(\{1\})$, there exists $h_z, h_p\in\bR^d$, $h_r\in\bR$, and $h_{1:T}\in\mathsf W$ such that $\mathcal{D}S_{\theta_0}^T(v_{1:T})h_{1:T} = [h_z, h_p, 0, h_{\ncovmat}, h_r]$;
		\item[(iii)] $z_T=q_T=0$ and $p_T\neq 0$;
	\end{enumerate}
	where $\theta_t = (z_t,p_t,q_t,\rncovmat_{t},r_t)=S_{\theta_0}^t(v_{1:t})$ for $t=1,\dots,T$.
\end{lemma}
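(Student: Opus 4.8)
The plan is to exhibit a single path $v_{1:T}$ emanating from a conveniently chosen steadily attracting state, along which $S_{\theta_0}^T$ is differentiable, the terminal mean and rank-one path vanish, the terminal stepsize path is nonzero, and $\mathcal{D}S_{\theta_0}^T(v_{1:T})$ sweeps out every admissible covariance direction while the induced perturbation of $q_T$ is forced to zero. First I would invoke \Cref{c:steadily-attracting-state-for-CMA} to fix the starting state $\theta_0=(0,p^*,0,\rncovmat^*,r^*)$, choosing $\rncovmat^*$ inside the nonempty open set on which $R$ is differentiable (such a set exists by \Cref{R3:differentiable}). Keeping all covariance matrices encountered along the path inside this open set—together with \Cref{rho2:C1} for $\rho$ and \Cref{G1:C1} for $\Gamma$, and \Cref{l:diff-pos-hom} to transport differentiability of $R$ across positive rescalings—is what will let me apply \Cref{l:S-differentiable-in} step by step and obtain differentiability (i) via the chain rule.

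Then I would assemble $v_{1:T}$ from short blocks. An initial block drives the stepsize path to a value $p\neq 0$ (the $p$-update $\Fsig$ is affine in $v$, so one suitably scaled equal-coordinate step, admissible by \Cref{p:characterization-control-sets-cma}, suffices), which places every subsequently perturbed step into case (b) of \Cref{l:S-differentiable-in}; the purely resetting steps ($v=0$ from a $z=q=0$ state) fall under case (a). The \emph{spanning} steps use \emph{distinct}, correctly ordered samples, which lie in the open control set of \Cref{l:density-alpha} (strict $f$-orderings are achievable by \Cref{F1:negligible-level-sets}); this is essential, since at an equal-coordinate point the rank-$\mu$ differential only yields rank-$2$ directions. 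Between blocks I return $z$ and $q$ to $0$ using the recombination/backstep construction of \Cref{l:path-step-1} and \Cref{l:step-back-to-mean-0}, and I finish with one reset step that merely rescales the covariance, preserving the direction already built; this gives (iii).

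The core of (ii) is a linear-algebra statement: the accessible first-order covariance directions, accumulated over the blocks, span all of $\Sd$, so that after projecting along $\mathcal{D}\rho$ onto $\mathrm T_{\rncovmat_T}\rho^{-1}(\{1\})$ they cover the whole tangent space. When $c_\mu>0$ I generate them through the rank-$\mu$ term: differentiating $\sum_i \wic v_iv_i^\top$ gives $\sum_i \wic\bigl(v_ih_i^\top+h_iv_i^\top\bigr)$, and letting the $v_i$ run over a basis of $\bR^d$ across successive blocks makes these images span $\Sd$, since $\{e_ie_j^\top+e_je_i^\top\}$ is a basis of $\Sd$. When $c_\mu=0$ (hence $c_1>0$, which in the admissible regime forces $c_c=1$) I instead use the rank-one term: with $c_c=1$ the update reads $q^+=\sqrt{\mueff}\,\vwm^\top v$, so perturbing $v$ moves $q^+$ and thus $c_1q^+(q^+)^\top$ at first order, and rotating the direction of $q^+$ over the blocks again fills $\Sd$, with $q$ restored to $0$ on the final step. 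To enforce that the $q$-component of the output be exactly $0$ I take $\mathsf W$ to be the subspace of whole-path perturbations on which the composite $q_T$-differential vanishes, and verify that this restriction does not shrink the reachable covariance directions, because the covariance contributions I exploit live at blocks whose effect on $q_T$ can be independently cancelled by the reset steps (the components $h_z,h_p,h_r$ are left unconstrained, as permitted, with \Cref{G3:G(0)<1} supplying the nonvanishing scalar factor where one is needed).

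The main obstacle is precisely this spanning-with-a-constraint argument: propagating the per-block rank-$\leq\mu$ (or rank-one) images forward through the nonlinear, $\rho$-normalized covariance recursion—where each step multiplies by $(1-c_1-c_\mu)$ and reprojects—and proving that their sum is all of $\Sd$ while the terminal perturbation of $q_T$ is annihilated. Managing the coupling between the covariance mixing across steps and the ordering constraints that define the control sets, and certifying differentiability uniformly along the path, is where the bulk of the appendix computation will go.
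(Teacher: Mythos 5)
Your overall architecture matches the paper's: start from a steadily attracting state $(0,p^*,0,\rncovmat^*,r^*)$ with $\rncovmat^*$ chosen in the differentiability locus of $R$ (via \Cref{c:steadily-attracting-state-for-CMA}, \Cref{R3:differentiable} and \Cref{l:diff-pos-hom}), build the path from paired steps that return $z$ and $q$ to zero, accumulate first-order covariance directions over several blocks, and keep the path small and $p_t\neq 0$ so that \Cref{l:S-differentiable-in} applies stepwise. Two points, however, need correction. Your claim that the spanning steps must use distinct, strictly ordered samples because ``at an equal-coordinate point the rank-$\mu$ differential only yields rank-$2$ directions'' is mistaken as a necessity claim: the paper's proof uses exclusively equal-coordinate steps $v_{2t+1}=\tilde r_{2t}^{-1/2}[\psi_{t+1},\dots,\psi_{t+1}]$ (admissible in $\overline{\cO^k_{\theta_0}}$ by \Cref{p:characterization-control-sets-cma}) together with perturbations $h$ that are themselves collinear with $[\psi_{t+1},\dots,\psi_{t+1}]$, so that each block contributes exactly the single direction $\psi_{t+1}\psi_{t+1}^\top$; since $(\psi_1\psi_1^\top,\dots,\psi_s\psi_s^\top)$ with $s=d(d+1)/2$ is chosen as a basis of $\Sd$, $s$ such blocks suffice. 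Your case split on $c_\mu=0$ is likewise unnecessary: along such a path both the rank-one contribution $c_1\tilde q_{2t+1}\tilde q_{2t+1}^\top$ and the rank-$\mu$ contribution are proportional to $\psi_{t+1}\psi_{t+1}^\top$ and are treated uniformly, the relevant coefficient $d_b = 2(1-c_1-c_\mu)\left(c_1c_c(2-c_c)\mueff+c_\mu\right)+2c_\mu(1-c_c)^2$ being positive whenever $c_1+c_\mu>0$.

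The genuine gap is that you explicitly defer ``propagating the per-block images forward through the nonlinear, $\rho$-normalized covariance recursion,'' which is precisely the content of the lemma. The paper's device for this is a change of variables to un-normalized quantities $\mathbf B_t = r_0\cdots r_t\, R(\rncovmat_t)^{-1}\rncovmat_t$, $\tilde q_t=\sqrt{\tilde r_{t-1}}\,q_t$, $\tilde v_t=\sqrt{\tilde r_{t-1}}\,v_t$, under which the covariance recursion becomes affine, $\mathbf B_{t+1}=(1-c_1-c_\mu)\mathbf B_t + c_1\tilde q_{t+1}\tilde q_{t+1}^\top+c_\mu\sum_i\wic\,\tilde v_{t+1}^i(\tilde v_{t+1}^i)^\top$, so that perturbations propagate by the explicit linear law $\mathbf B_{t+2}^h-\mathbf B_{t+2}=(1-c_1-c_\mu)^2(\mathbf B_t^h-\mathbf B_t)+d_b\,\kappa_t\varepsilon\,\psi_{t+1}\psi_{t+1}^\top+o(\varepsilon)$, and the $\rho$-renormalization is undone only once at the end, using that the targeted directions lie in $\ker\mathcal D\rho(\mathbf B_T)$ so that $\rho(\mathbf B_T^h)=\rho(\mathbf B_T)+o(\varepsilon)$. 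Likewise, the vanishing of the $q_T$-perturbation is achieved constructively, the paired perturbations $h_{2t+1},h_{2t+2}$ being chosen so that $\tilde q^h_{2t+2}=0$ identically, rather than by restricting a posteriori to the kernel of the composite $q_T$-differential and then having to argue that this restriction does not shrink the reachable covariance directions. Without this linearization (or an equivalent device), the central spanning-with-constraint computation that you acknowledge as the main obstacle remains unexecuted, so the proposal is a plausible plan but not yet a proof.
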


The next lemma is the second step of the \del{current}\del{present}proof of \Cref{p:full-rank-control-matrix-cma}. 
It deduces from \Cref{l:contrabillity-condition-step0} a path in 
which the transition map is differentiable and 
is of interest to apply \Cref{t:verifiable-conditions} or \Cref{c:verifiable-conditions-subchain}. 
It applies to all cases (a)-(d).
We delay once more the proof of \Cref{l:contrabillity-condition-step1} to \Cref{app:proofs-controllability}.

	\begin{lemma}
		\label{l:contrabillity-condition-step1}
		Suppose that the objective function $f\colon\bR^d\to\bR$, the normalization functions $R$ and $\rho$, the stepsize change $\Gamma\colon\bR^d\to\bR_{++}$ and the sampling distribution $\pUd$ satisfy \Cref{F1:negligible-level-sets}-\Cref{F2:scaling-invariant}, \Cref{R1:homogeneous}-\Cref{R3:differentiable}, \Cref{rho1:homogeneous}-\Cref{rho2:C1}, \Cref{G1:C1}-\Cref{G3:G(0)<1} and \Cref{P1:density}, respectively.
		%
		Consider the control model \eqref{eq:control-modelTheta} with the functions $F_\Theta$ and $\alpha_\Theta$ defined by \eqref{eq:def-F} and \eqref{eq:def-alpha} respectively.
		
		Then, there exist $\theta_0\in\cX$ a steadily attracting state, and $p\in\bR_{\neq 0}^d$, such that, for every $j\in\bN$,		
		there exist $T\in\bN$ and $v_{1:T}\in\overline{\cO_{\theta_0}^T}$, 
		with $S_{\theta_0}^T$ being differentiable at $v_{1:T}$, and
		\begin{equation}\label{eq:TE}
			S_{\theta_0}^T (v_{1:T}+h_{1:T}) = S_{\theta_0}^T(v_{1:T}) + \mathbf C_j \times L(h_{1:T}) + o(h_{1:T})
		\end{equation}
		for every $h_{1:T}\in\mathsf{W}_L$, where $\mathsf{W}_L$ is a well-chosen subspace of $\cV^T$\todo{footnote}\todo{to be discussed: isn't it the Tangent space ? We do not have latitute to "choose" it - do we? - Clarify why if we have it for the well-chosen subpace we have it fo the tangent space}\armand{the tangent space of $V^T$ is itself ($V^T$ is Euclidean). We choose a subspace $W_L$ because we want the matrix $C_j$ to come up}, $L\colon \mathsf{W}_L\to \bR^{s-1}\times\bR^d\times\bR^d\times\bR^d$ is a surjective linear map, $s=d(d+1)/2$, and $\mathbf C_j$ is a matrix of the form:
		\begin{equation}\label{eq:matrix-Cj}
			\mathbf C_j = \left[
			\begin{array}{ccccc}
				* & \cdots & * & 0 \hspace{1.5em} 0 & L^z \\
				\begin{array}{c}
					* \\ *
				\end{array} &
				\begin{array}{c}
					\cdots \\ \cdots
				\end{array} & 
				\begin{array}{c}
					* \\ *
				\end{array}  & 
				\mathbf L^{p,q}_j &
				\begin{array}{c}
					* \\ *
				\end{array} \\
				\mathbf L_1^{\ncovmat} & \dots & \mathbf L_{s-1}^{\ncovmat} & 0 \hspace{1.5em} 0 &  0  \\
				0 & \cdots & 0 & 0 \hspace{1.5em} 0 & 0
			\end{array}	
			\right]
		\end{equation}
		with $(\mathbf L_1^{\ncovmat},\dots,\mathbf L_{s-1}^{\ncovmat})$ being a basis of $\ker\mathcal{D}\rho(\rncovmat_T)$ (with $\theta_t=(\Z_t,p_t,q_t,\rncovmat_t,r_t)=S_{\theta_0}^t(v_{1:t})$ for $t=0,\dots,T$), $L_z\in\bR_{\neq 0}$, and 
		\begin{equation} \label{eq:Lpqj}
			\mathbf L^{p,q}_j = 
				\begin{bmatrix}
				(1-c_\sigma)^{3} c_{j+1}^p R(\rncovmat_{T})^{1/2}\rncovmat_{T}^{-1/2} & (1-c_\sigma)^{1} c_{j+3}^p R(\rncovmat_{T})^{1/2}\rncovmat_{T}^{-1/2} \\
				(1-c_c)^{3} (1-c_1-c_\mu)^{-3/2} d_{j+1}^p \Id & (1-c_c)^{1} (1-c_1-c_\mu)^{-1/2} d_{j+3}^p \Id 
				\end{bmatrix}
			\enspace ,
		\end{equation}
		 where $c_k^p \coloneqq (1-c_\sigma -(1-c_1-c_\mu)^{-1/2}\Gamma((1-c_\sigma)^k p))^{-1}\sqrt{c_\sigma(2-c_\sigma)\mueff}$ and $d_{k}^p\coloneqq (1-c_1-c_\mu)^{-1/2}\left[1-c_c- \Gamma((1-c_\sigma)^kp)^{-1}\right] $ $\sqrt{c_c(2-c_c)\mueff}$.
		 The symbol $*$ in \eqref{eq:matrix-Cj} represents the elements of the matrix $\mathbf C_j$ that we do not give explicitly
		 (their values do not change the rank of $\mathbf C_j$).
	\end{lemma}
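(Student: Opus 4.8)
The plan is to build the required path by appending a tail of null inputs to the path supplied by \Cref{l:contrabillity-condition-step0}, and then to linearize the resulting transition map step by step. Concretely, \Cref{l:contrabillity-condition-step0} already gives a steadily attracting state $\theta_0$, an integer $T_0$, a path $v_{1:T_0}\in\overline{\cO_{\theta_0}^{T_0}}$ and a subspace $\mathsf W$ such that $S_{\theta_0}^{T_0}$ is differentiable at $v_{1:T_0}$, reaches a state with $z_{T_0}=q_{T_0}=0$ and $p_{T_0}\neq 0$, and whose differential covers the whole covariance tangent space $\mathrm T_{\rncovmat_{T_0}}\rho^{-1}(\{1\})$. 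I set $p\coloneqq p_{T_0}$ and, for a given $j$, take $T\coloneqq T_0+j+4$ with $v_{T_0+1}=\dots=v_T=0$. As in \Cref{l:path-step-4}, along this tail the state remains on the axis $z=q=0$ with $\rncovmat_t\equiv\rncovmat_{T_0}$ and $r_t\equiv 1-c_1-c_\mu$, while the stepsize path decays geometrically, $p_{T_0+m}=(1-c_\sigma)^m p$. Differentiability of $S_{\theta_0}^T$ at $v_{1:T}$ then follows from \Cref{l:contrabillity-condition-step0}(i) for the first $T_0$ steps and from \Cref{l:S-differentiable-in}(a) (applicable since $z=q=0$ and $v=0$) at each tail step, by the chain rule.

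The heart of the argument is the explicit linearization along the tail. Using the update equations \eqref{eq:update-z}--\eqref{eq:update-r} together with the fact that $z=q=0$ at every nominal tail point, the state Jacobian decouples: a perturbation $(\delta z,\delta p,\delta q,\delta\rncovmat,\delta r)$ is sent to $(A_z\,\delta z,\,(1-c_\sigma)\delta p,\,A_q\,\delta q,\,\delta\rncovmat,\,0)$, where $A_z=(1-c_1-c_\mu)^{-1/2}\Gamma((1-c_\sigma)p)^{-1}$ and $A_q=(1-c_c)(1-c_1-c_\mu)^{-1/2}$; the cross terms all vanish because they are multiplied by the nominal values $z=0$, $q=0$, or by the nominal input $v=0$. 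In particular $r$ is pinned to $1-c_1-c_\mu$ with vanishing first variation, which yields the zero last row of $\mathbf C_j$; moreover the restriction of the covariance map to $\rho^{-1}(\{1\})$ is the identity, so the covariance directions supplied by \Cref{l:contrabillity-condition-step0}(ii) propagate unchanged and furnish the basis $(\mathbf L_1^{\ncovmat},\dots,\mathbf L_{s-1}^{\ncovmat})$ of $\ker\mathcal D\rho(\rncovmat_T)$, their leakage into the $z$- and $p$-rows giving the entries denoted $\ast$ in \eqref{eq:matrix-Cj}. The input Jacobian of a tail step, computed exactly as in the proof of \Cref{l:S-differentiable-in}(a), injects a perturbation only into the $(z,p,q)$-block, with scalar factors $(1-c_1-c_\mu)^{-1/2}\Gamma(\cdot)^{-1}c_m$, $\sqrt{c_\sigma(2-c_\sigma)\mueff}\,R(\rncovmat_T)^{1/2}\rncovmat_T^{-1/2}$ and $\sqrt{c_c(2-c_c)\mueff}$, and nothing into $(\rncovmat,r)$; this is why the $(p,q)$-control columns carry zeros in the $\ncovmat$- and $r$-rows.

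To obtain the two columns carrying $\mathbf L^{p,q}_j$, I would inject perturbations in pairs at two consecutive tail steps, centred at the step where $p$ has decayed to $(1-c_\sigma)^{k}p$, taking $k=j+1$ for the first column and $k=j+3$ for the second. The first injection of a pair creates a $(z,p,q)$-perturbation, and the second, one step later, is chosen proportional to it so that the net first variation of the mean $z$ at time $T$ cancels, exactly the cancellation device of \Cref{l:step-back-to-mean-0}. Solving this scalar cancellation and collecting the surviving $p$- and $q$-components (after rescaling the free amplitude) produces precisely the factors $c^p_k$ and $d^p_k$ of \eqref{eq:Lpqj}, while propagating the surviving perturbation over the remaining tail steps supplies the decay factors $(1-c_\sigma)^{3}$, $(1-c_\sigma)^{1}$ in the $p$-row and $[(1-c_c)(1-c_1-c_\mu)^{-1/2}]^{3}$, $[(1-c_c)(1-c_1-c_\mu)^{-1/2}]^{1}$ in the $q$-row, with $k=j+1$ and $k=j+3$ being $3$ and $1$ steps away from $T=T_0+j+4$. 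Taking $\mathsf W_L$ to be the span of $\mathsf W$ together with the directions of these two paired injections and one further mean-controlling direction (whose image is the last column, with $L^z\in\bR_{\neq0}$), and letting $L$ read off the associated coordinates in $\bR^{s-1}\times\bR^d\times\bR^d\times\bR^d$, gives the factorisation $\mathcal D S_{\theta_0}^T(v_{1:T})\big|_{\mathsf W_L}=\mathbf C_j\circ L$ with $\mathbf C_j$ of the announced form \eqref{eq:matrix-Cj}, so that \eqref{eq:TE} holds.

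The main obstacle is the bookkeeping of the paired injections, and above all the well-definedness of the cancellation coefficients: one must show that the denominators entering $c^p_k$, namely $1-c_\sigma-(1-c_1-c_\mu)^{-1/2}\Gamma((1-c_\sigma)^k p)$, do not vanish. This is where the standing hypotheses $0<c_1+c_\mu<1$, \Cref{G2:unbounded} and \Cref{G3:G(0)<1} enter, since as the injection is pushed down the tail one has $(1-c_\sigma)^k p\to0$ and thus $\Gamma((1-c_\sigma)^k p)\to\Gamma(0)<1$, keeping the relevant expressions away from their singular values. A secondary difficulty is to assemble all contributions into a single first-order expansion with one remainder $o(h_{1:T})$, uniformly over $\mathsf W_L$; this reduces to the differentiability obtained above and to the local Lipschitz continuity and differentiability of $\Gamma$, $R$ and $\rho$ guaranteed by \Cref{G1:C1}, \Cref{R2:C1} and \Cref{rho2:C1}. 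Once \eqref{eq:TE} is established with $\mathbf C_j$ in this explicit shape, it provides exactly the input that \Cref{p:full-rank-control-matrix-cma} needs to verify the rank and controllability statements by a suitable choice of $j$.
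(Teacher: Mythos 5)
Your proposal follows essentially the same route as the paper's proof: append a tail of zero inputs to the path from \Cref{l:contrabillity-condition-step0}, exploit that along this tail $z=q=0$, $\rncovmat$ is frozen and $r$ is pinned at $1-c_1-c_\mu$ so the state Jacobian decouples, then inject two paired perturbations with the mean-cancellation device of \Cref{l:step-back-to-mean-0} at depths $j+1$ and $j+3$ plus one final mean-controlling input to produce the columns of $\mathbf C_j$. The only slips are minor bookkeeping: the tail must have $j+5$ steps (the paper takes $T=T_0+j+5$, since two injection pairs plus the final $H_5$-injection occupy five slots), and the non-vanishing of $1-c_\sigma-(1-c_1-c_\mu)^{-1/2}\Gamma((1-c_\sigma)^kp)^{-1}$ is not needed for this lemma itself (the $c^p_k$ enter multiplicatively) but only downstream in \Cref{l:controllability-condition-step2}.
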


	Next, in order to deduce the case (a) in \Cref{p:full-rank-control-matrix-cma} from \Cref{l:contrabillity-condition-step1}, 
	we first show in the next lemma that the matrix $\mathbf L^{p,q}_j$ defined via \eqref{eq:Lpqj} is invertible when the integer $j$ is sufficiently large.
	
	\begin{lemma}
		\label{l:controllability-condition-step2}
		In the context of \Cref{l:contrabillity-condition-step1}, there exists $j\in\bN$ such that, if $c_c\neq 1$, $c_\sigma\neq1$, $1-c_c\neq (1-c_\sigma)\sqrt{1-c_1-c_\mu}$, then the matrix $\mathbf L^{p,q}_j$ defined via \eqref{eq:Lpqj} is invertible.
	\end{lemma}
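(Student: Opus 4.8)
The plan is to reduce the invertibility of the $2d\times 2d$ matrix $\mathbf L^{p,q}_j$ to the non-vanishing of a single scalar, and then to force that scalar to be nonzero by letting $j\to\infty$. First I would set $\mathbf A \coloneqq R(\rncovmat_T)^{1/2}\rncovmat_T^{-1/2}$, which is symmetric positive definite so that $\det\mathbf A>0$, and collect the scalar prefactors of the four blocks of \eqref{eq:Lpqj} into $a_1 = (1-c_\sigma)^3 c_{j+1}^p$, $a_2=(1-c_\sigma)c_{j+3}^p$, $b_1=(1-c_c)^3(1-c_1-c_\mu)^{-3/2}d_{j+1}^p$ and $b_2=(1-c_c)(1-c_1-c_\mu)^{-1/2}d_{j+3}^p$, so that $\mathbf L^{p,q}_j$ has top row $(a_1\mathbf A, a_2\mathbf A)$ and bottom row $(b_1\Id, b_2\Id)$. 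Since the two bottom blocks are scalar multiples of $\Id$ and hence commute, the block-determinant identity yields
\[
\det \mathbf L^{p,q}_j = \det\big((a_1\mathbf A)(b_2\Id)-(a_2\mathbf A)(b_1\Id)\big) = (a_1 b_2 - a_2 b_1)^d\,\det\mathbf A .
\]
As $\det\mathbf A>0$, the matrix is invertible if and only if $a_1 b_2 - a_2 b_1\neq 0$.

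Next I would simplify this scalar. Writing $\beta\coloneqq(1-c_1-c_\mu)^{-1/2}$ and factoring out the nonzero constants $(1-c_\sigma)(1-c_c)\beta$ (nonzero because $c_\sigma\neq1$, $c_c\neq1$ and $0<c_1+c_\mu<1$), the condition reduces to $E_j\neq0$, where
\[
E_j = (1-c_\sigma)^2 c_{j+1}^p d_{j+3}^p - (1-c_c)^2\beta^2\, c_{j+3}^p d_{j+1}^p .
\]
Substituting the definitions of $c_k^p$ and $d_k^p$ and abbreviating $\delta_k = 1-c_\sigma-\beta\,\Gamma((1-c_\sigma)^k p)$ and $\eta_k = 1-c_c-\Gamma((1-c_\sigma)^kp)^{-1}$, this becomes $E_j = \beta\,\mueff\,\sqrt{c_\sigma(2-c_\sigma)}\sqrt{c_c(2-c_c)}\big[(1-c_\sigma)^2\eta_{j+3}/\delta_{j+1}-(1-c_c)^2\beta^2\eta_{j+1}/\delta_{j+3}\big]$.

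The main step is the limit $j\to\infty$. Since $c_\sigma\in(0,1)$ we have $(1-c_\sigma)^k p\to 0$, and $\Gamma$ is continuous by \Cref{G1:C1}, so $\Gamma((1-c_\sigma)^kp)\to\Gamma(0)$, hence $\delta_k\to\delta_\infty=1-c_\sigma-\beta\,\Gamma(0)$ and $\eta_k\to\eta_\infty=1-c_c-\Gamma(0)^{-1}$. Assuming $\delta_\infty\neq0$, the two ratios in the bracket both tend to $\eta_\infty/\delta_\infty$, so
\[
\lim_{j\to\infty}E_j = \beta\,\mueff\,\sqrt{c_\sigma(2-c_\sigma)}\sqrt{c_c(2-c_c)}\,\frac{\eta_\infty}{\delta_\infty}\big[(1-c_\sigma)^2-(1-c_c)^2\beta^2\big].
\]
I would then check each factor is nonzero: the radicals and $\mueff=1/\|\vwm\|^2$ are strictly positive; $\eta_\infty<0$ because \Cref{G3:G(0)<1} gives $\Gamma(0)<1$, so $\Gamma(0)^{-1}>1>1-c_c$; and $(1-c_\sigma)^2-(1-c_c)^2\beta^2 = (1-c_\sigma)^2-(1-c_c)^2/(1-c_1-c_\mu)$ is nonzero precisely because the hypothesis reads $1-c_c\neq(1-c_\sigma)\sqrt{1-c_1-c_\mu}$ (both sides being positive). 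Thus $\lim_j E_j\neq0$ and $E_j\neq0$ for all large $j$, which gives the desired $j$.

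The hard part will be the degenerate case $\delta_\infty=0$, i.e.\ $1-c_\sigma=\beta\,\Gamma(0)$, in which each $c_k^p$ diverges. Here I would put $E_j$ over the common denominator $\delta_{j+1}\delta_{j+3}$: the numerator tends to $[(1-c_\sigma)^2-(1-c_c)^2\beta^2]\eta_\infty\delta_\infty=0$ to first order in the vanishing $\delta_k$, while the denominator vanishes to second order, so that $E_j$ is forced to diverge and is again eventually nonzero, using once more that $(1-c_\sigma)^2\neq(1-c_c)^2\beta^2$ and $\eta_\infty\neq0$. Making this order-of-vanishing comparison rigorous is the only genuinely delicate point, since \Cref{G1:C1} only gives differentiability of $\Gamma$ away from $0$, so the rate at which $\delta_k\to0$ is controlled merely through local Lipschitz continuity; one must also confirm that the denominators $\delta_{j+1},\delta_{j+3}$ stay nonzero for the chosen $j$, so that the entries of \eqref{eq:Lpqj} are well defined. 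Everything else is bookkeeping.
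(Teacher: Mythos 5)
Your overall strategy is the same as the paper's: reduce invertibility of $\mathbf L^{p,q}_j$ to the nonvanishing of a scalar $2\times2$ determinant by exploiting the block structure (the paper left-multiplies by a block-diagonal invertible matrix; your commuting-blocks determinant identity is equivalent), then send $j\to\infty$, use continuity of $\Gamma$, $\Gamma(0)<1$ from \Cref{G3:G(0)<1}, $(1-c_1-c_\mu)^{-1}>1$, and the hypothesis $1-c_c\neq(1-c_\sigma)\sqrt{1-c_1-c_\mu}$ to make the limit nonzero. In the non-degenerate branch your factorization and sign checks are correct and match the paper.

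The problem is the degenerate case $\delta_\infty=0$, and it is manufactured by a misreading of $c_k^p$. The displayed definition in \Cref{l:contrabillity-condition-step1} has a misplaced reciprocal: as the derivation of $p^h_{T_0+j+2}$ in the proof of that lemma shows, the intended quantity is the \emph{product} $c_k^p=\bigl(1-c_\sigma-(1-c_1-c_\mu)^{-1/2}\Gamma((1-c_\sigma)^kp)^{-1}\bigr)\sqrt{c_\sigma(2-c_\sigma)\mueff}$, with the inverse on $\Gamma$ and no inversion of the bracket; this is also the form the paper's own proof of the present lemma uses (the entries $1-c_\sigma-(1-c_1-c_\mu)^{-1/2}\Gamma_{j+1}^{-1}$ of $\mathbf A_j$). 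With that reading there are no denominators at all, the limit determinant factors cleanly as $\delta_\infty\,\eta_\infty\,[(1-c_\sigma)^2-(1-c_c)^2(1-c_1-c_\mu)^{-1}]$ with $\delta_\infty=1-c_\sigma-(1-c_1-c_\mu)^{-1/2}\Gamma(0)^{-1}<0$ (since $\Gamma(0)^{-1}>1$ and $(1-c_1-c_\mu)^{-1/2}>1$), and your degenerate case simply does not arise. Moreover, even granting your reading, the order-of-vanishing argument you sketch is not sound as stated: the numerator $(1-c_\sigma)^2\eta_{j+3}\delta_{j+3}-(1-c_c)^2\beta^2\eta_{j+1}\delta_{j+1}$ is a difference of two first-order terms, and nothing controls the ratio $\delta_{j+1}/\delta_{j+3}$ (local Lipschitz continuity of $\Gamma$ gives no lower bound on either $\delta_k$), so the two terms could cancel to higher order and $E_j$ need not diverge. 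So the proposal as written has a genuine hole, but it closes entirely once $c_k^p$ is taken in the form the previous lemma actually produces.
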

	
	\begin{proof}
We have, since \nnew{$c_\sigma \neq 1$, $c_c \neq 1$}:
			\begin{multline*}
				\begin{bmatrix}
					(1-c_\sigma)^{-1} R(\rncovmat_T)^{-1/2}\rncovmat_T^{1/2} & 0 \\
					0 & (1-c_c)^{-1} (1-c_1-c_\mu)^{1/2} \Id
				\end{bmatrix}\times \mathbf{L}_j^{p,q}
			\\ = \begin{bmatrix}
											(1-c_\sigma)^2 c_{j+1}^p \Id & c_{j+3}^p \Id \\
											(1-c_c)^2(1-c_1-c_\mu)^{-1} d_{j+1}^p \Id & d_{j+3}^p \Id
				\end{bmatrix} \enspace.
			\end{multline*}
			\done{in RHS matrix $c_{j+1}$ instead - $c_{j+3}$ - $d_{j+1}$ and $d_{j+3}$ instead}%
			Therefore, it is sufficient to find some $j\in\bN$ such that the RHS in the above equation is invertible, i.e., such that the matrix
		$$
			\mathbf A_j =	\begin{bmatrix}
				(1-c_\sigma)^2\left[ 1-c_\sigma-(1-c_1-c_\mu)^{-1/2} \Gamma_{j+1}^{-1} \right]  &	 1-c_\sigma-(1-c_1-c_\mu)^{-1/2} \Gamma_{j+3}^{-1}     \\
				(1-c_c)^2(1-c_1-c_\mu)^{-1} \left[ 1-c_c- \Gamma_{j+1}^{-1} \right]& 1-c_c- \Gamma_{j+3}^{-1} 
			\end{bmatrix}
			\enspace ,
			$$
			where $\Gamma_k = \Gamma((1-c_\sigma)^k p)$ for $k=j+2,j+4$,
		is full rank.
		\del{
		
		Since $\ncovmat_{T}^{-1/2}$ is invertible , we have
		\begin{align*}
			\det \mathbf L_j^{p,q}\neq 0 \done{L^{p,q}_j ?}& \Leftrightarrow \det 
				\begin{bmatrix}
				(1-c_\sigma)^2c_{j+2}^p  & c_{j+4}^p  \\ (1-c_c)^2(1-c_1-c_\mu)^{-1} d_{j+2}^p 
				& d_{j+4}^p 
				\end{bmatrix}
			\neq 0 \enspace .
		\end{align*}

		Denoting $T_1=T-5$\todo{where do you use $T_1$?}, we get
		$$
			\det \mathbf L^{p,q}_j\neq 0  \Leftrightarrow \det \mathbf A_j\neq 0
		$$
		where
		
		  and }%
		Moreover, when $j\to\infty$, by continuity of $\Gamma$ (by \Cref{G1:C1}), we have that $\Gamma_{j+1}$ and $\Gamma_{j+3}$ tend to $\Gamma(0)$. Hence,
		\begin{align*}
			\lim_{j\to\infty} &  \det \mathbf A_j
			 = \begin{vmatrix}
					(1-c_\sigma)^2(1-c_\sigma-(1-c_1-c_\mu)^{-1/2}\Gamma(0)^{-1}) & 1-c_\sigma - (1-c_1-c_\mu)^{-1/2} \Gamma(0)^{-1} \\
					(1-c_c)^2 (1-c_1-c_\mu)^{-1} (1-c_c-\Gamma(0)^{-1}) & 1-c_c-\Gamma(0)^{-1}
				\end{vmatrix} \\
			& = (1-c_\sigma-(1-c_1-c_\mu)^{-1/2}\Gamma(0)^{-1}) \times (1-c_c-\Gamma(0)^{-1}) \times
			\begin{vmatrix}
				(1-c_\sigma)^2 & 1 \\
				(1-c_c)^2 (1-c_1-c_\mu)^{-1}  & 1
			\end{vmatrix}  \del{\\
			& =
			\left( 1-c_\sigma-(1-c_1-c_\mu)^{-1/2}\Gamma(0)^{-1} \right)
				\times\left( 1-c_c-\Gamma(0)^{-1} \right) 
				\times \left((1-c_\sigma)^2-(1-c_c)^2(1-c_1-c_\mu)^{-1}\right) }\enspace ,
		\end{align*}
		where $\begin{vmatrix}
				a & b \\ c & d
			\end{vmatrix}$ denotes the determinant of the matrix $\begin{pmatrix}
				a & b \\ c & d
			\end{pmatrix}$. 
		However, $\Gamma(0)^{-1}>1$ (by \Cref{G3:G(0)<1}) and $(1-c_1-c_\mu)^{-1}>1$. Hence, there exists $j\in\bN$, such that, if $1-c_c\neq (1-c_\sigma)\sqrt{1-c_1-c_\mu}$, then $\det \mathbf L^{p,q}_j\neq 0$.
	\end{proof}

	We can now end the proof of \Cref{p:full-rank-control-matrix-cma}. Depending on the case (a)-(d), the end of the proof goes differently.
	We present here the proofs of cases (b) and (d), and we delay those of (a) and (d) to \Cref{sec:proof-p-full-rank}.
	
	\begin{proof}[Proof of \Cref{p:full-rank-control-matrix-cma}(d)]
		Suppose that $c_c=c_\sigma=1$. Apply \Cref{l:contrabillity-condition-step1}, we have then that the matrix $\mathbf L^{p,q}_j$ defined via \eqref{eq:Lpqj} is the zero matrix. 
		Then, \new{there exist a steadily attracting state $\theta_0\in\cX$, $T>0$ and $v_{1:T}\in\overline{ \cO^T_{\theta_{0}}}$ such that}
		 we have that $\mathrm{rge}~\mathcal{D}S_{\theta_0}^{T}(v_{1:T})\supset \bR^d\times\{0\}\times\{0\}\times \mathrm T_{\rncovmat_{T}}\rho^{-1}(\{1\}) \times\{0\}$\del{, ending the proof.} \new{and thus by taking $p=q=0$ and $r=0$, we have, for every $(z,\rncovmat)\in\bR^d\times \mathrm T_{\rncovmat_{T}}\rho^{-1}(\{1\})$, 
		 $(z,p,q,\rncovmat,r)\in\mathrm{rge}~\mathcal{D}S_{\theta_0}^T(v_{1:T})$.}
	\end{proof}
	
	\begin{proof}[Proof of \Cref{p:full-rank-control-matrix-cma}(b)]
		Suppose that $c_c=1$ and $c_\sigma\neq 1$. 
		By \Cref{l:contrabillity-condition-step1},
			\new{there exist a steadily attracting state $\theta_0\in\cX$, $T>0$ and $v_{1:T}\in\overline{ \cO^T_{\theta_{0}}}$ such that}
		 the matrix $\mathbf L^{p,q}_j$ defined via \eqref{eq:Lpqj} satisfies:
		\begin{equation}
			\mathbf L^{p,q}_j = \begin{bmatrix}
				(1-c_\sigma)^3 c_{j}^p \ncovmat_{T}^{-1/2} & (1-c_\sigma) c_{j+2}^p \ncovmat_{T}^{-1/2} \\
				0 & 0
			\end{bmatrix} \quad ,
		\end{equation}
		with $\rank \ncovmat_{T}^{-1/2}=d$, and $c_{j}^p\neq0$, $c_{j+2}^p\neq 0$. Thus, $\mathrm{rge}~\mathbf L_j^{p,q}=\bR^d\times\{0\}$ and $\mathrm{rge}~\mathcal{D}S_{\theta_0}^{T}(v_{1:T})\supset \bR^d\times\bR^d\times\{0\}\times \mathrm T_{\rncovmat_{T}}\rho^{-1}(\{1\}) \times\{0\}$, \new{and thus by taking $q=0$ and $r=0$, we have, for every $(z,p,\rncovmat)\in\bR^d\times\bR^d\times \mathrm T_{\rncovmat_{T}}\rho^{-1}(\{1\})$, 
			$(z,p,q,\rncovmat,r)\in\mathrm{rge}~\mathcal{D}S_{\theta_0}^T(v_{1:T})$.}
	\end{proof}

	\subsection{Proof of \Cref{t:main-1}}\label{sec:main-result}
	
	In \Cref{sec-DCM-for-CMA,sec:steadily-attracting-state,sec:rank-condition-cma}, 
	we have proven all required conditions to apply \Cref{t:verifiable-conditions} or \Cref{c:verifiable-conditions-subchain} to
	the Markov chain $\Theta$ defined in \eqref{eq:smooth-normalized-chain-def}. 
	The conclusion is summarized in the next theorem.

	\begin{theorem}
	\label{t:main-bis}
	Suppose\del{ that} the objective function $f\colon\bR^d\to\bR$, the normalization functions $R$ and $\rho$, the stepsize change $\Gamma\colon\bR^d\to\bR_{++}$ and the sampling distribution $\pUd$ satisfy \Cref{F1:negligible-level-sets}-\Cref{F2:scaling-invariant}, \Cref{R1:homogeneous}-\Cref{R3:differentiable}, \Cref{rho1:homogeneous}-\Cref{rho2:C1}, \Cref{G1:C1}-\Cref{G3:G(0)<1} and \Cref{P1:density}, respectively.
	
	Let $\Theta=\{(\Z_t,p_t,q_t,{\rncovmat}_t,r_t)\}_{t\geqslant1}$ be the normalized Markov chain associated to CMA-ES defined via \eqref{eq:smooth-normalized-chain-def} {and $P$ its transition kernel}.
	Then,
	\begin{itemize}
		\item[(i)] if $c_c,c_\sigma\in (0,1)$ are such that $1-c_c\neq(1-c_\sigma)\sqrt{1-c_1-c_\mu}$, and if $c_\mu>0$, then $P$ is an irreducible aperiodic $T$-kernel, such that compact sets of $\cX=\bR^d\times\bR^d\times\bR^d\times \rho^{-1}(\{1\})\times\bR_{++}$ are small;
		\item[(ii)] if $c_c\in(0,1)$, $c_\sigma=1$ and $c_\mu>0$, then the normalized chain $\{(\Z_t,q_t,{\rncovmat}_t,r_t)\}_{t\geqslant1}$ is a time-homogeneous Markov chain with an irreducible aperiodic $T$-kernel, such that compact sets of $\cX_2=\bR^d\times\bR^d\times \rho^{-1}(\{1\})\times\bR_{++}$ are small;
		\item[(iii)] if $c_\sigma\in(0,1)$ and $c_c=1$, then the normalized chain $\{(\Z_t,p_t,\rncovmat_t)\}_{t\geqslant1}$ is a time-homogeneous Markov chain with an irreducible aperiodic $T$-kernel, such that compact sets of $\cX_3=\bR^d\times\bR^d\times \rho^{-1}(\{1\})$ are small;
		\item[(iv)] if $c_c=c_\sigma=1$, then the normalized chain $\{(\Z_t,\rncovmat_t)\}_{t\geqslant1}$ is a time-homogeneous Markov chain with an irreducible aperiodic $T$-kernel, such that compact sets of $\cX_4=\bR^d\times \rho^{-1}(\{1\})$ are small.
	\end{itemize}
\end{theorem}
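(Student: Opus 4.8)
The plan is to assemble the ingredients established in \Cref{sec-DCM-for-CMA,sec:steadily-attracting-state,sec:rank-condition-cma} and feed them into the two abstract sufficient conditions already proved: \Cref{t:verifiable-conditions} for case (i), and \Cref{c:verifiable-conditions-subchain} for cases (ii)--(iv). The common starting point for all four regimes is \Cref{p:CMA-satisfies-assumptions}, which guarantees that $\Theta$ obeys the deterministic control model \eqref{eq:control-modelTheta} and that \Cref{A4:lsc-distribution}--\Cref{A5:C1-update} hold under the stated hypotheses on $f$, $R$, $\rho$, $\Gamma$ and $\pUd$. Hence only a controllability condition remains to be checked in each regime, and this is precisely where the hyperparameters $c_c$, $c_\sigma$, $c_\mu$ enter.

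For case (i), with cumulation active on both paths ($c_c,c_\sigma\in(0,1)$) and $c_\mu>0$, I would verify \Cref{H3:controllability-condition} directly. First I would invoke \Cref{c:steadily-attracting-state-for-CMA} (building on \Cref{p:steadily-attracting-state-for-CMA}) to produce a steadily attracting state $\theta_0$; then \Cref{p:full-rank-control-matrix-cma}(a) provides, under the extra nondegeneracy hypothesis $1-c_c\neq(1-c_\sigma)\sqrt{1-c_1-c_\mu}$, a path $v_{1:T}\in\overline{\cO_{\theta_0}^T}$ at which $S_{\theta_0}^T$ is differentiable with $\mathcal{D}S_{\theta_0}^T(v_{1:T})$ of maximal rank. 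This is exactly \Cref{H3:controllability-condition}, so \Cref{t:verifiable-conditions} yields that $P$ is an irreducible aperiodic T-kernel for which compact sets are small.

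For cases (ii)--(iv), where at least one of $c_\sigma,c_c$ equals $1$, I would recast $\Theta$ as a \emph{redundant} chain (in the sense of \Cref{sec:subchain}) over the smaller Markov chain identified in \Cref{cor:normalizedMCwithoutcumul}: the coordinate $p$ is dropped when $c_\sigma=1$, the pair $(q,r)$ is dropped when $c_c=1$, and all three when $c_\sigma=c_c=1$. The redundancy condition \eqref{eq:redundancy} holds immediately because $\alpha_\Theta$ from \eqref{eq:def-alpha} depends only on $(z,\rncovmat)$, hence not on the dropped coordinates. I would then verify \Cref{H4:controllability-subchain} by matching each case to the corresponding partial surjectivity statement of \Cref{p:full-rank-control-matrix-cma}: part (c) covers case (ii) (for every $(z,q,\ncovmat,r)$ there is a $p$), part (b) covers case (iii) (for every $(z,p,\ncovmat)$ there are $q,r$), and part (d) covers case (iv) (for every $(z,\ncovmat)$ there are $p,q,r$). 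In each case the asserted direction in the dropped coordinates supplies the element $h^\chi$ required by \Cref{H4:controllability-subchain}, and \Cref{c:verifiable-conditions-subchain} then concludes.

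The main obstacle is bookkeeping rather than a single hard estimate: one must correctly identify, case by case, which coordinates play the role of the projected variable $\phi$ and which the redundant variable $\chi$, and then check that the surjectivity statements of \Cref{p:full-rank-control-matrix-cma}(b)--(d) match exactly the ``for every $h^\phi$ there exists $h^\chi$'' form demanded by \Cref{H4:controllability-subchain}. The genuinely delicate analytic work---constructing the steadily attracting state and controlling the rank of the differential of the extended transition map---has already been isolated into \Cref{c:steadily-attracting-state-for-CMA} and \Cref{p:full-rank-control-matrix-cma}, so the remaining task is only to confirm that their conclusions line up with the abstract hypotheses of the two concluding theorems.
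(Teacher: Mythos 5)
Your proposal is correct and follows essentially the same route as the paper's proof: \Cref{p:CMA-satisfies-assumptions} for the control model and \Cref{A4:lsc-distribution}--\Cref{A5:C1-update}, then \Cref{p:full-rank-control-matrix-cma}(a) together with \Cref{t:verifiable-conditions} for case (i), and \Cref{cor:normalizedMCwithoutcumul} plus \Cref{p:full-rank-control-matrix-cma}(b)--(d) fed into \Cref{c:verifiable-conditions-subchain} for cases (ii)--(iv), with the same matching of parts (b), (c), (d) to cases (iii), (ii), (iv). Your explicit observation that the redundancy condition \eqref{eq:redundancy} holds because $\alpha_\Theta$ depends only on the retained coordinates $(z,\rncovmat)$ is a useful detail the paper leaves implicit, but the argument is otherwise identical.
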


	\begin{proof}
		By \Cref{p:CMA-follows-CM}, 
			the Markov chain $\Theta$ follows the control model \eqref{eq:control-modelTheta}, 
			and by \Cref{p:CMA-satisfies-assumptions}, 
			\Cref{A4:lsc-distribution} and \Cref{A5:C1-update} hold.
		
		Suppose first that $c_c,c_\sigma\neq 1$ and $1-c_c\neq(1-c_\sigma)\sqrt{1-c_1-c_\mu}$. 
		By \Cref{p:full-rank-control-matrix-cma}, 
			there exist a steadily attracting state 
			$\theta^*\in\cX$, $T\geqslant1$ $v_{1:T}\in\overline{\cO^{T}_{\theta^*}}$ such 
			that $\mathcal{D} S_{\theta^*}^T(v_{1:T})$ exists and 
			is of maximal rank. 
		Hence \Cref{H3:controllability-condition} holds, 
			and we deduce then (i) by applying \Cref{t:verifiable-conditions}.
		
		Now suppose that $c_c\neq 1$ and $c_\sigma=1$, 
			resp.\ $c_\sigma\neq 1$ and $c_c=1$, and $c_c=c_\sigma=1$. 
		Then, by \Cref{cor:normalizedMCwithoutcumul},
			$\Theta^q\coloneqq\{(\Z_t,q_t, \rncovmat_t,r_t)\}_{t\geqslant1}$, 
			resp.\ $\Theta^p\coloneqq\{(\Z_t,p_t, \rncovmat_t)\}_{t\geqslant1}$, 
			and $\Theta^r\coloneqq\{(\Z_t, \rncovmat_t)\}_{t\geqslant1}$, 
			defines a time-homogeneous Markov chain. 
		Moreover, since $\theta^*$ is a steadily attracting state for $\Theta$, 
			then, by \Cref{p:full-rank-control-matrix-cma}, 
			resp.\ $\Theta^p$, $\Theta^q$ and $\Theta^r$,
			follows a control model which satisfies \Cref{H4:controllability-subchain}.
		Thus, by \Cref{c:verifiable-conditions-subchain}, we obtain (ii), (iii) and (iv).
	\end{proof}

		\label{rem:homeomorphic}
		Our main result \Cref{t:main-1}, stated in \Cref{sec:main}, is a consequence of \Cref{t:main-bis} and of \Cref{p:irreducibility-via-homeomorphism}. 
		Indeed, \del{let}\new{consider} $\rho=\det(\cdot)^{1/d}$\new{. It is \del{be}}a normalization function that satisfies \Cref{rho1:homogeneous}-\Cref{rho2:C1}. 
		\del{Then, b}\new{B}y \Cref{p:xi-is-homeo}, the \new{associated} Markov chain $\Theta$ following \eqref{eq:smooth-normalized-chain-def} \new{with this normalization function}\del{ $\rho=\det(\cdot)^{1/d}$}\done{}\niko{do we really need to repeat this which was just said two lines above?} is 
		a transformation of the chain $\Phi$ defined via \eqref{eq:normalized-chain-def} 
		by the homeomorphism $\xi$ defined in \eqref{eq:def-homeo}.
		
		By \Cref{t:main-bis}, $\Theta$ is an irreducible aperiodic T-chain. 
		By \Cref{p:irreducibility-via-homeomorphism}, so is $\Phi$.
		Therefore~\cite[Theorem~6.2.5]{meyn2012markov}, compact sets are small sets.

	\section{Conclusion and perspectives}

	This paper \nnew{expands\del{ and applies} a methodology}\del{provides tools} to analyze \nnew{irreducibility and other} \new{stability} properties\del{, such as irreducibility,} of complex Markov chains \nnew{when they are} expressed as nonsmooth state-space models.
	\nnew{We apply the methodology} in the context of optimization\del{,
	we} \nnew{to the CMA-ES \cite{hansen2014principled}. We} prove\del{
		it\niko{what does "it" refer too?} allows to conclude to the} 
	irreducibility, aperiodicity and topological properties of a stochastic process obtained by normalizing the Markov chain \nnew{that represents}\del{representing}\del{composed of} the state of CMA-ES when optimizing scaling-invariant functions.\del{ 
		which, a}\del{\new{A}s explained \new{in the paper},}
	\new{This is an important milestone}\del{can be determinant} to prove the linear convergence of CMA-ES.

	Our \new{stability analysis}\del{assumptions} encompasses more general processes than the one underlying CMA-ES \nnew{by considering}\del{.
	In particular,}\del{
		 similarly to what was done in previous work \cite{toure2023global},}\del{
	we consider} an abstract stepsize change function.\del{ $\Gamma$. 
	Besides}
	\nnew{Compared to previous work \cite{toure2023global},} we relax the assumption\niko{{\tiny condition/precondition/requirement/premise}}\del{ that}
	\nnew{on the stepsize change}\del{its needs to be} \nnew{from} $\cC^1$ to locally Lipschitz.
	This \nnew{now}\del{also} allows to \new{analyze}\del{include} the default stepsize change of CMA-ES.\del{ In addition,}
	We \new{also consider}\del{ assume that the sampling is via} an abstract sampling distribution $\nu_U$ \nnew{which} includes\del{ the} multivariate normal distributions \nnew{as} used in CMA-ES.\del{, including general stepsize change and sampling distribution.} 

	\nnew{We summarize the assumptions to prove stability of CMA-ES:}\del{ we need to pose a number of}\del{different}\del{ assumptions \new{which}\del{that} we summarize below:}\del{ and comment on the reason for posing them:}
	\del{For our analysis, we make different assumptions.}
	\begin{itemize}
		\item \new{The objective function is\del{ assumed to be} scaling-invariant.}
		This is \new{inherent to our\del{ whole} methodology because we}
		define a time-homogeneous Markov chain \nnew{based upon}\del{from} the normalization of the state variables of CMA-ES.\del{, we require that the objective function $f$ is scaling-invariant;}\del{ and thus inherent to our whole methodology.}
		\item\del{ Moreover, we \del{only }consider} \new{The} objective function\del{s with} \new{has} Lebesgue negligible level sets. \new{This is needed to}\del{ and we} obtain \del{consequently }a lower semicontinuous density for the distribution of the ranked \del{offspring}\new{candidate solutions}.
		This is a main assumption to deduce \del{the }irreducibility \del{of the fore-mentioned Markov chain }from the analysis of an underlying control model.
		\item The normalization \new{function $R(\cdot)$ is}\del{ of the covariance matrix of the algorithm that defines the normalized process is done via a} positively homogeneous and continuously Lipschitz,\del{ function} \new{but} $R(\cdot)$ may be nonsmooth.
		\new{This includes natural normalizations, e.g., by} the determinant (which is smooth) or an eigenvalue (which is nonsmooth).
		\new{Positive homogeneity is needed for building the normalized Markov chain and thus (\new{too}\del{again}) inherent to the Markov chain methodology.
		Lipschitz \new{continuity} \nnew{yields}\del{assumption}\del{ is posed in order}\del{allows to obtain} a locally Lipschitz function $F$ for the nonsmooth model \eqref{eq:deterministic-control-model} \nnew{and allows} \new{to connect}\del{and be able\niko{\tiny required} to use the tools connecting} irreducibility to the analysis of an underlying control model \cite{gissler2024irreducibility}.}
		\item The hyperparameter setting assumptions cover all practically relevant algorithm variants (with/without cumulation, with rank-one and rank-mu updates) except when  $c_1 + c_\mu = 1$, or\del{ $c_\mu=0$ and $c_c<1$, or $1-c_c = (1-c_\sigma)\sqrt{1-c_1-c_\mu} > 0$.}
		\new{when $c_c<1$ and either $c_\mu=0$ or $1-c_c = (1-c_\sigma)\sqrt{1-c_1-c_\mu}$.}
		\new{Without cumulation ($c_c = 1$), the rank-one update is sufficient to prove irreducibility and aperiodicity. However,}
		we need the rank-mu update for our proof when cumulation is used ($c_c < 1$).
		None of the above cases is \new{important}\del{essential} in practice.\del{ and}\del{
			Importantly,}\del{ while practically the rank-one update with cumulation \new{has also been}\del{can be} used without rank-mu update}\del{
		without cumulation\del{ however} ($c_c = 1$), the rank-one update is sufficient to prove irreducibility and aperiodicity.}
	\end{itemize}
	%
	
	\paragraph*{Limitations and perspectives}
	
	We \nnew{believe that}\niko{{\tiny suppose/suspect/conjecture/reckon}}\del{ however that \del{our}}
	some of the above assumptions can be \new{relaxed}\del{generalized} with further work, \new{specifically, and based on empirical observations, the assumptions that}
	\begin{itemize}
	\item the hyperparameters have to be chosen \new{suitably}\del{correctly} (in particular $0<c_\mu<1$),
	\item the objective function $f$ has Lebesgue negligible level sets, and
	\item the sampling distribution is positive and continuous on the entire search space \new{(which is not the case for a\del{ uniform} distribution on the unit sphere)}.
	\end{itemize}

\new{In order to conclude\del{ to}---with the approach pursued in this paper---the linear convergence of CMA-ES and its learning of the inverse Hessian,
\new{it still remains to be proven}\del{the other important milestone that is left is to prove} that the}
\del{	Yet, it remains to prove that this }normalized Markov chain converges geometrically fast to a stationary distribution and satisfies a Law of Large Numbers.
\nnew{This proof could be achieved by finding}\del{A promising approach would be to find} a potential function for which a geometric drift condition holds \cite{meyn2012markov}.
	
	\appendix

	\section{Proofs in \Cref{sec:steadily-attracting-state}}\label{app:proofs-steadily}
	\subsection{Proof of \Cref{l:path-step-3}}
	\begin{proof}[Proof of \Cref{l:path-step-3}]
		Let $e_k$ be the $k$-th vector of the basis $\cB$. 
		Let $\kappa$ be positive and $\kappa'$ be real.
		Consider the sequence $\{\theta_t\}_{t=0,1,2,3,4}$ defined by
		$$
		\theta_{t+1}=(\Z_{t+1},p_{t+1},q_{t+1}, \rncovmat_{t+1},r_{t+1}) = \F\left( \theta_t , \aalpha\left( \theta_t , v_{t+1} \right) \right)
		$$
		with
		$
		v_1 = [\kappa e_k]_{i=1,\dots,\mu} ,
		$
		$
		v_2 = - r_1^{-1/2} \Gamma(p_1)^{-1} v_1,
		$
		$
		v_3 = [\kappa' e_k]_{i=1,\dots,\mu} ,
		$
		and
		$
		v_4 = -r_3^{-1/2} \Gamma(p_3)^{-1} v_3 .
		$
		\del{By \Cref{p:characterization-control-sets-cma}, note that $v_k\in\overline{ \cO^1_{\theta_{k-1}}}$ for $k=1,2,3,4$, hence $v_{1:4}\in\overline{ \cO^4_{\theta_{0}}}$.
			Then,}
		\nnew{By \Cref{p:characterization-control-sets-cma},}
		\del{By \Cref{l:step-back-to-mean-0},} we have $v_{1:4}\in\overline{ \cO^4_{\theta_{0}}}$ and \nnew{by \Cref{l:step-back-to-mean-0} we obtain}
		$
		\Z_4= \Z_2 = 0.
		$
		Moreover,
		$$
		q_2= \kappa\times \sqrt{c_c(2-c_c)\mueff} r_1^{-1/2}\left[1-c_c-\Gamma(p_1)^{-1}\right] e_k .
		$$
		Let $\eta\in\bR$, and set $\kappa'=\eta\times\left(\kappa\sqrt{c_c(2-c_c)\mueff}r_1^{-1/2}\left[1-c_c-\Gamma(p_1)^{-1}\right]\right)\nnew{=\eta q_2}$.
		Then, similarly to the proof of \Cref{l:path-step-2}, we have, since $v_3=\eta [q_2,\dots,q_2]$:
		\begin{equation}\label{eq:q4}
			q_4 = r_3^{-1/2}\times\left(  (1-c_c)^2r_2^{-1/2} + (1-c_c-\Gamma(p_3)^{-1})\sqrt{c_c(2-c_c)\mueff}\eta   \right) \times q_2
		\end{equation}
		where
		\begin{multline*}
			p_3 = (1-c_c) p_2 +  \sqrt{c_\sigma(2-c_\sigma)\mueff R(\rncovmat_2) }  \rncovmat_2^{-1/2} \vwm^\top v_3 \\
			= (1-c_\sigma)p_2 + \sqrt{c_\sigma(2-c_\sigma)\mueff R(\rncovmat_2) } \eta \rncovmat_2^{-1/2} q_2
		\end{multline*}
		and thus
		$$
		\Gamma(p_3)^{-1} = \Gamma\left( (1-c_\sigma)p_2 + \sqrt{c_\sigma(2-c_\sigma)\mueff R(\rncovmat_2) } \eta \rncovmat_2^{-1/2} q_2 \right)^{-1} .
		$$
		\del{We find then a value of $\eta\in\bR$ (independently of $\kappa$), such that $$q_4=0.$$}%
		\del{\nnew{Similarly to}\del{We refer} to the proof of \Cref{l:path-step-2}, b}%
		We apply the intermediate value theorem to the function 
		$$
		\zeta: \eta \mapsto \left[1-c_c-\Gamma\left( (1-c_\sigma)p_2 + \sqrt{c_\sigma(2-c_\sigma)\mueff R(\rncovmat_2) } \eta \rncovmat_2^{-1/2} q_2  \right)^{-1}\right]\times\sqrt{c_c(2-c_c)\mueff}\eta \enspace,
		$$ 
		\nnew{which is such that $q_4 = r_3^{-1/2}\times\left(  (1-c_c)^2r_2^{-1/2} + \zeta(\eta)  \right) \times q_2  $}.
		Since $\Gamma$ is continuous by \Cref{R2:C1} \nnew{and such that when $\eta$ goes to $\pm \infty $, $\left[1-c_c-\Gamma\left( (1-c_\sigma)p_2 + \sqrt{c_\sigma(2-c_\sigma)\mueff R(\rncovmat_2) } \eta \rncovmat_2^{-1/2} q_2  \right)^{-1}\right]$ is strictly positive by \Cref{G2:unbounded}, we find that $\zeta$ is continuous and $\zeta(\eta)$ tends to $+\infty$ when $\eta$ to $+\infty$, and to $-\infty$ when $\eta$ to $-\infty$. Hence} we find $\eta\nnew{_\kappa}\in\bR$ (\del{independently of}\nnew{which depends continuously on} $\kappa$) such that
		\nnew{when $\eta=\eta_\kappa$, we have}
		$$q_4=0.$$
		\del{ in order to find such value for $\eta\in\bR$.}%
		For the covariance matrix $\rncovmat_1$, we have
		$$
		\rncovmat_1 = \frac{(1-c_1-c_\mu)  \ncovmat_0 + c_1 {c_c(2-c_c)\mueff} \kappa^2 e_ke_k^\top + c_\mu \kappa^2 e_ke_k^\top}{\rho((1-c_1-c_\mu)  \ncovmat_0 + c_1 {c_c(2-c_c)\mueff} \kappa^2 e_ke_k^\top + c_\mu \kappa^2 e_ke_k^\top)} \enspace,
		$$
		where $\ncovmat_0=R(\rncovmat_0)^{-1}\rncovmat_0$.
		Let $\rho_1= \rho((1-c_1-c_\mu)  \ncovmat_0 + c_1 {c_c(2-c_c)\mueff} \kappa^2 e_ke_k^\top + c_\mu \kappa^2 e_ke_k^\top)$ and $\omega_1(\kappa) = c_1 {c_c(2-c_c)\mueff} \kappa^2 + c_\mu \kappa^2$ such that 
		\begin{align*}
			\rncovmat_1 & = \rho_1^{-1} \left[ (1-c_1-c_\mu)  \ncovmat_0 + c_1 {c_c(2-c_c)\mueff} \kappa^2 e_ke_k^\top + c_\mu \kappa^2 e_ke_k^\top \right] \\
			& \eqqcolon \rho_1^{-1} \left[ (1-c_1-c_\mu)  \ncovmat_0 + \omega_1(\kappa) e_ke_k^\top \right] . 
		\end{align*}
		\del{		where $\rho_1>0$ is chosen such that $\rho(\rncovmat_1)=1$,}\new{ The map }$\omega_1$ is\del{ a} continuous\del{ map}\done{ we do not need the continuity of $\rho$ for the continuity of $\omega_1$}, with $\omega_1(0)=0$ and $\omega_1(\kappa)\to\infty$ when $\kappa\to\infty$. \del{Then,}
		\new{Similarly, setting 
			$$
			\rho_2 = 
			\rho\left(
			R( \rncovmat_1)^{-1}\rncovmat_1 
			+ \left(c_1 c_c(2-c_c)\mueff\kappa^2  \left(1-c_c-\Gamma(p_1)^{-1}\right)  +c_\mu\kappa^2 \Gamma(p_1)^{-2}\right) e_ke_k^\top 
			\right)
			$$ 
			we get} 
		\begin{align*}
			\rncovmat_2 & = \rho_2^{-1}\left[ R( \rncovmat_1)^{-1}(1-c_1-c_\mu)^2  \ncovmat_0 \right. \\
			+ & \left. \left(R( \rncovmat_1)^{-1}(1-c_1-c_\mu) \omega_1(\kappa) +c_1 c_c(2-c_c)\mueff\kappa^2  \left(1-c_c-\Gamma(p_1)^{-1}\right)  +c_\mu\kappa^2 \Gamma(p_1)^{-2}\right) e_ke_k^\top \right] \\
			& \eqqcolon \rho_2^{-1}\left[ R( \rncovmat_1)^{-1}(1-c_1-c_\mu)^2 \ncovmat_0 + \omega_2(\kappa) e_ke_k^\top \right] ,
		\end{align*}
		\done{the $\rncovmat_0$ should probably read $\ncovmat_0$}
		with\del{ $\rho_2>0$ such that $\rho(\rncovmat_2)=1$,} $\omega_2$ continuous \new{since $R$ and $\Gamma$ are continuous by \Cref{R2:C1} and \Cref{G1:C1}}, $\omega_2(0)=0$ and $\omega_2(\kappa)\to\infty$ when $\kappa\to\infty$.
		
		Likewise, for the next two steps, we find $\rho_4>0$, $\omega_4$ continuous such that $\omega_4(0)=0$, and $\omega_4(\kappa)\to\infty$ when $\kappa\to\infty$, and
		$$
		\rncovmat_4 = \rho_4^{-1} \left[ R( \rncovmat_1)^{-1}R( \rncovmat_2)^{-1}R( \rncovmat_3)^{-1}(1-c_1-c_\mu)^4 \ncovmat_0 + \omega_4(\kappa) e_k e_k^\top \right].
		$$
		Then, by the intermediate value theorem, there exists $\kappa>0$ such that 
		$$\omega_4(\kappa)=R( \rncovmat_0)^{-1}R( \rncovmat_1)^{-1}R( \rncovmat_2)^{-1}R( \rncovmat_3)^{-1}(1-c_1-c_\mu)^4 (\lambda_{k-1}-\lambda_k)>0 \enspace. $$
		Therefore,
		$$
		[\rncovmat_4]_{\mathcal{B}} =  \rho_4^{-1}R( \rncovmat_0)^{-1}R( \rncovmat_1)^{-1}R( \rncovmat_2)^{-1}R( \rncovmat_3)^{-1} (1-c_1-c_\mu)^4 \diag(\lambda_1,\dots,\lambda_{k-1},\lambda_{k-1},\lambda_{k+1},\dots,\lambda_d) \enspace.
		$$
		\del{		This ends the proof.}\new{Setting $\gamma =  \rho_4^{-1} R( \rncovmat_0)^{-1}R( \rncovmat_1)^{-1}R( \rncovmat_2)^{-1}R( \rncovmat_3)^{-1}(1-c_1-c_\mu)^4$, we have proven that we can reach $\theta_4$ with the matrix $\rncovmat_4$ defined in \eqref{eq:equal-eig}.}
	\end{proof}
	
	\section{Proofs in \Cref{sec:rank-condition-cma}}\label{app:proofs-controllability}
	\subsection{Proof of \Cref{l:contrabillity-condition-step0}}
	\begin{proof}[Proof of \Cref{l:contrabillity-condition-step0}]
		By \Cref{R3:differentiable}, 
		there exists $\mathbf C_0\in\Sdpp$, 
		such that $R$ is differentiable on a neighbor\-hood of $\mathbf C_0$. 
		Since $R$ is positively homogeneous by \Cref{R1:homogeneous}, 
		then \new{by \Cref{l:diff-pos-hom}} $R$ is also differentiable on a neighborhood of $\rncovmat_0\coloneqq\rho(\mathbf C_0)^{-1}\mathbf C_0$\done{We have a lemma to prove a property which is less strong than that and here we state the property without any proof. Formulate maybe in Lemma~\ref{l:diff-pos-hom} that if a positively homogeneous function is differentiable in a neighborhood of $C_0$, then it is differentiable in a neighborhood of $\gamma C_0$ for any $\gamma$.}. 
		Then, by \Cref{c:steadily-attracting-state-for-CMA}, there exists $\theta_0=(z_0,p_0,q_0,\rncovmat_0,r_0)$ with $z_0=q_0=0$ which is a steadily attracting state.
		
		Let $T\in\bN$, $v_{1:T}\in\overline{\mathcal{O}_{\theta_0}^T}$ and ${h}_{1:T}\in\cV^T$. 
		We denote for $t\in\{1,\dots,T\}$:
		$$
		\theta_t=(\Z_t,p_t,q_t, \rncovmat_t,r_t) = S^t_{\theta_0}(v_{1:t}) \quad \text{and} \quad \theta_t^h=(\Z_t^h,p_t^h,q_t^h, \rncovmat_t^h,r_t^h) = S^t_{\theta_0}(v_{1:t}+{h}_{1:t}) \enspace.
		$$
		We have that, if $v_{1:T}=0$, then \new{since $q_0=0$, $q_t = 0$ for $t=1,\ldots, T$ and using \eqref{eq:update-Sigma} we find } $\rncovmat_t=\rncovmat_0$.
		Since $\nnew{v \mapsto} S^t_{\theta_0}\nnew{(v)}$ is continuous, and since $R$ is differentiable in a neighborhood of $\rncovmat_0$, 
		then there exists $M_V >0$ such that, if $\|v_{1:T}\|^2\leqslant M_V$, then $R$ is differentiable at $\rncovmat_t$.
		Hence we impose that $\|v_t\|^2 \leqslant M_V/T$ \nnew{for all $t  \in\{1,\dots,T\}$}.
		
		Define, for $t=0,\dots,T$,
		$b_t = r_0\times\dots\times r_t \times R(\rncovmat_t)^{-1}$ and
		$b_t^h = r_0^h\times\dots r_t^h \times R({\rncovmat}_t^h)^{-1}$,
		and let
		$\mathbf B_t=b_t\rncovmat_t$ and likewise $\mathbf B_t^h = b_t^h\rncovmat_t^h$.
		\del{with $b_t = r_1\times\dots\times r_t \times R(\rncovmat_t)^{-1}$\todo{this is a bit confusing - can you detail how you go from $b_t = \rho(B_t)$ to this equation?} and $b_t^h = r_1^h\times\dots r_t^h \times R({\rncovmat}_t^h)^{-1}$,
			so that}%
		Therefore \nnew{by positive homogeneity of $\rho$, $\rho(\mathbf B_t) = \rho(b_t\rncovmat_t) = b_t \rho(\rncovmat_t) = b_t$ since $\rho(\rncovmat_t) = 1$. Similarly $\rho(\mathbf B_t^h) = b_t^h$ and thus}
		\begin{equation}\label{eq:rcovB}
			\rncovmat_t = \frac{\mathbf B_t}{ \rho(\mathbf B_t)} \quad \text{and} \quad \rncovmat_t^h =  \frac{\mathbf B_t^h}{\rho(\mathbf B_t^h)} \enspace.
		\end{equation}	
		Moreover, define 
			$\tilde q_t = \sqrt{\tilde r_{t-1}} q_t \quad\text{and}\quad \tilde q_t^h=\sqrt{\tilde r_{t-1}^h} q_t^h $ as well as
			$ \tilde v_{t+1} = \sqrt{\tilde r_{t}} v_{t+1} \quad\text{and}\quad \tilde h_{t+1} = \sqrt{\tilde r_{t}^h} h_{t+1}$
		where $\tilde r_t = r_0\times\dots\times r_t$ and $\tilde r_t^h=r_0^h\times \nnew{ \dots \times} r_t^h$.
		Hence, by applying \nnew{\eqref{eq:update-q}}\del{\eqref{eq:Fq}}:
		\begin{multline*}
			\tilde q_{t+1} = \sqrt{\tilde r_t} r_t^{-1/2} (1-c_c) q_t + \sqrt{c_c(2-c_c)\mueff} \sqrt{\tilde r_t} \vwm^\top v_{t+1} \\
			= (1-c_c) \sqrt{\tilde r_{t-1}} q_t + \sqrt{c_c(2-c_c)\mueff} \vwm^\top \tilde v_{t+1}  
			= (1-c_c)\tilde q_t + \sqrt{c_c(2-c_c)\mueff} \vwm^\top \tilde v_{t+1}
		\end{multline*}
		and likewise
		\begin{equation}\label{eq:inproof-qtt}
			\tilde q_{t+1}^h = (1-c_c)\tilde q_t^h + \sqrt{c_c(2-c_c)\mueff} \vwm^\top \left( (\tilde r_t^h/\tilde r_t)^{1/2}\tilde v_{t+1} + \tilde h_{t+1} \right) \enspace .
		\end{equation}
		\new{Denote $\mathbf A_{t+1}$ such that $\rncovmat_{t+1} = \mathbf A_{t+1}/\rho(\mathbf A_{t+1}) $ in \eqref{eq:update-Sigma}. \new{(Alternatively the matrix $\tilde\ncovmat_{t+1}$ \del{introduced in the proof of \Cref{l:homogeneous-MC}}\nnew{in \eqref{tilda-matrix} equals $\mathbf A_{t+1}$})}.\done{clarify whether they are the same or not} Then by positive homogeneity of $R$, $R(\rncovmat_{t+1})= R(\mathbf A_{t+1}) / \rho(\mathbf A_{t+1})$ such that
			\begin{equation}
				\frac{\mathbf A_{t+1}}{R(\mathbf A_{t+1})} = \frac{\mathbf A_{t+1}}{\rho(\mathbf A_{t+1}) R(\rncovmat_{t+1})} = \frac{\rncovmat_{t+1}}{R(\rncovmat_{t+1})}
			\end{equation}
		}
		Then, \nnew{using the previous equation and} \eqref{eq:update-Sigma}:
		\begin{align}
			\mathbf B_{t+1} & = \new{b_{t+1}} \rncovmat_{t+1} = r_0\times\dots\times r_{t+1} \times R(\rncovmat_{t+1})^{-1} \rncovmat_{t+1} = \tilde r_{t+1} R(\rncovmat_{t+1})^{-1} \rncovmat_{t+1} \\
			& = \nnew{\tilde r_{t+1} R(\mathbf A_{t+1})^{-1} \mathbf A_{t+1}} \\
			& = \underbrace{\tilde r_{t+1} \times r_{t+1}^{-1}}_{\tilde r_{t}} \times \left(  (1-c_1-c_\mu) R(\rncovmat_t)^{-1} \rncovmat_t
			+ c_1 q_{t+1}q_{t+1}^\top + c_\mu \sum_{i=1}^\mu \wic v_{t+1}^i (v_{t+1}^i)^\top  \right)  \\
			& = (1-c_1-c_\mu) \tilde r_tR(\rncovmat_t)^{-1} \rncovmat_t + c_1 \tilde r_t \del{\tilde q_{t+1}\tilde q_{t+1}^\top}\nnew{q_{t+1} q_{t+1}^\top} + c_\mu \sum_{i=1}^\mu \wic \tilde r_t v_{t+1}^i ( v_{t+1}^i)^\top \\
			& = (1-c_1-c_\mu) \mathbf B_t + c_1 \tilde q_{t+1}\tilde q_{t+1}^\top + c_\mu \sum_{i=1}^\mu \wic \tilde v_{t+1}^i (\tilde v_{t+1}^i)^\top  \enspace .
			\label{inproof-Btt}%
		\end{align}
		Likewise,
		\begin{align}\label{eq:inproof-tom}
			\mathbf B_{t+1}^h & = (1-c_1-c_\mu) \mathbf B_t^h 
			+ c_1 \tilde q_{t+1}^h(\tilde q_{t+1}^h)^\top 
			+ c_\mu \tilde r_t^h\sum_{i=1}^\mu \wic ( v_{t+1}^i+ h_{t+1}^i) ( v_{t+1}^i+ h_{t+1}^i)^\top
			\enspace .
		\end{align}
		Let $s=d(d+1)/2$ be the dimension of the set of symmetric matrices $\Sd$ as a real vector space. 
		Let $\psi_1 \in\bR^d$ be a nonzero vector and
		define then $\psi_2,\dots,\psi_s$ nonzero vectors of $\bR^d$, such that $(\psi_1\psi_1^\top,\dots,\psi_s\psi_s^\top)$ forms a basis of $\Sd$.
		\new{Scaling down the length of $\psi_k$ does not change that we have a basis of $\Sd$ and thus we}\del{We} impose that $\|\psi_k\|\leqslant \varepsilon$, where $\varepsilon$ is a positive constant that we precise in the next paragraph.
		Set $T=2s(s-1)+4$ and set $v_{1:T}$ as below.

		For $t\in\{0,\dots,s-1\}$, we set
		\begin{equation}\label{eq:v2t+1}
			v_{2t+1} = \tilde r_{2t}^{-1/2} [\psi_{t+1},\dots,\psi_{t+1}]\in\bR^{d\mu} \mbox{ and } v_{2t+2} = -(1-c_c)\tilde r_{2t+1}^{-1/2}[\psi_{t+1},\dots,\psi_{t+1}]\in\bR^{d\mu}
		\end{equation}
		\nnew{such that
			\begin{equation}\label{eq:tildev2t+1}
				\tilde v_{2t+1} = [\psi_{t+1},\dots,\psi_{t+1}] \mbox{ and }  \tilde v_{2t+2} = -(1-c_c) [\psi_{t+1},\dots,\psi_{t+1}].
			\end{equation}
		}
		Moreover, we choose $\varepsilon>0$ small enough so that 
		$\|v_k\|^2\leqslant M_V/T$ for all $k=1,\dots, 2s$.
		By definition of $M_V$ earlier in the proof, we have that $R$ is differentiable in $\rncovmat_t$ for $t=0,\dots,T$.
		If moreover $p_t\neq0$ for $t=1,\dots,T$, then by \Cref{l:S-differentiable-in}, $v \to S_{\theta_0}^T(v)$ is differentiable in $v_{1:T}$.
		Besides, by \Cref{p:characterization-control-sets-cma}, we have $v_{1:2s}\in\overline{\cO^{2s}_{\theta_0}}$.
		
		Observe now that there exists $\psi_1\in\bR^d$ such that $\|\psi_1\|\leqslant\varepsilon$ and $p_1,p_2$ are nonzero.
		Indeed, \new{$p_1  = (1-c_\sigma) p_0 +
			\sqrt{c_\sigma(2-c_\sigma)\mueff} R(\rncovmat_0)^{1/2}\rncovmat_0^{-1/2} \sum_{i=1}^\mu \wi v_{1}^i$ and using $\mathbf B_0 = r_0 R(\rncovmat_0)^{-1} \rncovmat_0$ we find $p_1 =   (1-c_\sigma) p_0 +
			\sqrt{c_\sigma(2-c_\sigma)\mueff} r_{0}^{1/2}\mathbf B_0^{-1/2} \sum_{i=1}^\mu \wi \tilde r_0^{-1/2} \psi_1 = (1-c_\sigma) p_0 +
			\sqrt{c_\sigma(2-c_\sigma)\mueff} \mathbf B_0^{-1/2}\psi_1$}
		and
		\begin{align}
			p_2  = (1-c_\sigma) p_1 & +
			\sqrt{c_\sigma(2-c_\sigma)\mueff} R(\rncovmat_1)^{1/2}\rncovmat_1^{-1/2} \sum_{i=1}^\mu \wi v_{2}^i \\
			= (1-c_\sigma)^2 p_0 & +
			(1-c_\sigma) \sqrt{c_\sigma(2-c_\sigma)\mueff} \mathbf B_0^{-1/2}\psi_1 \\ & - 
			\sqrt{c_\sigma(2-c_\sigma)\mueff} \tilde r_1^{1/2} \mathbf B_1^{-1/2} \sum \wi (1-c_c) \tilde r_1^{-1/2} \psi_1 \\
			= (1-c_\sigma)^2 p_0 & +
			[(1-c_\sigma)\mathbf B_0^{-1/2}-(1-c_c)\mathbf B_1^{-1/2}] \sqrt{c_\sigma(2-c_\sigma)\mueff} \psi_1  \enspace.
		\end{align}
		\done{Since $c_1+c_\mu>0$}
		Since $v_2 = -(1-c_c)\tilde r_1^{-1/2}[\psi_1,\dots,\psi_1]$ with $\psi_1\neq 0$, \new{given that according to \eqref{inproof-Btt}, $\mathbf B_1 = \alpha_1 \mathbf B_0 + \alpha_2 \psi_1 \psi_1^\top + \alpha_3 q_1 q_1^\top $, for $\alpha_1, \alpha_2,\alpha_3$ some nonnegative constants and $\alpha_2+\alpha_3 >0$ \nnew{since $c_1+c_\mu>0$, and $\psi_1, q_1\neq 0$ (see below)},} 
		we have $(1-c_\sigma)\mathbf B_0^{-1/2}\neq (1-c_c)\mathbf B_1^{-1/2}$.
		Moreover, up to scaling $\psi_2,\dots,\psi_s$ sufficiently smaller than $\psi_1$, 
		we can ensure that $p_t\neq 0$ for $t\in\{\del{1}\nnew{3},\dots,2s\}$. 
		Then, by \Cref{l:S-differentiable-in}, and by composition \nnew{since $S_{\theta_0}^{t+1}(v_{1:t+1}) = S_{S_{\theta_0}^{t}(v_{1:t})}^1(v_{t+1}) $ we find by induction that} $S_{\theta_0}^{2s}$ is differentiable at $v_{1:2s}$.		
		Then, by induction, \nnew{since $\tilde q_{t+1} 
			= (1-c_c)\tilde q_t + \sqrt{c_c(2-c_c)\mueff} \vwm^\top \tilde v_{t+1} $
			with
			$
			\tilde	v_{2t+1} =  [\psi_{t+1},\dots,\psi_{t+1}] 
			$
			and
			$
			\tilde v_{2t+2} = -(1-c_c)[\psi_{t+1},\dots,\psi_{t+1}]
			$,
		} we find that, for every $t\in\{0,\dots,s-1\}$, we have:
		\begin{equation}\label{innproof-1}
			\tilde q_{2t+1} =  \sqrt{c_c(2-c_c)\mueff}\psi_{t+1} \mbox{ and } q_{2t+2} = 0.
		\end{equation}
		\del{Besides, we have:
			$$
			B_{2t+1} = (1-c_1-c_\mu) B_{3t} + (c_1c_c(2-c_c)\mueff+c_\mu)\psi_{t+1}\psi_{t+1}^\top .
			$$}
		\del{$$
			B_{3t+2} = (1-c_1-c_\mu)^2 B_{3t} + \underbrace{\left[(1-c_1-c_\mu)(c_1c_c(2-c_c)\mueff+c_\mu)+c_\mu (1-c_c)^2r_{3t+1}^{-1}\todo{}\right]}_{\eqqcolon c_{t}} \psi_{t+1}\psi_{t+1}^\top ,
			$$
			and
			$$
			B_{3t+3} = (1-c_1-c_\mu)^3 B_{3t} + (1-c_1-c_\mu) c_t \psi_{t+1}\psi_{t+1}^\top .
			$$}%
		For $t=0,\dots,s-1$, let $\kappa_t^1\in\bR$ be arbitrary (we fix the value of $\kappa_t^1$ later in the proof).
		We set, given an arbitrary real number $\h_1\in\bR$, for $t=0,\dots,s-1$:
		$$
		{h}_{2t+1}= [(\tilde r_{2t}^h)^{-1/2}-\tilde r_{2t}^{-1/2} + (\tilde r_{2t}^h)^{-1/2}\kappa_t^1  \h_1 ] \times [\psi_{t+1} ,\dots,\psi_{t+1} ]\in\bR^{d\mu}
		$$
		which implies
		\begin{equation}
			\tilde h_{2t+1} = [ 1 - (\tilde r_{2t}^h/\tilde r_{2t})^{1/2}  + \kappa_t^1  \h_1]  \times [\psi_{t+1} ,\dots,\psi_{t+1} ]
		\end{equation}
		and
		\begin{align*}
			{h}_{2t+2} & =-(1-c_c) 
			[(\tilde r_{2t+1}^h)^{-1/2}-\tilde r_{2t+1}^{-1/2} + (\tilde r_{2t+1}^h)^{-1/2}\kappa_t^1  \h_1 ] \times [\psi_{t+1} ,\dots,\psi_{t+1} ]\in\bR^{d\mu}
			\enspace ,
		\end{align*}
		so that, by induction, \new{starting from \eqref{eq:inproof-qtt}} we have:
		\begin{align}
			\tilde q_{2t+1}^h & = (1-c_c) \tilde q_{2t}^h + \sqrt{c_c(2-c_c)\mueff} \vwm^\top ((\tilde r_{2t}^h/\tilde r_{2t})^{1/2}\tilde v_{2t+1} + \tilde h_{2t+1})  \\
			& = (1-c_c) \times 0 \\
			& ~ + \sqrt{c_c(2-c_c)\mueff} \vwm^\top \left(  (\tilde r_{2t}^h/\tilde r_{2t})^{1/2} + \left(1- (\tilde r_{2t}^h/\tilde r_{2t})^{1/2} + \kappa_t^1 \varepsilon_1\right) \right) [\psi_{t+1},\dots,\psi_{t+1}]  \\
			& = \sqrt{c_c(2-c_c)\mueff}\left(1 + \kappa_t^1 \varepsilon_1\right)   \psi_{t+1}
		\end{align}
		and
		\begin{align*}
			\tilde q_{2t+2}^h & = (1-c_c) \tilde q_{2t+1}^h + \sqrt{c_c(2-c_c)\mueff} \vwm^\top ((\tilde r_{2t+1}^h/\tilde r_{2t+1})^{1/2}\tilde v_{2t+2} + \tilde h_{2t+2}) \\
			& =  (1-c_c)\sqrt{c_c(2-c_c)\mueff}\left(1 + \kappa_t^1 \varepsilon_1\right)   \psi_{t+1}  \\
			&\quad -(1-c_c) \sqrt{c_c(2-c_c)\mueff}  \vwm^\top \left(  \sqrt{\frac{\tilde r_{2t+1}^h}{\tilde r_{2t+1}}} + \left(1- \sqrt{\frac{\tilde r_{2t+1}^h}{\tilde r_{2t+1}}} + \kappa_t^1 \varepsilon_1\right) \right) [\psi_{t+1},\dots,\psi_{t+1}] \\
			& = 0
		\end{align*}
		Note that, for $i=1,\dots,\mu$:\done{Armand: we had 3 times the same mistake I guess like below. I corrected the others, I let you triple check - as it is strange to see several times the same error (but maybe it's due to some cut and paste)}
		$$
		(\tilde r_{2t}^h)^{1/2} \times (v_{2t+1}^i+h_{2t+1}^i) = \left(  (\tilde r_{2t}^h/\tilde r_{2t})^{1/2} + \left(1- (\tilde r_{2t}^h/\tilde r_{2t})^{1/2}  + \kappa_t^1 \varepsilon_1\right) \right) \psi_{t+1} = \left(  1 + \kappa_t^1 \varepsilon_1 \right) \psi_{t+1}
		\enspace .
		$$
		Then, \nnew{using \eqref{eq:inproof-tom},} we obtain for $t\in\{0,\dots,s-1\}$, when $\h_1\to0$:
		\begin{align}
			\mathbf B_{2t+1}^h  =  (1-c_1-c_\mu) \mathbf B_{2t}^h &+ c_1 c_c(2-c_c)\mueff  (1+ \kappa_t^1 \h_1)^2 \psi_{t+1}  \psi_{t+1}^\top 
			\\ &+ c_\mu  \sum_{i=1}^\mu \wic  \left(  1 + \kappa_t^1 \varepsilon_1 \right)^2 \psi_{t+1}\psi_{t+1}^\top \\
			=(1-c_1-c_\mu) \mathbf B_{2t}^h  &
			+ \left[ c_1 c_c(2-c_c)\mueff +c_\mu\right]\times\left( 1+ 2\kappa_t^1\h_1  \right) \psi_{t+1}\psi_{t+1}^\top + o(\h_1) \label{inproof-BB}
		\end{align}
		From \eqref{inproof-Btt}, we have that 
		\begin{align*}
			\mathbf B_{2t+1}  = (1-c_1-c_\mu) \mathbf B_{2t}  & + c_1 \tilde q_{2t+1}\tilde q_{2t+1}^\top + c_\mu \sum_{i=1}^\mu \wic \tilde v_{2t+1}^i (\tilde v_{2t+1}^i)^\top \\
			= (1-c_1-c_\mu) \mathbf B_{2t}  & + c_1 c_c(2-c_c)\mueff \psi_{t+1}\psi_{t+1}^\top + c_\mu \sum_{i=1}^\mu \wic \psi_{t+1}\psi_{t+1}^\top \\
			= (1-c_1-c_\mu) \mathbf B_{2t}  & + \left[c_1 c_c(2-c_c)\mueff+c_\mu\right] \psi_{t+1}\psi_{t+1}^\top 
		\end{align*}
		that we use in \del{the previous equation}\nnew{\eqref{inproof-BB}} to obtain
		\begin{align} \nonumber
			\mathbf	B_{2t+1}^h	& = \mathbf B_{2t+1} 
			+  (1-c_1-c_\mu)\left( \mathbf B_{2t}^h -\mathbf B_{2t} \right) 
			+ \underbrace{\left[ c_1 c_c(2-c_c)\mueff +c_\mu\right] \times 2}_{:= c_b} \kappa_t^1 \h_1 \psi_{t+1}\psi_{t+1}^\top + o(\h_1) \\\label{inproof-conf}
			& =  \mathbf B_{2t+1} +  (1-c_1-c_\mu)\left(\mathbf  B_{2t}^h -\mathbf B_{2t} \right) 
			+ c_b \kappa_t^1 \h_1 \psi_{t+1}\psi_{t+1}^\top + o(\h_1) \enspace .
		\end{align}
		Moreover, for $i=1,\dots,\mu$:
		\begin{align*}
			(\tilde r_{2t+1}^h)^{1/2} \times (v_{2t+2}^i+h_{2t+2}^i) & = -(1-c_c) \left(  (\tilde r_{2t+1}^h/\tilde r_{2t+1})^{1/2} + \left(1- (\tilde r_{2t+1}^h/\tilde r_{2t+1})^{1/2} + \kappa_t^1 \varepsilon_1\right) \right) \psi_{t+1} 
			\\ & =  -(1-c_c) \left(  1 + \kappa_t^1 \varepsilon_1 \right) \psi_{t+1}
			\enspace .
		\end{align*}
		Thus, we obtain, by \eqref{eq:inproof-tom} and \eqref{inproof-conf}:
		\begin{align*}
			\mathbf B_{2t+2}^h & = (1-c_1-c_\mu) \mathbf B_{2t+1}^h + c_\mu \sum_{i=1}^\mu \wic (1-c_c)^2(  1 + \kappa_t^1 \varepsilon_1)^2 \psi_{t+1}\psi_{t+1}^\top \\
			& = (1-c_1-c_\mu)\mathbf B_{2t+1} +  (1-c_1-c_\mu)^2\left(\mathbf  B_{2t}^h -\mathbf B_{2t} \right) 
			+ (1-c_1-c_\mu) c_b \kappa_t^1 \h_1 \psi_{t+1}\psi_{t+1}^\top  \\
			& \quad + c_\mu (1-c_c)^2(  1 + \kappa_t^1 \varepsilon_1)^2 \psi_{t+1}\psi_{t+1}^\top + o (\varepsilon_1) \\
			& = (1-c_1-c_\mu)\mathbf B_{2t+1} + c_\mu(1-c_c)^2\psi_{t+1}\psi_{t+1}^\top + (1-c_1-c_\mu)^2\left(\mathbf  B_{2t}^h -\mathbf B_{2t} \right)
			\\ & \quad	+ d_b \kappa_t^1 \varepsilon_1 \psi_{t+1}\psi_{t+1}^\top + o(\varepsilon_1)
		\end{align*}
		with
		$$
		d_b = (1-c_1-c_\mu)c_b + 2c_\mu(1-c_c)^2 = 2(1-c_1-c_\mu)\left(c_1 c_c(2-c_c)\mueff +c_\mu\right)  + 2 c_\mu(1-c_c)^2  \enspace.
		$$
		Yet, by \eqref{inproof-Btt} since \nnew{by \eqref{innproof-1} $q_{2t+2} = 0$ and by \eqref{eq:tildev2t+1} $\tilde v_{2t+2}^i = - (1-c_c) \psi_{t+1}$}:
		$$
		\mathbf B_{2t+2} = (1-c_1-c_\mu)\mathbf B_{2t+1} + c_\mu(1-c_c)^2\psi_{t+1}\psi_{t+1}^\top \enspace.
		$$
		Therefore,
		$$
		\mathbf B_{2t+2}^h - \mathbf B_{2t+2} = (1-c_1-c_\mu)^2\left(\mathbf  B_{2t}^h -\mathbf B_{2t} \right)
		+ d_b \kappa_t^1 \varepsilon_1 \psi_{t+1}\psi_{t+1}^\top + o(\varepsilon_1) \enspace.
		$$	
		\del{Thus, there exists a continuous function $\beta_t^1 \colon \bR\to\bR$ such that $\beta_t^1 (0)=0$, $\beta_t^1 (\kappa)\to +\infty$ when $\kappa\to +\infty$, $\beta_t^1 (\kappa)\to -\infty$ when $\kappa\to -\infty$, and, for every $\kappa_t^1\in\bR$, when $\varepsilon_1\to0$, we have:
			\begin{align*}
				\mathbf B_{2t+2}^h & = \mathbf B_{2t+2} 
				+ (1-c_1-c_\mu)^2 \left(\mathbf B_{2t}^h-\mathbf B_{2t}\right) + \beta_t^1(\kappa_t^1)\varepsilon_1 \psi_{t+1}\psi_{t+1}^\top + o(\varepsilon_1) \enspace.
		\end{align*}}%
		Then, by induction, we get,
		$$
		\mathbf B_{2s}^h =\mathbf  B_{2s} 
		+ \h_1 \sum_{t=1}^s (1-c_1-c_\mu)^{2s-2t} d_b \kappa_{t-1}^1 \psi_t\psi_t^\top + (1-c_1-c_\mu)^{2s}(\mathbf{B}_0^h-\mathbf{B}_0) +o(\h_1) \enspace,
		$$		
		with $\mathbf{B}_0^h-\mathbf{B}_0=0$ by definition.
		By induction on $k\in\{1,\dots,s-2\}$, we set for $t\in\{0,\dots,s-1\}$:
		$$
		v_{2t+2ks+1} = \tilde r_{2t+2ks}^{-1/2} [\psi_{t+1},\dots,\psi_{t+1}]\in\bR^{d\mu} \enspace,
		$$
		and
		$$
		v_{2t+2ks+2} = -(1-c_c)\tilde r_{2t+2ks+1}^{-1/2}[\psi_{t+1},\dots,\psi_{t+1}]\in\bR^{d\mu} \enspace.
		$$
		We also set, given arbitrary real numbers $\h_k\in\bR$ and for some $\kappa_t^k\in\bR$ for $t\in\{0,\dots,s-1\}$:
		$$
		{h}_{2t+2ks+1}= [(\tilde r_{2t+2ks}^h)^{-1/2}-\tilde r_{2t+2ks}^{-1/2} + (\tilde r_{2t+2ks}^h)^{-1/2}\kappa_t^s  \h_1 ] \times [\psi_{t+1} ,\dots,\psi_{t+1} ]\in\bR^{d\mu}
		$$
		and
		\begin{align*}
			{h}_{2t+2ks+2} & =-(1-c_c) 
			[(\tilde r_{2t+2ks+1}^h)^{-1/2}-\tilde r_{2t+2ks+1}^{-1/2} + (\tilde r_{2t+2ks+1}^h)^{-1/2}\kappa_t^s  \h_1 ] \times [\psi_{t+1} ,\dots,\psi_{t+1} ]\in\bR^{d\mu}
			\enspace .
		\end{align*}
		Then, similarly to above, we obtain $q_{2(k+1)s}=0$ and
		$$
		\mathbf B_{2(k+1)s}^h =\mathbf  B_{2(k+1)s} 
		+ \h_k \sum_{t=1}^s (1-c_1-c_\mu)^{2s-2t} d_b \kappa_{t-1}^k \psi_t\psi_t^\top + (1-c_1-c_\mu)^{2s}(\mathbf{B}_{2ks}^h-\mathbf{B}_{2ks}) +o(\h_k) \enspace.
		$$
		Thus, by induction, we get
		\del{
			
			If we repeat this operation $s-1$ times  with arbitrary real numbers $\h_2,\dots,\h_{s-1}$ which tend to $0$ instead of $\h_1$, and arbitrary real numbers $\kappa_k^2,\dots,\kappa_k^{s-1}$ instead of $\kappa_k^1$, we get then }%
		$q_{2s(s-1)}^h=0$ and
		\begin{multline*}
			\mathbf B_{2s(s-1)}^h=\mathbf B_{2s(s-1)} 
			+ \sum_{k=1}^{s-1} \h_k (1-c_1-c_\mu)^{2s(s-1)-2ks} \sum_{t=1}^s  (1-c_1-c_\mu)^{2s-2t} d_b \kappa_{t-1}^k \psi_t\psi_t^\top  +o(\h_{1:s-1}) 
		\end{multline*}
		\del{where $\beta_k^t \colon \bR\to\bR$ is a continuous function such that $\beta_k^t(0)=0$, $\beta_k^t(\kappa)\to +\infty$ when $\kappa\to +\infty$, 
			and $\beta_k^t(\kappa)\to -\infty$ when $\kappa\to -\infty$. }%
		Note moreover that we can assume again that $p_t\neq 0$, 
		up to choosing again the $\psi_k$, $k\geqslant2$, sufficiently smaller than $\psi_1$.
		
		By \Cref{l:path-step-1,l:path-step-2}, 
		for any $v_{2s(s-1)+1}\in\overline{\cO^1_{\theta_{2s(s-1)}}}$, 
		there exists $v_{2s(s-1)+2:2s(s-1)+4}\in\overline{ \cO^3_{\theta_{2s(s-1)+1}}}$ such that $z_{2s(s-1)+4}=q_{2s(s-1)+4}=0$. 
		Moreover, when $v_{2s(s-1)+1}\to0$,
		then we have that $z_{2s(s-1)+1}$ and $q_{2s(s-1)+1}$ tend to $0$ and thus
		we can impose that $v_{2s(s-1)+2:2s(s-1)+4}\to 0$ as well.
		In particular, we can choose $v_{2s(s-1)+1}$ small enough such that $p_t\neq0$ for $t=2s(s-1)+1,\dots,2s(s-1)+4$.
		Hence, by \Cref{l:S-differentiable-in}, $S^{2s(s-1)+4}_{\theta_0}$ is differentiable at $v_{2s(s-1)+4}$ 
		(we have that $R$ is differentiable at $\rncovmat_t$ for all $t=1,\dots,T$ by imposing $v_{1:T}$ small enough, see the beginning of the proof).
		
		Consider then $(\mathbf S_1,\dots,\mathbf S_{s-1})$ a basis of $\ker\mathcal{D}\rho(\mathbf B_{2s(s-1)+4})$. 
		For $k=1,\dots,s-1$, we can choose then the $\kappa_t^k\in\bR$, $t=0,\dots,s-1$ so that we have
		\begin{equation}\label{eq:inproof-B2s(s-1)}
			(1-c_1-c_\mu)^{2s(s-1)-2ks} \sum_{t=1}^s  (1-c_1-c_\mu)^{2s-2t} d_b \kappa_{t-1}^k \psi_t\psi_t^\top = \mathbf S_k \enspace.
		\end{equation}
		This is possible since $(\psi_1\psi_1^\top,\dots,\psi_s\psi_s^\top)$ is a basis of $\Sd$,
		and by the intermediate value theorem
		applied to the LHS of \eqref{eq:inproof-B2s(s-1)},
		for $k=1,\dots,s$.
		Set $T=2s(s-1)+4$. Then, we have, when $\varepsilon_{1:s-1}\to0$,
		\begin{equation}\label{eq:BBB}
			\mathbf B_{T}^h=\mathbf B_{T} + \sum_{k=1}^{s-1} \h_k \mathbf S_k + o(\h_{1:s-1}) \enspace.
		\end{equation}
		Therefore, since $\mathbf S_k\in\ker\mathcal{D}\rho(\mathbf B_{T})$ for $k=1,\dots,s$, \nnew{we have $$\rho \left(\mathbf B_{T} + \sum_{k=1}^{s-1} \h_k \mathbf S_k + o(\h_{1:s-1})\right) = \rho(\mathbf B_{T}) + \mathcal \sum_{k=1}^{s-1} \h_k \underbrace{\mathcal{D}\rho(\mathbf B_{T}) \mathbf S_k}_{=0} + o(\h_{1:s-1}) =  \rho(\mathbf B_{T}) + o(\h_{1:s-1})$$ and using \eqref{eq:rcovB} and \eqref{eq:BBB}}
		\begin{align*}
			\rncovmat_T^h & = \frac{\mathbf B_{T} + \sum_{k=1}^{s-1} \h_k \mathbf S_k + o(\h_{1:s-1})}{\rho\left(\mathbf B_{T} + \sum_{k=1}^{s-1} \h_k \mathbf S_k + o(\h_{1:s-1})\right)} =  \frac{\mathbf B_{T} + \sum_{k=1}^{s-1} \h_k \mathbf S_k }{\rho\left(\mathbf B_{T}\right)}+ o(\h_{1:s-1})  \\
			& = \rncovmat_T + \sum_{k=1}^{s-1} \h_k \rho\left(\mathbf B_{T}\right)^{-1} \mathbf S_k + o(\h_{1:s-1}) .
		\end{align*}
		However, $(\mathbf{S}_1,\dots,\mathbf{S}_{s-1})$ is a basis of $\ker\mathcal{D}\rho(\mathbf{B}_{T})$, and 
		by \Cref{l:diff-pos-hom}, $\ker\mathcal{D}\rho(\mathbf{B}_{T})=\ker\mathcal{D}\rho(\rncovmat_{T})=\mathrm T_{\rncovmat_T}\rho^{-1}(\{1\})$. \nnew{Thus we have shown that for every $h_{\ncovmat} \in T_{\rncovmat_T}\rho^{-1}(\{1\})$ for which we can find $\h_k$ such that $h_{\ncovmat}=\sum_{k=1}^{s-1} \h_k \rho\left(\mathbf B_{T}\right)^{-1} \mathbf S_k$,}\del{Thus we have shown that for every $h_{\ncovmat}=\sum_{k=1}^{s-1} \h_k \rho\left(\mathbf B_{T}\right)^{-1} \mathbf S_k\in\mathrm T_{\rncovmat_T}\rho^{-1}(\{1\})$,} there exist $h_z,h_p\in\bR^d$, $h_r\in\bR$ and $h_{1:T}\in (\bR^{d\mu})^T$ such that $\mathcal D S_{\theta_0}^T(v_{1:T})h_{1:T} = [h_z,h_p,0,h_{\ncovmat},h_t]$\del{, ending the proof} \nnew{which is the statement (ii) of the lemma. The statements (i) and (iii) have been proven earlier in the proof}.
	\end{proof}

	\subsection{Proof of \Cref{l:contrabillity-condition-step1}}
		\begin{proof}[Proof of \Cref{l:contrabillity-condition-step1}]
		Let $\theta_0\in\cX$ be a steadily attracting state satisfying \Cref{l:contrabillity-condition-step0}.
		\new{Then, there exists $T_0 > 0$
			and $v_{1:T0} \in \overline{\cO^T_{\theta_0}}$ such that conditions (i), (ii), (iii) of \Cref{l:contrabillity-condition-step0} are satisfied.}
		Let $T>T_0$, $v_{1:T}\in\overline{\cO^T_{\theta_0}}$ and $h_{1:T}\in\cV^T$. We denote for every $t\in\{1,\dots,T\}$
		$$
		\theta_t = (\Z_t,p_t,q_t,\rncovmat_t,r_t) \coloneqq S_{\theta_0}^t(v_{1:t}) \quad\text{and}\quad \theta_t^h = (\Z_t^h,p_t^h,q_t^h,\rncovmat_t^h,r_t^h)\coloneqq S_{\theta_0}^t(v_{1:t}+ h_{1:t}) .
		$$
		
		Let $s=d(d+1)/2$ be the dimension of $\Sd$. Then, $\ker \mathcal{D}\rho(\rncovmat_{T_0}) = \mathrm T_{\rncovmat_{T_0}}\rho^{-1} (\{1\})$ is a vector
		space of dimension $s-1$. Let $(\mathbf S_1 , \dots , \mathbf S_{s-1} )$ be a basis of $\ker \mathcal{D}\rho(\rncovmat_{T_0})$. 
		Then for $k = 1, \dots , s -1$,
		by condition (ii) in \Cref{l:contrabillity-condition-step0}, there exists $\xi_{1:T_0}^k\in\cV^{T_0}$
		such that \nnew{$\mathcal{D}S_{\theta_0}^{T_0}(v_{1:{T_0}})\xi_{1:T_0}^k = [h_z, h_p, 0, \mathbf S_k, h_r]$ (for some $h_z, h_p, h_r$). If}\del{, if} $h_{1:T_0} = \varepsilon_k \xi_{1:T_0}^k\in\cV^{T_0}$
		for $\varepsilon_k \in\bR$, we have
		by Taylor expansion \nnew{and linearity of the differential}:
		$$
		\rncovmat_{T_0}^h = \rncovmat_{T_0} + \varepsilon_k\mathbf S_k + o(\varepsilon_k) \enspace.
		$$
		Set then 
		$ h_{1:T_0} = \sum_{k=1}^{s-1} \varepsilon_k \xi_{1:T_0}^k$,
		so that, by linearity of the differential:
		\begin{equation}\label{eq:toh-1}
			\rncovmat_{T_0}^h = \rncovmat_{T_0} + \sum_{k=1}^{s-1}\varepsilon_k\mathbf S_k + o(\varepsilon_{1:s-1}) \enspace.
		\end{equation}
		Moreover, by conditions (ii) and (iii) in \Cref{l:contrabillity-condition-step0},
		\del{Then, by \Cref{l:contrabillity-condition-step0}, we know that, 
			there exist $T_0\in\bN$, $v_{1:T_0}$ and $\xi^1_{1:T_0},\dots,\xi^{s-1}_{1:T_0}\in\cV^T$\todo{I don't see that we need those ones for the statement that $S_{\theta_0}^{T_0}$ is differentiable in $v_{1:T_0}$}
			such that $S_{\theta_0}^{T_0}$ is differentiable in $v_{1:T_0}$.
			By setting $h_{1:T_0} = \sum_{k=1}^{s-1} \varepsilon_k \xi^k_{1:T_0}$, }%
		we get 
		\begin{equation}\label{eq:theta-T0}
			z_{T_0}=q_{T_0}=0 \quad \text{and} \quad p\coloneqq p_{T_0}\neq 0 \enspace,
		\end{equation}
		and by Taylor expansion \nnew{since $S_{\theta_0}^{T_0}(v_{1:T_0}+h_{1:T_0}) = S_{\theta_0}^{T_0}(v_{1:T_0}) + \sum_{k=1}^{s-1} \varepsilon_k \mathcal{D}S_{\theta_0}^{T_0}(v_{1:{T_0}})\xi_{1:T_0}^k + o(\varepsilon_{1:s-1})$}
		\begin{equation}\label{eq:theta-T0-perturbed}
			z_{T_0}^h = 0 + O(\varepsilon_{1:s-1}) \quad \text{and} \quad q_{T_0}^h = 0 + o(\varepsilon_{1:s-1}) \quad \text{and} \quad  p_{T_0}^h = p + O(\varepsilon_{1:s-1}) \enspace.
		\end{equation}
		\del{$q_{T_0}^h =q_{T_0}=0$, $z_{T_0}=0$, $p\coloneqq p_{T_0}\neq 0$}\del{, and when $\varepsilon_{1:s-1}\to0$,
			$$
			\rncovmat_{T_0}^h = \rncovmat_{T_0} + \sum_{k=1}^{s-1} \varepsilon_k \mathbf S_k + o(\varepsilon_{1:s-1})
			$$	
			where $(\mathbf S_1,\dots,\mathbf S_{s-1})$ is a basis of $\ker \mathcal{D}\rho (\rncovmat_{T_0})$. }		
		Let $j\in\bN$ and set $T=T_0+j+5$ and 
		\begin{equation}\label{eq:choicev}
			v_{T_0+1:T_0+j+5}=0
		\end{equation}
		Then
		$v_{T_0+1:T_0+j+5}=0\in\overline{ \cO^{j+5}_{\theta_{T_0}}}$ by \Cref{p:characterization-control-sets-cma}. Since $z_{T_0}=q_{T_0}=0$, then we obtain by applying the update equations \eqref{eq:update-z} and \eqref{eq:update-q}, that $z_{T_0:T}=q_{T_0:T}=0$.
		By condition (i) in \Cref{l:contrabillity-condition-step0}, $S_{\theta_0}^{T_0}$ is differentiable at $v_{1:T_0}$.
		Moreover, for $t=T_0,\dots,T-1$, we have $z_t=q_t=0$ and $v_{t+1}=0$, hence by \Cref{l:S-differentiable-in} (case a) $S_{\theta_t}^1$ is differentiable at $v_{t+1}$.
		By chain rule, $S^{T}_{\theta_0}$ is differentiable at $v_{1:T}$.
		\del{
			
			so that, by \Cref{l:S-differentiable-in}, $S_{\theta_0}^T$ is differentiable at $v_{1:T}$.}%
		
		We set $h_{T_0+1:T_0+j}=0$\nnew{, then since $\theta_{T_0+j}^h  = S^j_{\theta^h_{T_0}}(v_{T_0+1:T_0+j}) $}\del{ so that,} by applying \eqref{eq:update-z}, \eqref{eq:update-p} and  \eqref{eq:update-q}
		\nnew{with $v=0$ and using}\del{ to} \eqref{eq:theta-T0-perturbed}, we have
		\begin{equation}\label{eq:theta-T1-perturbed}
			z_{T_0+j}^h = \del{z_{T_0}^h = }0 + O(\varepsilon_{1:s-1}) \quad \text{and} \quad q_{T_0+j}^h = \del{q_{T_0}^h = } 0 + o(\varepsilon_{1:s-1}) \quad \text{and} \quad  p_{T_0+j}^h = (1-c_\sigma)^jp + O(\varepsilon_{1:s-1}) \enspace.
		\end{equation}
		Moreover, set
		$$
		\left\{ 
		\begin{array}{ll}
			{h}_{T_0+j+1} = (H_1,\dots,H_1) \in\bR^{d\mu} & \text{for some } H_1\in\bR^d \\
			{h}_{T_0+j+2} = -(1-c_1-c_\mu)^{-1/2} \Gamma(p_{T_0+j+1})^{-1} {h}_{T_0+j+1} &\\ 
			{h}_{T_0+j+3} = (H_3,\dots,H_3) \in\bR^{d\mu} & \text{for some } H_3\in\bR^d \\
			{h}_{T_0+j+4} = -(1-c_1-c_\mu)^{-1/2} \Gamma(p_{T_0+j+3})^{-1} {h}_{T_0+j+3}&  \\
			{h}_{T_0+j+5} = (H_5,\dots,H_5) \in\bR^{d\mu} & \text{for some } H_5\in\bR^d
		\end{array}
		\right.
		$$
		Note that $p_{T_0+k}= (1-c_\sigma)^k p$ for $k=1,\dots,j+5$ \new{by \eqref{eq:update-p}, and by Taylor expansion:
			$$
			p_{T_0+j+1}^h  = (1-c_\sigma)^{j+1} p + O(\varepsilon_{1:s-1},H_1) \enspace.
			$$
			Yet, $\Gamma$ is locally Lipschitz by \Cref{G1:C1}, thus $\Gamma(p_{T_0+j+1}^h)=\Gamma((1-c_\sigma)^{j+1} p) + O(\varepsilon_{1:s-1},H_1)$.}
		Moreover, since by \Cref{R1:homogeneous}, we have $r_{T_0+k}= R((1-c_1-c_\mu)R(\rncovmat_{T_0+k})^{-1}\rncovmat_{T_0+k})=1-c_1-c_\mu$			
		and since $R$ is locally Lipschitz by \Cref{R2:C1}, we have 
		\begin{multline}
			\label{eq:rT0kh}
			r_{T_0+k}^h=R((1-c_1-c_\mu)R(\rncovmat_{T_0+k})^{-1}\rncovmat_{T_0+k} + O(h_{1:T_0+k}))=1-c_1-c_\mu+ O(h_{1:T_0+k}) \\
			\nnew{= 1-c_1-c_\mu + O(\h_{1:s-1}) + O(h_{T_0+1:k})}
			\enspace.
		\end{multline}
		When $H_1,H_3,H_5,\varepsilon_{1:s-1}\to0$, \nnew{since 
			\begin{equation}\label{eq:update-thetat0j1}
				\theta_{T_0+j+1}^h=S_{\theta_{T_0+j}^h}^1(\new{0+}h_{T_0+j+1})
			\end{equation}
		} by applying \eqref{eq:update-z} 
		\nnew{ we find
			$$
			\Z_{T_0+j+1}^h  = \frac{\Z_{T_0+j}^h + c_m H_1}{\sqrt{r_{T_0+j+1}^h}\Gamma(p_{T_0+j+1}^h)} 
			$$
		}
		and thus using \eqref{eq:theta-T1-perturbed} and \eqref{eq:rT0kh}:
		$$
		\Z_{T_0+j+1}^h =  c_m (1-c_1-c_\mu)^{-1/2}\Gamma((1-c_\sigma)^{j+1} p) ^{-1} H_1 + O(\h_{1:s-1})  + o(H_1)
		$$
		so,
		\begin{align*}
			\Z_{T_0+j+2}^h & = \frac{\Z_{T_0+j+1}^h - c_m (1-c_1-c_\mu)^{-1/2} \Gamma(p_{T_0+j+1})^{-1} H_1}{\sqrt{r_{T_0+j+2}^h}\Gamma(p_{T_0+j+2}^h)} \\
			& =   O(\h_{1:s-1}) + o(H_1) \enspace .
		\end{align*}
		Likewise,
		$$
		\Z_{T_0+j+4}^h =  O(\h_{1:s-1}) + o(H_1,H_3) ,
		$$
		so that, in the end, since $R$ is locally Lipschitz by \Cref{R2:C1} and $\Gamma$ is locally Lipschitz by \Cref{G1:C1}, then
		$$
		\Z_{T}^h=\Z_{T_0+j+5}^h = O(\h_{1:s-1}) + o(H_1,H_3) + c_m r_{T}^{-1/2}\Gamma(p_{T}) ^{-1} H_5 \new{+o(H_5)} .
		$$
		Furthermore \nnew{using \eqref{eq:update-thetat0j1} and \eqref{eq:theta-T1-perturbed}},
		\begin{align*}
			q_{T_0+j+1}^h & \new{= (1-c_c)(r_{T_0+j}^h)^{-1/2} q_{T_0+j}^h +  \sqrt{c_c(2-c_c)\mueff}(0+H_1) }  \\
			&  =  \sqrt{c_c(2-c_c)\mueff}H_1 + o(\varepsilon_{1:s-1})
		\end{align*}
		and \done{check: $\Gamma((1-c_\sigma)^{j+1} p)$}\armand{it was $\Gamma()^{-1}$}
		\begin{align*}
			q_{T_0+j+2}^h 
			&\new{ = (1-c_c)(r_{T_0+j+1}^h)^{-1/2} q_{T_0+j+1}^h} \\ 
			& \new{~ +  \sqrt{c_c(2-c_c)\mueff}(0-(1-c_1-c_\mu)^{-1/2}\Gamma((1-c_\sigma)^{j+1}p)^{-1}H_1) }
			\\
			& \new{ = (1-c_c)(1-c_1-c_\mu+O(\varepsilon_{1:s-1},H_1))^{-1/2} \sqrt{c_c(2-c_c)\mueff}H_1 + o(\varepsilon_{1:s-1})} \\ 
			& \new{~ +  \sqrt{c_c(2-c_c)\mueff}(1-c_1-c_\mu)^{-1/2}\Gamma((1-c_\sigma)^{j+1}p)^{-1}H_1  }
			\\
			& = \underbrace{(1-c_1-c_\mu)^{-1/2}\left[1-c_c- \Gamma((1-c_\sigma)^{j+1} p)^{-1}\right]\sqrt{c_c(2-c_c)\mueff}}_{\eqqcolon d_{j+1}^p} H_1 + o(\varepsilon_{1:s-1}\new{,H_1}) \enspace.
		\end{align*}
		Likewise,
		\begin{align*}
			q_{T_0+j+3}^h  &   =   (r_{T_0+j+2})^{-1/2}(1-c_c) q_{T_0+j+2}^h   +  \sqrt{c_c(2-c_c)\mueff} H_3 \\
			&    =    (1-c_1-c_\mu)^{-1/2} (1-c_c) d_{j+1}^p H_1   +  \sqrt{c_c(2-c_c)\mueff} H_3   + o(\varepsilon_{1:s-1},H_1)   
		\end{align*}
		and
		\begin{align*}
			q_{T_0+j+4}^h  &   =   (r_{T_0+j+3})^{-1/2}(1-c_c) q_{T_0+j+3}^h   -   \sqrt{c_c(2-c_c)\mueff}  (1-c_1-c_\mu)^{-1/2} \Gamma((1-c_\sigma)^{j+3})^{-1} H_3 \\
			&    =    (1-c_1-c_\mu)^{-1} (1-c_c)^2 d_{j+1}^p H_1   + (1-c_1-c_\mu)^{-1/2} (1-c_c)  \sqrt{c_c(2-c_c)\mueff} H_3   
			\\ & ~ -     \sqrt{c_c(2-c_c)\mueff}    (1-c_1-c_\mu)^{-1/2} \Gamma((1-c_\sigma)^{j+3})^{-1} H_3
			+ o(\varepsilon_{1:s-1},H_1, H_3)    \\
			&    = (1-c_1-c_\mu)^{-1} (1-c_c)^2 d_{j+1}^p H_1   +
			d_{j+3}^p H_3 + o(\varepsilon_{1:s-1},H_1, H_3) \enspace,
		\end{align*}
		where $d_{k}^p\coloneqq (1-c_1-c_\mu)^{-1/2}\left[1-c_c- \Gamma((1-c_\sigma)^kp)^{-1}\right]\sqrt{c_c(2-c_c)\mueff}$ for $k=j+1,j+3$.
		Then,
		\begin{align*}
			q_{T}^h & = q_{T_0+j+5}^h \new{= (r_{T_0+j+4}^h)^{-1/2}(1-c_c) q_{T_0+j+4}^h   +  \sqrt{c_c(2-c_c)\mueff} H_5} \\
			& 		
			= (1-c_1-c_\mu)^{-3/2}(1-c_c)^3 d_{j+1}^p H_1 \\
			& \quad + (1-c_1-c_\mu)^{-1/2}(1-c_c)d_{j+3}^p H_3 + O(H_5) + o(\varepsilon_{1:s-1},H_1,H_3) \enspace.
		\end{align*}
		For $t=0,\dots,T$, we denote $\ncovmat_t=R(\rncovmat_t)^{-1}\rncovmat_t$ and $\ncovmat_t^h=R(\rncovmat_t^h)^{-1}\rncovmat_t^h$.
		For $t=T_0,\dots,T-1$, \new{given the choice of $v_{T_0+1:T_0+j+5}=0$  in \eqref{eq:choicev},} we have then \nnew{$\rncovmat_{t+1}=(1-c_1-c_\mu) \ncovmat_t / \rho((1-c_1-c_\mu) \ncovmat_t)$ and} by \Cref{R1:homogeneous} 
		$$
		\ncovmat_{t+1}\new{= \frac{\rncovmat_{t+1}}{R(\rncovmat_{t+1})}} = \cfrac{(1-c_1-c_\mu)\ncovmat_t}{R((1-c_1-c_\mu)\ncovmat_t)} = \ncovmat_t \enspace.
		$$
		Thus, $\ncovmat_t=\ncovmat_T$ for $t=T_0,\dots,T$. 
		Moreover, we have for $k=0,\dots,j$:
		$$
		\rncovmat_{T_0+k+1}^h = \cfrac{(1-c_1-c_\mu)\ncovmat_{T_0+k}^h  + c_1 q_{T_0+k+1}^h(q_{T_0+k+1}^h)^\top  + c_\mu \sum_{i=1}^\mu \wic h_{T_0+k+1}^i (h_{T_0+k+1}^i)^\top  }
		{\rho( (1-c_1-c_\mu)\ncovmat_{T_0+k}^h  + c_1 q_{T_0+k+1}^h(q_{T_0+k+1}^h)^\top  + c_\mu \sum_{i=1}^\mu \wic h_{T_0+k+1}^i (h_{T_0+k+1}^i)^\top )}
		$$
		Since $\rho$ is $\cC^1$ by \Cref{rho2:C1}, hence locally Lipschitz, and $q_{T_0+k+1}^h(q_{T_0+k+1}^h)^\top = 0+O(\|h_{1:T_0+k}\|^2)=o(h_{1:T_0+k})$, we have then:
		$$
		\rncovmat_{T_0+k+1}^h = \cfrac{(1-c_1-c_\mu)\ncovmat_{T_0+k}^h}{\rho( (1-c_1-c_\mu)\ncovmat_{T_0+k}^h)} + o(h_{1:T_0+k}) = \cfrac{\ncovmat_{T_0+k}^h}{\rho( \ncovmat_{T_0+k}^h)} + o(h_{1:T_0+k})  = \rncovmat_{T_0+k}^h + o(h_{1:T_0+k}) \enspace ,
		$$
		where we have used \Cref{rho1:homogeneous} to simplify the above equation. 
		Therefore, we obtain by induction \new{and using \eqref{eq:toh-1}} that
		$$
		\rncovmat_{T_0+k}^h = \rncovmat_{T_0} + \sum_{k=1}^{s-1}\varepsilon_k \mathbf{S}_k + o(\varepsilon_{1:s-1},H_1,H_3,H_5) 
		$$
		and thus, using $\ncovmat_{T_0+k}=\ncovmat_T$, and since $R$ is locally Lipschitz by \Cref{R2:C1}:
		\begin{multline}
			\ncovmat_{T_0+k}^h = \cfrac{\rncovmat_{T_0+k}^h}{R(\rncovmat_{T_0+k}^h)}  = \nnew{ \frac{\rncovmat_{T_0} + \sum_{k=1}^{s-1}\varepsilon_k \mathbf{S}_k + o(\varepsilon_{1:s-1},H_1,H_3,H_5)  }{R(\rncovmat_{T_0} + \sum_{k=1}^{s-1}\varepsilon_k \mathbf{S}_k + o(\varepsilon_{1:s-1},H_1,H_3,H_5) )} } 
			\\ 
			= \nnew{ \underbrace{\frac{\rncovmat_{T_0}}{R(\rncovmat_{T_0})}}_{=\ncovmat_{T_0} 
				}  + O(\varepsilon_{1:s-1}) + o(H_1,H_3,H_5)  = }  
			\ncovmat_{T}+ O(\varepsilon_{1:s-1}) + o(H_1,H_3,H_5)  \enspace.
		\end{multline}
		It follows that:
		\begin{align*}
			p_{T_0+j+1}^h & = (1-c_\sigma) p_{T_0+j}^h + \sqrt{c_\sigma(2-c_\sigma)\mueff} (\ncovmat_{T_0+j}^h)^{-1/2} \times ( 0 + H_1) \\
			& = (1-c_\sigma)^{j+1} p  + \sqrt{c_\sigma(2-c_\sigma)\mueff} \ncovmat_{T}^{-1/2} H_1 + O(\varepsilon_{1:s-1}) + o(H_1)
		\end{align*}
		and
		\begin{align*}
			p_{T_0+j+2}^h  =~ & (1-c_\sigma) p_{T_0+j+1}^h \\
			& + \sqrt{c_\sigma(2-c_\sigma)\mueff} (\ncovmat_{T_0+j+1}^h)^{-1/2} \times ( 0 - (1-c_1-c_\mu)^{-1/2}\Gamma((1-c_\sigma)^{j+1}p)^{-1} H_1) \\
			=~ & (1-c_\sigma)^{j+1} p  + (1-c_\sigma)\sqrt{c_\sigma(2-c_\sigma)\mueff} \ncovmat_{T}^{-1/2} H_1 \\
			&   -  \sqrt{c_\sigma(2-c_\sigma)\mueff}  (1-c_1-c_\mu)^{-1/2}\Gamma((1-c_\sigma)^{j+1}p)^{-1} \ncovmat_T^{-1/2} H_1 \\ &+ O(\varepsilon_{1:s-1}) + o(H_1,H_3,H_5)  \\
			=~ & (1-c_\sigma)^{j+2} p  + c_{j+1}^p \ncovmat_T^{-1/2} H_1 +  O(\varepsilon_{1:s-1}) + o(H_1,H_3,H5) \enspace.
		\end{align*}
		\anne{The $o(H_1,H_3,H5) $ is to be consistent with equation 5.55 . Might not be needed for all $k$ but rather fix the inconsistency around equation 5.55 then }%
		\armand{it is not needed but it avoids more explanations so I put $o(H_1,H_3,H_5)$}
		Likewise,
		\begin{align*}
			p_{T_0+j+3}^h  =~ & (1-c_\sigma) p_{T_0+j+2}^h  + \sqrt{c_\sigma(2-c_\sigma)\mueff} (\ncovmat_{T_0+j+2}^h)^{-1/2} \times ( 0 + H_3) \\
			=~ & (1-c_\sigma)^{j+3} p  + (1-c_\sigma) c_{j+1}^p \ncovmat_{T}^{-1/2} H_1 \\
			&   +  \sqrt{c_\sigma(2-c_\sigma)\mueff}   \ncovmat_T^{-1/2} H_3 + O(\varepsilon_{1:s-1}) + o(H_1,H_3,H_5) \done{+o(H_1,H_3,H5) ??}
		\end{align*}
		and
		\begin{align*}
			p_{T_0+j+4}^h  =~ & (1-c_\sigma) p_{T_0+j+3}^h \\
			& + \sqrt{c_\sigma(2-c_\sigma)\mueff} (\ncovmat_{T_0+j+3}^h)^{-1/2} \times ( 0 - (1-c_1-c_\mu)^{-1/2}\Gamma((1-c_\sigma)^{j+3}p)^{-1} H_3) \\
			=~ & (1-c_\sigma)^{j+4} p  + (1-c_\sigma)^2 c_{j+1}^p \ncovmat_{T}^{-1/2} H_1  + (1-c_\sigma) \sqrt{c_\sigma(2-c_\sigma)\mueff}   \ncovmat_T^{-1/2} H_3   \\
			&   -  \sqrt{c_\sigma(2-c_\sigma)\mueff}  (1-c_1-c_\mu)^{-1/2}\Gamma((1-c_\sigma)^{j+3}p)^{-1} \ncovmat_T^{-1/2} H_3 \\ & + O(\varepsilon_{1:s-1}) + o(H_1,H_3,H_5)  \\
			=~ & (1-c_\sigma)^{j+4} p  + (1-c_\sigma)^2c_{j+1}^p \ncovmat_T^{-1/2} H_1 + c_{j+3}^p \ncovmat_T^{-1/2} H_3 +  O(\varepsilon_{1:s-1}) + o(H_1,H_3,H_5) \enspace,
		\end{align*}
		\nnew{where $c_k^p \coloneqq (1-c_\sigma -(1-c_1-c_\mu)^{-1/2}\Gamma((1-c_\sigma)^k p))^{-1}\sqrt{c_\sigma(2-c_\sigma)\mueff}$ for $k=j+1,j+3$.}
		Finally,
		\begin{align*}
			p_{T_0+j+5}^h  =~ & (1-c_\sigma) p_{T_0+j+4}^h  + \sqrt{c_\sigma(2-c_\sigma)\mueff} (\ncovmat_{T_0+j+4}^h)^{-1/2} \times ( 0 + H_5) \\
			=~ & (1-c_\sigma)^{j+5} p  + (1-c_\sigma)^3 c_{j+1}^p \ncovmat_{T}^{-1/2} H_1 \\ &+  (1-c_\sigma) c_{j+3}^p \ncovmat_{T}^{-1/2} H_3  + O(\varepsilon_{1:s-1}) + o(H_1,H_3) + O(H_5) \enspace.
		\end{align*}
		\done{we lose $O(H_5)$ in the last equation}
		\del{	Likewise, since $c_k^p \coloneqq (1-c_\sigma -(1-c_1-c_\mu)^{-1/2}\new{c_m}\Gamma((1-c_\sigma)^k p)^{-1})\sqrt{c_\sigma(2-c_\sigma)\mueff}$, we find
			$$
			p_{T}^h = p_{T}  + (1-c_\sigma)^3 c_{j+2}^p \ncovmat_{T_1}^{-1/2} H_1 + (1-c_\sigma)c_{j+4}^p\ncovmat_{T_1}^{-1/2} H_3 + O(\h_{1:s-1},H_5) + o(H_1,H_3)  \enspace .
			$$
			Finally,
			$$
			\rncovmat_{T}^h = \rncovmat_{T} + (1-c_1-c_\mu)^{j+5} \sum_{t=1}^{s-1} \h_t \mathbf S_t + o(\h_{1:T-1},H_1,H_3,H_5) \enspace.
			$$}%
		By \Cref{R2:C1}, we have
		\begin{align*}
			r_{T}^h & = R\left( (1-c_1-c_\mu) R(\rncovmat_{T-1}^h)^{-1}\rncovmat_{T-1}^h + o(H_1,H_3,H_5) \right) \\
			& = (1-c_1-c_\mu) R(\rncovmat_{T-1}^h)^{-1} R(\rncovmat_{T-1}^h) + o(H_1,H_3,H_5) \\
			& = 1-c_1-c_\mu + o(H_1,H_3,H_5) = r_{T} + o (H_1,H_3,H_5) \enspace,
		\end{align*}
		where we have used \Cref{R1:homogeneous} to simplify the first line into the second line in the above equation.
		All in all, when $H_1,H_3,H_5,\varepsilon_{1:s-1}\to0$,
		\begin{align*}
			\theta_{T}^h & = \theta_{T} +   
				\begin{bmatrix}
					O(\h_{1:s-1}) \\ O(\h_{1:s-1}) \\ 0 \\  \sum_{t=1}^{s-1} \h_t \mathbf S_t \\ 0
				\end{bmatrix}
				+o(\h_{1:s-1},H_1,H_3,H_5) \\
				& +  
					\begin{bmatrix}
						(1-c_1-c_\mu)^{-1/2}\Gamma(p_{T})^{-1}c_m H_5 \\ (1-c_\sigma) \times   \left[(1-c_\sigma)^2c_{j+1}^p \ncovmat_{T}^{-1/2} H_1 + c_{j+3}^p \ncovmat_{T}^{-1/2} H_3\right] + O(H_5) \\  (1-c_c)(1-c_1-c_\mu)^{-1/2} \times \left[(1-c_c)^2(1-c_1-c_\mu)^{-1} d_{j+1}^p H_1 + d_{j+3}^p H_3\right] + O(H_5) \\  0 \\ 0
					\end{bmatrix}
					\enspace .
				\end{align*}
				\del{This ends the proof by }\nnew{We identify the }Taylor expansion of $S_{\theta_0}^T(v_{1:T}+h_{1:T})$ in \eqref{eq:TE}, with
				$L_z= (1-c_1-c_\mu)^{-1/2}\Gamma(p_{T})^{-1}c_m$ and $\mathbf{L}_k^{\ncovmat}= \mathbf S_k$ for $k=1,\dots,s-1$\del{.			Moreover,} \new{and} \begin{multline*}
					\mathsf W_L= \mathrm{span}(\xi_{1:T_0}^1,\dots,\xi_{1:T_0}^{s-1}) \times \{0\}^j \times  \begin{pmatrix}
						1 \\ -(1-c_1-c_\mu)^{-1/2}\Gamma((1-c_\sigma)^{j+1}p)
					\end{pmatrix}^\top \bR^{d\mu} \\
					\times \begin{pmatrix}
						1 \\ -(1-c_1-c_\mu)^{-1/2}\Gamma((1-c_\sigma)^{j+3}p)
					\end{pmatrix}^\top\bR^{d\mu} \times \bR^{d\mu} 
				\end{multline*}
				and $L\colon\mathsf W_L\to\bR^{s-1}\times(\bR^{d})^3$ maps a vector $h_{1:T}\in\mathsf W_L$ to a vector
				$(\varepsilon_{1:s-1},H_1,H_3,H_5)\in\bR^{s-1}\times(\bR^{d})^3$ such that 
				\begin{align*}
					h_{1:s-1}  = \sum_{k=1}^{s-1} \varepsilon_k \xi_{1:T_0}^k ;
					h_{s+j:s+j+1}  = \begin{pmatrix}
						H_1 \\
						-(1-c_1-c_\mu)^{-1/2}\Gamma((1-c_\sigma)^{j+1}p) H_1
					\end{pmatrix} 
				\end{align*}
				\begin{align*}
					h_{s+j+2:s+j+3}  = \begin{pmatrix}
						H_3 \\
						-(1-c_1-c_\mu)^{-1/2}\Gamma((1-c_\sigma)^{j+1}p) H_3
					\end{pmatrix} ;
					h_{s+j+4}  = H_5 \enspace .
				\end{align*}
				%
				%
				%
				\nnew{Since the scalars $\varepsilon_1,\dots,\varepsilon_{s-1}\in\bR$ and the vectors $H_1,H_3,H_5\in\bR^{d}$ above can be chosen arbitrary and independently of each other, the linear application $L\colon\mathsf{W}_L\to\bR^{s-1}\times(\bR^{d})^3$ is surjective.}
				\done{"prove" at least comment on the subjectivity which is in the statement and crucial.}
			\end{proof}
		
		\subsection{Proof of \Cref{p:full-rank-control-matrix-cma}}\label{sec:proof-p-full-rank}
		
		\begin{proof}[Proof of \Cref{p:full-rank-control-matrix-cma}(a) and (c)]
			Suppose that either $c_c\neq 1$, $c_\sigma\neq1$ and $1-c_c\neq (1-c_\sigma)\sqrt{1-c_1-c_\mu}$, or that $c_c\neq1$, $c_\sigma=1$. Assume moreover that $c_\mu>0$. 
			Apply then \Cref{l:contrabillity-condition-step1,l:controllability-condition-step2} to get that there exist $T\in\bN$ and $v_{1:T}\in\overline{ \cO^T_{\theta_0}}$ with
			\begin{itemize}
				\item[(a)] in the case $c_\sigma\neq 1$, $\mathrm{rge}~\mathcal{D}S_{\theta_0}^T(v_{1:T})\supset \bR^d\times\bR^d\times\bR^d\times \mathrm T_{\ncovmat_{T}}\rho^{-1}(\{1\}) \times\{0\}$;
				\item[(c)] in the case $c_\sigma= 1$, $\mathrm{rge}~\mathcal{D}S_{\theta_0}^T(v_{1:T})\supset \bR^d\times\{0\}\times\bR^d\times \mathrm T_{\ncovmat_{T}}\rho^{-1}(\{1\}) \times\{0\}$.
			\end{itemize}
			In both cases, \del{let}\new{consider arbitrary} $h_z,h_p,h_q\in\bR^d$, $\mathbf H_{\ncovmat}\in T_{\ncovmat_{T}} \rho^{-1}(\{1\})$, with $h_p=0$ if $c_\sigma=1$, so that there exists ${h}_{1:T}\in \cV^T$ satisfying $ \mathcal{D}S_{\theta_0}^{T}(v_{1:T}){h}_{1:T} = (h_z, h_p, h_q, \mathbf H_{\ncovmat}, 0 )^\top$.
			By Taylor expansion, we have then
			$$
			S_{\theta_0}^T (v_{1:T}+ h_{1:T}) = \begin{bmatrix}
				z_T + h_z \\
				p_T + h_p \\
				q_T + h_q \\
				\rncovmat_T+\mathbf{H}_{\ncovmat} \\
				r_T
			\end{bmatrix} + o(h_{1:T}) \enspace.
			$$
			Since $R$ is positive and positively homogeneous, it is not constant around $\rncovmat_T$.
			Besides, $R$ is differentiable at $\rncovmat_T$ and thus there exists $w\in\bR^d$ such that $\mathcal D R(\rncovmat_T)(ww^\top)\neq 0$.
			Consider the nonconstant smooth function
			$$
			G_w \colon s\in\bR \mapsto \FK(q_t,R(\rncovmat_t)^{-1}\rncovmat_t,r_t ; s [w,\dots,w] )\enspace,
			$$
			see \eqref{eq:FSigma}.
			Since $R$ is locally Lipschitz on $\Sdpp$, then $R$ is locally Lipschitz on the submanifold $\mathrm{rge}~G_w$, which is nontrivial since $G_w$ is nonconstant.
			Then, by Rademacher's theorem~\cite[Corollary B.5]{gissler2024irreducibility}, $R$ is differentiable at $G_w(s)$ for almost every $s\in\bR$.
			Moreover, we know that $\rncovmat_T=G_w(0)$ and that $\mathcal D R(\rncovmat_T)(ww^\top)\neq 0$.
			Thus, by upper semicontinuity of Clarke's Jacobian~\cite[Proposition B.9]{gissler2024irreducibility}, there exists a sufficiently small $s>0$ such that 
			$\mathcal D R(G_w(s))(ww^\top)\neq 0$.

			Then,\del{ by \Cref{R2:C1} and Rademacher's theorem~\cite[Theorem 3.2]{gariepy2015measure},} 
			we can find $\epsilon=sw\in\bR^d$ a nonzero vector small enough so that, 
			if $v_{T+1}=[\epsilon,\dots,\epsilon]\in\overline{ \cO^1_{\theta_{T}}}$, then $R$ is differentiable at $\rncovmat_{T+1}$, 
			and $\mathcal{D}R(\rncovmat_{T+1})\del{\cdot}\nnew{(}\epsilon\epsilon^\top \nnew{)}\neq 0$.
			Moreover, up to taking $s>0$ smaller, we can assume that $\Gamma$ is differentiable at $p_{T+1}\neq 0$ by \Cref{G1:C1}.
			Hence
			by composition and by \Cref{l:S-differentiable-in},
			$S_{\theta_0}^{T+1}$ is differentiable at $v_{1:T+1}$. Indeed, by chain rule~\cite[Corollary 2.6.6]{clarke1990optimization}, we have 
			\del{	$$
				\mathcal{D} S_{\theta_0}^{T+1}(v_{1:T+1})=  \mathcal D F(S_{\theta_0}^{T}(v_{1:T}\new{)},v_{T+1}) \circ \mathcal{D}S_{\theta_0}^{T}(v_{1:T}) 
				$$}%
			$$
			\mathcal{D} S_{\theta_0}^{T+1}(v_{1:T+1}) {h}_{1:T+1} =  \mathcal D F(S_{\theta_0}^{T}(v_{1:T}\new{)},v_{T+1}) \left( \mathcal{D}S_{\theta_0}^{T}(v_{1:T}) (h_{1:T}),h_{T+1} \right)
			$$
			Let ${h}_{T+1}=[h,\dots,h]\in\bR^{d\mu}$ for some arbitrary $h\in\bR^d$.
			Then, 
			\anne{I understand well the derivation below but I did not check the equations line by line - Shan is ready to proofread, we should let him know that the equations of this new part were less proofread}
			\begin{multline*}
				S_{\theta_0}^{T+1} (v_{1:T+1}+h_{1:T+1}) \\ =   
				\begin{bmatrix}
					\Fz (z_T+h_z,p_T+h_p,R(\rncovmat_T+\mathbf H_{\ncovmat})^{-1}(\rncovmat_T+\mathbf H_{\ncovmat}),r_T ; [\epsilon+h,\dots,\epsilon+h]) \\
					\Fsig (p_T+h_p,R(\rncovmat_T+\mathbf H_{\ncovmat})^{-1}(\rncovmat_T+\mathbf H_{\ncovmat}) ;[\epsilon+h,\dots,\epsilon+h]) \\
					\Fq  (q_T+h_q,r_T ; [\epsilon+h,\dots,\epsilon+h] ) \\
					\FK (q_T+h_q,R(\rncovmat_T+\mathbf H_{\ncovmat})^{-1}(\rncovmat_T+\mathbf H_{\ncovmat}),r_T ; [\epsilon+h,\dots,\epsilon+h]) \\
					\Fr (q_T+h_q,R(\rncovmat_T+\mathbf H_{\ncovmat})^{-1}(\rncovmat_T+\mathbf H_{\ncovmat}),r_T ; [\epsilon+h,\dots,\epsilon+h]) 
				\end{bmatrix}
				+ o(h_{1:T+1})
			\end{multline*}
			see \eqref{eq:update-rnormalized-chain}-\eqref{eq:update-Fr}.
			Moreover, we have
			\begin{align*}
				\Fz & (z_T+h_z,p_T+h_p,R(\rncovmat_T+\mathbf H_{\ncovmat})^{-1}(\rncovmat_T+\mathbf H_{\ncovmat}),r_T ; [\epsilon+h,\dots,\epsilon+h])  \\
				&=
				\frac{z_T+h_z+c_m (h+\epsilon)}{r_{T+1}^{1/2}\Gamma(p_{T+1}) + O(h,h_p,h_q,\mathbf{H}_{\ncovmat})} + o (h_{1:T+1}) \\
				& = \Fz (z_T,p_T,R(\rncovmat_T)^{-1}(\rncovmat_T),r_T ; [\epsilon,\dots,\epsilon]) + r_{T+1}^{-1/2}\Gamma(p_{T+1})^{-1} h_z + O(h,h_p,h_q,\mathbf{H}_{\ncovmat})
				+ o (h_{1:T+1}) \enspace,
			\end{align*}
			\begin{align*}
				\Fsig & (p_T+h_p,R(\rncovmat_T+\mathbf H_{\ncovmat})^{-1}(\rncovmat_T+\mathbf H_{\ncovmat}) ;[\epsilon+h,\dots,\epsilon+h]) \\
				& = 
				(1-c_\sigma) (p_T+h_p) + \sqrt{c_\sigma(2-c_\sigma)\mueff} R(\rncovmat_T+\mathbf H_{\ncovmat})^{1/2}(\rncovmat_T+\mathbf H_{\ncovmat})^{-1/2} (\epsilon + h) + o(h_{1:T+1})  \\
				& = \Fsig (p_T,R(\rncovmat_T)^{-1}(\rncovmat_T) ;[\epsilon,\dots,\epsilon]) + (1-c_\sigma) h_p + O(h,\mathbf H_{\ncovmat}) + o(h_{1:T+1}) \enspace,
			\end{align*}
			\begin{align*}
				\Fq & (q_T+h_q,r_T ; [\epsilon+h,\dots,\epsilon+h] ) \\
				& =
				r_T^{-1/2} (1-c_c) (q_T+h_q) + \sqrt{c_c(2-c_c)\mueff} (\epsilon + h) + o(h_{1:T+1}) \\
				& = \Fq (q_T,r_T ; [\epsilon,\dots,\epsilon] ) + (1-c_c)r_T^{-1/2}h_q +  \sqrt{c_c(2-c_c)\mueff}h  + o(h_{1:T+1})  = q_{T+1} + h_q^+ +  o(h_{1:T+1})\enspace,
			\end{align*}
			where $h_q^+= (1-c_c)r_T^{-1/2}h_q +  \sqrt{c_c(2-c_c)\mueff}h$,
			\done{I guess you want to define $h_q^+$ via the $\eqqcolon$ ? I feel it's better to define just what you want to define (as I have a doubt now reading that) ? for instance you can write an equal and then a sentence "where $h_q^+$ is defined as XXX.}%
			\begin{align*}
				\Fr & (q_T+h_q,R(\rncovmat_T+\mathbf H_{\ncovmat})^{-1}(\rncovmat_T+\mathbf H_{\ncovmat}),r_T ; [\epsilon+h,\dots,\epsilon+h]) \\
				& = 
				R\left( (1-c_1-c_\mu)  R(\rncovmat_T+\mathbf H_{\ncovmat})^{-1}(\rncovmat_T+\mathbf H_{\ncovmat}) 
				+ 
				c_1[ q_{T+1} + h_q^+][q_{T+1}+h_q^+]^\top +
				c_\mu [\epsilon +h][\epsilon +h]^\top \right) \\ &  \quad + o(h_{1:T+1}) \\
				& = R(\tilde\ncovmat_{T+1})  
				+ \mathcal{D}R(\tilde\ncovmat_{T+1})	[ (1-c_1-c_\mu) R(\rncovmat_T+\mathbf H_{\ncovmat})^{-1}\mathbf H_{\ncovmat} ]	 \\
				& \quad +	  \mathcal{D}R(\tilde\ncovmat_{T+1}) [ c_1[ q_{T+1} (h_q^+)^\top + h_q^+q_{T+1}^\top]  ] 
				+ \mathcal{D}R(\tilde\ncovmat_{T+1}) [c_\mu [\epsilon h^\top + h\epsilon^\top]  ]
				+ o(h_{1:{T+1}}) \\
				& = \Fr (q_T,R(\rncovmat_T)^{-1}(\rncovmat_T),r_T ; [\epsilon,\dots,\epsilon])
				+(1-c_1-c_\mu) R(\rncovmat_T)^{-1} \mathcal{D}R(\tilde\ncovmat_{T+1})	\mathbf H_{\ncovmat}  \\
				& \quad +	c_1  \mathcal{D}R(\tilde\ncovmat_{T+1}) [ q_{T+1} (h_q^+)^\top + h_q^+q_{T+1}^\top] 
				+ c_\mu \mathcal{D}R(\tilde\ncovmat_{T+1}) [\epsilon h^\top + h\epsilon^\top] 
				+ o(h_{1:{T+1}})  \\
				& =
				\Fr (q_T,R(\rncovmat_T)^{-1}(\rncovmat_T),r_T ; [\epsilon,\dots,\epsilon]) + 
				\mathcal{D}R(\tilde\ncovmat_{T+1}) \tilde{\mathbf H}_{\ncovmat}^+ + o(h_{1:{T+1}})
				\enspace,
			\end{align*}
			where
			$$
			  \tilde{\mathbf H}_{\ncovmat}^+ = (1-c_1-c_\mu) R(\rncovmat_T)^{-1} 	\mathbf H_{\ncovmat} + 	c_1   [ q_{T+1} (h_q^+)^\top + h_q^+q_{T+1}^\top] 
			  + c_\mu  [\epsilon h^\top + h\epsilon^\top] 
			$$
			and
			$$
			\tilde\ncovmat_{T+1}  =   (1-c_1-c_\mu)  R(\rncovmat_T)^{-1}(\rncovmat_T) + c_1 q_{T+1}q_{T+1}^\top + c_\mu \epsilon \epsilon^\top \enspace.
			$$
			Let $(x,y)\in\bR^d\times\bR$.
				Since $c_\mu>0$ and $\mathcal{D}R(\tilde\ncovmat_{T+1})(\epsilon\epsilon^\top)\neq 0$ (since \nnew{as seen above} $\mathcal{D}R(\rncovmat_{T+1})(\epsilon\epsilon^\top)\neq 0$ and $\rncovmat_{T+1}$ is proportional to $\tilde\ncovmat_{T+1}$, see \Cref{l:diff-pos-hom}), there exists $h= l \varepsilon$, where $$l=(y-c_1\mathcal D R(\tilde \ncovmat_{T+1})[q_{T+1} x^\top + x q_{T+1}^\top]-(1-c_1-c_\mu)R(\hat\ncovmat_{T})\mathcal D R(\tilde \ncovmat_{T+1})\mathbf H_{\ncovmat})/(2c_\mu\mathcal D R(\tilde \ncovmat_{T+1})(\epsilon\epsilon^\top))\enspace,$$
			and $h_q = (1-c_c)^{-1}r_T^{1/2}(x-\sqrt{c_c(2-c_c)\mueff}l\epsilon)$ such that $h^+_q=x$ and
			$$
			\mathcal D R(\tilde\ncovmat_{T+1})\tilde{\mathbf H}_{\ncovmat}^+ = (1-c_1-c_\mu)D R(\tilde\ncovmat_{T+1}) R(\rncovmat_T)^{-1} 	\mathbf H_{\ncovmat} + 	c_1  D R(\tilde\ncovmat_{T+1}) [ q_{T+1} x^\top + xq_{T+1}^\top] 
			+ 2lc_\mu  [\epsilon \epsilon^\top ] = y
			$$
			Therefore, the linear map $(h_q,h)\mapsto (h_q^+,\mathcal D R(\tilde\ncovmat_{T+1})\tilde{\mathbf H}_{\ncovmat}^+)$ valued in $\bR^d\times\bR$ is surjective.
			\del{observe that $h_q^+$ and $\mathcal D R(\tilde\ncovmat_{T+1})\tilde{\mathbf H}_{\ncovmat}^+\nnew{= }$\done{}  are linearly independent in $h_q$ and $\|h\|$ when $h$ is proportional to $\epsilon$.}\done{not sure what is meant linearly independent in $h_q$ - explain better (instead of write "observe")}\done{do not understand what it means to be linearly independent in $\|h\|$.}%
			Besides,
			$$
			\rncovmat_{T+1}^h \coloneqq \FK (q_T+h_q,R(\rncovmat_T+\mathbf H_{\ncovmat})^{-1}(\rncovmat_T+\mathbf H_{\ncovmat}),r_T ; [\epsilon+h,\dots,\epsilon+h]) 
			= \frac{  \tilde\ncovmat_{T+1}^h  }{  \rho( \tilde\ncovmat_{T+1}^h ) } \enspace,
			$$
			where
			\begin{align*}
				\tilde\ncovmat_{T+1}^h &  = \tilde\ncovmat_{T+1} +(1-c_1-c_\mu) R(\rncovmat_T)^{-1}\mathbf H_{\ncovmat}
				+ c_1[ q_{T+1} (h_q^+)^\top + h_q^+q_{T+1}^\top] \\
				& \quad
				+ c_\mu [\epsilon h^\top + h\epsilon^\top] + o(h_{1:T+1})  = \tilde\ncovmat_{T+1} + \tilde{\mathbf{H}}_{\ncovmat}^+  + o(h_{1:T+1}) 
				\enspace.
			\end{align*}
			Therefore, 
			by using the Taylor expansion $	\rho( \tilde\ncovmat_{T+1}^h )^{-1} = \rho(\tilde\ncovmat_{T+1})^{-1} - \mathcal{D}\rho ( \tilde\ncovmat_{T+1})\tilde{\mathbf{H}}_{\ncovmat}^+ + o(h_{1:T+1})$ since $\rho$ is positive and continuously differentiable by \Cref{rho2:C1}, we get
			\done{detail the expansion you use below of  $\rho( \tilde\ncovmat_{T+1}^h )$}%
			\begin{align*}
				\FK & (q_T+h_q,R(\rncovmat_T+\mathbf H_{\ncovmat})^{-1}(\rncovmat_T+\mathbf H_{\ncovmat}),r_T ; [\epsilon+h,\dots,\epsilon+h])  \\
				& = \FK (q_T,R(\rncovmat_T)^{-1}(\rncovmat_T),r_T ; [\epsilon,\dots,\epsilon]) 
				+ \rho(\tilde\ncovmat_{T+1})^{-1} \tilde{\mathbf H}_{\ncovmat}^+ - (\mathcal D\rho(\tilde\ncovmat_{T+1}) \tilde{\mathbf H}_{\ncovmat}^+) \tilde\ncovmat_{T+1}
				+ o(h_{1:T+1}) \enspace.
			\end{align*}
			All in all, by Taylor expansion,
			$$
			\mathcal D S_{\theta_0}^{T+1} (v_{1:T+1}) h_{1:T+1}  =
			\begin{bmatrix}
				r_{T+1}^{-1/2}\Gamma(p_{T+1})^{-1} h_z + O(h,h_p,h_q,\mathbf{H}_{\ncovmat}) \\
				(1-c_\sigma) h_p + O(h,\mathbf H_{\ncovmat}) \\
				h_q^+ \\
				\rho(\tilde\ncovmat_{T+1})^{-1} \tilde{\mathbf H}_{\ncovmat}^+ - (\mathcal D\rho(\tilde\ncovmat_{T+1}) \tilde{\mathbf H}_{\ncovmat}^+) \tilde\ncovmat_{T+1} \\
				\mathcal{D}R(\tilde\ncovmat_{T+1}) \tilde{\mathbf H}_{\ncovmat}^+
			\end{bmatrix} \enspace,
			$$
			which proves that every element in $\bR^d\times\bR^d\times\bR^d\times \mathrm T_{\rncovmat_{T+1}}\rho^{-1}(\{1\})\times\bR$ is reached by the linear map $\mathcal D S_{\theta_0}^{T+1} (v_{1:T+1})$ when $c_\sigma\neq1$ \nnew{so $\mathcal D S_{\theta_0}^{T+1} (v_{1:T+1})$ is surjective, hence of maximal rank, which proves the statement (a) (with $T+1$ instead of $T$)}\done{Armand: check}.
			When $c_\sigma=1$, the statement (c) follows as well
			as there exists $p=0\in\bR^d$ such that for every $(z,q,\ncovmat,r)\in  \bR^d\times\bR^d\times \mathrm T_{\rncovmat_{T+1}} \rho^{-1}(\{1\})\times\bR$, $(z,p,q,\ncovmat,r)$ belongs to the range of $\mathcal D S_{\theta_0}^{T+1} (v_{1:T+1})$.
			\del{	The proof ends.}%
			\del{ we have
				$$
				\mathcal{D} S_{\theta_0}^{T+1}(v_{1:T+1}){h}_{1:T+1} = \left[\begin{array}{c}
					\Gamma(p_{T+1}^\sigma)^{-1}r_{T+1}^{-1/2}h_z + O(h) \\
					(1-c_\sigma)h_p +O(h,\mathbf H_{\ncovmat}) \\
					(1-c_c)r_{T}^{-1/2}h_q + \sqrt{c_c(2-c_c)\mueff} h \\
					(1-c_1-c_\mu)r_{T+1}^{-1}\mathbf H_{\ncovmat} +O(h,h_q) \\
					h_r
				\end{array}\right]
				$$
				with
				$$
				h_r = \mathcal{D}R(\rncovmat_{T+1})\left[(c_1c_c(2-c_c)\mueff+c_\mu)(h\epsilon^\top+\epsilon h^\top)+c_1(1-c_c)r_{T}^{-1/2}\sqrt{c_c(2-c_c)\mueff}(h_q\epsilon^\top+\epsilon h_q^\top)  \right].
				$$
				But when $c_\mu>0$, we have that $h_r$ and $(1-c_c)r_{T}^{-1/2}h_q + \sqrt{c_c(2-c_c)\mueff} h$ are linearly independent in $h_q$ and $h$. This ends the proof of (a) and (c).}%
		\end{proof}

	\bibliography{biblio}

\begin{thebibliography}{10}

\bibitem{aboyeji2024covariance}
Esther~Tolulope Aboyeji, Oladayo~S. Ajani, and Rammohan Mallipeddi.
\newblock Covariance matrix adaptation evolution strategy based on ensemble of
  mutations for parking navigation and maneuver of autonomous vehicles.
\newblock {\em Expert Systems with Applications}, 249:123565, September 2024.

\bibitem{akiba2019optuna}
Takuya Akiba, Shotaro Sano, Toshihiko Yanase, Takeru Ohta, and Masanori Koyama.
\newblock Optuna: {{A Next-generation Hyperparameter Optimization Framework}}.
\newblock In {\em Proceedings of the 25th {{ACM SIGKDD International
  Conference}} on {{Knowledge Discovery}} \& {{Data Mining}}}, {{KDD}} '19,
  pages 2623--2631, New York, NY, USA, July 2019. Association for Computing
  Machinery.

\bibitem{akimoto2022global}
Youhei Akimoto, Anne Auger, Tobias Glasmachers, and Daiki Morinaga.
\newblock Global {{Linear Convergence}} of {{Evolution Strategies}} on {{More}}
  than {{Smooth Strongly Convex Functions}}.
\newblock {\em SIAM Journal on Optimization}, 32(2):1402--1429, June 2022.

\bibitem{arnold2004performance}
D.V. Arnold and H.-G. Beyer.
\newblock Performance analysis of evolutionary optimization with cumulative
  step length adaptation.
\newblock {\em IEEE Transactions on Automatic Control}, 49(4):617--622, April
  2004.

\bibitem{auger2005convergence}
Anne Auger.
\newblock Convergence results for the (1, $\lambda$)-sa-es using the theory of
  $\phi$-irreducible markov chains.
\newblock {\em Theoretical Computer Science}, 334(1-3):35--69, 2005.

\bibitem{auger2016these}
Anne Auger.
\newblock Analysis of {{Comparison-based Stochastic Continuous Black-Box
  Optimization Algorithms}}.
\newblock Th{\`e}se d'habilitation {\`a} diriger des recherches, Universit{\'e}
  Paris-Sud, May 2016.

\bibitem{auger2013linear}
Anne Auger and Nikolaus Hansen.
\newblock Linear convergence on positively homogeneous functions of a
  comparison based step-size adaptive randomized search: the (1+ 1) es with
  generalized one-fifth success rule.
\newblock {\em arXiv preprint arXiv:1310.8397}, 2013.

\bibitem{auger2016linear}
Anne Auger and Nikolaus Hansen.
\newblock Linear {{Convergence}} of {{Comparison-based Step-size Adaptive
  Randomized Search}} via {{Stability}} of {{Markov Chains}}.
\newblock {\em SIAM Journal on Optimization}, 26(3):1589--1624, January 2016.

\bibitem{bieler2014robust}
Jonathan Bieler, Rosamaria Cannavo, Kyle Gustafson, Cedric Gobet, David
  Gatfield, and Felix Naef.
\newblock Robust synchronization of coupled circadian and cell cycle
  oscillators in single mammalian cells.
\newblock {\em Molecular Systems Biology}, 10(7):739, July 2014.

\bibitem{bienvenue2003global}
Alexis Bienven{\"u}e and Olivier Fran{\c c}ois.
\newblock Global convergence for evolution strategies in spherical problems:
  Some simple proofs and difficulties.
\newblock {\em Theoretical Computer Science}, 306(1):269--289, September 2003.

\bibitem{brooks2011handbook}
Steve Brooks, Andrew Gelman, Galin Jones, and Xiao~Li Meng.
\newblock {\em Handbook of {{Markov Chain Monte Carlo}}}.
\newblock CRC Press, May 2011.

\bibitem{chotard2019verifiable}
Alexandre Chotard and Anne Auger.
\newblock Verifiable conditions for the irreducibility and aperiodicity of
  {{Markov}} chains by analyzing underlying deterministic models.
\newblock {\em Bernoulli}, 25(1):112--147, February 2019.

\bibitem{clarke1990optimization}
Frank~H. Clarke.
\newblock {\em Optimization and {{Nonsmooth Analysis}}}.
\newblock SIAM, January 1990.

\bibitem{colutto2010cmaes}
Sebastian Colutto, Florian Fruhauf, Matthias Fuchs, and Otmar Scherzer.
\newblock The {{CMA-ES}} on {{Riemannian Manifolds}} to {{Reconstruct Shapes}}
  in 3-{{D Voxel Images}}.
\newblock {\em IEEE Transactions on Evolutionary Computation}, 14(2):227--245,
  April 2010.

\bibitem{gariepy2015measure}
Lawrence~Craig Evans and Ronald~F Gariepy.
\newblock {\em Measure {{Theory}} and {{Fine Properties}} of {{Functions}},
  {{Revised Edition}}}.
\newblock {Chapman and Hall/CRC}, New York, April 2015.

\bibitem{fujii2018exploring}
Garuda Fujii, Youhei Akimoto, and Masayuki Takahashi.
\newblock Exploring optimal topology of thermal cloaks by {{CMA-ES}}.
\newblock {\em Applied Physics Letters}, 112(6):061108, February 2018.

\bibitem{gallegos2023equivalences}
Marco~A. {Gallegos-Herrada}, David Ledvinka, and Jeffrey~S. Rosenthal.
\newblock Equivalences of {{Geometric Ergodicity}} of {{Markov Chains}}.
\newblock {\em Journal of Theoretical Probability}, May 2023.

\bibitem{gissler2023evaluation}
Armand Gissler.
\newblock Evaluation of the impact of various modifications to {{CMA-ES}} that
  facilitate its theoretical analysis.
\newblock In {\em {{GECCO}} 2023 - {{Genetic}} and {{Evolutionary Computation
  Conference}}}, July 2023.

\bibitem{gissler2024irreducibility}
Armand Gissler, Alain Durmus, and Anne Auger.
\newblock On the irreducibility and convergence of a class of nonsmooth
  nonlinear state-space models on manifolds, February 2024.

\bibitem{hansen2015evolution}
Nikolaus Hansen, Dirk~V Arnold, and Anne Auger.
\newblock Evolution {{Strategies}}.
\newblock 2015.

\bibitem{hansen2011cmaes}
Nikolaus Hansen and Anne Auger.
\newblock {{CMA-ES}}: Evolution strategies and covariance matrix adaptation.
\newblock In {\em Proceedings of the 13th Annual Conference Companion on
  {{Genetic}} and Evolutionary Computation}, {{GECCO}} '11, pages 991--1010,
  New York, NY, USA, July 2011. Association for Computing Machinery.

\bibitem{hansen2014principled}
Nikolaus Hansen and Anne Auger.
\newblock Principled {{Design}} of {{Continuous Stochastic Search}}: {{From
  Theory}} to {{Practice}}.
\newblock In Yossi Borenstein and Alberto Moraglio, editors, {\em Theory and
  {{Principled Methods}} for the {{Design}} of {{Metaheuristics}}}, Natural
  {{Computing Series}}, pages 145--180. Springer, Berlin, Heidelberg, 2014.

\bibitem{hansen2004evaluating}
Nikolaus Hansen and Stefan Kern.
\newblock Evaluating the {{CMA Evolution Strategy}} on {{Multimodal Test
  Functions}}.
\newblock In {\em Parallel {{Problem Solving}} from {{Nature}} - {{PPSN
  VIII}}}, Lecture {{Notes}} in {{Computer Science}}, pages 282--291, Berlin,
  Heidelberg, 2004. Springer.

\bibitem{hansen2003reducing}
Nikolaus Hansen, Sibylle~D. M{\"u}ller, and Petros Koumoutsakos.
\newblock Reducing the {{Time Complexity}} of the {{Derandomized Evolution
  Strategy}} with {{Covariance Matrix Adaptation}} ({{CMA-ES}}).
\newblock {\em Evolutionary Computation}, 11(1):1--18, March 2003.

\bibitem{hansen1996adapting}
Nikolaus Hansen and Andreas Ostermeier.
\newblock Adapting arbitrary normal mutation distributions in evolution
  strategies: The covariance matrix adaptation.
\newblock In {\em Proceedings of {{IEEE International Conference}} on
  {{Evolutionary Computation}}}, pages 312--317, May 1996.

\bibitem{hansen2001completely}
Nikolaus Hansen and Andreas Ostermeier.
\newblock Completely {{Derandomized Self-Adaptation}} in {{Evolution
  Strategies}}.
\newblock {\em Evolutionary Computation}, 9(2):159--195, June 2001.

\bibitem{hansen2010benchmarking}
Nikolaus Hansen and Raymond Ros.
\newblock Benchmarking a weighted negative covariance matrix update on the
  {{BBOB-2010}} noiseless testbed.
\newblock In {\em Proceedings of the 12th Annual Conference Companion on
  {{Genetic}} and Evolutionary Computation}, {{GECCO}} '10, pages 1673--1680,
  New York, NY, USA, July 2010. Association for Computing Machinery.

\bibitem{hastings1970monte}
W.~K. Hastings.
\newblock Monte {{Carlo}} sampling methods using {{Markov}} chains and their
  applications.
\newblock {\em Biometrika}, 57(1):97--109, April 1970.

\bibitem{horn2013matrix}
{Horn R. {and} Johnson C.}
\newblock {\em Matrix {{Analysis}}}.
\newblock Cambridge University Press, 2013.

\bibitem{jastrebski2006improving}
G.A. Jastrebski and D.V. Arnold.
\newblock Improving {{Evolution Strategies}} through {{Active Covariance Matrix
  Adaptation}}.
\newblock In {\em 2006 {{IEEE International Conference}} on {{Evolutionary
  Computation}}}, pages 2814--2821, July 2006.

\bibitem{lee2012introduction}
John~M. Lee.
\newblock {\em Introduction to {{Smooth Manifolds}}}, volume 218 of {\em
  Graduate {{Texts}} in {{Mathematics}}}.
\newblock Springer, New York, NY, 2012.

\bibitem{maki2020application}
Atsuo Maki, Naoki Sakamoto, Youhei Akimoto, Hiroyuki Nishikawa, and Naoya
  Umeda.
\newblock Application of optimal control theory based on the evolution strategy
  ({{CMA-ES}}) to automatic berthing.
\newblock {\em Journal of Marine Science and Technology}, 25(1):221--233, March
  2020.

\bibitem{metropolis1953equation}
Nicholas Metropolis, Arianna~W. Rosenbluth, Marshall~N. Rosenbluth, Augusta~H.
  Teller, and Edward Teller.
\newblock Equation of {{State Calculations}} by {{Fast Computing Machines}}.
\newblock {\em The Journal of Chemical Physics}, 21(6):1087--1092, June 1953.

\bibitem{meyn1991asymptotic}
S.~P. Meyn and P.~E. Caines.
\newblock Asymptotic {{Behavior}} of {{Stochastic Systems Possessing Markovian
  Realizations}}.
\newblock {\em SIAM Journal on Control and Optimization}, 29(3):535--561, May
  1991.

\bibitem{meyn2012markov}
Sean~P. Meyn and Richard~L. Tweedie.
\newblock {\em Markov {{Chains}} and {{Stochastic Stability}}}.
\newblock Springer Science \& Business Media, December 2012.

\bibitem{morinaga2024convergence}
Daiki Morinaga, Kazuto Fukuchi, Jun Sakuma, and Youhei Akimoto.
\newblock Convergence {{Rate}} of the (1+1)-{{ES}} on {{Locally Strongly
  Convex}} and {{Lipschitz Smooth Functions}}.
\newblock {\em IEEE Transactions on Evolutionary Computation}, 28(2):501--515,
  April 2024.

\bibitem{ostermeier1994stepsize}
Andreas Ostermeier, Andreas Gawelczyk, and Nikolaus Hansen.
\newblock Step-size adaptation based on non-local use of selection information.
\newblock In Yuval Davidor, Hans-Paul Schwefel, and Reinhard M{\"a}nner,
  editors, {\em Parallel {{Problem Solving}} from {{Nature}} --- {{PPSN III}}},
  pages 189--198, Berlin, Heidelberg, 1994. Springer.

\bibitem{roberts2004general}
Gareth~O. Roberts and Jeffrey~S. Rosenthal.
\newblock General state space {{Markov}} chains and {{MCMC}} algorithms.
\newblock {\em Probability Surveys}, 1(none):20--71, January 2004.

\bibitem{rodriguez2006hybrid}
Maria {Rodriguez-Fernandez}, Pedro Mendes, and Julio~R. Banga.
\newblock A hybrid approach for efficient and robust parameter estimation in
  biochemical pathways.
\newblock {\em Biosystems}, 83(2):248--265, February 2006.

\bibitem{serre2010matrices}
Denis Serre.
\newblock {\em Matrices: {{Theory}} and {{Applications}}}, volume 216 of {\em
  Graduate {{Texts}} in {{Mathematics}}}.
\newblock Springer, New York, NY, 2010.

\bibitem{tanabe2021level}
Takumi Tanabe, Kazuto Fukuchi, Jun Sakuma, and Youhei Akimoto.
\newblock Level generation for angry birds with sequential {{VAE}} and latent
  variable evolution.
\newblock In {\em Proceedings of the {{Genetic}} and {{Evolutionary Computation
  Conference}}}, {{GECCO}} '21, pages 1052--1060, New York, NY, USA, June 2021.
  Association for Computing Machinery.

\bibitem{toure2023global}
Cheikh Toure, Anne Auger, and Nikolaus Hansen.
\newblock Global linear convergence of evolution strategies with recombination
  on scaling-invariant functions.
\newblock {\em Journal of Global Optimization}, 86(1):163--203, May 2023.

\bibitem{toure2021scaling}
Cheikh Toure, Armand Gissler, Anne Auger, and Nikolaus Hansen.
\newblock Scaling-invariant {{Functions}} versus {{Positively Homogeneous
  Functions}}.
\newblock {\em Journal of Optimization Theory and Applications},
  191(1):363--383, October 2021.

\end{thebibliography}
	\bibliographystyle{plain}
	
\end{document}